\newif\ifPDF
\numberwithin{equation}{section}
\theoremstyle{plain}
\newtheorem{proposition}{Proposition}[section]
\newtheorem{theorem}[proposition]{Theorem}		
\newtheorem{corollary}[proposition]{Corollary}
\newtheorem{lemma}[proposition]{Lemma}
\theoremstyle{definition}
\newtheorem{definition}[proposition]{Definition}
\newtheorem{remark}[proposition]{Remark}
\newtheorem{example}[proposition]{Example}
\newcommand{\Vbold}{{\bf V}}
\newcommand{\ebold}{{\bf e}}
\newcommand{\lbold}{{\bf l}}
\newcommand{\vbold}{{\bf v}}
\newcommand{\CBbb}{\mathbb C}
\newcommand{\HBbb}{\mathbb H}
\newcommand{\PBbb}{\mathbb P}
\newcommand{\RBbb}{\mathbb R}
\newcommand{\ZBbb}{\mathbb Z}
\newcommand{\Acal}{\mathcal A}
\newcommand{\Bcal}{\mathcal B}
\newcommand{\Ccal}{\mathcal C}
\newcommand{\Dcal}{\mathcal D}
\newcommand{\Ecal}{\mathcal E}
\newcommand{\Fcal}{\mathcal F}
\newcommand{\Gcal}{\mathcal G}
\newcommand{\Kcal}{\mathcal K}
\newcommand{\Lcal}{\mathcal L}
\newcommand{\Mcal}{\mathcal M}
\newcommand{\Ncal}{\mathcal N}
\newcommand{\Ocal}{\mathcal O}
\newcommand{\Qcal}{\mathcal Q}
\newcommand{\Rcal}{\mathcal R}
\newcommand{\Scal}{\mathcal S}
\newcommand{\Vcal}{\mathcal V}
\newcommand{\Wcal}{\mathcal W}
\newcommand{\Mfrak}{\mathfrak M}
\newcommand{\gfrak}{\mathfrak g}
\newcommand{\SL}{\mathsf{SL}}
\newcommand{\PSL}{\mathsf{PSL}}
\newcommand{\GL}{\mathsf{GL}}
\newcommand{\SU}{\mathsf{SU}}
\newcommand{\U}{\mathsf{U}}
\DeclareMathOperator{\End}{End}
\DeclareMathOperator{\Hom}{Hom}
\DeclareMathOperator{\rank}{rank}
\DeclareMathOperator{\vol}{vol}
\DeclareMathOperator{\coker}{coker}
\DeclareMathOperator{\ad}{ad}
\DeclareMathOperator{\tr}{tr}
\newcommand{\dbar}{\bar\partial}
\newcommand{\lra}{\longrightarrow}
\newcommand{\doubleslash}{\bigr/ \negthinspace\negthinspace \bigr/}
\newcommand{\simrightarrow}{\buildrel \sim\over\longrightarrow }
\newcommand{\Op}{{\rm Op}}
\newcommand{\Gr}{{\rm Gr}}
\newcommand{\Tor}{{\rm Tor}}
\DeclareMathOperator{\YMH}{YMH}
\DeclareMathOperator{\hol}{hol}
\DeclareMathOperator{\Ric}{Ric}
\DeclareMathOperator{\Riem}{Riem}
\begin{document}

% Topmatter

\title[Higgs bundles and local systems]
{Higgs bundles and local systems on Riemann surfaces}

\author[Richard A. Wentworth]{Richard A. Wentworth}

\address{Department of Mathematics,
   University of Maryland,
   College Park, MD 20742, USA}
\email{raw@umd.edu}
\thanks{R.W. supported in part by NSF grants DMS-1037094 and DMS-1406513.  
The author also acknowleges support from NSF grants DMS 1107452, 1107263, 1107367 ``RNMS: GEometric structures And Representation varieties" (the GEAR Network).}

%\subjclass[2000]{Primary: 58D15; Secondary: 14D20, 32G13}
\date{\today}

%\begin{abstract} 
%\end{abstract}

%$\hbox{}$
%\centerline{\fbox{\bf *** Preliminary Version ***}}
%\vskip .75in

% End Topmatter

\maketitle
\setcounter{tocdepth}{2}
\tableofcontents
\thispagestyle{empty}

%\newpage

\baselineskip=16pt

%\thispagestyle{plain}
%\setcounter{page}{1}
%\setcounter{section}{-1}
%\setcounter{footnote}{0}

%%%%%%%%%%%%%%%%%%%%%%%%%%%%%%%%%%%%%%%%%%%%%%%%
%%%  BODY OF THE ARTICLE HERE  %%%%%%%%%%%%%%%%%%%%%%%%%%%%

\section{Preface}

These notes are based on lectures given at
 the Third International School on
Geometry and Physics at the Centre de Recerca Matem\`atica in
Barcelona, March 26--30, 2012.
The aim of the School's four  lecture series was to give a rapid introduction  to  Higgs bundles, representation varieties,
 and mathematical physics. 
 While the scope of these subjects is very broad,
that of these notes is far more modest.
The main topics covered here are:
\begin{itemize}
\item The Hitchin-Kobayashi-Simpson correspondence for Higgs bundles
on Riemann surfaces.
\item The Corlette-Donaldson theorem relating the moduli
spaces of Higgs bundles and semisimple representations of the
fundamental group.
%\item The equivariant cohomology of moduli spaces.
\item  A description of the oper moduli space and its
relationship to systems of holomorphic differential equations, Higgs bundles,  and the Eichler-Shimura isomorphism.
\end{itemize}
These topics have been treated extensively in the literature.  I have tried to condense the 
key ideas into a presentation that requires as little background as possible.
With regard to the first item, 
I give a complete proof of the Hitchin-Simpson theorem (Theorem \ref{thm:hitchin}) that combines techniques that have emerged since Hitchin's seminal paper \cite{Hitchin87a}.  In the case of Riemann surfaces a direct proof for arbitrary rank which avoids introduction of the Donaldson functional can be modeled on Donaldson's proof of the Narasimhan-Seshadri theorem  in  \cite{Donaldson83} (such a proof was suggested in \cite{Simpson87}).  Moreover, the Yang-Mills-Higgs flow can be used to extract minimizing sequences with desirable properties.  A similar idea is used in the Corlette-Donaldson proof of the existence of equivariant harmonic maps (Theorem \ref{thm:corlette}).  Indeed, I have sought in these notes to exhibit the parallel structure of the proofs of these two fundamental results.
Continuity of the two flows is the key to the relationship between the equivariant cohomology of the moduli space of semistable Higgs bundles on the one hand and the moduli space of representations on the other.
On first sight the last item in the list above is a rather different topic from the others, but it is nevertheless  deeply related in ways that are perhaps still not completely understood.  Opers 
\cite{BeilinsonDrinfeld05} play an important role in the literature on the Geometric Langlands program \cite{Frenkel07}. My intention here is to give  fairly complete  proofs of the basic facts about opers and their relationship to differential equations  and Higgs bundles (see also \cite{Simpson10}).

Due to the limited amount of time for the lectures
I have necessarily omitted many important aspects of this subject.
Two in particular are worth mentioning. 
 First, I deal only with vector bundles and do not 
consider principal bundles with more general structure groups.  For example, there is no discussion of representations into the various real forms of a complex Lie group.  Since some of the other lectures at this introductory school will treat this topic in great detail I hope this omission will  not be serious.
 Second, I deal only with closed Riemann surfaces and
do not consider extra ``parabolic'' structures at marked points.   In some sense this ignores an important aspect at  the heart of the classical literature on holomorphic differential equations (cf.\ \cite{Simpson90, Boalch01}).  Nevertheless, for the purposes of introducing the global structure of  moduli spaces, I feel it is better to first treat the case of closed surfaces.
While much of the current research in the field is 
directed toward the two generalizations above,
 these topics are left for further reading.

I have tried to give  references to essential results in these notes. 
 Any omissions or incorrect attributions are due solely to my own ignorance of the 
extremely rich and vast literature, and 
for these I extend my sincere apologies.  Also,
there is  no claim to originality of the proofs given here.  A perusal of Carlos Simpson's foundational contributions to this subject  is highly recommended for 
anyone wishing to learn about Higgs bundles (see \cite{Simpson87, Simpson88, Simpson92, Simpson94a, Simpson94b}).  In addition, the original articles of Corlette \cite{Corlette88}, Donaldson \cite{Donaldson83, Donaldson87}, and of course Hitchin \cite{Hitchin87a, Hitchin87b, Hitchin92} are indispensable.  Finally, I also mention more  recent survey articles  \cite{BurgerIozziWienhard, BradlowGothenPrada, Guichard11} which  treat  especially the case of representations to general Lie groups.
  I am grateful to the organizers, Luis \'Alvarez-C\'onsul,
Peter Gothen, and Ignasi Mundet i Riera, for inviting me to
give these lectures, and to the CRM for its hospitality. Additional thanks to Bill Goldman, Fran\c cois Labourie, Andy Sanders, and Graeme Wilkin for discussions related to the topics presented here, and to Beno\^ it Cadorel for catching several typos. The anonymous referee also made very useful suggestions, for which I owe my gratitude.

\bigskip

\centerline{\sc Notation}

\medskip  

\begin{itemize}
\item  $X=$ a compact Riemann surface of genus $g\geq 2$.
\item $\pi=\pi_1(X,p)=$ the fundamental group of $X$.
\item $\HBbb=$ the upper half plane in $\CBbb$.
\item $\Ocal=\Ocal_X=$  the sheaf of  germs of holomorphic functions on $X$.
\item $\Kcal=\Kcal_X=$  the canonical sheaf of $X$.
\item 
$E=$ a complex vector bundle on $X$. 
\item $H=$ a hermitian metric on $E$.
\item $\nabla=$ a connection on $E$.
\item $A$ (or $d_A$) $=$  a unitary connection on $(E, H)$.
\item $\Ccal_E=$ the space of connections on a rank $n$ bundle $E$.
\item $\Acal_E=$ the space of unitary connections on $E$.
\item $\Bcal_E=$ the space of Higgs bundles.
\item $\Bcal_E^{ss}=$ the space of semistable Higgs bundles.
\item $\Gcal_E$ (resp.\ $\Gcal_E^\CBbb$) $=$ the unitary (resp.\ complex) gauge group.
\item $\dbar_E=$ a Dolbeault operator on $E$, which is equivalent to a holomorphic structure. 
\item $(\dbar_E,H)=$ the Chern connection.
\item $\Ecal=$ sheaf of germs of holomorphic sections of a holomorphic bundle $(E,\dbar_E)$.
\item $\gfrak_E=$ the bundle of skew-hermitian endomorphisms of $E$.
\item $\End E=\gfrak_E^\CBbb$ the endomorphism bundle of $E$.
\item 
$\Vbold=$ a local system on $X$. 
\item $\Vbold_\rho=$ the local system associated to a representation $\rho:\pi\to \GL_n(\CBbb)$.
\item $\underline R =$  the locally constant sheaf modeled on a ring $R$.  
\item $L^p_k=$ the Sobolev space of functions/sections with
$k$ derivatives in $L^p$.
\item $C^{k,\alpha}=$ the space of functions/sections with $k$ 
derivatives being H\"older continuous with exponent $\alpha$.
\end{itemize}

\section{The Dolbeault Moduli Space} \label{sec:dolbeault}

\subsection{Higgs bundles}

\subsubsection{Holomorphic bundles and stability} \label{sec:stability}
Throughout these notes, $X$ will denote a closed Riemann
surface of genus $g\geq 2$ and $E\to X$ a  complex
vector bundle.  We begin with a discussion of the basic differential geometry of complex vector bundles. Good references for this material are Kobayashi's book \cite{Kobayashi87} and Griffiths and Harris \cite{GriffithsHarris78}.
 A holomorphic structure on $E$ is equivalent
to a choice of {\bf $\dbar$-operator}, i.e.\ a $\CBbb$-linear map
$$
\dbar_E: \Omega^0(X,E)\lra \Omega^{0,1}(X,E)
$$
satisfying the Leibniz rule: $\dbar_E(fs)=\dbar f\otimes
s+f\dbar_E s$, for a function $f$ and a section $s$ of $E$.
  Indeed, if $\{s_i\}$ is a local holomorphic
frame of a holomorphic bundle, then the Leibniz rule uniquely determines the $\dbar$-operator on the underlying complex vector bundle. Conversely, since there is no integrability condition on Riemann surfaces, given a
$\dbar$-operator as defined above one
can always find local holomorphic frames (cf.\ \cite[\S 5]{AtiyahBott82}).  When we want to specify the holomorphic
structure we write $(E,\dbar_E)$.  We also introduce the
notation $\Ecal$ for a sheaf of germs of holomorphic sections
of $(E,\dbar_E)$. We will sometimes confuse the terminology and call $\Ecal$ a holomorphic bundle. 

If $\Scal\subset\Ecal$ is a holomorphic subbundle with
quotient $\Qcal$, then a smooth splitting $E=S\oplus Q$ allows
us to represent the $\dbar$-operators as
\begin{equation} \label{eqn:dbar}
\dbar_E=\left( \begin{matrix} \dbar_S&\beta\\
0&\dbar_Q\end{matrix}\right)
\end{equation}
where $\beta\in \Omega^{0,1}(X,\Hom(Q,S))$ is called the
{\bf second fundamental form}.
A hermitian metric $H$ on $E$ gives  an orthogonal splitting.
In this case the subbundle $S$ is determined by its orthogonal projection operator $\pi$, which is an
endomorphism of $E$ satisfying
\begin{enumerate}
\item $\pi^2=\pi$;
\item $\pi^\ast=\pi$;
\item $\tr\pi$ is constant.
\end{enumerate}
The statement  that $S\subset E$ be holomorphic is equivalent
to the further condition 
$$
\hspace{-11.85cm} {\rm (iv)}\ (I-\pi)\dbar_E\,\pi=0\ .
$$
Notice that (i) and (iv) imply (iii), and that $\beta=-\dbar_E\pi$. Hence, there is a 1-1 correspondence between holomorphic subbundles of $\Ecal$ and endomorphisms $\pi$ of the hermitian bundle $E$ satisfying conditions (i), (ii), and (iv).  I should point out that the generalization of this description of holomorphic subsheaves to higher dimensions is a key idea of Uhlenbeck and Yau \cite{UhlenbeckYau86}.

A {\bf connection} $\nabla$ on $E$ is a $\CBbb$-linear map
$$
\nabla: \Omega^0(X,E)\lra \Omega^{1}(X,E)\ ,
$$
satisfying the Leibniz rule: $\nabla(fs)=df\otimes
s+f\nabla s$, for a function $f$ and a section $s$.
Given a hermitian metric $H$, we call a connection {\bf unitary}
(and we will always then denote it by $A$ or $d_A$)
if it preserves $H$, i.e.
\begin{equation} \label{eqn:unitary}
d\langle s_1, s_2\rangle_H=\langle d_A s_1, s_2\rangle_H
+\langle s_1, d_A s_2\rangle_H\ .
\end{equation}
The curvature of a connection $\nabla$ 
is $F_\nabla=\nabla^2$ (perhaps more precise notation: $\nabla\wedge \nabla$).
If $\gfrak_E$ denotes the bundle of  skew-hermitian endomorphisms of $E$ and $\gfrak_E^\CBbb$ its complexification, then $F_A\in \Omega^2(X,\gfrak_E)$ for a unitary connection, and $F_\nabla\in \Omega^2(X,\gfrak_E^\CBbb)$ in general.

\begin{remark} \label{rem:traceless}
We will mostly be dealing with connections on bundles that induce a fixed connection on the  determinant bundle. These will correspond, for example, to representations into $\SL_n$ as opposed to $\GL_n$. In this case, the bundles $\gfrak_E$ and $\gfrak_E^\CBbb$ should be taken to consist of traceless endomorphisms.  
\end{remark}

Finally, note that a connection always induces a
$\dbar$-operator by taking its $(0,1)$ part.  Conversely, a
$\dbar$-operator gives a unique unitary connection, called the
{\bf Chern connection}, which we will sometimes denote by 
$d_A=(\dbar_E, H)$.  The complex structure on $X$ splits $\Omega^1(X)$  into $(1,0)$ and $(0,1)$ parts, and hence also splits the connections.  We denote these by, for example, $d'_A$ and $d''_A$, respectively.  So for $d_A=(\dbar_E,H)$, $d''_A=\dbar_E$, and $d'_A$ is determined by 
$
\partial\langle s_1,s_2\rangle_H=\langle d_A's_1, s_2\rangle_H
$,
for any pair of holomorphic sections $s_1$, $s_2$. Henceforth, I will mostly omit $H$ from the notation if there is no chance of confusion.

\begin{example} \label{ex:line-bundle-curvature}
Let $\Lcal$ be   a holomorphic line bundle with hermitian metric $H$. For a local holomorphic frame $s$ write $H_s=|s|^2$.
Then $F_{(\dbar_L,H)}=\dbar\partial\log H_s$, and the right hand side is independent of the choice of frame.
\end{example}

The transition functions of a collection of local trivializations of a holomorphic line bundle   on the open sets of a covering of $X$ give a $1$-cocycle
with values in the sheaf $\Ocal^\ast$ of germs of nowhere vanishing holomorphic functions.  The set of isomorphism classes of line bundles is then $H^1(X,\Ocal^\ast)$.
Recall that on a compact Riemann surface every holomorphic 
line bundle has a meromorphic section.  This 
gives an equivalence between the categories of holomorphic 
line bundles under tensor products and linear equivalence classes 
of divisors $\Dcal=\sum_{x\in X}m_x x$
 with their additive structure (here $m_x\in \ZBbb$ is zero for all but finitely many $x\in X$).
We shall denote by $\Ocal (\Dcal)$ the line 
bundle thus associated to $\Dcal$.
 Furthermore,  a divisor has a  {\bf degree}, $\deg\Dcal=\sum_{x\in X}m_x$.
We define this to be the degree of $\Ocal(\Dcal)$.  Alternatively, from the exponential sequence
$$
0\lra \underline \ZBbb\lra \Ocal \stackrel{f\mapsto e^{2\pi i f}}{\xrightarrow{\hspace*{1.5cm}}}\Ocal ^\ast \lra 0\ ,$$
we have the long exact sequence in cohomology:
$$
0\lra H^1(X,\ZBbb)\lra H^1(X,\Ocal)\lra H^1(X,\Ocal^\ast)
\stackrel{c_1}{\xrightarrow{\hspace*{.75cm}}}
H^2(X,\ZBbb)\lra 0\ .
$$
The fundamental class of $X$ identifies $H^2(X,\ZBbb)\cong\ZBbb$, and it is a standard exercise to show that under this identification: $\deg(\Dcal)=c_1(\Ocal(\Dcal))$. 
For a holomorphic vector bundle $\Ecal$, we declare the degree $\deg\Ecal :=\deg\det\Ecal$.
 Notice that the degree is topological, i.e.\ it does not depend on the holomorphic structure, just on the underlying complex bundle $E$.  By the Chern-Weil theory, for any hermitian metric $H$ on $\Ecal$ we have
 \begin{equation} \label{eqn:chern-weil}
 c_1(E)=\left[ \frac{\sqrt{-1}}{2\pi}\tr F_{(\dbar_E,H)}\right] =\left[ \frac{\sqrt{-1}}{2\pi}F_{(\dbar_{\det E},\det H)}\right] \ .
 \end{equation}
 Complex vector bundles on Riemann surfaces are classified topologically by their rank and degree.
 We will also make use of the {\bf slope} (or normalized degree) of a bundle, which is defined by the ratio $\mu(E)=\deg E/\rank E$.
 
If a line bundle $\Lcal=\Ocal(\Dcal)$ has a nonzero holomorphic section, then  since $\Dcal$ is linearly equivalent to an effective divisor (i.e.\ one with $m_x\geq 0$ for all $x$), $\deg\Lcal\geq 0$.  It follows that if $\Ecal$ is a holomorphic vector bundle with a subsheaf $\Scal\subset\Ecal$ and $\rank\Scal=\rank\Ecal$, then $\deg\Scal\leq \deg\Ecal$.  Indeed, the assumption implies $\det\Ecal\otimes (\det\Scal)^\ast$ has a nonzero holomorphic section.  We will use this fact later on.  
Notice that in the case above, $\Qcal=\Ecal/\Scal$ is a torsion sheaf.  In general, for any subsheaf $\Scal\subset \Ecal$ of a holomorphic vector bundle, $\Scal$ is contained in a uniquely defined holomorphic subbundle $\Scal'$ of $\Ecal$ called the {\bf saturation} of $\Scal$.  It is obtained by taking the kernel of the induced map $\Ecal\to \Qcal/\Tor(\Qcal)\to 0$.  From this discussion we conclude that $\deg\Scal$ is no greater than the degree $\deg\Scal'$ of its saturation.

 Let  $\omega$ be the K\"ahler form associated to a choice of conformal metric on $X$. This will be fixed throughout, and  for convenience we normalize
 so that 
 $$\int_X\omega=2\pi\ .$$
 The contraction: 
 $\Lambda: \Omega^2(X)\to \Omega^0(X)$, is defined  by setting $\Lambda(f\omega)=f$ for any function $f$.
For a holomorphic subbundle $\Scal$ of a hermitian holomorphic bundle $\Ecal$ with
projection operator $\pi$ we have the following useful formula, which follows easily from direct calculation using \eqref{eqn:chern-weil}.
 
 \begin{equation} \label{eqn:degree}
 \deg\Scal=\frac{1}{2\pi}\int_X\tr(\pi \sqrt{-1}\Lambda F_{(\dbar_E,H)})\, \omega
 -\frac{1}{2\pi}\int_X |\beta|^2\, \omega\ .
 \end{equation}

\begin{definition} \label{def:stability}
We say that $\Ecal$ is  {\bf stable}
 (resp.\ {\bf semistable}) if for all
 holomorphic subbundles $\Scal\subset \Ecal$, 
$0<\rank\Scal<\rank\Ecal$, 
we have $\mu(\Scal)<\mu(\Ecal)$ (resp.\ $\mu(\Scal)\leq\mu(\Ecal)$).  We call $\Ecal$ {\bf polystable} if it is a direct sum of stable bundles of the same slope.
\end{definition}

\begin{remark} \label{rem:tensor}
Line bundles are trivially stable.  If $\Ecal$ is (semi)stable and $\Lcal$ is a line bundle, then $\Ecal\otimes\Lcal$ is also (semi)stable.
\end{remark}

Before giving an example, recall the notion of an extension
\begin{equation} \label{eqn:extension}
0\lra \Scal\lra\Ecal\lra\Qcal\lra 0\ .
\end{equation}
The {\bf extension class}  is 
the image of the identity endomorphism under the coboundary map of the long exact sequence associated to \eqref{eqn:extension}
$$
H^0(X,\Qcal\otimes\Qcal^\ast)\lra H^1(X, \Scal\otimes \Qcal^\ast) \ .
$$
Notice that the isomorphism class of the bundle 
$\Ecal$ is unchanged under scaling, so the extension class (if not zero) should 
be regarded as an element of the projective space $\PBbb(H^1(X, \Scal\otimes \Qcal^\ast)
)$.
It is then an exercise to see that in terms of the second
fundamental form $\beta$, the extension class coincides
(projectively) with
the corresponding Dolbeault cohomology class $[\beta]\in
H^{0,1}_{\dbar}(X,S\otimes Q^\ast)$.
  We 
say that \eqref{eqn:extension} is {\bf split} if 
the extension class is zero.  Clearly, this occurs if and only there is an injection $\Qcal\to \Ecal$ lifting the projection. 

\begin{example}
Suppose $g\geq 1$.
Consider extensions of the type
$$
0\lra\Ocal \lra\Ecal\lra\Ocal (p)\lra 0\ .
$$
These are parametrized by $H^1(X, \Ocal(-p))\cong H^0(X, \Kcal(p))^\ast \cong H^0(X,\Kcal )^\ast$, which has dimension $g$.  Any non-split extension of this type is stable. Indeed, if $\Lcal\hookrightarrow\Ecal$ is a destabilizing line subbundle, then $\deg\Lcal\geq 1$.  
The induced map $\Lcal\to \Ocal(p)$ cannot be zero, since then by the inclusion $\Lcal\hookrightarrow \Ecal$ it would lift to a nonzero map $\Lcal\to\Ocal $, which is impossible.
Hence,  $\Lcal\to\Ocal (p)$ must be an isomorphism. Such an $\Lcal$ would therefore split the extension.
\end{example}

A connection is {\bf flat} if its curvature vanishes.  We say
that $\nabla$ is {\bf projectively flat} if 
$\sqrt{-1}\Lambda F_\nabla=\mu$,
where $\mu$ is a constant (multiple of the identity).
Note that by our normalization of the area, $\mu=\mu(E)$.
In Section 4, we will prove Weil's criterion for when a
holomorphic bundle $\Ecal$ admits a flat connection (i.e. $\nabla''=\dbar_E$, $F_\nabla=0$). Demanding that the connection be unitary imposes stronger conditions. This is the famous result of
Narasimhan-Seshadri.

\begin{theorem}[Narasimhan-Seshadri {\cite{NarasimhanSeshadri65}}] \label{thm:narasimhan-seshadri}
A holomorphic bundle $\Ecal\to X$ admits a projectively 
flat unitary connection if and only if $\Ecal$  is polystable.
\end{theorem}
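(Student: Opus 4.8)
The plan is to establish the two implications separately, treating the ``only if'' direction as a short computation and concentrating on the ``if'' direction.

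For the easy direction, suppose $\Ecal$ carries a unitary connection $d_A=(\dbar_E,H)$ with $\sqrt{-1}\Lambda F_A=\mu(E)I$. Given any holomorphic subbundle $\Scal\subset\Ecal$ with orthogonal projection $\pi$ and second fundamental form $\beta$, I would substitute $\sqrt{-1}\Lambda F_A=\mu(E)I$ into \eqref{eqn:degree}. Since $\tr\pi=\rank\Scal$ and $\int_X\omega=2\pi$, the first term becomes $\mu(E)\rank\Scal$, giving
\[
\deg\Scal=\mu(E)\rank\Scal-\frac{1}{2\pi}\int_X|\beta|^2\,\omega\leq\mu(E)\rank\Scal,
\]
so $\mu(\Scal)\leq\mu(\Ecal)$, with equality precisely when $\beta=0$. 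When equality holds, $\beta=-\dbar_E\pi=0$ forces $\pi$ to be parallel for the unitary connection, so $\Scal^\perp$ is also holomorphic and $d_A$ splits as a direct sum of connections on $\Scal$ and $\Scal^\perp$, each again projectively flat and of slope $\mu(\Ecal)$; inducting on the rank then exhibits $\Ecal$ as a direct sum of stable bundles of equal slope, i.e.\ polystable.

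For the hard direction, I would first observe that on a Riemann surface $\Omega^2(X,\gfrak_E)$ is spanned pointwise by $\omega$, so $F_A=(\Lambda F_A)\,\omega$ and the projective flatness $\sqrt{-1}\Lambda F_A=\mu(E)I$ is \emph{equivalent} to the Hermitian--Einstein equation $\sqrt{-1}\Lambda F_{(\dbar_E,H)}=\mu(E)I$ for the Chern connection of a metric $H$. Since a direct sum of Hermitian--Einstein metrics with a common Einstein constant is again Hermitian--Einstein, and that constant is forced to equal the slope by \eqref{eqn:chern-weil}, the polystable case reduces immediately to finding such a metric when $\Ecal$ is \emph{stable}. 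To produce it I would fix a background metric $H_0$ and run the Donaldson heat flow $H_t^{-1}\dot H_t=-(\sqrt{-1}\Lambda F_{(\dbar_E,H_t)}-\mu(E)I)$ in the space of metrics: parabolic theory gives short-time existence, the Riemann surface geometry gives long-time existence, and the flow (or equivalently a minimizing sequence for $\|\sqrt{-1}\Lambda F-\mu(E)I\|_{L^2}^2$ extracted along it) provides a family of connections along which this curvature defect is non-increasing.

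The hard part will be upgrading this minimizing family to genuine convergence, and this is the only step where stability is used. The crux is a uniform $C^0$ estimate on $h_t=H_0^{-1}H_t$: I would argue by contradiction in the manner of Donaldson and Uhlenbeck--Yau, showing that if $\sup_X|\log h_t|$ were unbounded, then after rescaling a weak $L^2_1$ limit of the endomorphisms $h_t$ would yield a nonzero weakly holomorphic projection, i.e.\ one satisfying $\pi=\pi^\ast=\pi^2$ and $(I-\pi)\dbar_E\pi=0$ in a weak sense. The regularity theorem of Uhlenbeck--Yau---that such a weak projection is represented by a genuine coherent subsheaf $\Scal\subset\Ecal$---together with an integration-by-parts estimate using the controlled curvature defect would force $\mu(\Scal)\geq\mu(\Ecal)$, contradicting stability. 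Once the $C^0$ bound is secured, elliptic bootstrapping gives higher-order bounds, a subsequence of metrics converges to a smooth $H_\infty$ solving $\sqrt{-1}\Lambda F_{(\dbar_E,H_\infty)}=\mu(E)I$, and its Chern connection is the required projectively flat unitary connection. I expect this regularity and the identification of the destabilizing subsheaf to be the main obstacle; everything else is standard parabolic theory and Chern--Weil bookkeeping.
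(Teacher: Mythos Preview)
Your proposal is correct, but the paper takes a different route for the hard direction. The paper proves Narasimhan--Seshadri as the $\Phi=0$ case of Theorem~\ref{thm:hitchin}, and that proof follows Donaldson's 1983 argument rather than the Uhlenbeck--Yau scheme you describe. Concretely: the paper works on the \emph{connection} side, using the YMH flow to produce a $J$-minimizing sequence $(A_j,\Phi_j)$ in the complex gauge orbit (Lemma~\ref{lem:minimizing}), applies Uhlenbeck compactness (Proposition~\ref{prop:uhlenbeck}) to pass to a limiting Higgs bundle $(\Ecal_\infty,\Phi_\infty)$, rescales the gauge transformations $g_j$ to extract a nonzero holomorphic map $g_\infty:\Ecal\to\Ecal_\infty$, and then uses the two Donaldson inequalities (Lemmas~\ref{lem:donaldson-upper-bound} and \ref{lem:donaldson-lower-bound}) comparing $J$ to slopes of extensions to force $g_\infty$ to be an isomorphism. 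By contrast, you work on the \emph{metric} side: run the heat flow on $H_t$, and if $\sup|\log h_t|$ blows up, rescale to extract a weakly holomorphic projection onto a destabilizing subsheaf.

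Both are standard and complete. Your approach is the one that generalizes to higher-dimensional K\"ahler manifolds, and on a Riemann surface the Uhlenbeck--Yau regularity step you flag as the main obstacle is in fact easy (a weak $L^2_1$ solution of $(I-\pi)\dbar_E\pi=0$ is smooth by ordinary elliptic regularity for $\dbar$). The paper's approach trades that analysis for Uhlenbeck compactness plus Donaldson's combinatorial $J$-lemmas; its payoff is that the same template runs verbatim for Higgs bundles, which is the paper's actual target.
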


\noindent
In Section \ref{sec:hitchin} we will prove Theorem \ref{thm:narasimhan-seshadri} as a special case of the more general result on Higgs bundles (see Theorem \ref{thm:hitchin}).

\subsubsection{Higgs fields} \label{sec:higgs-fields}
A {\bf Higgs bundle} is a pair $(\Ecal,\Phi)$ where $\Ecal$ is
a holomorphic bundle and $\Phi$ is a holomorphic section of
$\Kcal\otimes\End\Ecal$. We will sometimes regard $\Phi$ as a
section of $\Omega^{1,0}(X,\gfrak_E^\CBbb)$ satisfying $\dbar_{E}\Phi=0$.

\begin{definition} \label{def:higgs-stability}
We say that a pair $(\Ecal,\Phi)$ is {\bf stable} 
(resp.\ {\bf semistable}) 
if for all $\Phi$-invariant holomorphic subbundles 
$\Scal\subset \Ecal$, $0<\rank\Scal<\rank\Ecal$, we 
have $\mu(\Scal)<\mu(\Ecal)$ (resp.\ $\mu(\Scal)\leq\mu(\Ecal)$).
It is {\bf polystable} if it is a direct sum of Higgs bundles
of the same slope.
\end{definition}

The following is a simple but useful consequence of the definition and the additive properties of the slope on exact sequences.
\begin{lemma} \label{lem:semistable}
Let $f:(\Ecal_1,\Phi_1)\to (\Ecal_2,\Phi_2)$ be a holomorphic homomorphism of Higgs bundles, $\Phi_2 f=f\Phi_1$.  Suppose $(\Ecal_i,\Phi_i)$ is semistable, $i=1,2$, and $\mu(\Ecal_1)>\mu(\Ecal_2)$.  Then $f\equiv 0$. If $\mu(\Ecal_1)=\mu(\Ecal_2)$ and one of the two is stable, then either $f\equiv 0$ or $f$ is an isomorphism.
\end{lemma}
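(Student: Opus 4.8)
The plan is to extract $\Phi$-invariant sub- and quotient objects from $f$ and play the (semi)stability inequalities against the additivity of degree and rank on exact sequences. First I would record two structural facts. The kernel $\ker f$ is the kernel of a sheaf map, hence a saturated subsheaf of $\Ecal_1$, and it is $\Phi_1$-invariant: the relation $\Phi_2 f=f\Phi_1$ shows that $f(s)=0$ forces $f(\Phi_1 s)=\Phi_2 f(s)=0$, so $\ker f$ is a $\Phi_1$-invariant holomorphic subbundle. Dually, the image $\im f\subset\Ecal_2$ is a $\Phi_2$-invariant subsheaf; I let $\Ical'$ be its saturation, which is then a $\Phi_2$-invariant holomorphic subbundle of $\Ecal_2$ with $\rank\Ical'=\rank\im f$ and $\deg\Ical'\geq\deg\im f$.

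Next I would sandwich the slope of the image. From the exact sequence $0\to\ker f\to\Ecal_1\to\im f\to 0$, applying semistability of $(\Ecal_1,\Phi_1)$ to the $\Phi_1$-invariant subbundle $\ker f$ (or trivially when $\ker f=0$) gives $\mu(\ker f)\leq\mu(\Ecal_1)$, and additivity of degree and rank then yields $\mu(\im f)\geq\mu(\Ecal_1)$. On the other side, $\mu(\im f)\leq\mu(\Ical')$ since $\deg\im f\leq\deg\Ical'$ at equal rank, and semistability of $(\Ecal_2,\Phi_2)$ gives $\mu(\Ical')\leq\mu(\Ecal_2)$ (when $\Ical'$ is proper; if $\Ical'$ has full rank then $\Ical'=\Ecal_2$ and this holds with equality). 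Assembling, if $f\not\equiv 0$ then $\mu(\Ecal_1)\leq\mu(\im f)\leq\mu(\Ical')\leq\mu(\Ecal_2)$. Under the hypothesis $\mu(\Ecal_1)>\mu(\Ecal_2)$ this is impossible, so $f\equiv 0$, which proves the first assertion.

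For the second assertion I assume $\mu(\Ecal_1)=\mu(\Ecal_2)$ and $f\not\equiv 0$. Then the chain above collapses to a string of equalities, $\mu(\ker f)=\mu(\Ecal_1)=\mu(\im f)=\mu(\Ical')=\mu(\Ecal_2)$. Here strictness of stability does the work: if $(\Ecal_1,\Phi_1)$ is stable, a proper nonzero $\Phi_1$-invariant $\ker f$ would force $\mu(\ker f)<\mu(\Ecal_1)$, contradicting the equality, so $\ker f=0$ and $f$ is injective with $\im f\cong\Ecal_1$; while if $(\Ecal_2,\Phi_2)$ is stable, a proper $\Phi_2$-invariant $\Ical'$ of equal slope is likewise excluded, forcing $\Ical'=\Ecal_2$, and then $\deg\im f=\deg\Ical'$ gives $\im f=\Ecal_2$, so $f$ is surjective. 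In either case the equality of slopes together with the matching of degrees at equal rank upgrades injectivity (resp.\ surjectivity) to an isomorphism.

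The step I expect to be the main obstacle is the purely sheaf-theoretic bookkeeping: the image of a holomorphic bundle map need not be a subbundle, so one must pass to the saturation $\Ical'$, verify that saturation preserves $\Phi_2$-invariance, and track the inequality $\deg\im f\leq\deg\Ical'$ alongside the rank relations. The borderline equal-slope case is the delicate one, since every inequality in the sandwich must be forced to an equality, and it is exactly there that the strict inequality coming from stability must be invoked to annihilate the kernel or fill out the image.
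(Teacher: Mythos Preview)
Your argument for the first assertion is correct and follows the same route as the paper: both use the $\Phi_1$-invariance of $\ker f$ and the $\Phi_2$-invariance of $\im f$, together with the slope inequalities from semistability, to derive a chain $\mu(\Ecal_1)\leq\mu(\im f)\leq\mu(\Ecal_2)$ and hence a contradiction. You are more careful than the paper about passing to the saturation $\Ical'$ of the image before invoking semistability of $(\Ecal_2,\Phi_2)$, which is the right thing to do.

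For the second assertion there is a genuine gap in your final sentence. From stability of $(\Ecal_1,\Phi_1)$ alone you correctly deduce $\ker f=0$, hence $f$ is injective; and from stability of $(\Ecal_2,\Phi_2)$ alone you correctly deduce $\Ical'=\Ecal_2$ and then $\im f=\Ecal_2$, hence $f$ is surjective. But the claim that ``equality of slopes together with the matching of degrees at equal rank upgrades injectivity (resp.\ surjectivity) to an isomorphism'' is not justified, and in fact fails when only one side is stable. A concrete obstruction: take $\Ecal_1=\Ocal$, $\Ecal_2=\Ocal\oplus\Ocal$, both with zero Higgs field, and $f$ the inclusion of the first summand; then $(\Ecal_1,0)$ is stable, $(\Ecal_2,0)$ is semistable, $\mu(\Ecal_1)=\mu(\Ecal_2)=0$, and $f$ is a nonzero non-isomorphism. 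Your equalities $\mu(\im f)=\mu(\Ical')=\mu(\Ecal_2)$ only force $\im f=\Ical'$ to be a saturated subbundle, not all of $\Ecal_2$; semistability of $(\Ecal_2,\Phi_2)$ does not exclude a proper $\Phi_2$-invariant subbundle of equal slope. So the second assertion, as stated with only one side stable, is slightly too strong; one gets injectivity or surjectivity, not an isomorphism, unless \emph{both} are stable. The paper's own proof of the second part is only ``follows similarly'' and does not address this either; in the paper the lemma is in any case invoked only through its first assertion.
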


\begin{proof}
Consider the first statement.  Then if $f\not\equiv 0$,  the assumption $\Phi_2 f=f\Phi_1$ implies that the image of $f$ is $\Phi_2$-invariant, so by the condition on slopes
$f$ must have a kernel.  But then $\ker f$ is $\Phi_1$-invariant.  So $\mu(\ker f)\leq \mu(\Ecal_1)\leq \mu(\coker f)\leq \mu(\Ecal_2)$; contradiction. 
 The second statement follows similarly.
\end{proof}

A {\bf Higgs subbundle}  of $(\Ecal,\Phi)$ is by definition a holomorphic subbundle $\Scal\subset \Ecal$ that is $\Phi$-invariant. The restriction $\Phi_\Scal$ of $\Phi$ to $\Scal$ then makes  $(\Scal,\Phi_\Scal)$ a Higgs bundle, where now the inclusion $\Scal\hookrightarrow\Ecal$ gives a map of Higgs bundles.  Similarly, $\Phi$ induces a Higgs bundle structure on the quotient $\Qcal=\Ecal/\Scal$.
Given an arbitrary Higgs bundle, 
the  {\bf Harder-Narasimhan filtration of $(\Ecal,\Phi)$} 
  is a filtration by Higgs subbundles
$$
0=(\Ecal_0,\Phi_0)\subset (\Ecal_1,\Phi_1)\subset\cdots\subset 
(\Ecal_\ell,\Phi_\ell)=(\Ecal,\Phi)\ ,
$$
such that the quotients $(\Qcal_i, \Phi_{Q_i})=
(\Ecal_i,\Phi_i)/(\Ecal_{i-1}, \Phi_{i-1})$ are  semistable (cf.\ \cite{HarderNarasimhan74}). 
 The filtration is also required to satisfy $\mu(\Qcal_i)>\mu(\Qcal_{i+1})$, 
and one can show that the associated graded object
$
\Gr_{HN}(\Ecal,\Phi)=\oplus_{i=1}^\ell (\Qcal_i, \Phi_{Q_i})
$
is uniquely determined by the isomorphism class of
$(\Ecal,\Phi)$.
 The collection of slopes $\mu_i=\mu(\Qcal_i)$
 is an important invariant of the isomorphism class of the
Higgs bundle.

\begin{remark} \label{rem:maximal}
By construction, \emph{$\mu_i$ is the maximal slope of a Higgs subbundle of $\Ecal/\Ecal_{i-1}$} with its induced Higgs field.  We can also interpret \emph{$\mu_i$ as the minimal slope of a Higgs quotient of $(\Ecal_i,\Phi_i)$}.  Indeed, $(\Ecal_1,\Phi_1)$ is semistable, so this is trivially true if $i=1$.  Suppose $\Ecal_i\to \Qcal\to 0$ is a Higgs quotient for $i\geq 2$ and $\mu(\Qcal)\leq \mu_i$. If $\Qcal$ is the minimal such quotient, then it is semistable with respect to the induced Higgs field.
It follows from Lemma \ref{lem:semistable} that the induced map $\Ecal_1\to \Qcal$ must vanish.  Hence, the quotient passes to $\Ecal/\Ecal_1\to \Qcal\to 0$.  Now by the same argument, $\Ecal_2/\Ecal_1\to \Qcal$ vanishes if $i\geq 3$.
Continuing in this way, we obtain a quotient $\Qcal_i\to \Qcal\to 0$.  Now since $(\Qcal_i,\Phi_{\Qcal_i})$ is semistable and the quotient is nonzero, applying Lemma \ref{lem:semistable} once again, we conclude that $\mu_i\leq \mu(\Qcal)$.
\end{remark}

 Consider  the  $n$-tuple of numbers $\vec\mu(\Ecal,\Phi)=(\mu_1,\ldots, \mu_n)$ obtained
 from the Harder-Narasimhan filtration by repeating each of the 
$\mu_i$'s according to the ranks of the $\Qcal_i$'s.  
%Note that since $\Ecal$ is topologically trivial, $\sum_{i=1}^n\mu_i=0$.
  We then get a vector $\vec\mu(\Ecal, \Phi)$, called the 
{\bf  Harder-Narasimhan type} of $(\Ecal,\Phi)$.
There is a natural partial
ordering on vectors of this type that is key to the stratification we desire.  
 For a pair $\vec\mu$, $\vec\lambda$ of $n$-tuple's satisfying 
$\mu_1\geq\cdots\geq\mu_n$,  $\lambda_1\geq
\cdots\geq\lambda_n$,  and $\sum_{i=1}^n\mu_i=\sum_{i=1}^n\lambda_i$, we define
$$
\vec\lambda\leq\vec\mu \quad \iff\quad  \sum_{j\leq k}\lambda_j\leq\sum_{j\leq k}\mu_j\quad\text{for all}\ k=1,\ldots, n\ .
$$
The importance of this ordering is that it defines a stratification 
of the space of Higgs bundles. In particular, the Harder-Narasimhan type is upper semicontinuous.
This is the direct analog of the Atiyah-Bott stratification
for holomorphic bundles \cite[\S 7]{AtiyahBott82}.

There is a similar filtration of a semistable Higgs bundle
$(\Ecal,\Phi)$,
where the successive quotients are stable, all with 
slope $=\mu(E)$.  This is called the {\bf Seshadri filtration} \cite{Seshadri67}
and its associated graded $\Gr_S(\Ecal,\Phi)$ is therefore
polystable.  When $\Phi\equiv 0$, we recover the usual Harder-Narasimhan and Seshadri filtrations of holomorphic bundles $\Ecal$.  We will denote these by $\Gr_{HN}(\Ecal)$ and $\Gr_{S}(\Ecal)$.

\begin{example}
Consider an extension \eqref{eqn:extension} 
where $\rank\Scal=\rank\Qcal=1$ and $\deg\Scal>\deg\Qcal$.
 Then the Harder-Narasimhan filtration of $\Ecal$ is given by $0\subset \Scal\subset\Ecal$.
\end{example}

%
%
%
%\begin{lemma}
%Let $(\Ecal,\Phi_E)$ be a stable Higgs bundle and $(\Fcal,\Phi_F)$ arbitrary.  Let $f:\Ecal\to \Fcal$ be $\Phi$-invariant holomorphic bundle map. Then the following holds:
%\begin{itemize}
%\item If $\mu(\Ecal)>\mu_{max}(\Fcal,\Phi_F)$, then $f\equiv 0$.
%\item If $(\Fcal,\Phi_F)$ is stable with $\mu(\Fcal)=\mu(\Ecal)$, then either $f\equiv 0$ or $f$ is an isomorphism.
%\end{itemize}
%\end{lemma}

\subsection{The moduli space}

\subsubsection{Gauge transformations} \label{sec:gauge}
Let $\Acal_E$ denote the space of unitary connections on a rank $n$ hermitian vector bundle $E$.  If $\gfrak_E$ denotes the associated bundle of skew-hermitian endomorphisms of $E$, then one observes from the Leibniz rule that $\Acal_E$ is an infinite dimensional affine space modeled on $\Omega^1(X,\gfrak_E)$.   By the construction of the Chern connection discussed in Section \ref{sec:stability}, we also see that $\Acal_E$ can be identified with the space of holomorphic structures on $E$.  We will most often be interested in the case of \emph{fixed determinant}, i.e. where the induced holomorphic structure on $\det E$ is fixed.

The {\bf gauge group} is defined by 
$$
\Gcal_E=\{ g\in \Omega^0(X,\End E) : gg^\ast=I\}\ .
$$
In the fixed determinant case we also impose the condition that $\det g=1$ (see Remark \ref{rem:traceless}).
The gauge group acts on $\Acal_E$ by pulling back connections: $d_{g(A)}=g\circ d_A \circ g^{-1}$.  On the other hand, because of the identification with holomorphic structures we see that the complexification $\Gcal_E^\CBbb$, the {\bf complex gauge group}, also acts on $\Acal_E$.  Explicitly, if $\dbar_E=d''_A$, then $g(A)$ is the Chern connection of $g\circ \dbar_E \circ g^{-1}$.

The space of Higgs bundles is
$$
\Bcal_E=\{ (A,\Phi)\in \Acal_E \times \Omega^0(X, K\otimes \gfrak_E^\CBbb) : d''_A\Phi=0\}\ .
$$
Let $\Bcal_E^{ss}\subset \Bcal_E$ denote the subset of semistable Higgs bundles.

\begin{definition} \label{def:dolbeault}
The moduli space of rank $n$ semistable Higgs bundles (with fixed determinant) on $X$ is
$
\Mfrak_D^{(n)}=\Bcal^{ss}_E\doubleslash\Gcal_E^\CBbb
$,
where the double slash means that the orbits of $(\Ecal,\Phi)$ and $\Gr_S(\Ecal,\Phi)$ are identified.
\end{definition}

We have not been careful about topologies.  In fact, $\Mfrak_D^{(n)}$ can be given the structure of a (possibly nonreduced) complex analytic space using the Kuranishi map (cf.\ \cite{Kobayashi87}).  An algebraic construction using geometric invariant theory is given in \cite{Simpson94a}.  

A second comment is that $\Gcal_E^\CBbb/\Gcal_E$ may be identified with the space of hermitian metrics on $E$.  This leads to an important interpretation when studying the behavior of functionals along $\Gcal_E^\CBbb$ orbits in $\Acal_E/\Gcal_E$: we may either think of varying the complex structure $g(\dbar_E)$ with a fixed hermitian metric, or we may keep $\dbar_E$ fixed and vary the metric $H$ by
$\langle s_1, s_2\rangle_{g(H)}=\langle gs_1, gs_2\rangle_H$.

\subsubsection{Deformations of Higgs bundles}

Let $D''=d''_A+\Phi$, $D'=d'_A+\Phi^\ast$. The metric $\omega$ on $X$ and the hermitian metric on $E$  define $L^2$-inner products on forms with values in $E$ and $\End E$.
We  have the K\"ahler identities
\begin{align}
\begin{split} \label{eqn:kahler}
(D'')^\ast&=-\sqrt{-1}[\Lambda, D'] \ ;\\
(D')^\ast&=\sqrt{-1}[\Lambda, D'']\ ,
\end{split}
\end{align}
(see \cite[p.\ 111]{GriffithsHarris78} for the case $\Phi= 0$; the case $\Phi\neq 0$ follows by direct computation).

The infinitesimal structure of the moduli space is governed by a deformation complex $C(A,\Phi)$, which is obtained by differentiating the condition $d''_A\Phi=0$ and the action of the gauge group.

\begin{equation} \label{eqn:deformation-complex}
C(A,\Phi)\ :\ 
0\lra \Omega^0(X,\gfrak_E^\CBbb)
\stackrel{D''}{\xrightarrow{\hspace*{.5cm}}}
\Omega^{1,0}(X,\gfrak_E^\CBbb)\oplus
\Omega^{0,1}(X,\gfrak_E^\CBbb) 
\stackrel{D''}{\xrightarrow{\hspace*{.5cm}}}
\Omega^{1,1}(X,\gfrak_E^\CBbb)\to 0\ .
\end{equation}
Note that the holomorphicity condition on $\Phi$  guarantees that $(D'')^2=0$.  
Serre duality gives an isomorphism $H^0(C(A,\Phi))\simeq H^2(C(A,\Phi))$.  We call a Higgs bundle {\bf simple} if $H^0(C(A,\Phi))\simeq \CBbb$ (or $\{0\}$ in the fixed determinant case).

\begin{remark} \label{rem:simple}
A stable Higgs bundle is necessarily simple.
Indeed, if $\phi\in \ker D''$, then $\phi$ is a holomorphic
endomorphism of $\Ecal$ commuting with $\Phi$.  In particular,
$\det\phi$ is a holomorphic function and is therefore constant. Also,
$\ker\phi$ is $\Phi$-invariant.
 If $\phi$ is nonzero but not
an isomorphism 
$$
0\lra \ker\phi\lra \Ecal\lra \Ecal/\ker\phi\lra 0\ .
$$
Since $\Ecal/\ker\phi$ is also a subsheaf of $\Ecal$,
stability implies both $\mu(\ker\phi)$ and
$\mu(\Ecal/\ker\phi)$ are both less than $\mu(E)$, which is a contradiction.
Hence, $\phi$ is either zero or an isomorphism.  But applying the same
argument to $\phi-\lambda$ for any scalar $\lambda$, we
conclude that $\phi$ is a multiple of the identity.
\end{remark}

\begin{proposition} \label{prop:tangent-space}
At a simple Higgs bundle $[A, \Phi]$,
 $\Mfrak_D^{(n)}$ is smooth of complex dimension \break $(n^2-1)(2g-2)$, and the tangent space  may be identified with 
\begin{equation} \label{eqn:h1}
H^1(C(A,\Phi))\simeq\left\{ (\varphi,\beta) : d''_A\varphi=-[\Phi,\beta]\ ,\ (d''_A)^\ast\beta=\sqrt{-1}\Lambda[\Phi^\ast,\varphi]\right\}\ .
\end{equation}
\end{proposition}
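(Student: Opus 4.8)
The plan is to combine elliptic deformation theory with Hodge theory. The complex \eqref{eqn:deformation-complex} is the Dolbeault realization of the two-term complex of sheaves $\gfrak_E^\CBbb\xrightarrow{\ad_\Phi}\gfrak_E^\CBbb\otimes\Kcal$, and as such it is elliptic, so its cohomology groups $H^i(C(A,\Phi))$ are finite dimensional. By the standard Kuranishi/slice construction, a neighborhood of $[A,\Phi]$ in $\Mfrak_D^{(n)}$ is modeled on the zero set of an equivariant Kuranishi map $k:H^1(C(A,\Phi))\to H^2(C(A,\Phi))$ with vanishing linearization at the origin, where $H^0(C(A,\Phi))$ is the Lie algebra of the stabilizer of $[A,\Phi]$. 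First I would invoke simpleness: in the fixed-determinant case this means $H^0(C(A,\Phi))=\{0\}$, and the Serre duality isomorphism $H^0(C(A,\Phi))\simeq H^2(C(A,\Phi))$ quoted above then forces $H^2(C(A,\Phi))=0$. Hence the obstruction space vanishes, $k$ is trivial, the stabilizer is discrete, and $\Mfrak_D^{(n)}$ is smooth near $[A,\Phi]$ with tangent space canonically $H^1(C(A,\Phi))$.

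For the dimension, since $H^0=H^2=0$ the alternating sum of dimensions gives $\dim_\CBbb H^1(C(A,\Phi))=-\chi(C(A,\Phi))$. I would compute $\chi$ additively from the two-term sheaf complex, $\chi(C(A,\Phi))=\chi(\gfrak_E^\CBbb)-\chi(\gfrak_E^\CBbb\otimes\Kcal)$, evaluating each term by Riemann--Roch on the genus-$g$ curve $X$. In the fixed-determinant case $\gfrak_E^\CBbb$ has rank $n^2-1$ and degree $0$, so $\chi(\gfrak_E^\CBbb)=(n^2-1)(1-g)$ and $\chi(\gfrak_E^\CBbb\otimes\Kcal)=(n^2-1)(2g-2)+(n^2-1)(1-g)=(n^2-1)(g-1)$; subtracting gives $\chi(C(A,\Phi))=-(n^2-1)(2g-2)$, hence $\dim_\CBbb H^1=(n^2-1)(2g-2)$.

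Finally, to obtain the explicit description \eqref{eqn:h1} I would pass to harmonic representatives. Ellipticity lets Hodge theory identify $H^1(C(A,\Phi))$ with the space of pairs $(\varphi,\beta)\in\Omega^{1,0}(X,\gfrak_E^\CBbb)\oplus\Omega^{0,1}(X,\gfrak_E^\CBbb)$ lying in the kernel of the middle Laplacian, equivalently those annihilated simultaneously by $D''$ and by the adjoint $(D'')^\ast$ of the first arrow. Unwinding $D''=d''_A+\Phi$ on $(\varphi,\beta)$, and using that $\Omega^{2,0}$ and $\Omega^{0,2}$ vanish on a curve, gives the cocycle equation $d''_A\varphi=-[\Phi,\beta]$. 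For the co-closed condition I would compute the formal adjoint of the first map $u\mapsto([\Phi,u],d''_Au)$: the adjoint of $d''_A$ contributes $(d''_A)^\ast\beta$, while the Kähler identities \eqref{eqn:kahler} convert the adjoint of $\ad_\Phi$ into the zeroth-order term $\sqrt{-1}\Lambda[\Phi^\ast,\varphi]$, yielding $(d''_A)^\ast\beta=\sqrt{-1}\Lambda[\Phi^\ast,\varphi]$.

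I expect the main obstacle to be this last adjoint computation: one must carefully track bidegrees and signs in applying \eqref{eqn:kahler} to identify $(\ad_\Phi)^\ast$ with $\sqrt{-1}\Lambda[\Phi^\ast,\,\cdot\,]$, and one should verify ellipticity so that Hodge theory legitimately supplies unique harmonic representatives. Once $H^0=H^2=0$ is established, the smoothness and dimension statements are formal by comparison.
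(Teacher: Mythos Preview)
The paper does not actually prove this proposition; it is stated without proof as a standard consequence of Kuranishi deformation theory (the paper refers generally to \cite{Kobayashi87} for such constructions), and the identification \eqref{eqn:h1} is then used as a computational tool in the subsequent Fuchsian example. Your argument is correct and is precisely the standard one: vanishing of $H^0$ by simpleness together with the Serre duality $H^0\simeq H^2$ (which the paper quotes just before the proposition) kills the obstruction space, Riemann--Roch on the two-term complex $\gfrak_E^\CBbb\to\gfrak_E^\CBbb\otimes\Kcal$ gives the dimension, and the K\"ahler identities \eqref{eqn:kahler} yield the explicit harmonic description. Your concern about the adjoint computation is unwarranted: once one writes $(D'')^\ast=-\sqrt{-1}[\Lambda,D']$ with $D'=d'_A+\Phi^\ast$ and notes that on a $(1,0)\oplus(0,1)$ pair only $d'_A\beta$ and $[\Phi^\ast,\varphi]$ survive (the other pieces land in $\Omega^{2,0}$ or $\Omega^{0,2}$, which vanish), the two equations drop out immediately.
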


\begin{example} (cf.\ \cite{Hitchin87a, Hitchin92})
We now give important examples of  stable Higgs bundles; namely, the \emph{Fuchsian} ones. First for rank 2. Fix a choice of square root $\Kcal^{1/2}$ of the canonical bundle, and let $\Ecal=\Kcal^{1/2}\oplus \Kcal^{-1/2}$.  Then the part of the endomorphism bundle that sends $\Kcal^{1/2}\to \Kcal^{-1/2}$ is isomorphic to $\Kcal^{-1}$.  Tensoring by $\Kcal$, it becomes trivial.  Hence, the 
$$
\Phi=\left(\begin{matrix} 0&0\\ 1&0\end{matrix}\right)\ ,
$$
makes sense as a Higgs field, and it is clearly holomorphic.  While $\Ecal$ is unstable as a holomorphic vector bundle
the Higgs bundle $(\Ecal,\Phi)$ is stable, since the only $\Phi$-invariant sub-line bundle is $\Kcal^{-1/2}$ which has negative degree.
Let us remark in passing that if we consider a different holomorphic structure $\Vcal$
 on $E$ given by the $\dbar$-operator 
 $$\dbar_E+\Phi^\ast=\left(\begin{matrix} \dbar_{\Kcal^{1/2}} & \omega\\0& \dbar_{\Kcal^{-1/2}}\end{matrix}\right)\ ,
 $$
 then $\Vcal$ is the unique (up to isomorphism) non-split extension
 $$
 0\lra\Kcal^{1/2}\lra\Vcal\lra\Kcal^{-1/2}\lra 0\ .
 $$
We now compute the tangent space $\Mfrak_D^{(2)}$ at $[(\Ecal,\Phi)]$.  Write 
$$
\beta=\left(\begin{matrix} b&b_1\\ b_2&-b\end{matrix}\right)\qquad ,\qquad
\varphi=\left(\begin{matrix} \phi&\phi_1\\ \phi_2&-\phi\end{matrix}\right)\ ,
$$
and compute
$$
[\Phi,\beta]=\left(\begin{matrix} -b_1&0\\ 2b&b_1\end{matrix}\right) \qquad ,\qquad
\sqrt{-1}\Lambda[\Phi^\ast,\varphi]=\left(\begin{matrix} \phi_2&-2\phi\\ 0&-\phi_2\end{matrix}\right)\ .
$$
Then the conditions \eqref{eqn:h1} that $(\beta,\varphi)$ define a tangent vector are
$$
\dbar_E\varphi=\left(\begin{matrix} b_1&0\\ -2b&-b_1\end{matrix}\right) \qquad ,\qquad
\dbar_E^\ast\beta=\left(\begin{matrix} \phi_2&-2\phi\\ 0&-\phi_2\end{matrix}\right)\ .
$$
In particular, $\phi_1\in H^0(X,\Kcal^2)$ and $b_2\in H_{\dbar}^{0,1}(X,K^\ast)\simeq H^0(X,\Kcal^2)^\ast$.  I claim that the other entries vanish.  Indeed, the equations for $\phi$ and $b_1$ are
$
\dbar\phi=b_1
$, and  $\dbar^\ast b_1=-2\phi$.  But this implies $(\dbar^\ast\dbar+2)\phi=0$. Hence, $\phi$, and therefore also $b_1$, must vanish. The same argument works for $\phi_2$ and $b$.  We therefore have an isomorphism
$$
T_{[\Ecal_F,\Phi_F]}\Mfrak_D^{(2)}\simeq H^0(X,\Kcal^2)\oplus (H^0(X,\Kcal^2))^\ast\ .
$$
For $n\geq 2$, there is a similar argument.  Here we take
$$
\Ecal_F=\Kcal^{(n-1)/2}\oplus \Kcal^{(n-3)/2}\oplus \cdots\oplus \Kcal^{-(n-1)/2}\ ,
$$
and 
$$
\Phi_F=\left(\begin{matrix}  0 & 0 & 0 & \cdots & 0 \\
1 & 0 & 0 & \cdots & \vdots \\
0&1&0&\cdots&\vdots \\
\vdots && \ddots &\ddots & \vdots \\
0&\cdots &0&1&0
   \end{matrix}\right)\ .
$$
Notice that with respect to this splitting the $(ij)$ entry of $\varphi$ is a section of $\Kcal^{j-i+1}$,
 and the $(ij)$ entry of $\beta$ is in $\Omega^{0,1}(X,K^{j-i})$. 
We obtain the following equations on the entries of a tangent vector $(\beta,\varphi)$,
\begin{align}
\begin{split} \label{eqn:phi-beta}
\dbar_E\varphi_{ij} & = \beta_{i-1,j}-\beta_{i,j+1}\ ; \\
\dbar_E^\ast\beta_{ij} & = \varphi_{i,j-1}-\varphi_{i+1,j} \ ,
\end{split}
\end{align}
where it is understood that terms with indices $\leq 0$ or $\geq n+1$ are set to zero.
Upon further differentiation as in the $n=2$ case, we find
\begin{align}
\begin{split} \label{eqn:L}
(L-\delta_{i1}-\delta_{jn})\varphi_{ij} & =\varphi_{i+1,j+1}+\varphi_{i-1,j-1} \ ; \\
(\widetilde L-\delta_{in}-\delta_{j1})\beta_{ij} & =  \beta_{i+1,j+1}+\beta_{i-1,j-1}\ ,
\end{split}
\end{align}
where $L=\dbar_E^\ast\dbar_E+2$ and $\widetilde L=\dbar_E\dbar_E^\ast+2$.
I claim that $\varphi_{ij}=0$ (resp.\ $\beta_{ij}=0$) for $i\geq j$ (resp.\ $i\leq j$). 
For example, by  \eqref{eqn:L}, $L\varphi_{n1}=0$, and
since $L$ is a positive operator,  $\varphi_{n1}$ vanishes.
 More generally, fix $ 0\leq p\leq n-2$. Then for $0\leq \ell\leq n-p-1$, there are polynomials $P_\ell$ such that
 \begin{equation} \label{eqn:kl}
 \varphi_{p+\ell+1, \ell+1}=P_\ell(L)\varphi_{p+1,1}\ .
 \end{equation}
 Indeed, let $P_0(L)=1$, $P_1(L)=L$ if $p\neq 0$ and  $P_1(L)=L-1$ if $p=0$. Suppose $P_k(L)$ has been defined for $0\leq k\leq \ell$,  where $0<\ell< n-p-1$. Use \eqref{eqn:L} and \eqref{eqn:kl} to find:
 \begin{align*}
 L\varphi_{p+\ell+1,\ell+1}&= \varphi_{p+\ell+2, \ell+2}+ \varphi_{p+\ell, \ell} \\
 LP_\ell(L) \varphi_{p+1, 1}&= \varphi_{p+\ell+2, \ell+2}+P_{\ell-1}(L) \varphi_{p+1,1}\ .
 \end{align*}
 Hence, we let $P_{\ell+1}(L)= LP_\ell(L)-P_{\ell-1}(L)$.    
  Since $L\geq 2$, we see from the  recursive definition that $P_{\ell+1}(L)\geq P_{\ell}(L)$, and
  hence for  all $\ell\geq 1$, $P_{\ell}(L)\geq P_1(L)\geq1$,  and $\geq 2$ if $p\neq 0$. 
 Taking $\ell=n-p-1$ in \eqref{eqn:kl}, we have
 \begin{equation} \label{eqn:kn}
 \varphi_{n,n-p}=P_{n-p-1}(L)\varphi_{p+1,1}\ .
 \end{equation}
 On the other, a similar argument implies
 $
 \varphi_{p+1,1}=P_{n-p-1}(L)\varphi_{n,n-p}
 $,
 from which we obtain
 $$
0= (P^2_{n-p-1}(L)-1)\varphi_{p+1, 1}=(P_{n-p-1}(L)+1)(P_{n-p-1}(L)-1)\varphi_{p+1, 1}\ .
$$
Hence, $\varphi_{p+1,1}$ is in the kernel of $P_{n-p-1}(L)-1$.  But then by the remark above, for $p\geq 1$, $\varphi_{p+1,1}$ must vanish.
Since $p\geq 1$ is arbitrary, this implies by  \eqref{eqn:kl} that $\varphi_{ij}=0$ for all $i>j$.  
 In the case $p=0$, notice that for all $\ell\geq 1$, $P_\ell(L)$ is a polynomial of positive degree in $\dbar_E^\ast\dbar_E$ with nonnegative coefficients and  constant term $=1$. 
 Indeed, by the definition
 $$
 P_{\ell+1}(L)-P_\ell(L)=(\dbar_E^\ast\dbar_E)P_\ell(L)+ P_{\ell}(L)-P_{\ell-1}(L)\ ,
 $$
 and so by induction $P_{\ell+1}(L)-P_\ell(L)$ has nonnegative coefficients and zero constant term.
 In this case, $(P_{n-1}(L)-1)\varphi_{1,1}=0$ implies that $\varphi_{1,1}$ is holomorphic.  Using \eqref{eqn:kl} again,
 $$
  \varphi_{\ell+1, \ell+1}=P_\ell(L)\varphi_{1,1}=(P_\ell(L)-1)\varphi_{1,1}+ \varphi_{1,1}=\varphi_{1,1}\ ,
 $$
 for all $\ell=0,\ldots, n-1$. But since $(\varphi_{ij})$ is traceless, it follows that in fact $\varphi_{ii}=0$ for all $i$.
The proof for $\beta_{ij}$ is exactly similar.

Going back to \eqref{eqn:phi-beta}, we see that $\varphi_{ij}$ (resp.\ $\beta_{ji}$) is holomorphic  (resp.\ harmonic) if $i<j$. Moreover, for $p\geq 1$,  \eqref{eqn:L} becomes
\begin{equation} \label{eqn:P}
(2-\delta_{i1}-\delta_{i n-p})\varphi_{i, i+p}=\varphi_{i+1, i+1+p}+\varphi_{i-1, i-1+p}\ .
\end{equation}
If $i=1$ this implies $\varphi_{1, p+1}=\varphi_{2, p+2}$. Suppose by induction that $\varphi_{k, k+p}=\varphi_{1,p+1}$ for all $k\leq i$.  Then if $i+p\neq n$, 
\eqref{eqn:P} implies
$$
2\varphi_{i,i+p}=\varphi_{i+1, i+1+p}+\varphi_{i-1,i-1+p} \quad \Longrightarrow\quad \varphi_{1,p+1}=\varphi_{i+1, i+1+p}\ .
$$
If $i+p=n$, we immediately get $\varphi_{in}=\varphi_{i-1, n-1}=\varphi_{1,p+1}$.  Hence, all differentials $\varphi_{ij}$, $j-i=p$, are equal. The same argument applies to $\beta_{ij}$.
From this
we conclude that the map $(\varphi,\beta)\mapsto (\varphi_{12},\ldots, \varphi_{1n}, \beta_{21}, \ldots, \beta_{n1})$
gives an isomorphism
\begin{equation} \label{eqn:higgs-fuchs}
T_{[\Ecal_F,\Phi_F]}\Mfrak_D^{(n)}\simeq \bigoplus_{j=2}^n H^0(X,\Kcal^j)\oplus (H^0(X,\Kcal^j))^\ast\ .
\end{equation}

The rank $n$ holomorphic vector bundle 
$\Vcal$ whose $\dbar$-operator is
$\dbar_E+\Phi^\ast_{F}$ is unstable and 
has a Harder-Narasimhan  filtration $0=\Vcal_0\subset\Vcal_1\subset\cdots\subset \Vcal_n=\Vcal$, $\Vcal_{j+1}/\Vcal_j=\Kcal^{-j+(n-1)/2}$, such that 
$$
 0\lra\Vcal_j\lra\Vcal_{j+1}\lra\Kcal^{-j+(n-1)/2}\lra 0\ .
 $$
 is the  (unique) non-split extension.
This is an example of an \emph{oper}. Opers will be discussed
in Section \ref{sec:opers}.
\end{example}

\subsubsection{The Hitchin map}
Given a Higgs bundle $(\Ecal,\Phi)$, the coefficient of
$\lambda^{n-i}$ in the expansion of $\det(\lambda+\Phi)$ is a
holomorphic section of $\Kcal^i$, $i=1,\ldots, n$.  In the case of fixed
determinant that we will mostly be considering, $\tr \Phi=0$,
so the sections start with $i=2$.
These pluricanonical sections are clearly invariant under the
action (by conjugation) of $\Gcal_E^\CBbb$, so we have a
well-defined map, called the {\bf Hitchin map},
\begin{equation} \label{eqn:hitchin-map}
h: \Mfrak_D^{(n)}\lra \bigoplus_{i=2}^n H^0(X,\Kcal^i) \ .
\end{equation}
The structure of this map and its fibers turns out be
extremely rich (cf.\ \cite{Hitchin87b}).  
In these notes, however, I will only discuss 
the following important fact which will be proven in the next 
section using Uhlenbeck compactness  
(for algebraic proofs, see \cite{Nitsure91, Simpson92}).
\begin{theorem} \label{thm:hitchin-proper}
The Hitchin map is proper.
\end{theorem}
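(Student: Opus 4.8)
The plan is to deduce properness from sequential compactness: since the base $\bigoplus_{i=2}^n H^0(X,\Kcal^i)$ is a finite-dimensional vector space, it suffices to show that if $K$ is a bounded subset and $[(A_j,\Phi_j)]$ is a sequence in $h^{-1}(K)$, then some subsequence converges in $\Mfrak_D^{(n)}$. By the Hitchin--Simpson correspondence (Theorem \ref{thm:hitchin}) every point of the moduli space is represented by a solution of the self-duality equations, so I may choose representatives with
\[
\sqrt{-1}\Lambda F_{A_j}+\sqrt{-1}\Lambda[\Phi_j,\Phi_j^\ast]=\mu\, \mathrm{I},\qquad d''_{A_j}\Phi_j=0 ,
\]
where $\mu=\mu(E)$. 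The whole problem is then to control such solutions when only the coefficients of $\det(\lambda+\Phi_j)$ are assumed bounded.

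The heart of the argument, and the step I expect to be the main obstacle, is an a priori bound $\sup_X|\Phi_j|\le C$ with $C$ depending only on $K$ and the geometry of $X$. First, the coefficient bound controls the \emph{eigenvalues}: the roots of $\det(\lambda+\Phi_j)$ are bounded in terms of its coefficients, so at every point $\sum_i|\lambda_i|^2\le R$ for a constant $R=R(K)$. The difficulty is that $|\Phi_j|^2$ also sees the nilpotent (non-normal) part of $\Phi_j$, which the characteristic polynomial does not detect, so the eigenvalue bound alone is useless; one must invoke the equation. Holomorphicity $d''_{A_j}\Phi_j=0$ together with a Weitzenb\"ock computation and substitution of the self-duality equation yields a differential inequality of the form
\[
\Delta|\Phi_j|^2\le -c_1\,|[\Phi_j,\Phi_j^\ast]|^2+c_2\,|\Phi_j|^2 ,
\]
where $\Delta$ is the nonnegative Laplacian and $c_2>0$ comes from the curvature of $\Kcal$ (here $\deg\Kcal>0$ because $g\ge 2$, which is precisely why the eigenvalue input cannot be dispensed with). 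The non-normality is in turn controlled by the self-commutator through a Henrici-type linear-algebra estimate $|\Phi_j|^2-\sum_i|\lambda_i|^2\le c_n\,|[\Phi_j,\Phi_j^\ast]|$. Evaluating the differential inequality at a maximum $M=\max_X|\Phi_j|^2$, where $\Delta|\Phi_j|^2\ge 0$, gives $|[\Phi_j,\Phi_j^\ast]|^2\le (c_2/c_1)M$ there, and feeding this into the linear-algebra estimate yields $M\le R+c_n\sqrt{(c_2/c_1)M}$. The mismatch between the quadratic and linear powers of $|[\Phi_j,\Phi_j^\ast]|$ makes this a quadratic inequality in $\sqrt M$, bounding $M$ by a constant depending only on $R$, $n$, and $c_1,c_2$. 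Making the Weitzenb\"ock constants and the linear-algebra estimate precise is the technical crux.

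With the uniform bound $\sup_X|\Phi_j|\le C$ in hand, the self-duality equation immediately gives a uniform $L^\infty$ (hence $L^2$) bound on $\sqrt{-1}\Lambda F_{A_j}$. On a Riemann surface this is exactly the hypothesis of Uhlenbeck's compactness theorem, and since we are in real dimension two with an $L^\infty$ curvature bound there is no bubbling: after passing to a subsequence there are unitary gauge transformations $g_j$ so that $g_j(A_j)$ converges weakly in $L^2_1$ to a connection $A_\infty$. The bound on $\Phi_j$ together with $d''_{A_j}\Phi_j=0$ gives, via elliptic estimates, a weak limit $\Phi_\infty$ with $d''_{A_\infty}\Phi_\infty=0$, and the pair $(A_\infty,\Phi_\infty)$ again solves the self-duality equations. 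Hence $(A_\infty,\Phi_\infty)$ is polystable and determines a point of $\Mfrak_D^{(n)}$; continuity of $h$ places it in $h^{-1}(K)$. This exhibits a convergent subsequence, so $h^{-1}(K)$ is compact and $h$ is proper.
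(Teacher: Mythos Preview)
Your argument is correct and follows essentially the same route as the paper's proof. The paper likewise chooses representatives solving \eqref{eqn:hitchin2}, invokes the a~priori estimate of Proposition~\ref{prop:higgs-bound} to bound $\sup|\Phi_j|$ from the eigenvalue bound, then bounds $|F_{A_j}|$ from the equation and applies Uhlenbeck compactness exactly as you describe. What you call the ``Henrici-type linear-algebra estimate'' is the content of Lemma~\ref{lem:simpson} (Simpson's lemma), and your Weitzenb\"ock computation reproduces the proof of Proposition~\ref{prop:higgs-bound}; the paper simply packages these as standalone results because the same estimate is reused in the proof of Theorem~\ref{thm:hitchin}. One small point: the constant $c_n$ in your Henrici-type inequality in fact depends on the eigenvalue bound $R$ as well as on $n$ (compare the statement of Lemma~\ref{lem:simpson}), but since $R=R(K)$ is fixed this does not affect the argument.
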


\subsection{The Hitchin-Kobayashi correspondence} \label{sec:hitchin}

\subsubsection{Stability and critical metrics}

{\bf Hitchin's equations} for Higgs bundles on a trivial bundle are 
\begin{equation} \label{eqn:hitchin}
F_A+[\Phi,\Phi^\ast]=0\ .
\end{equation}
Here, $\Phi$ is regarded as an endomorphism valued $(1,0)$-form.
It will also be convenient to consider the case of bundles of nonzero degree.  In this case the equations become
\begin{equation} \label{eqn:hitchin2}
f_{(A,\Phi)}:=
\sqrt{-1}\Lambda(F_A+[\Phi,\Phi^\ast])=\mu\ .
\end{equation}
Here we recall the  normalization  $\vol(X)=2\pi$, and
 then on right hand side the scalar multiple of the identity 
endomorphism necessarily satisfies $\mu=\mu(E)$.

There are two ways of thinking of \eqref{eqn:hitchin2}: for a
Higgs bundle $(\Ecal, \Phi)$ a choice of hermitian
metric gives a Chern connection $A=(\dbar_E, H)$.  
Hence, we may either view \eqref{eqn:hitchin2} as an equation
for a hermitian metric $H$, or alternatively (and
equivalently) we may fix $H$ and consider 
$f_{(A,\Phi)}$ for all $(A,\Phi)$ in a
complex gauge orbit.  We will often go
back and forth between these equivalent points of view.

The solutions to the  equations \eqref{eqn:hitchin2} may be regarded 
as the absolute minimum for the {\bf Yang-Mills-Higgs functional}
 on the space of holomorphic pairs, defined as
\begin{equation} \label{eqn:ymh}
\YMH(A,\Phi)=\int_X \left|F_A+[\Phi,\Phi^\ast]\right|^2 \,\omega\ .
\end{equation}
The Euler-Lagrange equations for $\YMH$ are 
\begin{equation} \label{eqn:critical}
d_A f_{(A,\Phi)}=0\ ,\ [\Phi, f_{(A,\Phi)}]=0\ .
\end{equation}
 We call a metric {\bf critical} if  \eqref{eqn:critical} is
satisfied.
In this case, it is easy to see the bundle $(\Ecal,\Phi)$
splits holomorphically and isometrically as a direct 
sum of  Higgs bundles that are 
solutions to \eqref{eqn:hitchin2} with possibly
different slopes.

\begin{proposition} \label{prop:easy}
If a Higgs bundle $(\Ecal, \Phi)$ admits a  metric satisfying
\eqref{eqn:hitchin2},  then $(\Ecal,\Phi)$ is polystable.
\end{proposition}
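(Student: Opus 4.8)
The plan is to show that if $(\Ecal,\Phi)$ admits a metric solving \eqref{eqn:hitchin2}, then every $\Phi$-invariant holomorphic subbundle $\Scal\subset\Ecal$ satisfies $\mu(\Scal)\leq\mu(\Ecal)$, with equality forcing a holomorphic splitting; this gives semistability together with the polystable decomposition. The key tool is the degree formula \eqref{eqn:degree}, suitably adapted to incorporate the Higgs field. First I would fix such a $\Scal$ with orthogonal projection $\pi$ and second fundamental form $\beta=-\dbar_E\pi$, and compute $\deg\Scal$ by integrating $\tr(\pi\cdot f_{(A,\Phi)})$ against $\omega/2\pi$. The point is that the curvature term $\sqrt{-1}\Lambda F_A$ appearing in \eqref{eqn:degree} is replaced, via Hitchin's equation, by $\mu - \sqrt{-1}\Lambda[\Phi,\Phi^\ast]$.

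Concretely, I would start from the identity
\begin{equation*}
\deg\Scal=\frac{1}{2\pi}\int_X\tr\bigl(\pi\,\sqrt{-1}\Lambda F_{(\dbar_E,H)}\bigr)\,\omega-\frac{1}{2\pi}\int_X|\beta|^2\,\omega\ ,
\end{equation*}
and substitute $\sqrt{-1}\Lambda F_A=\mu-\sqrt{-1}\Lambda[\Phi,\Phi^\ast]$ coming from \eqref{eqn:hitchin2}. Since $\tr(\pi)$ integrates to $2\pi\,\rank\Scal$ (using $\int_X\omega=2\pi$), the constant term contributes exactly $\mu\cdot\rank\Scal=\mu(\Ecal)\rank\Scal$. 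Hence
\begin{equation*}
\deg\Scal=\mu(\Ecal)\rank\Scal-\frac{1}{2\pi}\int_X\tr\bigl(\pi\,\sqrt{-1}\Lambda[\Phi,\Phi^\ast]\bigr)\,\omega-\frac{1}{2\pi}\int_X|\beta|^2\,\omega\ ,
\end{equation*}
so that $\mu(\Scal)\leq\mu(\Ecal)$ will follow once I show the two subtracted integrals are each nonnegative.

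The main obstacle, and the crux of the argument, is the curvature term $-\tr(\pi\,\sqrt{-1}\Lambda[\Phi,\Phi^\ast])$. I would handle this by using that $\Scal$ is $\Phi$-invariant: writing $\Phi$ in block form with respect to the orthogonal splitting $E=S\oplus Q$, invariance means the lower-left block vanishes, so $\pi\Phi\pi=\Phi\pi$ and $(I-\pi)\Phi\pi=0$. Expanding $\tr(\pi[\Phi,\Phi^\ast])=\tr(\pi\Phi\Phi^\ast)-\tr(\pi\Phi^\ast\Phi)$ and inserting $\pi+(I-\pi)=I$ between the factors, the invariance condition should collapse this to a manifestly nonpositive expression of the form $-|(I-\pi)\Phi\pi|^2$ (pointwise, after contracting with $\sqrt{-1}\Lambda$ against the Kähler form). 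Thus $-\tr(\pi\,\sqrt{-1}\Lambda[\Phi,\Phi^\ast])=|\text{off-diagonal part of }\Phi|^2\geq 0$, giving the desired inequality. Together with $|\beta|^2\geq 0$, this yields $\mu(\Scal)\leq\mu(\Ecal)$, hence semistability.

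For polystability, I would examine the equality case $\mu(\Scal)=\mu(\Ecal)$: both integrals must then vanish, forcing $\beta\equiv 0$ (so the extension splits holomorphically, i.e.\ $\Scal$ is a holomorphic summand) and the off-diagonal Higgs block to vanish (so $\Phi$ preserves the orthogonal complement as well). Therefore $(\Ecal,\Phi)$ splits as an orthogonal direct sum of Higgs bundles, each again satisfying \eqref{eqn:hitchin2} with the same slope $\mu(\Ecal)$. Iterating on the summands—each of which is semistable of slope $\mu(\Ecal)$ by the same argument—terminates by a rank induction and expresses $(\Ecal,\Phi)$ as a direct sum of stable Higgs bundles of equal slope, which is precisely polystability.
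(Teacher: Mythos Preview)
Your approach is exactly the paper's: substitute \eqref{eqn:hitchin2} into the degree formula \eqref{eqn:degree}, use $\Phi$-invariance of $\Scal$ to control the sign of the Higgs correction, and read off the splitting in the equality case. There is, however, a slip in the key computation. You write that $\tr(\pi[\Phi,\Phi^\ast])$ collapses to $-|(I-\pi)\Phi\pi|^2$, but $(I-\pi)\Phi\pi$ is precisely the block that \emph{vanishes} by invariance, so that expression is identically zero; moreover, you need the quantity $\tr(\pi\sqrt{-1}\Lambda[\Phi,\Phi^\ast])$ to be \emph{nonnegative} (so that it is subtracted in your displayed formula), not nonpositive as you assert. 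The correct identity, which the paper derives, is
\[
\tr\bigl(\pi\,\sqrt{-1}\Lambda[\Phi,\Phi^\ast]\bigr)=|\pi\Phi(I-\pi)|^2\geq 0\ ,
\]
the surviving upper-right block $\pi\Phi(I-\pi)$ rather than the vanishing lower-left one. With this correction your formula reads $\deg\Scal=\rank(\Scal)\mu(\Ecal)-\tfrac{1}{2\pi}\bigl(\Vert\pi\Phi(I-\pi)\Vert^2+\Vert\beta\Vert^2\bigr)$, and the rest of your argument (including the equality case and the inductive decomposition into stable summands) goes through exactly as in the paper.
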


\begin{proof}
Let $\Scal\subset\Ecal$ be a proper $\Phi$-invariant subbundle. Let $\pi$ denote the orthogonal projection to $S$ and $\beta=-\dbar_E \pi$ the second fundamental form.
Then since $\Scal$ is invariant, $(I-\pi)\Phi\pi=0$, or
$$
\Phi\pi = \pi\Phi\pi \ ,\
\pi\Phi^\ast = \pi\Phi^\ast\pi\ .
$$
In particular, this implies
\begin{align}
\tr (\pi[\Phi, \Phi^\ast] )&= \tr (\pi\Phi\Phi^\ast )+ \tr(\pi \Phi^\ast\Phi) \notag \\
&= \tr( \pi\Phi\Phi^\ast )- \tr (\Phi\pi\Phi^\ast )\notag \\
&= \tr (\pi\Phi\Phi^\ast\pi) - \tr (\Phi\pi\Phi^\ast\pi) \notag \\
&= \tr (\pi\Phi\Phi^\ast\pi) - \tr(\pi \Phi\pi\Phi^\ast\pi)\notag \\
&= \tr (\pi\Phi(I-\pi)\Phi^\ast \pi)= \tr (\pi\Phi(I-\pi)(I-\pi)\Phi^\ast \pi) \notag \\
&= \tr \left\{ (\pi\Phi(I-\pi))(\pi\Phi(I-\pi))^\ast\right\} \ ;\notag \\
\tr(\pi\sqrt{-1}\Lambda [\Phi, \Phi^\ast]) &=|\pi\Phi(I-\pi)|^2 \label{eqn:pi-phi}\ .
\end{align}
Plugging \eqref{eqn:hitchin2} into \eqref{eqn:degree}, and using \eqref{eqn:pi-phi}, we have
 $$
\deg\Scal=\rank(\Scal)\mu(\Ecal)-\frac{1}{2\pi}( \Vert \pi\Phi(I-\pi)\Vert^2 + \Vert \beta\Vert^2)\ .
$$
This proves $\mu(\Scal)\leq \mu(\Ecal)$.  Moreover, equality holds if and only if the two terms on the right hand side above vanish; i.e.\ the holomorphic structure and Higgs field split.
\end{proof}

The main result we prove in this section is the converse to Proposition \ref{prop:easy}.
\begin{theorem}[Hitchin \cite{Hitchin87a}, Simpson
\cite{Simpson88}] \label{thm:hitchin}
If $(\Ecal,\Phi)$ is polystable, then it admits a  metric
satisfying \eqref{eqn:hitchin2}.
\end{theorem}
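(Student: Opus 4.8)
The plan is to prove the converse to Proposition \ref{prop:easy} by the gradient-flow method, mirroring Donaldson's proof of the Narasimhan--Seshadri theorem \cite{Donaldson83}. First I would reduce to the case where $(\Ecal,\Phi)$ is \emph{stable}: by Definition \ref{def:higgs-stability} a polystable Higgs bundle is an orthogonal direct sum of stable Higgs bundles all of slope $\mu(E)$, and a block-diagonal metric assembled from solutions on the stable summands satisfies \eqref{eqn:hitchin2} on the whole bundle with the common constant $\mu=\mu(E)$. So it is enough to produce, for a stable $(\Ecal,\Phi)$, a metric solving \eqref{eqn:hitchin2}.

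Fix $\dbar_E$ and $\Phi$ and view the Yang--Mills--Higgs functional \eqref{eqn:ymh} as a functional on hermitian metrics, equivalently on the $\Gcal_E^\CBbb$-orbit of $(\dbar_E,\Phi)$. I would run the Yang--Mills--Higgs flow, that is, the downward gradient flow of \eqref{eqn:ymh}, starting from an arbitrary metric $H_0$. The virtue of the flow over a generic minimizing sequence is that it produces one with \emph{a priori bounds}: short-time existence is standard parabolic theory, while for long-time existence the decisive point is that the pointwise norm $|f_{(A,\Phi)}-\mu|$ obeys a heat inequality $(\partial_t+\Delta)|f_{(A,\Phi)}-\mu|\le 0$, so by the maximum principle $\sup_X|f_{(A,\Phi)}-\mu|$ is non-increasing and finite-time blow-up is impossible. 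Along the flow $\YMH$ is non-increasing, with stationary points exactly the critical metrics of \eqref{eqn:critical}; letting $t_k\to\infty$ thus yields a minimizing sequence $H_k$ along which $\sup_X|f_{(A,\Phi)}-\mu|$ is uniformly bounded.

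The heart of the argument is to convert this minimizing sequence into an honest limit, and this is exactly where stability enters, as a dichotomy. Let $h_k=H_kH_0^{-1}$ be the self-adjoint endomorphism comparing the two metrics. A Donaldson-type estimate---valid precisely because $\sup_X|f_{(A,\Phi)}-\mu|$ is controlled along the flow---bounds $\sup_X|\log h_k|$ in terms of $\|\log h_k\|_{L^1}$. Hence either $\|\log h_k\|_{L^1}$ stays bounded, and Uhlenbeck-type compactness extracts (modulo unitary gauge) a smooth limiting metric $H_\infty$ that is a critical metric; or $\|\log h_k\|_{L^1}\to\infty$. In the first case the limit solves \eqref{eqn:critical}, so by the splitting remark following \eqref{eqn:critical} the bundle decomposes into solutions of \eqref{eqn:hitchin2} of possibly distinct slopes, and since a stable Higgs bundle is simple (Remark \ref{rem:simple}) and hence indecomposable, this forces a single summand of slope $\mu(E)$---a genuine solution of \eqref{eqn:hitchin2}. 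In the second case one normalizes $u_k=\log h_k/\|\log h_k\|_{L^1}$, passes to a weak $L^2_1$ limit $u_\infty\neq 0$, and reads off from its spectral projections an endomorphism $\pi$ satisfying $\pi^2=\pi=\pi^\ast$, $(I-\pi)\dbar_E\pi=0$, and $(I-\pi)\Phi\pi=0$---a weakly holomorphic, $\Phi$-invariant subbundle.

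The main obstacle is the analysis of this second case. Two inputs are required. First, the regularity theory of Uhlenbeck and Yau \cite{UhlenbeckYau86} is needed to promote the measurable projection $\pi$ to the orthogonal projection onto an actual coherent $\Phi$-invariant subsheaf $\Scal\subset\Ecal$. Second, one must run the degree formula \eqref{eqn:degree}, supplemented by the identity \eqref{eqn:pi-phi} for the Higgs contribution, through the degeneration and track the signs carefully to conclude that $\mu(\Scal)\ge\mu(E)$, contradicting stability and so excluding the divergent alternative. Getting these two estimates to interlock---the functional bound from the flow feeding the Donaldson estimate, and the weak limit feeding a slope inequality of the correct sign---is the technical core on which the whole theorem rests.
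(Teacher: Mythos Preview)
Your outline is a valid strategy but takes a genuinely different route from the paper---and, incidentally, it is the paper's proof rather than yours that actually ``mirrors Donaldson's proof of the Narasimhan--Seshadri theorem'' in \cite{Donaldson83}. What you sketch is the Simpson/Uhlenbeck--Yau method: fix $\dbar_E$, vary the metric, and run a dichotomy on $\|\log h_k\|_{L^1}$, extracting in the divergent case a $\Phi$-invariant destabilizing subsheaf from the spectral projections of a weak limit $u_\infty$. The paper instead introduces Donaldson's auxiliary functional $J(A,\Phi)=N(f_{(A,\Phi)}-\mu)$ of \eqref{eqn:J}, proceeds by \emph{induction on rank}, and applies Uhlenbeck compactness (Proposition \ref{prop:uhlenbeck}) to the \emph{connections} rather than the metric endomorphisms, producing a limiting Higgs bundle $(\Ecal_\infty,\Phi_\infty)$ together with a nonzero holomorphic map $g_\infty:\Ecal\to\Ecal_\infty$; the core of the argument is then showing $g_\infty$ is an isomorphism via the paired slope inequalities of Lemmas \ref{lem:donaldson-upper-bound} and \ref{lem:donaldson-lower-bound}, the second of which uses the inductive hypothesis through the $\varepsilon$-approximate critical metrics of Lemma \ref{lem:approximate-critical}. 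Your route avoids the induction and the $J$-functional, but it needs the Uhlenbeck--Yau regularity theory for weakly holomorphic projections and---a step you pass over---an $L^2_1$ bound on the normalized $u_k$ in order to extract the weak limit; in \cite{Simpson88} that bound comes from the Donaldson functional, precisely the ingredient the paper's Preface announces it will avoid. The paper's approach thus trades the heavier analytic machinery of your proposal for an inductive scaffold and the diagram chase \eqref{eqn:donaldson-diagram}.
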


\begin{remark} \label{rem:rank-one}
The result is straightforward in the case of line bundles $\Lcal$. Indeed, in rank $1$ the term $[\Phi,\Phi^\ast]$ vanishes, so \eqref{eqn:hitchin2} amounts to finding a constant curvature metric on $L$.  If $H$ is any metric, let $H_\varphi=e^\varphi H$ for a function $\varphi$.  Then
$
F_{(\dbar_L, H_\varphi)}=F_{(\dbar_L, H)}+\dbar\partial \varphi
$, and the problem is solved if we can find $\varphi$ such that
$$
\Delta\varphi=2\sqrt{-1}\Lambda(F_{(\dbar_L, H)})- 2\deg(L)\ .
$$
By the Hodge theorem the only condition to finding a solution to this equation is that the integral of the right hand side vanish (cf.\ \cite[p.\ 84]{GriffithsHarris78}), which it does by \eqref{eqn:chern-weil}.
\end{remark}

In order to prove Theorem \ref{thm:hitchin} in higher rank, it will be important to construct approximate critical metrics.
Let $0\subset (\Ecal_1,\Phi_1)\subset \cdots\subset
 (\Ecal_\ell, \Phi_\ell)=(\Ecal, \Phi)$
 be the Harder-Narasimhan filtration of the Higgs bundle 
$(\Ecal,\Phi)$.  We let $\Qcal_i=\Ecal_i/\Ecal_{i-1}$ and $\mu_i=\mu(\Qcal_i)$.  Then there is a smooth splitting $E=\bigoplus_i Q_i$, and given a hermitian metric $H$  we can make this splitting orthogonal.  Hence, there is a well-defined endomorphism
\begin{equation} \label{eqn:mu-gr}
\mu_{(\Gr(\Ecal,\Phi), H)}=\left(\begin{matrix} \mu_1 && \\ & \ddots & \\ && \mu_\ell \end{matrix}\right)\ .
\end{equation}
where the blocks $\mu_i$ have dimensions $\rank Q_i$.

\begin{definition} \label{def:approximate-critical}
We say that a metric on $(\Ecal,\Phi)$ is $\varepsilon$-approximate critical if
$$
\sup \left| f_{((\dbar_E,H),\Phi)}-\mu_{(\Gr(\Ecal,\Phi), H)}\right| < \varepsilon\ .
$$
\end{definition}

Note that the $\dbar$-operator for $\Ecal$ may be written in an upper triangular form with respect to this splitting, and the strictly upper triangular piece is determined by the extension classes.  By acting with a complex gauge transformation that is block diagonal, the extension classes  may be made arbitrarily small.  If moreover the bundles $\Qcal_i$ with their induced Higgs fields admit Hermitian-Yang-Mills-Higgs connections, then we can sum these up and obtain the following (for more details, see \cite{DaskalWentworth04}).

\begin{lemma} \label{lem:approximate-critical}
Let $(\Ecal,\Phi)$ be an unstable Higgs bundle of rank $n$,
 and suppose that Theorem \ref{thm:hitchin} has been proven 
for Higgs bundles of rank less than $n$.  
Then for any $\varepsilon>0$ there is an 
$\varepsilon$-approximate critical metric on $(\Ecal,\Phi)$.
\end{lemma}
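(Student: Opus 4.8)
The plan is to build the required metric by gluing together exact solutions on the stable constituents of $(\Ecal,\Phi)$ and then using a one-parameter family of block-diagonal (complex) gauge transformations to suppress the coupling between those constituents. First I would refine the Harder-Narasimhan filtration $0\subset(\Ecal_1,\Phi_1)\subset\cdots\subset(\Ecal_\ell,\Phi_\ell)=(\Ecal,\Phi)$. Each quotient $\Qcal_i$ is semistable of rank strictly less than $n$, but it is only semistable, so Theorem \ref{thm:hitchin} does not apply to it directly. Passing to the Seshadri filtration of each $\Qcal_i$ refines the original filtration to a filtration of $(\Ecal,\Phi)$ by Higgs subbundles whose successive quotients $(\Gcal_k,\Phi_k)$ are all \emph{stable} of rank $<n$, and those arising from $\Qcal_i$ share the common slope $\mu_i$. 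Fixing a smooth splitting $E=\bigoplus_k G_k$ realizing this refined filtration, the operators $\dbar_E$ and $\Phi$ become block upper-triangular, with diagonal blocks $(\Gcal_k,\Phi_k)$ and strictly upper-triangular entries given by the second fundamental forms (for $\dbar_E$) and the off-diagonal components of $\Phi$.

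Next, by the induction hypothesis each stable piece $(\Gcal_k,\Phi_k)$, having rank $<n$, admits an exact solution of \eqref{eqn:hitchin2}: a Hitchin metric $H_k$ with $f_{((\dbar_{G_k},H_k),\Phi_k)}=\nu_k\, I$, where $\nu_k=\mu(\Gcal_k)$. I would then consider the family of metrics $H_s=\bigoplus_k s^{-k}H_k$, $s>0$, equivalently acting on $(A,\Phi)$ by the diagonal complex gauge transformations $g_s=\diag(s^{-k/2}I)$ as in Section \ref{sec:gauge}. This keeps the splitting orthogonal, so $\mu_{(\Gr(\Ecal,\Phi),H_s)}$ stays equal to the fixed block matrix $\diag(\mu_1 I,\dots,\mu_\ell I)$. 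The conjugation multiplies the $(j,k)$-entry of $g_s\dbar_E g_s^{-1}$ and of $g_s\Phi g_s^{-1}$ by $s^{(k-j)/2}$, which tends to $0$ for every upper-triangular entry $j<k$; hence both operators converge smoothly, as $s\to0$, to the block-diagonal direct sum $\bigoplus_k(\dbar_{G_k},\Phi_k)$.

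Since $f_{(A,\Phi)}=\sqrt{-1}\Lambda(F_A+[\Phi,\Phi^\ast])$ depends continuously (indeed polynomially, after one derivative) on the connection and the Higgs field, it follows that $f_{((\dbar_E,H_s),\Phi)}$ converges uniformly on the compact surface $X$ to the direct-sum value $\bigoplus_k \nu_k I$. Because every $\nu_k$ coming from the portion of $E$ lying in $\Qcal_i$ equals $\mu_i$, this limit is exactly $\diag(\mu_1 I,\dots,\mu_\ell I)=\mu_{(\Gr(\Ecal,\Phi),H_s)}$; choosing $s$ small enough then makes the sup-norm difference less than $\varepsilon$, which is the assertion. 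Keeping $(\dbar_E,\Phi)$ fixed throughout (only the metric varies) guarantees that $\Phi$ remains holomorphic and that we genuinely produce a metric on the given Higgs bundle.

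The hard part, conceptually, is the first step: the graded pieces of the Harder-Narasimhan filtration are semistable but in general \emph{not} polystable, so the induction hypothesis — which yields exact solutions of \eqref{eqn:hitchin2} only for polystable bundles — cannot be invoked on the $\Qcal_i$ themselves. The Seshadri refinement is what forces every application of Theorem \ref{thm:hitchin} onto stable (hence polystable) bundles of rank $<n$, and one must check that after this refinement the diagonal limit on each Harder-Narasimhan block collapses to the single scalar $\mu_i$. The remaining technical point is to verify that the off-diagonal contributions of the second fundamental forms and of the off-diagonal Higgs terms — both to the off-diagonal blocks and, through the $\beta\wedge\beta^\ast$ and commutator cross-terms, to the diagonal blocks of $f_{(A,\Phi)}$ — each carry a strictly positive power of $s$, so that the convergence above is uniform and nothing survives to perturb the diagonal limit.
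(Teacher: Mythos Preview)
Your argument is correct and follows the same approach as the paper's sketch: write $(\dbar_E,\Phi)$ in upper-triangular form with respect to a filtration, put Hermitian--Yang--Mills--Higgs metrics on the graded pieces, and use a block-diagonal scaling to kill the off-diagonal terms. You have correctly identified and handled the one subtlety the paper glosses over (referring to \cite{DaskalWentworth04} for details): the Harder--Narasimhan quotients $\Qcal_i$ are only semistable, so the induction hypothesis does not apply to them directly, and the Seshadri refinement is exactly what is needed to reduce to stable pieces of rank $<n$ while preserving the block constants $\mu_i$.
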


\subsubsection{Preliminary estimates}

Recall the map \eqref{eqn:hitchin-map}.
A crucial point is the following a priori estimate.
\begin{proposition} \label{prop:higgs-bound}
Let $(\Ecal, \Phi)$ be a Higgs bundle.
There are  constants $C_1, C_2 >0$  depending only  on the metrics on $X$ and $E$, and on
$\Vert h[\Ecal,\Phi]\Vert$, such that
$$\sup |\Phi|^2\leq
 C_1 +
 C_2\, \sup\left| \sqrt{-1}\Lambda(F_A+[\Phi,\Phi^\ast])\right|\ .$$ 
\end{proposition}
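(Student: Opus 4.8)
We seek $C_1, C_2 > 0$ depending only on the metrics on $X$ and $E$ and on $\Vert h[\Ecal,\Phi]\Vert$ (the norm of the image under the Hitchin map, i.e.\ the sup-norms of the pluricanonical symmetric functions of $\Phi$), such that
$$
\sup |\Phi|^2 \leq C_1 + C_2 \sup\left| \sqrt{-1}\Lambda(F_A+[\Phi,\Phi^\ast])\right|.
$$

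The plan is to derive a differential inequality for the scalar function $|\Phi|^2$ and then apply a maximum-principle argument. First I would compute the Laplacian of $|\Phi|^2 = \tr(\Phi\Phi^\ast)$ using the Chern connection $d_A=(\dbar_E,H)$. Because $\Phi$ is holomorphic ($d''_A\Phi=0$), a Weitzenb\"ock-type computation expresses $\Delta|\Phi|^2$ in terms of $|d'_A\Phi|^2$, which is nonnegative, and a curvature term arising from commuting derivatives. The key identity is that the curvature of the connection on $\Kcal\otimes\End\Ecal$ contributes a term of the form $\tr([\,F_A+[\Phi,\Phi^\ast]\,,\Phi]\Phi^\ast)$ together with a contribution from the fixed curvature of $\Kcal$ (depending only on the metric on $X$). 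Schematically I expect something like
$$
\Delta|\Phi|^2 \geq -C\,|\Phi|^2 - c\,\bigl|\sqrt{-1}\Lambda(F_A+[\Phi,\Phi^\ast])\bigr|\,|\Phi|^2 + |[\Phi,\Phi^\ast]|^2
$$
where the positive quartic term $|[\Phi,\Phi^\ast]|^2$ is what one gains from the Hitchin-type curvature expression, and $C$ absorbs the (fixed) curvature of the background metric.

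The crucial step is to extract genuine coercivity from that quartic term. Here I would invoke the a priori control coming from the Hitchin map: the characteristic polynomial coefficients of $\Phi$ are holomorphic sections of $\Kcal^i$ with sup-norm bounded by $\Vert h[\Ecal,\Phi]\Vert$, so the \emph{eigenvalues} of $\Phi$ are bounded pointwise. Consequently $|\Phi|^2$ can only be large if $\Phi$ is far from normal, in which case $|[\Phi,\Phi^\ast]|$ is correspondingly large; a pointwise linear-algebra estimate should give $|[\Phi,\Phi^\ast]|^2 \geq \kappa\,|\Phi|^4 - P(\Vert h\Vert)$ for some positive constant $\kappa$ and a polynomial $P$ in the Hitchin norm. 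Plugging this into the differential inequality yields
$$
\Delta|\Phi|^2 \geq \kappa\,|\Phi|^4 - \bigl(C + c\,\sup|f_{(A,\Phi)}|\bigr)|\Phi|^2 - P(\Vert h\Vert).
$$

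Finally I would evaluate this inequality at a point $x_0$ where $|\Phi|^2$ attains its maximum. At such a point $\Delta|\Phi|^2 \leq 0$ (with the geometer's sign convention $\Delta = -d^\ast d \leq 0$ at a max, adjusting signs as needed), so the right-hand side is nonpositive there, giving a quadratic inequality in the quantity $M:=\sup|\Phi|^2$:
$$
\kappa\,M^2 \leq \bigl(C + c\,\sup|f_{(A,\Phi)}|\bigr)M + P(\Vert h\Vert).
$$
Solving this quadratic for $M$ produces a bound of the stated form, with $C_1, C_2$ depending only on $\kappa, C, c, P$ and hence only on the metrics and on $\Vert h[\Ecal,\Phi]\Vert$. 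The main obstacle I anticipate is the linear-algebra lemma bounding $|\Phi|^4$ below by $|[\Phi,\Phi^\ast]|^2$ modulo the bounded symmetric functions: one must argue carefully that controlling the eigenvalues while the Hilbert--Schmidt norm blows up forces the off-diagonal (nilpotent) part of $\Phi$ to grow, and that this part does not commute with its adjoint. A secondary technical point is justifying the maximum-principle step with only the Sobolev/H\"older regularity of $\Phi$ available, though $\Phi$ holomorphic is automatically smooth once $\dbar_E$ is fixed, so this should not be serious.
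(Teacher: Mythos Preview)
Your proposal is correct and follows essentially the same route as the paper: a Weitzenb\"ock computation for $|\Phi|^2$ as a holomorphic section of $\Kcal\otimes\End\Ecal$, extraction of the quartic term $|[\Phi,\Phi^\ast]|^2$ by adding and subtracting $[\Phi,\Phi^\ast]$ in the curvature (via the identity $\langle \ad([\Phi,\Phi^\ast])\Phi,\Phi\rangle=|[\Phi,\Phi^\ast]|^2$), the linear-algebra lemma $|[P,P^\ast]|^2\geq C_1|P|^4-C_2(1+|P|^2)$ for matrices with bounded eigenvalues (which you correctly flag as the main obstacle and give the right intuition for), and the maximum principle. The paper proves the linear-algebra lemma by a compactness/contradiction argument on the strictly upper-triangular part, exactly along the lines you sketch.
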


We need the  following
\begin{lemma}[{cf.\ \cite[p.\ 27]{Simpson92}}] \label{lem:simpson}
For a matrix $P$ 
there are constants $C_1, C_2>0$ 
depending only on the eigenvalues of $P$ such that
$$
|[P,P^\ast]|^2\geq C_1|P|^4-C_2(1+|P|^2)\ .
$$
\end{lemma}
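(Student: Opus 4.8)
The plan is to exploit the unitary invariance of both sides to reduce to an upper-triangular matrix via the Schur decomposition, and then to isolate the genuinely quadratic contribution to $[P,P^\ast]$. Write $P=U(D+N)U^\ast$ with $U$ unitary, $D=\diag(\lambda_1,\dots,\lambda_n)$ carrying the eigenvalues, and $N$ strictly upper triangular. Conjugation by $U$ preserves the Hilbert--Schmidt norm and satisfies $[U(D+N)U^\ast,(U(D+N)U^\ast)^\ast]=U[D+N,(D+N)^\ast]U^\ast$, so both $|P|$ and $|[P,P^\ast]|$ are unchanged on replacing $P$ by $D+N$. Since $D$ and $N$ are orthogonal in the Hilbert--Schmidt inner product, $|P|^2=|D|^2+|N|^2$, and $|D|^2=\sum_i|\lambda_i|^2$ is controlled by the eigenvalues; thus the inequality reduces to bounding $|[P,P^\ast]|^2$ below by a multiple of $|N|^4$ up to lower-order terms.

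Next I would expand, using $[D,D^\ast]=0$,
\[ [D+N,(D+N)^\ast]=[N,N^\ast]+[D,N^\ast]+[N,D^\ast]. \]
Set $Q=[N,N^\ast]$ and $L=[D,N^\ast]+[N,D^\ast]$. The term $Q$ is quadratic in $N$, while $L$ is linear, with $|L|\le 4\|D\|_{\mathrm{op}}|N|=:c'|N|$ for a constant $c'$ depending only on $\max_i|\lambda_i|$. The elementary estimate $|Q+L|^2\ge \tfrac12|Q|^2-|L|^2$ then yields $|[P,P^\ast]|^2\ge \tfrac12|Q|^2-(c')^2|N|^2$, so everything comes down to a lower bound of the shape $|Q|^2\ge c_n|N|^4$.

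The heart of the argument --- and the step I expect to be the main obstacle --- is exactly this bound: for every strictly upper-triangular $N$ one has $|[N,N^\ast]|^2\ge c_n|N|^4$ with $c_n>0$ depending only on the size $n$. Both sides are homogeneous of degree four in $N$, so it suffices to show $[N,N^\ast]\ne 0$ for every nonzero strictly upper-triangular $N$ and then appeal to compactness of the unit sphere in the finite-dimensional space of such matrices. But $[N,N^\ast]=0$ says $N$ is normal, while a strictly upper-triangular matrix is nilpotent, and a normal nilpotent matrix vanishes. Hence $N\mapsto|[N,N^\ast]|^2$ is strictly positive on the unit sphere and attains a positive minimum $c_n$ there. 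It is worth noting that $c_n$ genuinely degrades as $n$ grows --- for the full Jordan block $|[N,N^\ast]|^2=2$ while $|N|^4=(n-1)^2$ --- which is why the dependence on $n$, equivalently on the number of eigenvalues, cannot be removed.

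Finally I would reassemble: substituting $|N|^2=|P|^2-|D|^2$ into $|[P,P^\ast]|^2\ge \tfrac{c_n}{2}|N|^4-(c')^2|N|^2$ and expanding gives a lower bound of the form $\tfrac{c_n}{2}|P|^4-A|P|^2+B$ with $A,B\ge 0$ depending only on the eigenvalues. Discarding the nonnegative constant $B$ and bounding $-A|P|^2\ge -C_2(1+|P|^2)$ yields the asserted inequality with $C_1=c_n/2$ and a suitable $C_2$. Apart from the uniform positivity of $c_n$, the remaining steps are routine manipulations with the Hilbert--Schmidt norm and Cauchy--Schwarz.
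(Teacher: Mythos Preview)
Your proposal is correct and follows essentially the same route as the paper: Schur decomposition $P=D+N$, reduction to the inequality $|[N,N^\ast]|\ge c_n|N|^2$ for strictly upper-triangular $N$, and a homogeneity/compactness argument on the unit sphere. The only cosmetic differences are that you spell out the cross terms $[D,N^\ast]+[N,D^\ast]$ and the reassembly more carefully than the paper (which simply asserts the reduction is ``easy to see''), and for the key step you invoke the standard fact that a normal nilpotent matrix vanishes, whereas the paper proves $[N,N^\ast]=0\Rightarrow N=0$ by an elementary row/column norm chase.
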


\begin{proof}
Choose a unitary basis such that $P=S+N$, where $S$ is diagonal and $N$ is strictly upper triangular. By assumption, $|S|$ is bounded.  It is easy to see that it then suffices to show there is $C>0$ such  that for all strictly upper triangular $N$,  $|[N,N^\ast]|\geq C|N|^2$.  Suppose not.  Then by scaling we can find a sequence $N_j$, $|N_j|=1$, and $[N_j, N_j^\ast]\to 0$.  After passing to a subsequence, we may assume $N_j\to N$, with $[N,N^\ast]=0$, $|N|=1$. But this is a contradiction.  Indeed, if $a_1,\ldots, a_n$ and $b_1, \ldots, b_n$ are the rows and columns of $N$, then reading off the diagonal of $NN^\ast=N^\ast N$ implies $|a_i|^2=|b_i|^2$ for $i=1,\ldots, n$.  But $b_1=0$, which from this equality implies $a_1=0$.  This in turn implies $b_2=0$, and hence $a_2=0$. Continuing in this way, we conclude $N=0$; contradiction.
\end{proof}

We will also need the following computation.
\begin{align}
[[P,P^\ast],P] &= (PP^\ast-P^\ast P)P-P(PP^\ast-P^\ast P)\notag \\
&=2PP^\ast P-P^\ast P^2 -P^2 P^\ast \notag\\
\langle [[P,P^\ast],P], P\rangle =\tr ([[P,P^\ast],P]P^\ast) &=
\tr((2PP^\ast P-P^\ast P^2 -P^2 P^\ast)P^\ast)\notag \\
&=2\tr(PP^\ast)^2-2\tr( P^2 (P^\ast)^2 ) \notag\\
\langle \ad([P,P^\ast])P, P\rangle&=|[P,P^\ast]|^2 \label{eqn:ad}\ .
\end{align}

\begin{proof}[Proof of Proposition \ref{prop:higgs-bound}]
Regard $\Phi$ as a holomorphic section of $\Kcal\otimes\End \Ecal$.  We also make use of three easy facts.  First, if $H$ is a hermitian metric on $E$ and $\widehat H$ is the induced metric on $\End E$, then $F_{(\End \Ecal,\widehat H)}= \ad F_{(\Ecal, H)}$, where the adjoint indicates that the curvature endomorphism acts by commutation.
Second, if $\widehat H, h$ are hermitian metrics on $\End E$ and $K$, respectively, then 
\begin{equation} \label{eqn:end-curvature}
F_{(\Kcal\otimes\End \Ecal, h\otimes \widehat H)}= F_{(\End \Ecal,\widehat H)}+ F_{(\Kcal, h)}\cdot I\ .
\end{equation}
Third, if $s$ is a holomorphic section of a vector bundle with unitary connection $A$ and curvature $F_A$, then we have the following Weitzenb\"ock formula:
\begin{equation} \label{eqn:weitzenbock}
\Delta|s|^2= 2|d_A s|^2 -2\langle \sqrt{-1}\Lambda F_A s,s\rangle\ .
\end{equation}
Indeed (cf.\ \eqref{eqn:kahler}),
\begin{align*}
\Delta|s|^2&=-2\dbar^\ast\dbar|s|^2=
2\sqrt{-1}\Lambda\partial\bar\partial |s|^2=2\sqrt{-1}\Lambda\partial\langle s, d'_A s\rangle \\
&=2\sqrt{-1}\Lambda\langle d'_A s, d'_A s\rangle +2\sqrt{-1}\Lambda\langle s, d''_Ad'_A s\rangle \\
&=2|d'_A s|^2 +2\sqrt{-1}\Lambda\langle s, F_A s\rangle \\
&= 2|d_A s|^2 -2\langle \sqrt{-1}\Lambda F_A s,s\rangle \ .
\end{align*}
Now using eqs. \eqref{eqn:ad}, \eqref{eqn:end-curvature}, and \eqref{eqn:weitzenbock}, 
along with Lemma \ref{lem:simpson},
we have
\begin{align*}
\Delta |\Phi|^2&\geq - 2\langle \sqrt{-1}\Lambda F_{(\Kcal\otimes\End \Ecal, h\otimes\widehat H)}\Phi,\Phi\rangle \\
&\geq -2\langle \sqrt{-1}\Lambda F_{(\End \Ecal, \widehat H)}\Phi,\Phi\rangle- C_3|\Phi|^2 \\
&=-2\langle \ad (\sqrt{-1}\Lambda F_{(\Ecal, H)})\Phi,\Phi\rangle- C_3|\Phi|^2 \\
&=2\langle \ad (\sqrt{-1}\Lambda [\Phi,\Phi^\ast])\Phi,\Phi\rangle 
%&\qquad \qquad
-2\langle \ad (\sqrt{-1}\Lambda (F_{(\Ecal, H)}+[\Phi,\Phi^\ast])\Phi,\Phi\rangle- C_3|\Phi|^2 \\
&\geq C_1|\Phi|^4-C_2(1+|\Phi|^2)-C_4\sup\left|\sqrt{-1}\Lambda (F_{(\Ecal, H)}+[\Phi,\Phi^\ast]\right||\Phi|^2\ .
\end{align*}
Now at a maximum of $|\Phi|^2$ the left hand side is nonpositive.  Since $C_1>0$, the proposition follows immediately.
\end{proof}

\begin{remark}
Notice that the sign in \eqref{eqn:hitchin} is decisive for this argument (cf.\ \cite{Hitchin90}).
\end{remark}

Finally, the existence proof will be based on Donaldson's elegant argument in \cite{Donaldson83}. This requires the introduction of the functional $J=J(A,\Phi)$, defined as follows.
For a hermitian endomorphism $\phi$ of $E$, let 
$$
\nu(\phi)=\sum_{i=1}^n|\lambda_i| \quad ,\quad
N^2(\phi)=\int_X \nu^2(\phi)\, \frac{\omega}{2\pi}\ ,
$$
where the $\lambda_i$ are the (pointwise) eigenvalues of $\phi$. Then we define
\begin{equation} \label{eqn:J}
J(A,\Phi)=N(f_{(A,\Phi)}-\mu(E))\ .
\end{equation}

We next prove the following two results of Donaldson (see \cite[Lemmas 2 $\&$ 3]{Donaldson83}), adapted here to the case of Higgs bundles.

\begin{lemma} \label{lem:donaldson-upper-bound}
Let $(A,\Phi)$ be a Higgs bundle with underlying bundle $\Ecal$.
Suppose it fits into an extension of Higgs bundles  $0\to\Mcal\to\Ecal\to\Ncal\to 0$, and that
$\mu(\Ncal)\leq \mu(\Ecal)\leq \mu(\Mcal)$.  Then
$$
(\rank \Mcal)(\mu(\Mcal)-\mu(\Ecal))+(\rank\Ncal )(\mu(\Ecal)-\mu(\Ncal))\leq J(A,\Phi)\ .
$$
\end{lemma}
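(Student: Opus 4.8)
The plan is to relate the quantity $J(A,\Phi) = N(f_{(A,\Phi)}-\mu(E))$ to the projections onto the sub- and quotient-bundles in the extension. Let $\pi_M$ denote the orthogonal projection onto the smooth subbundle $M$ underlying $\Mcal$, and $\pi_N = I - \pi_M$ the complementary projection. The key observation is that $\tr(\pi_M) = \rank\Mcal$ and $\tr(\pi_N) = \rank\Ncal$ are constant, so integrating the trace of $\pi_M$ and $\pi_N$ against the endomorphism $f_{(A,\Phi)}-\mu(E)$ should recover the two degree differences on the left-hand side. The natural first step is therefore to write out, using the degree formula \eqref{eqn:degree} applied both to the subbundle and to the quotient,
\begin{align*}
\deg\Mcal &= \frac{1}{2\pi}\int_X \tr\bigl(\pi_M \sqrt{-1}\Lambda(F_A+[\Phi,\Phi^\ast])\bigr)\,\omega - (\text{nonnegative terms})\ ,
\end{align*}
where the nonnegative terms are the $L^2$-norms of the second fundamental form and of $\pi_M\Phi(I-\pi_M)$, exactly as in the proof of Proposition \ref{prop:easy}. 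Here I am using the integrated version of \eqref{eqn:pi-phi} so that the curvature $F_A$ gets replaced by $F_A+[\Phi,\Phi^\ast]$.

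Next I would subtract $\rank\Mcal \cdot \mu(\Ecal)$, rewriting the integrand as $\tr(\pi_M(f_{(A,\Phi)}-\mu(E)))$, since $\sqrt{-1}\Lambda(F_A+[\Phi,\Phi^\ast]) = f_{(A,\Phi)}$ and $\mu(E)\cdot I$ contributes $\rank\Mcal\cdot\mu(E)$ under the trace against $\pi_M$. This gives
$$
(\rank\Mcal)(\mu(\Mcal)-\mu(\Ecal)) \leq \frac{1}{2\pi}\int_X \tr\bigl(\pi_M(f_{(A,\Phi)}-\mu(E))\bigr)\,\omega\ ,
$$
the inequality coming from dropping the nonnegative correction terms. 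Applying the same reasoning to $\Ncal$ as a quotient (equivalently, to the complementary projection, keeping careful track of the sign since $\mu(\Ncal)\leq\mu(\Ecal)$ goes the other way) yields
$$
(\rank\Ncal)(\mu(\Ecal)-\mu(\Ncal)) \leq -\frac{1}{2\pi}\int_X \tr\bigl(\pi_N(f_{(A,\Phi)}-\mu(E))\bigr)\,\omega\ .
$$
Adding these two inequalities, the right-hand side becomes $\frac{1}{2\pi}\int_X \tr\bigl((\pi_M-\pi_N)(f_{(A,\Phi)}-\mu(E))\bigr)\,\omega$.

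The final step is a pointwise eigenvalue estimate bounding this trace integral by $J(A,\Phi)$. Since $\pi_M - \pi_N$ is a hermitian endomorphism with eigenvalues $\pm 1$, its operator norm is $1$, so pointwise $|\tr((\pi_M-\pi_N)\psi)| \leq \nu(\psi)$ for any hermitian $\psi$, where $\nu(\psi)=\sum_i|\lambda_i|$ is the sum of absolute values of eigenvalues; this is just the trace-norm duality. Applying this with $\psi = f_{(A,\Phi)}-\mu(E)$ and then Cauchy--Schwarz (using $\int_X \omega/2\pi = 1$) gives
$$
\frac{1}{2\pi}\int_X \tr\bigl((\pi_M-\pi_N)(f_{(A,\Phi)}-\mu(E))\bigr)\,\omega \leq \int_X \nu(f_{(A,\Phi)}-\mu(E))\,\frac{\omega}{2\pi} \leq N(f_{(A,\Phi)}-\mu(E)) = J(A,\Phi)\ .
$$
I expect the main obstacle to be the bookkeeping in the quotient computation: one must verify that applying \eqref{eqn:degree} to the quotient $\Ncal$ produces the correct sign and that the second-fundamental-form and commutator terms again enter with a favorable sign after the subtraction, so that dropping them preserves the inequality direction. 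The eigenvalue bound $|\tr(\Pi\psi)|\leq \nu(\psi)$ for a signed projection $\Pi$ is elementary but should be stated cleanly, as it is precisely the point where the definition of $N$ via $\nu$ is used.
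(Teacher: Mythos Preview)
Your proof is correct and follows essentially the same strategy as the paper's: both arguments compute the traces of the diagonal blocks of $f_{(A,\Phi)}-\mu(E)$ with respect to the splitting $E=M\oplus N$, bound these pointwise by $\nu(f_{(A,\Phi)}-\mu(E))$, and integrate. The only difference is presentational: the paper writes out the block decompositions of $F_A$ and $[\Phi,\Phi^\ast]$ explicitly and invokes the inequality $\nu(\psi)\geq |\tr(\text{upper block})|+|\tr(\text{lower block})|$ from \cite{Donaldson83}, whereas you package the same bound as trace--norm duality $|\tr((\pi_M-\pi_N)\psi)|\leq \nu(\psi)$, which is a clean way to phrase it.
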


\begin{proof}
With respect to the orthogonal splitting $E=M\oplus N$, and letting $F_E$, $F_M$, and $F_N$ denote the curvature and induced curvatures of the Chern connection for $(\Ecal, H)$,
 we have
$$
\sqrt{-1}\Lambda F_E=\left(
\begin{matrix}
\sqrt{-1}\Lambda F_M+b_M & -(d''_A)^\ast\beta \\
-((d''_A)^\ast\beta)^\ast &\sqrt{-1}\Lambda F_N+b_N
\end{matrix}
\right)\ ,
$$
where $\beta$ is the second fundamental form, and 
$$
b_M=-\sqrt{-1}\Lambda(\beta\wedge\beta^\ast)\ ,\ b_N=-\sqrt{-1}\Lambda(\beta^\ast\wedge\beta)\ .
$$
Notice that $\tr b_M=-\tr b_N=|\beta|^2$. Similarly, if we write $\displaystyle \Phi=\left(\begin{matrix} \Phi_M &\varphi\\ 0 &\Phi_N\end{matrix}\right)$, then
$$
[\Phi,\Phi^\ast]=\left(\begin{matrix} [\Phi_M,\Phi_M^\ast]+\varphi\wedge\varphi^\ast & \varphi\wedge\Phi_N^\ast+\Phi_M^\ast\wedge\varphi \\ \Phi_N\wedge\varphi^\ast+\varphi^\ast\wedge\Phi_M
&  [\Phi_N,\Phi_N^\ast]+\varphi^\ast\wedge\varphi
\end{matrix}
\right)\ .
$$
It follows that
$$
f_{(A,\Phi)}=\left(\begin{matrix} f_M+b_M+\sqrt{-1}\Lambda\varphi\wedge\varphi^\ast & \cdots \\
\dots & f_N+b_N+\sqrt{-1}\Lambda\varphi^\ast\wedge\varphi
\end{matrix}
\right)\ .
$$
Hence, (cf.\ \cite[p.\ 271]{Donaldson83}),
\begin{align*}
\nu(f_{(A,\Phi)}-\mu(E)) &\geq  \left| \tr(\sqrt{-1}\Lambda F_M)-(\rank \Mcal)\mu(\Ecal)+|\beta|^2+|\varphi|^2 \right| \\
&\qquad +\left| \tr(\sqrt{-1}\Lambda F_N)-(\rank \Ncal)\mu(\Ecal)-|\beta|^2-|\varphi|^2\right| \ ,
\end{align*}
and therefore
\begin{align*}
J(A,\Phi)&\geq \int_X\nu(f_{(A,\Phi)}-\mu(\Ecal)) \frac{\omega}{2\pi} \\
&\geq  \left|\int_X\left( \tr(\sqrt{-1}\Lambda F_M)-(\rank \Mcal)\mu(\Ecal)+|\beta|^2+|\varphi|^2\right) \frac{\omega}{2\pi} \right| \\
&\qquad +\left| \int_X \left(\tr(\sqrt{-1}\Lambda F_N)-(\rank \Ncal)\mu(\Ecal)-|\beta|^2-|\varphi|^2\right)\frac{\omega}{2\pi}\right| \\
&\geq (\rank \Mcal)(\mu(\Mcal)-\mu(\Ecal)) + (\rank \Ncal)(\mu(\Ecal)-\mu(\Ncal))\ .
\end{align*}
\end{proof}

\begin{lemma} \label{lem:donaldson-lower-bound}
Let $(A_0,\Phi_0)$ be a stable Higgs bundle of rank $n$ that 
 fits into an extension of Higgs bundles 
 $0\to\Scal\to\Ecal\to\Qcal\to 0$. 
Assume Theorem \ref{thm:hitchin} has been proven for 
Higgs bundles of rank less than $n$.
 Then we can choose a point $(A,\Phi)$ in the complex gauge orbit of $(A_0,\Phi_0)$ such that
 $$
 J(A,\Phi)< (\rank \Scal)(\mu(\Ecal)-\mu(\Scal))+(\rank\Qcal)(\mu(\Qcal)-\mu(\Ecal))\ .
 $$
\end{lemma}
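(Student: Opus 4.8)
The plan is to mimic Donaldson's argument for Lemma~3 of \cite{Donaldson83}, constructing the test point as a rescaled orthogonal direct sum of (approximately) critical metrics on $\Scal$ and $\Qcal$, but choosing the off-diagonal data \emph{harmonically} so that the Higgs contribution cancels. First I record what stability provides. Since $(A_0,\Phi_0)$ is stable, every proper $\Phi$-invariant subsheaf of $\Ecal$ has slope $<\mu(\Ecal)$; as the Higgs Harder--Narasimhan subobjects of $\Scal$ are $\Phi$-invariant in $\Ecal$, all Harder--Narasimhan slopes of $\Scal$ are $<\mu(\Ecal)$, and dually (using the quotient form of stability) all those of $\Qcal$ are $>\mu(\Ecal)$. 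In particular $\mu(\Scal)<\mu(\Ecal)<\mu(\Qcal)$ and the extension is non-split. Because $\rank\Scal,\rank\Qcal<n$, Theorem~\ref{thm:hitchin} together with Lemma~\ref{lem:approximate-critical} lets me equip $\Scal$ and $\Qcal$ with $\varepsilon$-approximate critical metrics $H_S$, $H_Q$, so that $f_{\Scal}$ and $f_{\Qcal}$ lie within $\varepsilon$ of the constant diagonal endomorphisms $\mu_{(\Gr(\Scal),H_S)}$, $\mu_{(\Gr(\Qcal),H_Q)}$, whose entries are the Harder--Narasimhan slopes.

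Next I set up the family. On the smooth splitting $E=S\oplus Q$ take the orthogonal metric $H_S\oplus H_Q$ and represent the extension by a pair $(\beta,\varphi)$, where $\beta$ is the second fundamental form and $\varphi$ the off-diagonal block of $\Phi$; crucially I choose $(\beta,\varphi)$ to be the harmonic representative of the extension class in the deformation complex of $\Hom((\Qcal,\Phi_Q),(\Scal,\Phi_S))$, so that by \eqref{eqn:h1} it satisfies $(d''_A)^\ast\beta=\sqrt{-1}\Lambda[\Phi^\ast,\varphi]$. I then move along the orbit by the block-diagonal complex gauge transformations $g_t=\diag(t^{1/2}I_S,t^{-1/2}I_Q)\in\Gcal_E^\CBbb$, which fix the diagonal blocks and scale $\beta\mapsto t\beta$, $\varphi\mapsto t\varphi$. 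Using the block formula for $f_{(A,\Phi)}$ from the proof of Lemma~\ref{lem:donaldson-upper-bound}, the diagonal blocks become $f_{\Scal}+t^2(b_S+\sqrt{-1}\Lambda\,\varphi\wedge\varphi^\ast)$ and $f_{\Qcal}+t^2(b_Q+\sqrt{-1}\Lambda\,\varphi^\ast\wedge\varphi)$, whose extra traces are $\pm t^2(|\beta|^2+|\varphi|^2)$, while the leading off-diagonal block is $t\bigl(-(d''_A)^\ast\beta+\sqrt{-1}\Lambda(\varphi\wedge\Phi_Q^\ast+\Phi_S^\ast\wedge\varphi)\bigr)$. Since $[\Phi^\ast,\varphi]$ has off-diagonal component $\Phi_S^\ast\wedge\varphi+\varphi\wedge\Phi_Q^\ast$, the harmonic condition makes this leading off-diagonal vanish identically, so the off-diagonal of $f_{(A,\Phi)}$ is $O(t^2)$.

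Now I compute $J=N(f_{(A,\Phi)}-\mu(\Ecal))$. For $\varepsilon$ and $t$ small the eigenvalues on the $S$-block stay strictly negative and those on the $Q$-block strictly positive (here the slope separation of the first paragraph is essential), so $\nu$ equals $-\tr$ over the $S$-block plus $\tr$ over the $Q$-block, up to an $O(t^4)$ error coming from the now quadratic off-diagonal. Pointwise this gives $\nu(f_{(A,\Phi)}-\mu(\Ecal))=G-2t^2(|\beta|^2+|\varphi|^2)+O(t^4)$, where $G=\bigl[\rank\Scal\,\mu(\Ecal)-\tr f_{\Scal}\bigr]+\bigl[\tr f_{\Qcal}-\rank\Qcal\,\mu(\Ecal)\bigr]$ integrates against $\omega/2\pi$ exactly to $K:=(\rank\Scal)(\mu(\Ecal)-\mu(\Scal))+(\rank\Qcal)(\mu(\Qcal)-\mu(\Ecal))$ and differs from the constant $K$ pointwise only by a mean-zero fluctuation of size $O(\varepsilon)$. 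Squaring and integrating then yields $J^2=K^2-4K t^2\!\int(|\beta|^2+|\varphi|^2)\,\tfrac{\omega}{2\pi}+O(\varepsilon^2)+O(t^4)$.

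The final step is strictness. Because the extension is non-split, the harmonic representative is nonzero, so $\int(|\beta|^2+|\varphi|^2)\,\tfrac{\omega}{2\pi}>0$; taking $\varepsilon$ small compared with $t^2$ (and $t$ small to absorb the $O(t^4)$ term) forces $J<K$, as required. The step I expect to be the main obstacle is exactly the off-diagonal Higgs contribution: a crude triangle-inequality estimate of its effect on $\nu$ is only linear in $t$ and would swamp the favourable quadratic term, so one must instead exploit the harmonic gauge to cancel it at first order. This cancellation is the feature genuinely new to the Higgs setting and degenerates to Donaldson's choice of a harmonic Dolbeault representative of $[\beta]$ when $\Phi\equiv0$. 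A secondary technical point, requiring care but no new idea, is the order in which the approximation parameter $\varepsilon$ and the scaling parameter $t$ are sent to zero.
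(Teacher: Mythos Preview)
Your argument is essentially the same as the paper's: approximate critical metrics on $\Scal$ and $\Qcal$ via the inductive hypothesis and Lemma~\ref{lem:approximate-critical}, a harmonic choice of the off-diagonal pair $(\beta,\varphi)$ so that the linear-in-$t$ off-diagonal of $f_{(A,\Phi)}$ vanishes, and then sign-definiteness of the blocks to compute $\nu$ as a signed trace. Two small points deserve tightening. First, after the harmonic gauge the off-diagonal of $f_{(A,\Phi)}$ is \emph{exactly} zero, not merely $O(t^2)$: the only off-diagonal contributions in $\sqrt{-1}\Lambda F_A$ and $\sqrt{-1}\Lambda[\Phi,\Phi^\ast]$ are linear in $(\beta,\varphi)$, so once the linear term cancels there is nothing left. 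This removes your $O(t^4)$ correction coming from eigenvalue perturbation. Second, and more importantly, the implied constant in your $O(t^4)$ term (equivalently, the integral $\int(|\beta|^2+|\varphi|^2)^2$) depends on the metrics $H_S,H_Q$ and hence on $\varepsilon$; you need it bounded uniformly as $\varepsilon\to 0$, and likewise you need $\int(|\beta|^2+|\varphi|^2)$ bounded below. The paper handles this by normalizing $\Vert\beta\Vert^2+\Vert\varphi\Vert^2=1$ (a further diagonal scaling) and then invoking Donaldson's argument from \cite{Donaldson83} for a uniform $\sup$ bound on $|\beta|,|\varphi|$. Without this, your order-of-limits step ``take $\varepsilon$ small compared with $t^2$'' is not justified.
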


\begin{proof}
First, consider the Harder-Narasimhan filtrations of  $(\Scal,\Phi_S)$ and $(\Qcal, \Phi_Q)$.  By 
applying Lemma \ref{lem:approximate-critical} we may 
assume for any $\varepsilon>0$ that there is a  metric on $S$  such that
$$
\sup \left| f_{((\dbar_S,H_S),\Phi_S)}-\mu_{(\Gr(\Scal,\Phi_S), H_S)}\right| < \varepsilon\ ,
$$
and similarly for $Q$.  
We endow $E=S\oplus Q$ with the sum of these two metrics.
This is equivalent to a pair $(A,\Phi)$ in the orbit of
$(A_0,\Phi_0)$.
Next, since $(A_0, \Phi_0)$ (and hence also $(A,\Phi)$) 
is simple we may further assume that 
$$-\dbar_{A_0}^\ast\beta +\sqrt{-1}\Lambda \left( \varphi\wedge
\Phi_Q^\ast+\Phi_S^\ast\wedge\varphi\right)=0
$$
(see \eqref{eqn:h1}).
This is accomplished via a complex gauge transformation of the 
form $\displaystyle g=\left(\begin{matrix} 1&\phi\\0&1\end{matrix}\right)$. In particular, the $\dbar$-operators on $S$ and 
$Q$ remain unchanged, and so the approximate critical 
structure still holds.
With this understood, we perform a further gauge
transformation so that $(A,\Phi)$ coincides with
$(A_0,\Phi_0)$ but with $\beta$ and $\varphi$ scaled by $t$.
Then $f_{(A,\Phi)}-\mu(E)$ is block diagonal with entries
\begin{align}
\begin{split} \label{eqn:donaldson}
& f_S-\mu_{(\Gr(\Scal,\Phi_S), H_S)}
+\mu_{(\Gr(\Scal,\Phi_S), H_S)}-\mu(E)
+t^2\left(b_S+\sqrt{-1}\Lambda\varphi\wedge\varphi^\ast \right) \ ;\\
 & f_Q-\mu_{(\Gr(\Qcal,\Phi_Q), H_Q)}
+\mu_{(\Gr(\Qcal,\Phi_Q), H_Q)}-\mu(E)
+t^2\left(b_Q+\sqrt{-1}\Lambda\varphi^\ast\wedge\varphi\right)\ .
\end{split}
\end{align}
Since $(\Ecal,\Phi)$ is stable, $\mu(\Ecal)$ is strictly bigger than the maximal slope of a subsheaf of $\Scal$, and strictly smaller than the minimal slope of a quotient of $\Qcal$.
This says that for $t$ and $\varepsilon$ 
chosen sufficiently small, the first line in \eqref{eqn:donaldson} is negative definite and the second is positive definite.  It follows that
$$
\nu(f_{(A,\Phi)}-\mu(E))
\leq (\rank\Scal)(\mu(E) -\mu(S))+(\rank\Qcal)(\mu(\Qcal)-\mu(\Ecal))
-2t^2\left( |\beta|^2+|\varphi|^2\right)+O(\varepsilon)\ .
$$
Without loss of generality, assume that
$\Vert\beta\Vert^2+\Vert\varphi\Vert^2=1$. By the argument in
\cite{Donaldson83} we may also assume $|\beta|$, $|\varphi|$ are
bounded uniformly in $\varepsilon$.
The result now follows by fixing $t$ and choosing
$\varepsilon$ sufficiently small.
\end{proof}

\subsubsection{The existence theorem}

We will prove the following in the next section where the Yang-Mills-Higgs flow will be introduced.
\begin{lemma} \label{lem:minimizing}
In any complex gauge orbit 
there exists a  sequence $(A_i, \Phi_i)$ satisfying the following conditions:
\begin{enumerate}
\item $(A_i, \Phi_i)$  is minimizing for $J$ ;
\item if
 $f_{(A_j,\Phi_j)}= 
\sqrt{-1}\Lambda(F_{A_j}+[\Phi_j,\Phi_j^\ast])$, then $\sup | f_{(A_j,\Phi_j)}|$ is   bounded  uniformly in $j$;
\item $\Vert d_{A_j} f_{(A_j,\Phi_j)}\Vert_{L^2}\to 0$ and $\Vert [f_{(A_j,\Phi_j)}, \Phi_i]\Vert_{L^2}\to 0$.
\end{enumerate}
\end{lemma}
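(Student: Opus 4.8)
The plan is to realize the desired sequence as values along the downward gradient flow of the Yang--Mills--Higgs functional \eqref{eqn:ymh}, whose construction, long-time existence, and regularity are the business of the next section. Fixing a representative $(\dbar_E,\Phi)$ of the complex gauge orbit and varying the hermitian metric $H=H(t)$ (recall $\Gcal_E^\CBbb/\Gcal_E$ is the space of metrics), I would use the Donaldson-type parabolic equation
\begin{equation*}
H^{-1}\frac{\partial H}{\partial t}=-2\bigl(f_{(A,\Phi)}-\mu\bigr),\qquad A=(\dbar_E,H(t)),
\end{equation*}
which is the gradient flow of $\YMH$ restricted to the orbit; equivalently one keeps $H$ fixed and moves $(A,\Phi)$ along the orbit. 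To arrange the minimizing property I would start each flow from a nearly $J$-minimizing point: for each $k$ choose a representative with $J<\inf_{\text{orbit}}J+1/k$. Such a point automatically has $\YMH$ bounded, since $\YMH$ is comparable to $J^2$ (the cross term in $\int_X|f|^2=\int_X|f-\mu|^2+\int_X|\mu|^2$ vanishes because $\int_X\tr(f-\mu)\,\omega=0$, and pointwise $|f-\mu|^2\le\nu(f-\mu)^2\le n\,|f-\mu|^2$).

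Next I would extract the two analytic inputs from the flow. Writing $\Psi:=f_{(A,\Phi)}-\mu$, differentiation shows that $\Psi$ obeys a heat equation of the form $(\partial_t+\Delta_A)\Psi=0$, where $\Delta_A$ is the Laplacian coupled to $\ad\Phi$; a Weitzenb\"ock/Kato inequality then gives $(\partial_t+\Delta)|\Psi|^2\le 0$, so by the maximum principle $\sup_X|\Psi|(\cdot,t)$ is non-increasing. Combined with parabolic smoothing, which bounds $\sup_X|\Psi|(\cdot,1)$ in terms of $\YMH(0)$, this produces a bound on $\sup|f|$ that is uniform across the family of flows once the clock is started at $t=1$, giving condition (ii); Proposition \ref{prop:higgs-bound} is what translates this $\sup|f|$ control into a bound on $\Phi$ and thereby prevents blow-up (underlying the assumed long-time existence). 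For condition (iii) I would use the energy identity for the gradient flow: since the Euler--Lagrange operator is \eqref{eqn:critical},
\begin{equation*}
\YMH(1)-\YMH(T)=c\int_1^T\!\!\int_X\bigl(|d_A f_{(A,\Phi)}|^2+|[\Phi,f_{(A,\Phi)}]|^2\bigr)\,\omega\,dt
\end{equation*}
for some $c>0$. As $\YMH$ is bounded below, the time integrand is integrable on $[1,\infty)$, so along some sequence $t_j\to\infty$ both $\Vert d_A f_{(A,\Phi)}\Vert_{L^2}$ and $\Vert[\Phi,f_{(A,\Phi)}]\Vert_{L^2}$ tend to zero.

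Finally, for condition (i) I would invoke the monotonicity of $J$ under the flow, namely Donaldson's lemma (\cite[Lemmas 2 \& 3]{Donaldson83}) adapted to the Higgs setting. This follows from the convexity of $\nu$ together with the heat equation for $\Psi$: convexity yields a pointwise differential inequality for $\nu(\Psi)^2$ that forces $J(t)$ to be non-increasing. Hence, along the flow started from the $k$-th near-minimizer, $J$ remains below $\inf_{\text{orbit}}J+1/k$, while $\sup|f|$ stays uniformly bounded and a selected time $t_{j(k)}$ can be chosen so that the $L^2$ Euler--Lagrange norm is $<1/k$. The diagonal choice $(A_k,\Phi_k):=(A(t_{j(k)}),\Phi(t_{j(k)}))$ then satisfies (i), (ii), and (iii) simultaneously. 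The main obstacle is twofold: first, the input deferred to the next section, namely global existence, regularity, and the a priori $\sup|f|$ estimate for the flow; and second, within the present argument, the $J$-monotonicity itself, since $J$ is only a continuous (not smooth) functional of the metric, so the convexity estimate for $\nu$ is exactly what is needed both to bypass this non-differentiability and to transfer the gradient decay of the smooth functional $\YMH$ into control of the non-smooth functional $J$.
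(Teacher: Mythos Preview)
Your proposal is correct and follows essentially the same approach as the paper: start with a $J$-minimizing sequence in the orbit, note that $\YMH$ is then uniformly bounded, run the Yang--Mills--Higgs flow from each point, use the heat inequality for $|f_{(A,\Phi)}|^2$ together with a parabolic smoothing estimate to get the uniform sup bound (ii), use the integrated energy identity to extract times where (iii) holds, and use monotonicity of $J$ along the flow to preserve (i); then diagonalize. One small slip: the $J$-monotonicity under the flow is not Donaldson's Lemmas~2 and~3 from \cite{Donaldson83} (those are the bounds adapted in the paper as Lemmas~\ref{lem:donaldson-upper-bound} and~\ref{lem:donaldson-lower-bound}); rather it is a separate computation, which the paper attributes to \cite{DaskalWentworth04}, based exactly on the convexity-of-$\nu$ mechanism you describe.
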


Next, we will need one of the most fundamental results of gauge theory, stated here for the case of Riemann surfaces.

\begin{proposition}[Uhlenbeck {\cite{Uhlenbeck82}}] \label{prop:uhlenbeck}
Fix $p\geq 2$.
Let $\{A_j\}$ be a sequence of $L^p_1$-connections with 
$\Vert F_{A_j}\Vert_{L^p}$ uniformly bounded. 
Then there exists a sequence of unitary gauge 
transformations $g_j\in L^p_2$ and a 
smooth unitary connection $A_\infty$ such that 
(after passing to a subsequence) 
$g_j(A_j)\to A_\infty$ weakly in $L^p_1$ and strongly in $L^p$.
\end{proposition}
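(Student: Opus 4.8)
The plan is to follow Uhlenbeck's strategy: produce a \emph{local} Coulomb gauge with elliptic estimates on small balls, patch the local gauges into a single global gauge transformation carrying uniform $L^p_1$ bounds, and then extract a weak limit. The key structural feature of the Riemann surface case is that, since $\dim_\RBbb X=2$ and $p\geq 2$, the hypothesis is \emph{supercritical} and no curvature concentration (bubbling) can occur. Concretely, the scale-invariant norm of curvature in two dimensions is the $L^1$ norm, and by H\"older's inequality on a geodesic ball $B_r$,
\begin{equation} \label{eqn:holder-scaling}
\Vert F_{A_j}\Vert_{L^1(B_r)}\leq C\, r^{2-2/p}\,\Vert F_{A_j}\Vert_{L^p(B_r)}\ .
\end{equation}
Because $2-2/p\geq 1>0$, the right-hand side of \eqref{eqn:holder-scaling} can be made smaller than any prescribed $\varepsilon$ by shrinking $r$, \emph{uniformly in} $j$, using only the assumed uniform $L^p$ bound. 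Fixing such an $r$ gives a finite cover $\{B_\alpha\}$ of $X$ on each element of which every $A_j$ has arbitrarily small curvature.

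The first substantial ingredient is then the local gauge-fixing theorem: on a ball with sufficiently small curvature there is a gauge transformation $u\in L^p_2$ with $a:=u(A_j)$ in Coulomb gauge, $d^\ast a=0$ (with the natural boundary condition), satisfying $\Vert a\Vert_{L^p_1}\leq C\Vert F_{A_j}\Vert_{L^p}$. This is the analytic heart of the argument, and I would prove it by a continuity method, showing that the set of small-curvature connections admitting such a gauge is both open and closed via the implicit function theorem. In Coulomb gauge the connection form solves the first-order elliptic system $d^\ast a=0$, $da=F_a-a\wedge a$; the elliptic (Hodge / div-curl) estimate combined with the Sobolev embedding $L^p_1\hookrightarrow L^{2p}$ valid on a surface controls the quadratic term by $\Vert a\wedge a\Vert_{L^p}\leq C\Vert a\Vert_{L^p_1}^2$, which the smallness lets us absorb to obtain the stated bound.

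The second, and I expect the most delicate, step is patching the independently chosen local Coulomb gauges into one global $g_j$ with $\Vert g_j(A_j)\Vert_{L^p_1(X)}$ bounded uniformly in $j$. I would order the finitely many balls and proceed inductively: having fixed the gauge over $B_1\cup\cdots\cup B_k$, I adjust the Coulomb gauge on $B_{k+1}$ by a further gauge transformation so that it matches the already-fixed gauge, up to controlled error, on the overlap. The relevant transition transformations lie in $L^p_2$ and are small because the connection forms are, so the gluing problem is solvable with uniform estimates; reconciling the overlapping gauges while preserving global $L^p_1$ control is exactly where the combinatorics of the cover and the topology of $E$ enter, and is the main obstacle.

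Finally, with $\{g_j(A_j)\}$ bounded in the reflexive space $L^p_1(X)$, a subsequence converges weakly in $L^p_1$ to a limit $A_\infty$, and by the Rellich--Kondrachov compact embedding $L^p_1\hookrightarrow\hookrightarrow L^p$ on the compact surface the convergence is strong in $L^p$. Strong $L^p$ convergence makes the quadratic terms $g_j(A_j)\wedge g_j(A_j)$ converge, so $F_{A_\infty}\in L^p$; placing $A_\infty$ itself in Coulomb gauge and bootstrapping the elliptic system above propagates any regularity of the limiting curvature back to $A_\infty$, which yields smoothness in the setting of the intended application, where the limit inherits additional control on its curvature.
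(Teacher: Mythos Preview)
The paper does not provide its own proof of this proposition; it is quoted from \cite{Uhlenbeck82} as one of the ``most fundamental results of gauge theory'' and used as a black box in the proof of Theorem~\ref{thm:hitchin}. Your sketch is a correct outline of Uhlenbeck's argument: the supercriticality observation ($p\geq 2$ strictly exceeds the critical exponent $\dim_\RBbb X/2=1$, so curvature cannot concentrate and a single small radius works uniformly in $j$), the local Coulomb gauge via the continuity method with the quadratic term absorbed through $L^p_1\hookrightarrow L^{2p}$, the inductive patching over a finite cover, and weak compactness plus Rellich for the limit are exactly the standard steps. Your hedge on the smoothness of $A_\infty$ is appropriate: the weak $L^p_1$ limit is a priori only in $L^p_1$, and in the paper's application the further regularity comes from the limiting equations \eqref{eqn:critical} together with the uniform sup bound on $f_{(A_j,\Phi_j)}$ from Lemma~\ref{lem:minimizing}.
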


Assuming these results, we now prove  the existence theorem.

\begin{proof}[Proof of Theorem \ref{thm:hitchin}]
It clearly suffices to assume $(\Ecal,\Phi)$ is stable.  Furthermore,
by Remark \ref{rem:rank-one}, we may proceed by induction.  Assume that the result has been proven for all bundles of rank $<n=\rank E$.

\medskip\noindent
{\bf Step 1.}  \emph{The limiting bundle $(\Ecal_\infty, \Phi_\infty)$}.
Choose a minimizing sequence for $J$ as in Lemma \ref{lem:minimizing}.
Since the sequence lies in a single complex gauge orbit, the
image of the Hitchin map $h[A_i, \Phi_i]$ is unchanged.
Hence, by Proposition \ref{prop:higgs-bound}  the $\Phi_i$ are uniformly
bounded.  By Lemma \ref{lem:minimizing} (ii), this in turn implies that $\Vert F_{A_j}\Vert_{L^p}$
is bounded for any $p$.  We therefore may assume by
Proposition \ref{prop:uhlenbeck} that there is a smooth
connection $A_\infty$ so that if we write
$\dbar_{A_j}=\dbar_{A_\infty}+a_j$, then $a_j\to 0$ weakly in
$L^p_1$.
By the Sobolev embedding theorem, we may assume in particular that
the $a_j\to 0$ in some $C^\alpha$.  Notice that it follows that $F_{A_j}\to F_{A_\infty}$ weakly in $L^p$.
 From the
holomorphicity condition
$$
0=\dbar_{A_j}\Phi_j=\dbar_{A_\infty}\Phi_j+[a_j,\Phi_j]\ .
$$
  Elliptic regularity for $\dbar_{A_\infty}$ implies a bound
$\Vert \Phi_j\Vert_{L^2_1}\leq C\Vert \Phi_j\Vert_{L^2}$, say. Differentiating the previous equation gives
\begin{equation} \label{eqn:phi1}
\dbar_{A_\infty}^\ast\dbar_{A_\infty}\Phi_j+\dbar_{A_\infty}^\ast[a_j,\Phi_j]=0
\end{equation}
By the Cauchy-Schwarz inequality and the previous estimate we have
\begin{equation} \label{eqn:phi2}
\Vert \dbar_{A_\infty}^\ast[a_j,\Phi_j]\Vert_{L^2}\leq C_1\Vert a_j\Vert_{L^4_1}\Vert \Phi_j\Vert_{L^4}
+C_2\Vert \Phi_j\Vert_{L^2}\ .
\end{equation}
Now we may assume $\{a_j\}$ is bounded in $L^4_1$, and using elliptic regularity for the Laplacian $\dbar_{A_\infty}^\ast\dbar_{A_\infty}$ along with
 the inclusions $L^2_1\hookrightarrow L^4$, $L^2_2\hookrightarrow C^\alpha$,
 by \eqref{eqn:phi1} and \eqref{eqn:phi2} we have an estimate
 $\Vert \Phi_j\Vert_{C^\alpha}\leq C\Vert \Phi_j\Vert_{L^2}$. 
Since the $\Phi_j$ are uniformly bounded their $L^2$ norms are bounded, so
 we may assume that $\Phi_j$ converges
in $C^\alpha$ to some $\Phi_\infty$.
Moreover, 
 by holomorphicity of the $\Phi_j$ we can write
 $$
 \dbar_{A_\infty}\Phi_\infty=\dbar_{A_\infty}(\Phi_\infty-\Phi_j)-[a_j, \Phi_j]\ ,
 $$
 and since $[a_j, \Phi_j]\to 0$ in $C^\alpha$ we see that
 $\dbar_{A_\infty} \Phi_\infty=0$ weakly.
 Hence, by Weyl's lemma $\Phi_\infty$ is actually  holomorphic, and
 thus $(\Ecal_\infty, \Phi_\infty)$ is a Higgs bundle.

\medskip\noindent
{\bf Step 2.} \emph{Construction of a nonzero map $\Ecal\to \Ecal_\infty$}.  Let $g_j$ be complex gauge transformations such that $g_j(A)=A_j$.  Holomorphicity of $g_j$ implies $\dbar_{A_\infty}g_j+[a_j,g_j]=0$.  By the exact same argument as in Step 1, 
%Since the $a_j$ are bounded, elliptic regularity for $\dbar_{A_\infty}$ implies a bound
%$\Vert g_j\Vert_{L^2_1}\leq C\Vert g_j\Vert_{L^2}$, say. Differentiating the previous equation gives
%$$
%\dbar_{A_\infty}^\ast\dbar_{A_\infty}g_j+\dbar_{A_\infty}^\ast[a_j,g_j]=0
%$$
%By the Cauchy-Schwarz inequality and the previous estimate we have
%$$
%\Vert \dbar_{A_\infty}^\ast[a_j,g_j]\Vert_{L^2}\leq C_1\Vert a_j\Vert_{L^4_1}\Vert g_j\Vert_{L^4}
%+C_2\Vert g_j\Vert_{L^2}
%$$
%Now we may assume $\{a_j\}$ is bounded in $L^4_1$, and using elliptic regularity for the Laplacian $\dbar_{A_\infty}^\ast\dbar_{A_\infty}$ along with
 %the inclusions $L^2_1\hookrightarrow L^4$, $L^2_2\hookrightarrow C^\alpha$, 
 we have an estimate
 $\Vert g_j\Vert_{C^\alpha}\leq C\Vert g_j\Vert_{L^2}$.  Now rescale $g_j$ so that $\Vert g_j\Vert_{L^2}=1$.  The $C^\alpha$-estimate above still holds for the rescaled map, so by compactness we may assume there is a continuous $g_\infty: \Ecal\to \Ecal_\infty$ such that $g_j\to g_\infty$ in $C^\alpha$.  Because of the normalization, we know that $g_\infty\not\equiv 0$.  
 Moreover, it follows as in Step 1 that $g_\infty$ is holomorphic.
% by holomorphicity of the $g_j$ we can write
% $$
% \dbar_{A_\infty}g_\infty=\dbar_{A_\infty}(g_\infty-g_j)-[a_j, g_j]
% $$
% and since $[a_j, g_j]\to 0$ in $C^\alpha$ we see that
% $\dbar_{A_\infty} g_\infty=0$ weakly.
% Hence, by Weyl's lemma $g_\infty$ is actually a holomorphic map.  
 Finally, by the $C^\alpha$ convergence of $g_j$ and $\Phi_j$ and the fact that $g_j\Phi=\Phi_j g_j$, we have
 $g_\infty\Phi=\Phi_\infty g_\infty$.
 
 \medskip\noindent
{\bf Step 3.}  \emph{The map $g_\infty$ is an isomorphism}.  Suppose  the contrary.  Let $\Scal=\ker g_\infty$ and $\Qcal=\Ecal/\Scal$.  Then $\Qcal$ is a subsheaf of $\Ecal_\infty$.  Let $\Mcal$ denote its saturation and $\Ncal=\Ecal_\infty/\Mcal$.  
Since $\Phi_\infty g_\infty=g_\infty\Phi$, the subbundle $\Scal$ is $\Phi$-invariant.  Similarly, $\Mcal$ is $\Phi_\infty$-invariant.  Also,
from the discussion in Section \ref{sec:stability}, we have
\begin{align} 
\begin{split} \label{eqn:slopes}
\mu(\Qcal)-\mu(\Ecal) &\leq \mu(\Mcal)-\mu(\Ecal) \ ;\\
\mu(\Ecal)-\mu(\Scal) &\leq \mu(\Ecal)-\mu(\Ncal)\ .
\end{split}
\end{align}
Then we have the following extensions of Higgs bundles (see \cite{Donaldson83}):
\begin{equation}
\begin{split}  \label{eqn:donaldson-diagram}
\xymatrix{
0\ar[r] & \Scal \ar[r] & \Ecal \ar[r] \ar[d]^{g_\infty} & \Qcal \ar[r]\ar[d] & 0 \\
0 & \Ncal \ar[l] & \Ecal_\infty \ar[l]  & \Mcal \ar[l]& 0 \ar[l]
}
\end{split}
\end{equation}
Applying Lemma \ref{lem:donaldson-upper-bound} to the bottom row of \eqref{eqn:donaldson-diagram}  and Lemma \ref{lem:donaldson-lower-bound} to the top row implies
\begin{align*}
(\rank\Mcal)( \mu(\Mcal)-\mu(\Ecal))+&(\rank\Ncal)( \mu(\Ecal)-\mu(\Ncal)
) \leq J(A_\infty,\Phi_\infty) \\
&\leq\lim_{j\to\infty} J(A_j,\Phi_j)= \inf J(A,\Phi) \\
&< (\rank\Scal)( \mu(\Ecal)-\mu(\Scal))+(\rank\Qcal)( \mu(\Qcal)-\mu(\Ecal))\ ,
\end{align*}
where for the second line we 
can use either the the lower semicontinuity of $J$ (see \cite{Donaldson83}) or the argument in \cite[Corollary 2.12 and Lemma 2.17]{DaskalWentworth04}.
But since $\rank\Mcal=\rank\Qcal$ and $\rank\Scal=\rank\Ncal$, this contradicts \eqref{eqn:slopes}.

\medskip\noindent
{\bf Step 4.}  \emph{Solution to Hitchin's equations}.
Finally, I  claim that the Higgs bundle $(A_\infty,\Phi_\infty)$
is a solution to \eqref{eqn:hitchin}.  Indeed, by the remark following eq.\ 
\eqref{eqn:critical} this follows if we can show
$d_{A_\infty}f_{(A_\infty,\Phi_\infty)}=0$ and
 $[f_{(A_\infty,\Phi_\infty)},\Phi_\infty]=0$.  The second fact holds, since $[f_{(A_j,\Phi_j)},\Phi_j]\to 0$ in $L^2$ by assumption, and $f_{(A_j,\Phi_j)}$ (resp.\ $\Phi_j$) converges weakly in $L^p$ (resp.\ $C^\alpha$).
 For the first claim, let $B$ be a test form. Then
  \begin{align*}
 \langle d_{A_\infty}f_{(A_\infty,\Phi_\infty)}, B\rangle_{L^2}&=
 \langle f_{(A_\infty,\Phi_\infty)}, d_{A_\infty}^\ast B\rangle_{L^2} \\
 &=
\lim_{j\to\infty}
 \langle f_{(A_j,\Phi_j)}, d_{A_j}^\ast B\rangle_{L^2} 
+
\lim_{j\to\infty}
 \int_X\tr\left\{ f_{(A_j,\Phi_j)}[a_j, B^\ast ] \right\}\\
 &=
\lim_{j\to\infty}
 \langle d_{A_j}f_{(A_j,\Phi_j)},  B\rangle_{L^2} 
+
\lim_{j\to\infty}
 \int_X\tr\left\{ f_{(A_j,\Phi_j)}[a_j, B^\ast ] \right\}\ .
 \end{align*}
 The first term vanishes since $\Vert d_{A_j}f_{(A_j,\Phi_j)}\Vert_{L^2}\to 0$, and the second term vanishes since $f_j$ is bounded and $a_j\to 0$ in $C^\alpha$.  Since $B$ is arbitrary, $d_{A_\infty}f_{(A_\infty,\Phi_\infty)}=0$, and 
this completes the proof.
\end{proof}

The same type of argument leads to the
\begin{proof}[Proof of Theorem \ref{thm:hitchin-proper}]
Let $[A_j,\Phi_j]$ be a sequence of polystable Higgs bundles with $h[A_j,\Phi_j]$ bounded. By Theorem \ref{thm:hitchin} we may assume $(A_j,\Phi_j)$ satisfies \eqref{eqn:hitchin}.  Since 
$h[A_j,\Phi_j]$ is  bounded, the pointwise spectrum of $\Phi_j$ is uniformly bounded.  Therefore, Proposition \ref{prop:higgs-bound} provides uniform sup bounds on $|\Phi_j|$.  Again using \eqref{eqn:hitchin} we have uniform bounds on $|F_{A_j}|$.  Now Uhlenbeck compactness can be used to extract a convergent subsequence which also satisfies \eqref{eqn:hitchin} as in the proof of the existence theorem above.
\end{proof}

\subsubsection{The Yang-Mills-Higgs flow}
We define the {\bf Yang-Mills-Higgs flow } for a pair $(A,\Phi)$ by the equations
\begin{align}
\begin{split}\label{eqn:ymh-flow}
\frac{\partial A}{\partial t}&= -d_A^\ast(F_A+[\Phi, \Phi^\ast]) \ ;\\
\frac{\partial \Phi}{\partial t} &= [\Phi, \sqrt{-1}\Lambda(F_A+[\Phi, \Phi^\ast])]\ .
\end{split}
\end{align}
In the above, we only consider initial conditions where $\Phi$ is $d''_A$-holomorphic.  Notice then that this holomorphicity condition is preserved along a solution to \eqref{eqn:ymh-flow}.  Indeed, as in Donaldson \cite{Donaldson85}, the flow is tangent to the complex gauge orbit and exists for all $0\leq t<+\infty$.  
The flow equations may be regarded as the $L^2$-gradient flow of the $\YMH$ functional.
They generalize the Yang-Mills flow equations. For more on this we refer to \cite{Hong01, Wilkin08} and the references therein.  Here we limit ourselves to a discussion of a few key properties.
In particular, we justify the assumptions in the previous section.

As in \eqref{eqn:hitchin2}, set $f_{(A,\Phi)}= \sqrt{-1}\Lambda(F_A+[\Phi, \Phi^\ast])$.

\begin{lemma} \label{lem:ymh-flow1}
For all $t\geq 0$,
$$
\frac{d}{dt}\YMH(A,\Phi)=-2\Vert d_A f_{(A,\Phi)}\Vert_{L^2}^2-4\Vert[\Phi,f_{(A,\Phi)}\Vert^2_{L^2}\ .
$$
\end{lemma}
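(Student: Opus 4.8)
The plan is to compute the time derivative of $\YMH$ directly, using the flow equations \eqref{eqn:ymh-flow} and the fact that the flow stays within the space of holomorphic pairs. Recall that
$$
\YMH(A,\Phi)=\int_X \left|F_A+[\Phi,\Phi^\ast]\right|^2\,\omega = \int_X \left| f_{(A,\Phi)}\right|^2\,\omega\ ,
$$
the last equality holding because on a Riemann surface $F_A+[\Phi,\Phi^\ast]$ is a two-form with values in $\gfrak_E$, and contracting with $\Lambda$ preserves the pointwise norm up to the fixed volume form. First I would differentiate under the integral sign to obtain
$$
\frac{d}{dt}\YMH = 2\int_X \left\langle \frac{\partial}{\partial t}\bigl(F_A+[\Phi,\Phi^\ast]\bigr),\, F_A+[\Phi,\Phi^\ast]\right\rangle\,\omega\ .
$$
The key computation is then to evaluate the time derivative $\frac{\partial}{\partial t}(F_A+[\Phi,\Phi^\ast])$ using \eqref{eqn:ymh-flow}. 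For the curvature term, the standard first-variation formula gives $\frac{\partial F_A}{\partial t}=d_A\dot A$, so with $\dot A=-d_A^\ast f_{(A,\Phi)}$ (writing $f=f_{(A,\Phi)}$) we get $\frac{\partial F_A}{\partial t}=-d_A d_A^\ast f$. For the commutator term, I would use the Leibniz rule and the flow equation $\dot\Phi=[\Phi,f]$ to compute $\frac{\partial}{\partial t}[\Phi,\Phi^\ast]=[\dot\Phi,\Phi^\ast]+[\Phi,\dot\Phi^\ast]=[[\Phi,f],\Phi^\ast]+[\Phi,[\Phi^\ast,f]]$, using that $f$ is skew-hermitian so that $\dot\Phi^\ast=[\Phi^\ast,f]^{\,} = -[f,\Phi^\ast]$ after adjoint.

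Next I would substitute these into the derivative and integrate by parts to move $d_A^\ast$ and the commutators onto the factor $F_A+[\Phi,\Phi^\ast]$, converting everything into expressions in $f$. The curvature contribution yields $2\langle -d_A d_A^\ast f, F_A+[\Phi,\Phi^\ast]\rangle$, and after applying $\sqrt{-1}\Lambda$ (which is the adjoint relation tying $F_A+[\Phi,\Phi^\ast]$ to $f$ on the surface) and integrating by parts, this becomes $-2\Vert d_A f\Vert_{L^2}^2$. The commutator contribution, after using the adjoint property of $\ad$ and the cyclic/Jacobi identity for the trace pairing, should collapse into $-4\Vert[\Phi,f]\Vert_{L^2}^2$, matching the stated coefficients. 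The factor of $4$ versus $2$ arises from the two symmetric commutator terms in $\frac{\partial}{\partial t}[\Phi,\Phi^\ast]$ combined with the factor of $2$ from differentiating the square.

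The main obstacle will be keeping the bookkeeping of the $\sqrt{-1}\Lambda$ operators, the adjoints $d_A^\ast$, and the Hodge star consistent, so that the integration by parts produces exactly the $L^2$-norms $\Vert d_A f\Vert_{L^2}^2$ and $\Vert[\Phi,f]\Vert_{L^2}^2$ with the correct signs and constants. In particular, one must verify that the cross terms between the curvature variation and the commutator variation cancel or recombine correctly, and that the formula $\langle \ad([\Phi,\Phi^\ast])\Phi,\Phi\rangle$-type identities from \eqref{eqn:ad} are deployed in the right places. Once the algebra is arranged so that the integrand is manifestly a sum of two nonnegative squares with a minus sign, the identity follows. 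This computation is precisely the statement that the Yang-Mills-Higgs flow is the $L^2$-gradient flow of $\YMH$, whose gradient is governed by the Euler-Lagrange expressions $d_A f$ and $[\Phi,f]$ appearing in \eqref{eqn:critical}.
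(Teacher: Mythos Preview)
Your approach is correct and is essentially the same as the paper's: differentiate $\YMH=\int_X |f|^2\,\omega$, compute $\dot f=\sqrt{-1}\Lambda(d_A\dot A+[\dot\Phi,\Phi^\ast]+[\Phi,\dot\Phi^\ast])$ by substituting the flow equations, take the trace against $f$, and integrate by parts to obtain \eqref{eqn:fdot}. Two small corrections: $f_{(A,\Phi)}$ is \emph{hermitian}, not skew-hermitian (since $\sqrt{-1}\Lambda F_A$ and $\sqrt{-1}\Lambda[\Phi,\Phi^\ast]$ are hermitian for a unitary connection), and there are in fact no cross terms to worry about---the curvature and commutator contributions to $\tr(f\dot f)$ separate cleanly into $-\tr(f\, d_A^\ast d_A f)$ and $-2|[\Phi,f]|^2$.
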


\begin{proof}
We have
$$
\frac{d}{dt}\YMH(A,\Phi)=2\int_X\tr(f_{(A,\Phi)}\dot f_{(A,\Phi)}) \omega\ .
$$
Now using dots to denote time derivatives, 
\begin{align*}
\dot f_{(A,\Phi)}&=
\sqrt{-1}\Lambda\left(d_A\dot A+[\dot\Phi,\Phi^\ast]+[\Phi,\dot\Phi^\ast] \right)\\
&=
\sqrt{-1}\Lambda\left( -d_Ad_A^\ast(F_A+[\Phi,\Phi^\ast])+[[\Phi,f],\Phi^\ast]+[\Phi, [\Phi,f]^\ast]\right) \\
&=
 -d_A^\ast d_A  f_{(A,\Phi)}+
 \sqrt{-1}\Lambda\left(
 [\Phi, f_{(A,\Phi)}]\Phi^\ast+\Phi^\ast[\Phi, f_{(A,\Phi)}]+\Phi[\Phi, f_{(A,\Phi)}]^\ast+[\Phi, f_{(A,\Phi)}]^\ast\Phi\right)\ .
\end{align*}
Taking traces we get
\begin{equation} \label{eqn:fdot}
\tr(f_{(A,\Phi)}\dot f_{(A,\Phi)}) 
=-\tr(f_{(A,\Phi)} d_A^\ast d_A  f_{(A,\Phi)})
-2\sqrt{-1}\Lambda\tr\left([\Phi, f_{(A,\Phi)}][\Phi, f_{(A,\Phi)}]^\ast\right)\ ,
\end{equation}
and the result follows by integration by parts.
\end{proof}

As a consequence of Lemma \ref{lem:ymh-flow1}, $\YMH$ decreases along the flow.  Moreover, we have the following inequality
$$
\int_0^\infty dt\left\{2\Vert d_A f_{(A,\Phi)}\Vert_{L^2}^2+4\Vert[\Phi,f_{(A,\Phi)}\Vert^2_{L^2} \right\}
\leq \YMH(A_0,\Phi_0)\ .
$$
It follows that if $(A_j,\Phi_j)$ is a sequence with $\YMH(A_j,\Phi_j)$ uniformly bounded, then we may replace it with another sequence $(\widetilde A_j,\widetilde \Phi_j)$ with 
$\YMH(\widetilde A_j,\widetilde \Phi_j)$
also uniformly bounded but such that 
$d_{A_j}f_{(\widetilde A_j,\widetilde \Phi_j)}$
and $[\Phi_j,f_{(\widetilde A_j,\widetilde \Phi_j)}]$ converge to $0$ in $L^2$.

Now let's compute
\begin{align*}
\Delta \left|f_{( A,\Phi)}\right|^2&=-d^\ast d \left|f_{( A,\Phi)}\right|^2 =\ast d\ast d\tr f_{( A,\Phi)}^2 \\
&=2\ast d\ast \tr (f_{( A,\Phi)} d_A f_{( A,\Phi)}) \\
&=2\ast  \tr( df_{( A,\Phi)} \wedge \ast d_A f_{( A,\Phi)})
-2\tr(f_{( A,\Phi)} d_A^\ast d_A f_{( A,\Phi)}) \\
&=2\left| df_{( A,\Phi)}\right|^2+4\left| [\Phi,f_{( A,\Phi)}]\right|^2+\frac{\partial}{\partial t}\left|f_{( A,\Phi)}\right|^2\ ,
\end{align*}
from \eqref{eqn:fdot}.
We have shown
\begin{lemma}
For all $t\geq 0$,
$$
\frac{\partial}{\partial t}\left|f_{( A,\Phi)}\right|^2
-\Delta \left|f_{( A, \Phi)}\right|^2 
=
-2\left| d_A f_{( A, \Phi)}\right|^2-4\left| [\Phi, f_{( A, \Phi)}]\right|^2\ .
$$
\end{lemma}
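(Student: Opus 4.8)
The plan is to compute the ordinary (scalar) Laplacian of the function $|f_{(A,\Phi)}|^2$ directly and match it against the flow-derivative $\frac{\partial}{\partial t}|f_{(A,\Phi)}|^2$, exploiting the fact that both are governed by the same connection-Laplacian $d_A^\ast d_A f_{(A,\Phi)}$. Since $f_{(A,\Phi)}=\sqrt{-1}\Lambda(F_A+[\Phi,\Phi^\ast])$ is self-adjoint, I would first record that $|f_{(A,\Phi)}|^2=\tr f_{(A,\Phi)}^2$, so that spatial derivatives pass through the trace and can be distributed with the Leibniz rule for $d_A$ together with cyclicity of the trace.

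First I would expand $\Delta|f_{(A,\Phi)}|^2=\ast d\ast d\,\tr f_{(A,\Phi)}^2$, using $\Delta=-d^\ast d$ on functions and $d^\ast=-\ast d\ast$ on $1$-forms in dimension two. Differentiating once gives $d\,\tr f_{(A,\Phi)}^2=2\tr(f_{(A,\Phi)}\,d_A f_{(A,\Phi)})$, and a second application of $\ast d\ast$ with the derivative distributed produces the Bochner-type identity
$$
\Delta|f_{(A,\Phi)}|^2=2|d_A f_{(A,\Phi)}|^2-2\tr\bigl(f_{(A,\Phi)}\,d_A^\ast d_A f_{(A,\Phi)}\bigr),
$$
where the first term arises from $\ast\tr(d_A f_{(A,\Phi)}\wedge\ast d_A f_{(A,\Phi)})=|d_A f_{(A,\Phi)}|^2$ and the second is the leftover covariant-Laplacian contribution.

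The crucial input is then the formula \eqref{eqn:fdot}, already obtained from the flow equations \eqref{eqn:ymh-flow}, which expresses
$$
\tr(f_{(A,\Phi)}\dot f_{(A,\Phi)})=-\tr\bigl(f_{(A,\Phi)}\,d_A^\ast d_A f_{(A,\Phi)}\bigr)-2\sqrt{-1}\Lambda\,\tr\bigl([\Phi,f_{(A,\Phi)}][\Phi,f_{(A,\Phi)}]^\ast\bigr).
$$
Solving this for $-\tr(f_{(A,\Phi)}\,d_A^\ast d_A f_{(A,\Phi)})$ and substituting into the Bochner identity replaces the covariant-Laplacian term by $2\tr(f_{(A,\Phi)}\dot f_{(A,\Phi)})=\frac{\partial}{\partial t}|f_{(A,\Phi)}|^2$ together with a multiple of the contracted commutator. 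Recognizing $4\sqrt{-1}\Lambda\,\tr([\Phi,f_{(A,\Phi)}][\Phi,f_{(A,\Phi)}]^\ast)=4|[\Phi,f_{(A,\Phi)}]|^2$ and rearranging then yields the stated parabolic identity.

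I expect the main obstacle to be the Hodge-theoretic and sign bookkeeping rather than any conceptual difficulty: one must correctly identify $\ast\tr(d_A f_{(A,\Phi)}\wedge\ast d_A f_{(A,\Phi)})$ with the pointwise norm $|d_A f_{(A,\Phi)}|^2$, and — using that $\Phi$ is of type $(1,0)$ — identify the contracted commutator $\sqrt{-1}\Lambda\,\tr([\Phi,f_{(A,\Phi)}][\Phi,f_{(A,\Phi)}]^\ast)$ with $|[\Phi,f_{(A,\Phi)}]|^2$ up to the correct constant. These are exactly the type-and-normalization checks encoded in the K\"ahler identities \eqref{eqn:kahler} and the area normalization $\vol(X)=2\pi$; once those conventions are fixed, the spatial Bochner computation and the temporal flow-derivative dovetail through their shared $d_A^\ast d_A f_{(A,\Phi)}$ term.
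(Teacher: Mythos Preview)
Your proposal is correct and follows essentially the same route as the paper: compute $\Delta|f_{(A,\Phi)}|^2$ by expanding $\ast d\ast d\,\tr f_{(A,\Phi)}^2$ via the Leibniz rule to obtain the Bochner-type identity $\Delta|f_{(A,\Phi)}|^2=2|d_A f_{(A,\Phi)}|^2-2\tr(f_{(A,\Phi)}\,d_A^\ast d_A f_{(A,\Phi)})$, then substitute the flow identity \eqref{eqn:fdot} for the second term and rearrange. The paper's proof is precisely this computation, and your anticipated sign and normalization checks are indeed the only places requiring care.
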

In particular, $ \left|f_{( A,\Phi)}\right|$ is a subsolution of the heat equation, and so
 $\sup \left|f_{( A,\Phi)}\right|$ is nonincreasing.  In fact,  one can use
an explicit argument with the heat kernel
 to show that for $t\geq 1$, say,  the 
 $\sup \left|f_{( A_t,\Phi_t)}\right|\leq C\YMH(A_0,\Phi_0)$
 for a fixed constant $C$.
In particular,
if $(A_j,\Phi_j)$ is a sequence with $\YMH(A_j,\Phi_j)$ uniformly bounded, then we may replace it with another sequence $(\widetilde A_j,\widetilde \Phi_j)$ with 
$f_{(\widetilde A_j,\widetilde \Phi_j)}$ uniformly bounded.

\begin{proof}[Proof of Lemma \ref{lem:minimizing}]
Choose $(A_j,\Phi_j)$ a minimizing sequence for $J$ in the complex gauge orbit of $(A,\Phi)$.
Note that $\YMH(A_j,\Phi_j)$ is then uniformly bounded.  In addition,
by an argument similar to the one above (see \cite{DaskalWentworth04}), $J$ is also decreasing along the $\YMH$-flow. 
Hence, replacing each $(A_j,\Phi_j)$ with a point along the $\YMH$-flow with initial condition $(A_j,\Phi_j)$ also gives a $J$-minimizing sequence.  On the other hand, by the discussion in this section, we can choose points along the flow where items (ii) and (iii) are also satisfied.  This completes the proof.
\end{proof}

Let $\Bcal_E^{min}$ be the set of all Higgs bundles satisfying the Hitchin equations \eqref{eqn:hitchin2}.
The $\YMH$-flow sets up an infinite dimensional, singular Morse theory problem where $\Bcal_E^{min}$ is the minimum of the functional, and Higgs bundles not in  $\Bcal_E^{min}$ but 
satisfying \eqref{eqn:critical} play the role of higher critical points. 
This Morse theory picture can actually be shown to be more than just an analogy.  In particular, we have the following

\begin{theorem}[Wilkin {\cite{Wilkin08}}] \label{thm:wilkin}
The $\YMH$-flow gives a $\Gcal_E$-equivariant deformation retraction
of $\Bcal_E^{ss}$ onto $\Bcal_E^{min}$.
\end{theorem}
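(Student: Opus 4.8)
The plan is to produce the retraction directly from the Yang--Mills--Higgs flow \eqref{eqn:ymh-flow} and to verify the axioms of a $\Gcal_E$-equivariant (strong) deformation retraction one at a time. Write $r_t(A_0,\Phi_0)=(A_t,\Phi_t)$ for the time-$t$ map of the flow with initial condition $(A_0,\Phi_0)\in\Bcal_E^{ss}$. Long-time existence for $0\le t<\infty$ is the Donaldson-type statement recorded just after \eqref{eqn:ymh-flow}; since the flow is tangent to the complex gauge orbit, the underlying holomorphic Higgs bundle remains isomorphic to its initial value, so semistability is preserved and each $r_t$ maps $\Bcal_E^{ss}$ into itself. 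Because $\YMH$ and the flow equations are assembled from gauge-covariant quantities, $r_t$ commutes with the action of $\Gcal_E$, which gives the required equivariance. Finally, $\Bcal_E^{min}$ consists precisely of the absolute minima of $\YMH$, where the gradient vanishes by \eqref{eqn:critical}; hence the flow is stationary there and $r_t$ restricts to the identity on $\Bcal_E^{min}$ for every $t$.

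First I would collect the a priori bounds needed for compactness. By Lemma \ref{lem:ymh-flow1} the functional $\YMH$ is nonincreasing along the flow, and $\sup|f_{(A_t,\Phi_t)}|$ is nonincreasing as well, being a subsolution of the heat equation. Since the flow preserves the complex gauge orbit, the image of the Hitchin map is constant, so Proposition \ref{prop:higgs-bound} yields a uniform bound on $\sup|\Phi_t|$, and then \eqref{eqn:hitchin2} gives a uniform bound on $|F_{A_t}|$. These are exactly the hypotheses of Uhlenbeck's compactness theorem (Proposition \ref{prop:uhlenbeck}).

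Next I would establish convergence of the flow as $t\to\infty$. Integrating the identity of Lemma \ref{lem:ymh-flow1} gives $\int_0^\infty\big(\Vert d_{A_t}f_{(A_t,\Phi_t)}\Vert_{L^2}^2+\Vert[\Phi_t,f_{(A_t,\Phi_t)}]\Vert_{L^2}^2\big)\,dt<\infty$, so along some sequence $t_j\to\infty$ both terms tend to $0$. Combining this with the compactness bounds and arguing exactly as in the proof of Theorem \ref{thm:hitchin}, one extracts a subsequence along which $(A_{t_j},\Phi_{t_j})$ converges, modulo unitary gauge, to a limit $(A_\infty,\Phi_\infty)$ satisfying the critical-point equations \eqref{eqn:critical}. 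To promote this to genuine convergence of the entire trajectory (independent of the subsequence) and to continuous dependence on the initial data, I would invoke a \L ojasiewicz--Simon gradient inequality for $\YMH$ near the critical set, which bounds the length of the flow line and so pins down a unique limit. The decisive point, and where semistability is used, is the identification of that limit: by the discussion following \eqref{eqn:critical} a critical point splits holomorphically and isometrically into a direct sum of solutions of \eqref{eqn:hitchin2} of possibly distinct slopes, and its isomorphism class is the graded object of the Harder--Narasimhan--Seshadri filtration of $(\Ecal_0,\Phi_0)$. For semistable initial data the Harder--Narasimhan filtration is trivial, so the limit is the metric solving \eqref{eqn:hitchin2} on the polystable bundle $\Gr_S(\Ecal_0,\Phi_0)$, which by Theorem \ref{thm:hitchin} indeed lies in $\Bcal_E^{min}$. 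I expect this step---proving genuine convergence and correctly identifying the limit across the possible jump in holomorphic type---to be the main obstacle.

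Granting convergence and continuity, set $r_\infty=\lim_{t\to\infty}r_t$. The preceding analysis shows that $r_\infty$ is a continuous $\Gcal_E$-equivariant map $\Bcal_E^{ss}\to\Bcal_E^{min}$, that $r_0=\id$, and that $r_t$ fixes $\Bcal_E^{min}$ pointwise for all $t$. Reparametrizing $t\in[0,\infty)$ to $[0,1]$, the family $\{r_t\}$ is then the desired $\Gcal_E$-equivariant deformation retraction of $\Bcal_E^{ss}$ onto $\Bcal_E^{min}$.
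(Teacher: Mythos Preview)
The paper does not supply its own proof of Theorem \ref{thm:wilkin}; the result is simply stated with attribution to \cite{Wilkin08} and then used. So there is nothing in the paper to compare your argument against.

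That said, your outline is a faithful roadmap of what actually goes into Wilkin's proof: long-time existence tangent to the complex gauge orbit, the a priori bounds coming from Proposition \ref{prop:higgs-bound} and the subsolution property of $|f_{(A,\Phi)}|^2$, subsequential convergence via Uhlenbeck compactness to a critical point, and the identification of the limit for semistable initial data with a solution of \eqref{eqn:hitchin2} on $\Gr_S(\Ecal_0,\Phi_0)$. You also correctly flag the two genuinely hard steps---upgrading subsequential convergence to convergence of the whole trajectory, and establishing continuity of the time-infinity map in the initial data across changes in the $S$-equivalence class. Your appeal to a \L ojasiewicz--Simon inequality is the right idea for the first of these, but both steps require substantial analysis (this is the main content of \cite{Wilkin08}), so your write-up should be regarded as a correct sketch rather than a proof.
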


\section{The Betti Moduli Space} \label{sec:betti}

\subsection{Representation varieties}
\subsubsection{Definition}
Fix a base point $p\in X$ and
 set $\pi=\pi_1(X,p)$. 
Let  $\Hom(\pi,\SL_n(\CBbb))$ denote the set of homomorphisms from $\pi$ to $\SL_n(\CBbb)$. This has the structure of an affine algebraic variety.
Let
\begin{equation*} 
\Mfrak_B^{(n)} =\Hom(\pi,\SL_n(\CBbb))\doubleslash \SL_n(\CBbb)\ ,
\end{equation*}
denote the representation variety,
where the double slash indicates the invariant theoretic quotient by   overall conjugation of $\SL_n(\CBbb)$.  Then $\Mfrak_B^{(n)}$ is an irreducible affine variety of complex dimension $(n^2-1)(2g-2)$.
There is a surjective algebraic quotient map $\Hom(\pi,\SL_n(\CBbb)) \to \Mfrak_B^{(n)}$,  and this is a geometric quotient on the open set of irreducible (or simple) representations. Points of $\Mfrak_B^{(n)}$ are in 1-1 correspondence with conjugacy classes of semisimple (or reductive) representations, and every $\SL_n(\CBbb)$ orbit in $\Hom(\pi,\SL_n(\CBbb))$ contains a semisimple representation in its closure (for these results, see \cite{LubotzkyMagid85}).  Following Simpson \cite{Simpson94a, Simpson94b} I  will refer to $\Mfrak_B^{(n)}$ as the {\bf Betti moduli space} of rank $n$.

 Let $E\to X$ be a trivial rank $n$ complex
vector bundle.  A flat connection $\nabla$ on $E$ gives rise
to a representation of $\pi$ as follows.  Recall that we have
fixed a base point $p\in X$.  We also fix a frame
$\{\ebold_i\}$ of $E_p$.  For each loop $\gamma$ based at $p$, parallel
translation of the frame $\{\ebold_i\}$ defines an element of
$\GL_n(\CBbb)$.  Since the connection is flat this is independent of
the choice of path in the homotopy class.  In this way we have
defined an element $\hol(\nabla)\in \Hom(\pi, \GL_n(\CBbb))$.  If $\nabla$
induces the trivial connection on $\det E$, the holonomy lies
in $\SL_n(\CBbb)$, and we will assume this from now on.
%The holonomy of a  flat connection 
%gives a representation $\rho: \pi_1(X,p)\to \GL_n(\CBbb)$.
%A projectively flat connection gives a representation 
%$\rho: \pi_1(X,p)\to \PGL_n(\CBbb)$.
%If the connection is in addition unitary for a hermitian
%metric, then the corresponding holonomy representations take values in
%the unitary groups $\U_n$ and $\PU_n$.
%
Conversely, given a representation $\rho: \pi_1(X,p)\to
\SL_n(\CBbb)$,  we obtain a holomorphic bundle $\Vcal_\rho$
with a flat connection $\nabla$ by the quotient 
$
\Vcal_\rho=\widetilde X\times \CBbb^n/\pi
$,
where $\widetilde X$ is the universal cover of $X$, and the quotient identifies $(x,v)\sim (x\gamma,
v\rho(\gamma))$.
Let $\Ccal_E$ denote the space of connections on $E$, and $\Ccal^{flat}_E\subset\Ccal_E$ the flat connections.
 Let $\Gcal_E^\CBbb(p)$ denote the space of complex gauge
transformations that are the identity at $p$,  acting on
$\Ccal_E$ by conjugation ({\bf warning:} this is a different
action of $\Gcal^\CBbb_E$ from the one on the space of \emph{unitary} connections in Section \ref{sec:gauge}). 

\begin{proposition} \label{prop:monodromy}
The holonomy map gives an $\SL_n(\CBbb)$-equivariant homeomorphism
$$
\hol: 
\Ccal_E^{flat}/\Gcal_E^\CBbb(p) \simrightarrow \Hom(\pi,\SL_n(\CBbb))\ .
$$
In particular, $\Ccal_E^{flat}\doubleslash\Gcal_E^\CBbb\simeq \Mfrak_B^{(n)}$.
\end{proposition}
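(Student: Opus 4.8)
The plan is to establish the three standard properties of the holonomy map—that it descends to the quotient, that it is a bijection, and that it is bicontinuous—and then to verify $\SL_n(\CBbb)$-equivariance so that the final assertion follows by passing to the further quotient. First I would record how $\hol$ transforms under gauge. If $g\in\Gcal_E^\CBbb$ and $\nabla'=g\nabla g^{-1}$, then parallel transport for $\nabla'$ along a path from $p$ to $q$ equals $g(q)$ composed with the transport for $\nabla$ composed with $g(p)^{-1}$; for a loop based at $p$ the holonomy is therefore conjugated by $g(p)$. Hence if $g(p)=I$ the holonomy is unchanged, so $\hol$ is constant on $\Gcal_E^\CBbb(p)$-orbits and descends to the quotient, while a general $g$ conjugates the representation by $g(p)$. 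This last observation is exactly the equivariance I will use at the end, once I identify $\Gcal_E^\CBbb/\Gcal_E^\CBbb(p)\cong\SL_n(\CBbb)$ through the evaluation map $g\mapsto g(p)$ (surjective on the trivial bundle $E$, with kernel the normal subgroup $\Gcal_E^\CBbb(p)$).

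For bijectivity I would work on the universal cover $\widetilde X$. Given $\rho$, the associated flat bundle $\Vcal_\rho=\widetilde X\times\CBbb^n/\pi$ carries its tautological flat connection; since $\det\rho=1$ its determinant bundle is trivially flat, so $\Vcal_\rho$ has rank $n$ and degree $0$ and is thus topologically isomorphic to $E$ (complex bundles on $X$ are classified by rank and degree). Transporting the flat connection through such an isomorphism, normalized at $p$, produces an element of $\Ccal_E^{flat}$ with holonomy $\rho$, giving surjectivity. For injectivity, suppose $\nabla_1,\nabla_2\in\Ccal_E^{flat}$ share the holonomy $\rho$. On the simply connected $\widetilde X$ each $\nabla_i$ admits a parallel frame $s_i$, which I normalize to agree with the fixed frame $\{\ebold_i\}$ over the lift of $p$; then $s_i$ transforms under deck transformations by the common $\rho$, so $g=s_2 s_1^{-1}$ is $\pi$-invariant, descends to a gauge transformation with $\nabla_2=g\nabla_1 g^{-1}$, and satisfies $g(p)=I$ and $\det g=1$ (the latter because $\det s_i$ is flat for the trivial induced connection on $\det E$). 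Thus $\nabla_1$ and $\nabla_2$ lie in one $\Gcal_E^\CBbb(p)$-orbit.

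The bicontinuity requires the most care, and the continuity of the inverse is the main obstacle. The forward direction is standard: parallel transport along each generator of $\pi$ solves a linear ODE whose coefficients depend continuously (in the Sobolev or $C^\infty$ topology on $\Ccal_E^{flat}$) on the connection, so the resulting word map into $\SL_n(\CBbb)^{2g}\supset\Hom(\pi,\SL_n(\CBbb))$ is continuous. For the inverse I must produce, continuously in $\rho$, a representative flat connection on the \emph{fixed} bundle $E$. The key point is that the trivial connection on $\widetilde X\times\CBbb^n$ does not depend on $\rho$—only the quotient identifications do—so what must be arranged is a family of trivializations $\Vcal_\rho\cong E$ varying continuously with $\rho$; this can be done explicitly by fixing a smooth fundamental domain and a partition of unity on $X$ and assembling the gluing data algebraically from the matrix entries of $\rho$ on a generating set. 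Since $\pi$ is finitely generated and the construction is algebraic in those entries, this yields a continuous section and hence a continuous inverse.

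Finally, equivariance together with $\Gcal_E^\CBbb/\Gcal_E^\CBbb(p)\cong\SL_n(\CBbb)$ shows that $\hol$ intertwines the residual $\SL_n(\CBbb)$-action on $\Ccal_E^{flat}/\Gcal_E^\CBbb(p)$ with conjugation on $\Hom(\pi,\SL_n(\CBbb))$. Taking the further quotient by this action on both sides—where $\doubleslash$ records on the left the identification of orbits with common semisimplification and on the right the passage to reductive representatives—yields the $\SL_n(\CBbb)$-equivariant homeomorphism and, in particular, $\Ccal_E^{flat}\doubleslash\Gcal_E^\CBbb\simeq\Mfrak_B^{(n)}$, as claimed.
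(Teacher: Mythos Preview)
The paper does not actually supply a proof of this proposition; it is stated as a standard fact, with only the informal discussion preceding it (explaining how holonomy produces a representation and how the associated-bundle construction $\Vcal_\rho=\widetilde X\times\CBbb^n/\pi$ goes the other way) serving as a sketch of the bijection. Your argument fills in exactly the standard details the paper omits, and it is correct in outline and in substance.

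One small comment: the continuity of the inverse is, as you note, the only delicate point, and your description (``assembling the gluing data algebraically from the matrix entries of $\rho$'') is correct in spirit but a bit vague. A clean way to make it precise is to fix once and for all a system of smooth paths from $p$ to every point of $X$ (say via a CW structure or a spanning tree in a presentation $2$-complex). Parallel transport along these paths gives, for each $\rho$, an explicit trivialization of $\Vcal_\rho$ over the complement of the $1$-skeleton, and the transition across each edge is literally one of the generators $\rho(\gamma_i)$; the resulting connection on $E$ then depends continuously---indeed polynomially in local coordinates---on the matrix entries of $\rho$ on the generating set. With that tweak your proof is complete and matches the approach any careful treatment would take.
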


\subsection{Local systems and holomorphic connections}

\subsubsection{Definitions}
\begin{definition}
A {\bf complex $n$-dimensional  local system} on $X$ is a  sheaf of abelian groups that is locally isomorphic to the constant sheaf $\underline\CBbb^n$.
\end{definition}

\noindent
Here $\underline\CBbb$ denotes the locally constant sheaf modeled on $\CBbb$. Clearly a local system $\Vbold$ is a sheaf of modules over $\underline\CBbb$.

\begin{definition}
Let $\Vcal\to X$ be a holomorphic bundle.  A {\bf holomorphic connection} on $\Vcal$ is a $\CBbb$-linear operator $\nabla: \Vcal\to \Kcal \otimes\Vcal$ satisfying the Leibniz rule
\begin{equation} \label{eqn:leibniz}
\nabla(fs)=df\otimes s + f\nabla s\ ,
\end{equation}
for local sections $f\in \Ocal $, $s\in \Vcal$.
\end{definition}

For a local system $\Vbold$ 
let $\Vcal$ be the holomorphic bundle  $\Vcal=\Ocal\otimes_{\underline\CBbb}\Vbold$. 
Then $\Vcal$ inherits a holomorphic connection as follows: 
choose a local parallel frame $\{\vbold_i\}$ for $\Vbold$.
Any local section of $\Vcal$ may be written uniquely as $s=\sum_{i=1}^n f_i\otimes\vbold_i$, with $f_i\in \Ocal$.  Then \emph{define} 
$
\nabla s=\sum_{i=1}^n df_i\otimes \vbold_i
$. Since the transition functions for $\Vbold$ are constant this is well-defined independent of the choice of frame, and $\nabla$ also immediately  satisfies the Leibniz rule.
Conversely, a holomorphic connection defines a \emph{flat} connection on the underlying complex vector bundle, since
in a local holomorphic frame the curvature $F_\nabla$ is necessarily of type $(2,0)$, and
 on a Riemann surface there are no $(2,0)$-forms. In particular, the $\underline\CBbb$-subsheaf $\Vbold\subset \Vcal$ of locally parallel sections $\nabla s=0$ defines a local system. This gives a categorical equivalence between local systems and holomorphic bundles with a holomorphic connection (see
\cite[Th\'eor\`eme 2.17]{Deligne70}).

%\begin{remark}
%In higher dimensions the same construction holds provided one imposes the integrability condition $\nabla^2=0$.
%\end{remark}

A local system has a {\bf monodromy
 representation} $\rho:\pi\to \GL_n(\CBbb)$, obtained by developing local parallel frames.
 Conversely, given $\rho$ we construct a local system as  in the previous section. 
 We will sometimes denote these $\Vbold_\rho$ and $\Vcal_\rho$.
 For simplicity, in these notes I will almost 
 always assume the monodromy lies in $\SL_n(\CBbb)$, or in other words, $\det\Vcal_\rho\simeq\Ocal$ and the induced connection on $\det\Vcal_\rho$ is trivial.

Not every holomorphic bundle $\Vcal$ admits a holomorphic connection.  In particular, such a connection is flat, and so by \eqref{eqn:chern-weil} a necessary condition is that $\deg\Vcal=0$.  
In fact, one can say more about the Harder-Narasimhan type of a bundle with a holomorphic connection.

\begin{proposition}[cf.\ \cite{EsnaultViehweg99, Bolibrukh02}] \label{prop:bound}
Suppose $\Vcal$ is an unstable bundle with 
an irreducible holomorphic connection,  and let $\mu_1>\mu_2>\cdots>\mu_\ell$ be the Harder-Narasimhan type.  Then for each $i=1,\ldots, \ell-1$, $\mu_i-\mu_{i+1}\leq 2g-2$.
\end{proposition}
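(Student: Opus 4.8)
The plan is to exploit the fact that a holomorphic connection is not $\Ocal$-linear, but the obstruction to its preserving a subbundle is. Let $0=\Vcal_0\subset\Vcal_1\subset\cdots\subset\Vcal_\ell=\Vcal$ be the Harder--Narasimhan filtration, so that $\Qcal_i=\Vcal_i/\Vcal_{i-1}$ is semistable of slope $\mu_i$ with $\mu_1>\cdots>\mu_\ell$. Since $\Vcal$ is unstable we have $\ell\geq 2$, and for each $1\leq i\leq\ell-1$ the subbundle $\Vcal_i$ is proper and nonzero. The first point I would record is that $\nabla$ cannot preserve $\Vcal_i$: a holomorphic subbundle with $\nabla(\Vcal_i)\subset\Kcal\otimes\Vcal_i$ would, under Deligne's equivalence between local systems and holomorphic bundles with holomorphic connection, correspond to a proper nonzero sub-local-system of $\Vbold$, hence to a proper invariant subspace for the monodromy, contradicting irreducibility.

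Next I would package this failure as a \emph{second fundamental form}. Define
\begin{equation*}
\bar\nabla_i:\Vcal_i\lra\Kcal\otimes(\Vcal/\Vcal_i)\ ,\qquad s\longmapsto \nabla s \bmod \Kcal\otimes\Vcal_i\ .
\end{equation*}
The crucial observation is that although $\nabla$ only satisfies the Leibniz rule, $\bar\nabla_i$ is genuinely $\Ocal$-linear: for $f\in\Ocal$ and $s\in\Vcal_i$ one has $\nabla(fs)=df\otimes s+f\nabla s$, and the term $df\otimes s$ lies in $\Kcal\otimes\Vcal_i$, hence vanishes modulo $\Kcal\otimes\Vcal_i$. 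Moreover $\bar\nabla_i=0$ would mean precisely that $\Vcal_i$ is $\nabla$-invariant, so by the previous paragraph $\bar\nabla_i$ is a nonzero map of holomorphic bundles.

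The final step is a slope comparison applied to $\bar\nabla_i$. I would use the standard fact that for any nonzero homomorphism $\phi:A\to B$ of holomorphic bundles one has $\mu_{\min}(A)\leq\mu_{\max}(B)$; this follows because $\im\phi$ is simultaneously a quotient of $A$ and a subsheaf of $B$, whence $\mu_{\min}(A)\leq\mu_{\min}(\im\phi)\leq\mu_{\max}(\im\phi)\leq\mu_{\max}(B)$ by monotonicity of $\mu_{\min}$ under quotients and of $\mu_{\max}$ under subsheaves. Here $A=\Vcal_i$ has Harder--Narasimhan quotients $\Qcal_1,\ldots,\Qcal_i$, so $\mu_{\min}(\Vcal_i)=\mu_i$, while $\Vcal/\Vcal_i$ has quotients $\Qcal_{i+1},\ldots,\Qcal_\ell$ with top slope $\mu_{i+1}$; tensoring by the degree-$(2g-2)$ line bundle $\Kcal$ shifts every slope by $2g-2$ and preserves semistability (cf.\ Remark \ref{rem:tensor}), so $\mu_{\max}(\Kcal\otimes(\Vcal/\Vcal_i))=\mu_{i+1}+(2g-2)$. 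Combining, $\mu_i\leq\mu_{i+1}+(2g-2)$, which is the assertion.

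I do not expect a serious obstacle: the entire content lies in the observation that $\bar\nabla_i$ is $\Ocal$-linear, which converts the non-$\Ocal$-linear connection into an honest sheaf map to which semistability arguments apply. The only points needing care are verifying the identifications $\mu_{\min}(\Vcal_i)=\mu_i$ and $\mu_{\max}(\Kcal\otimes(\Vcal/\Vcal_i))=\mu_{i+1}+(2g-2)$ directly from the Harder--Narasimhan filtration, and confirming that $\nabla$-invariance of a holomorphic subbundle really does descend to a sub-local-system under the categorical equivalence.
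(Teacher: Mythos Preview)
Your proof is correct and follows essentially the same strategy as the paper: both arguments use that the induced map $\Vcal_i\to\Kcal\otimes(\Vcal/\Vcal_i)$ is $\Ocal$-linear and nonzero by irreducibility, then compare slopes. The only difference is packaging: where you invoke the general principle $\mu_{\min}(A)\leq\mu_{\max}(B)$ for a nonzero map $A\to B$, the paper unwinds this inline by choosing the smallest $j\leq i$ with $\Vcal_j\to\Kcal\otimes(\Vcal/\Vcal_i)$ nonzero and the largest $k\geq i$ with $\Qcal_j\to\Kcal\otimes(\Vcal/\Vcal_k)$ nonzero, obtaining a nonzero map $\Qcal_j\to\Qcal_{k+1}\otimes\Kcal$ between semistable bundles and applying Lemma~\ref{lem:semistable} directly.
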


\begin{proof}
Let $0\subset \Vcal_1\subset\cdots\subset \Vcal_\ell=\Vcal$ be the Harder-Narasimhan filtration of $\Vcal$.  Then since the connection is irreducible the $\Ocal$-linear map 
$\Vcal_i\stackrel{\nabla}{\xrightarrow{\hspace*{.5cm}}} \Vcal/\Vcal_i\otimes \Kcal $
 is nonzero for each $i=1,\ldots, \ell-1$. 
 Let $j\leq i$ be the smallest integer such that
 $\Vcal_j\to \Vcal/\Vcal_i\otimes \Kcal $
is nonzero. Then it follows from the sequence$$
0\lra \Vcal_{j-1}\lra \Vcal_j \lra \Qcal_j\lra 0
$$
that there is a nonzero map $\Qcal_j\to \Vcal/\Vcal_i\otimes \Kcal $.
With this fixed $j$,  let $k\geq i$ be the largest integer such that $\Qcal_j\to \Vcal/\Vcal_k\otimes \Kcal $ is nonzero.
It follows from
$$
0\lra \Qcal_{k+1}\lra \Vcal/\Vcal_k\lra \Vcal/\Vcal_{k+1}\lra 0
$$
 that  $\Qcal_j\to \Qcal_{k+1}\otimes \Kcal $ is nonzero.
  Since the $\Qcal_i$ are all semistable, we have by Lemma \ref{lem:semistable} that
$$
\mu_j=\mu(\Qcal_j)\leq \mu(\Qcal_{k+1}\otimes\Kcal )=\mu_{k+1}+2g-2\ ,
$$
and the result follows, since $\mu_i-\mu_{i+1}\leq \mu_j-\mu_{k+1}$.
\end{proof}

\subsubsection{The Weil-Atiyah theorem} The goal of this section is to prove the following

\begin{theorem}[Weil {\cite{Weil38}}, Atiyah {\cite{Atiyah57}}] \label{thm:weil}
A holomorphic bundle $\Vcal \to X$ admits a holomorphic connection if and only if each indecomposable factor of $\Vcal$ has degree zero.
\end{theorem}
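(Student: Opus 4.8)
The plan is to use the Atiyah class as the obstruction to a holomorphic connection, and to reduce everything to the indecomposable case. Concretely, cover $X$ by opens on which $\Vcal$ is holomorphically trivial with transition functions $g_{\alpha\beta}$; in a local holomorphic frame a holomorphic connection has the form $d+\theta_\alpha$ with $\theta_\alpha$ a holomorphic section of $\Kcal\otimes\End\Vcal$, and the local pieces patch to a global connection precisely when the \v{C}ech $1$-cocycle $\{(\partial g_{\alpha\beta})g_{\alpha\beta}^{-1}\}$ is a coboundary. This defines the Atiyah class $a(\Vcal)\in H^1(X,\Kcal\otimes\End\Vcal)$, and $\Vcal$ admits a holomorphic connection if and only if $a(\Vcal)=0$. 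Two features are immediate and I would record them first: $a(\Vcal)$ is additive over direct sums, $a(\Vcal_1\oplus\Vcal_2)=a(\Vcal_1)\oplus a(\Vcal_2)$ under $\End\Vcal_1\oplus\End\Vcal_2\hookrightarrow\End(\Vcal_1\oplus\Vcal_2)$; and under the trace map $\tr\colon H^1(X,\Kcal\otimes\End\Vcal)\to H^1(X,\Kcal)\cong\CBbb$ one has $\tr a(\Vcal)=\deg\Vcal$ by Chern--Weil (compare \eqref{eqn:chern-weil}).

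By the Krull--Schmidt theorem I would write $\Vcal=\bigoplus_i\Vcal_i$ into indecomposable factors. Additivity of $a$ then shows $a(\Vcal)=0$ if and only if $a(\Vcal_i)=0$ for every $i$, that is, $\Vcal$ admits a holomorphic connection if and only if each $\Vcal_i$ does. This reduces the theorem to the single statement: an indecomposable bundle admits a holomorphic connection if and only if it has degree zero. The forward implication is the easy one: a holomorphic connection is flat on a Riemann surface, since its curvature is of type $(2,0)$ and hence zero, so $\deg\Vcal_i=0$; alternatively $a(\Vcal_i)=0$ forces $\tr a(\Vcal_i)=0$, whence $\deg\Vcal_i=0$.

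For the converse I would dualize. By Serre duality $H^1(X,\Kcal\otimes\End\Vcal)\cong H^0(X,\End\Vcal)^\ast$, with pairing sending $(a(\Vcal),\psi)$ to $\frac{\sqrt{-1}}{2\pi}\int_X\tr(\psi F_\nabla)$ for any connection $\nabla$ compatible with $\dbar_\Vcal$ (the curvature $F_\nabla$ represents $a(\Vcal)$ up to the same constant). So I must show this pairing vanishes for every holomorphic $\psi\in H^0(X,\End\Vcal)$. Here indecomposability enters: $\End(\Vcal)=H^0(X,\End\Vcal)$ is a finite-dimensional local $\CBbb$-algebra, so $\psi=\lambda\,\id+\eta$ with $\eta$ nilpotent. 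The scalar part pairs to $\lambda\deg\Vcal=0$ by hypothesis, so everything comes down to the key lemma: for a holomorphic nilpotent endomorphism $\eta$ one has $\int_X\tr(\eta F_\nabla)=0$.

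I expect this lemma to be the main obstacle, and I would prove it in two moves. First, $\int_X\tr(\eta F_\nabla)$ is independent of the choice of $\dbar_\Vcal$-compatible $\nabla$: two such connections differ by a global $\Psi\in\Omega^{1,0}(X,\End\Vcal)$, and on a curve $F_{\nabla'}-F_\nabla=\dbar_{\End}\Psi$ because the $(2,0)$ terms vanish, whence $\tr(\eta(F_{\nabla'}-F_\nabla))=\tr(\dbar_{\End}(\eta\Psi))=\dbar\,\tr(\eta\Psi)$ using $\dbar_{\End}\eta=0$; since $\tr(\eta\Psi)$ is a global $(1,0)$-form this integrates to zero by Stokes. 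Second, nilpotency of $\eta$ gives a holomorphic flag $0\subset V_1\subset\cdots\subset V_N=\Vcal$ with $\eta(V_k)\subseteq V_{k-1}$, and I would choose a $\dbar_\Vcal$-compatible connection preserving this flag (keep $\dbar_\Vcal$, which already preserves the flag, and take a block-diagonal $(1,0)$-part for a smooth splitting). Then $F_\nabla$ preserves each $V_k$, so $\eta F_\nabla$ maps $V_k$ into $V_{k-1}$; being strictly filtration-decreasing it is pointwise trace-free, so $\int_X\tr(\eta F_\nabla)=0$. Combining the two moves, the pairing vanishes on all of $\End(\Vcal)$, hence $a(\Vcal)=0$ and the indecomposable degree-zero bundle carries a holomorphic connection; additivity of the Atiyah class then finishes the general case.
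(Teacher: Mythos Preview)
Your proof is correct and follows essentially the same approach as the paper: both identify the obstruction as the Atiyah class (the paper packages it as the extension class of an explicit bundle $D(\Vcal)$), reduce to the indecomposable case, apply Serre duality together with the scalar-plus-nilpotent decomposition of $H^0(X,\End\Vcal)$, and kill the nilpotent contribution via a filtration on which $\eta$ is strictly decreasing. Your two-move treatment of the nilpotent term---first independence of the curvature representative, then choice of a flag-preserving connection so that $\eta F_\nabla$ is strictly upper-triangular---is exactly the Dolbeault incarnation of the paper's argument, which instead invokes functoriality of $D(\Vcal)$ under subbundles and a projection-operator computation to the same end.
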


The proof I give here follows Atiyah.
The following construction will be useful (see {\cite[p.\ 193]{Atiyah57}}).
Any holomorphic bundle $\Vcal\to X$ gives rise to a counterpart
 $D(\Vcal)$ as follows.  First, as a smooth bundle $D(\Vcal)=(V\otimes K)\oplus V$.  With respect to this splitting define the $\Ocal$-module structure by 
$$
f(\varphi,s)=(f\varphi+s\otimes df, fs)\ ,\qquad \ f\in \Ocal\ ,\ \varphi\in \Vcal\otimes\Kcal\ ,\ s\in \Vcal\ .
$$
One checks that this gives $D(\Vcal)$ the structure of a locally free sheaf over $\Ocal$.
Then we have a compatible inclusion $\varphi\mapsto(\varphi,0)$ and projection $(\varphi,s)\mapsto s$ making $D(\Vcal)$ into an extension
\begin{equation} \label{eqn:de}
0\lra \Vcal\otimes\Kcal\lra D(\Vcal)\lra \Vcal\lra 0\ .
\end{equation}
Observe that \eqref{eqn:de} \emph{splits if and only if $\Vcal$ admits a holomorphic connection}. Indeed, such a $\nabla$ gives a splitting by $s\mapsto (\nabla s,s)$, and if \eqref{eqn:de} splits then there is a $\underline\CBbb$-linear map $\Vcal\to \Vcal\otimes\Kcal$ satisfying \eqref{eqn:leibniz}.

\begin{remark} \label{rem:de}
The construction is functorial with respect to subbundles.  If $0=\Vcal_0\subset\Vcal_1\subset\cdots\subset\Vcal_\ell=\Vcal$ is a filtration of $\Vcal$ by holomorphic subbundles, then
there is a filtration 
$$0=D(\Vcal_0)\subset D(\Vcal_1)\subset\cdots\subset D(\Vcal_\ell)=D(\Vcal)\ .$$
\end{remark}

\begin{lemma} \label{lem:extension-class}
Given a holomorphic bundle $\Vcal\to X$, let 
$$[\beta]\in H^1(X,(\Vcal\otimes\Kcal)\otimes\Vcal^\ast)\simeq H^{1,1}_{\dbar}(X,\End V)\ ,$$
denote the extension class.  Then $[\tr\beta]=-2\pi\sqrt{-1}\, c_1(V)$.
\end{lemma}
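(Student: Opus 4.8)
The plan is to compute the extension class of \eqref{eqn:de} explicitly as a \v Cech cocycle built from the transition functions of $\Vcal$, to observe that taking the trace collapses everything onto the determinant line bundle, and finally to identify the resulting class with $c_1$ by passing from \v Cech to Dolbeault cohomology with the help of a Hermitian metric and \eqref{eqn:chern-weil}.

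First I would fix a good cover $\{U\}$ of $X$ with local holomorphic frames $\{e_i\}$ of $\Vcal$ and holomorphic transition matrices $a=a_{UU'}$, so that a frame change reads $e'_j=\sum_i a_{ij}e_i$. Over each $U$ the sequence \eqref{eqn:de} splits holomorphically: the assignment $e_i\mapsto(0,e_i)$, extended by the twisted $\Ocal$-module law $f(\varphi,s)=(f\varphi+s\otimes df,fs)$, is an $\Ocal$-linear splitting $\tau_U\colon\Vcal\to D(\Vcal)$ of the projection. The crucial feature is that this law carries a derivative: for holomorphic $f$ one has $f\cdot(0,s)=(s\otimes df,fs)$ with $df=f'\,dz$. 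Computing the two local splittings on an overlap via $\tau_U(e'_j)=\sum_i a_{ij}\cdot(0,e_i)=\big(\sum_i a'_{ij}\,e_i\otimes dz,\;e'_j\big)$ while $\tau_{U'}(e'_j)=(0,e'_j)$, I find that the difference $\tau_U-\tau_{U'}$ lands in the subsheaf $\Vcal\otimes\Kcal$ and is $\Ocal$-linear, giving (up to the orientation sign) the Atiyah cocycle $\theta_{UU'}=(da)\,a^{-1}$ valued in $\Hom(\Vcal,\Vcal\otimes\Kcal)\simeq\End\Vcal\otimes\Kcal$. Since differences of local splittings represent the extension class, $[\beta]=[\{\theta_{UU'}\}]$.

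Next I would take the trace. Because $\tr\big((da)a^{-1}\big)=\tr(a^{-1}\,da)=d\log\det a$, the trace of the extension class is the \v Cech class $\{d\log\det a_{UU'}\}\in H^1(X,\Kcal)$, whose cocycle $\det a_{UU'}$ is precisely the transition data of $\det\Vcal$; thus $[\tr\beta]$ is the extension class of $D(\det\Vcal)$ and the problem reduces to a line bundle. To convert this into a curvature representative I would choose a Hermitian metric $H$ on $\Vcal$ and let $h_U=\det H$ be the induced metric on $\det\Vcal$. The smooth $(1,0)$-forms $\eta_U=\partial\log h_U$ satisfy $\eta_U-\eta_{U'}=d\log\det a_{UU'}$ on overlaps, so they solve the additive \v Cech problem for our cocycle, and therefore the globally defined $(1,1)$-form $\dbar\eta_U=\dbar\partial\log h_U$ is a Dolbeault representative of $[\tr\beta]$. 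By Example \ref{ex:line-bundle-curvature} this is exactly $F_{(\dbar_{\det V},\det H)}=\tr F_{(\dbar_V,H)}$. Finally \eqref{eqn:chern-weil} reads $c_1(V)=\big[\tfrac{\sqrt{-1}}{2\pi}\tr F_{(\dbar_V,H)}\big]$, equivalently $[\tr F_{(\dbar_V,H)}]=-2\pi\sqrt{-1}\,c_1(V)$, which yields $[\tr\beta]=-2\pi\sqrt{-1}\,c_1(V)$.

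The main obstacle is the very first step. One must resist treating $D(\Vcal)$ as the naive direct sum: with the untwisted structure the $\dbar$-operator would be block diagonal and $\beta$ would spuriously vanish. All of the nontriviality lives in the twisted $\Ocal$-action, and it is the term $s\otimes df$ that manufactures the $da$ in the cocycle. Correctly isolating $\{(da)a^{-1}\}$ and then tracking the sign and the factor $-2\pi\sqrt{-1}$ through the \v Cech--Dolbeault comparison (pinned down by the normalization $\int_X\omega=2\pi$ and by \eqref{eqn:chern-weil}) is the delicate part. As a consistency check, note that the assertion is equivalent to the clean statement $[\tr\beta]=[\tr F_{(\dbar_V,H)}]$: the trace of the extension class of $D(\Vcal)$ is represented by the trace of the Chern curvature.
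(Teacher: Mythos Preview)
Your proof is correct and follows essentially the same approach as the paper: compute the extension class as the \v Cech cocycle $\{a^{-1}da\}$ (the paper writes it as $\psi_{ij}^{-1}d\psi_{ij}$) from differences of local holomorphic splittings of $D(\Vcal)$, take the trace to reduce to $d\log\det$ of the transition functions, and then pass to Dolbeault cohomology via $\partial\log h$ for a metric $h$ on $\det\Vcal$ to identify the result with the curvature and hence with $-2\pi\sqrt{-1}\,c_1(V)$. Your emphasis on the twisted $\Ocal$-module structure as the source of the derivative term is exactly the point, and matches the paper's computation $\partial f^{(j)}=\psi_{ij}(\psi_{ij}^{-1}\partial\psi_{ij}f^{(i)}+\partial f^{(i)})$.
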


\begin{proof}
Choose $s^{(i)}$ local holomorphic frames for $\Vcal$ on $U_i$, and let  $\psi_{ij}$ denote the transition functions: $s^{(i)}=s^{(j)}\psi_{ij}$.
  We can define local splittings of \eqref{eqn:de} by $s^{(i)}f^{(i)}\mapsto s^{(i)}\otimes df^{(i)}$, for $f^{(i)}$ a vector of holomorphic functions on $U_i$. 
  In particular,
  $$
  f^{(j)}=\psi_{ij}f^{(i)}\ ,\ 
  \partial f^{(j)}=\psi_{ij}(\psi_{ij}^{-1}\partial\psi_{ij}f^{(i)}+\partial f^{(i)})\ .
  $$
   Since the extension class is given by the image of $I$ under the map 
$$
H^0(X,\End \Vcal)\to H^1(X, \End\Vcal\otimes \Kcal)\ ,
$$ 
it follows from the local splitting above that $[\beta]$ is represented by the cocycle $[\psi_{ij}^{-1}d\psi_{ij}]$.  Hence, $[\tr\beta]=[d\log\det\psi]$.  On the other hand, if $h$ is a hermitian metric on $\det\Vcal$, then
$$
h_i|s^{(i)}_1\wedge \cdots\wedge s^{(i)}_n|^2=h_j|s^{(j)}_1\wedge \cdots\wedge s^{(j)}_n|^2\ ,
$$
so $h_i|\det\psi_{ij}|^2=h_j$. This implies $d\log\det\psi_{ij}=\partial\log h_j-\partial\log h_i$. By the Dolbeault isomorphism $[\beta]$ is represented by $[\dbar\partial\log h_i]=[F_{(\dbar_{\det\Vcal},h)}]=-2\pi\sqrt{-1}\, c_1(V)$ (see Example \ref{ex:line-bundle-curvature} and \eqref{eqn:chern-weil}).
\end{proof}

\begin{lemma} \label{lem:nilpotent}
If $\Vcal\to X$ is an indecomposable holomorphic bundle and $\phi\in H^0(X,\End \Vcal)$, Then there is $\lambda\in \CBbb$ such that $\phi-\lambda I$ is nilpotent.
\end{lemma}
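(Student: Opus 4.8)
The plan is to reduce the statement to the spectral decomposition of $\phi$ and then invoke indecomposability. The first observation is that the characteristic polynomial of $\phi$ is globally constant. For each point $x\in X$ consider $p_x(\lambda)=\det(\lambda I-\phi_x)$. The coefficient of $\lambda^{n-j}$ is $(-1)^j\tr(\Lambda^j\phi)$, which is a globally well-defined holomorphic function on $X$. Since $X$ is compact and connected, each such function is constant by the maximum principle. Hence $p(\lambda)=\prod_{i=1}^k(\lambda-\lambda_i)^{m_i}$ is a fixed polynomial with distinct roots $\lambda_i\in\CBbb$ and $\sum m_i=n=\rank\Vcal$, independent of $x$.

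Next I would build holomorphic idempotents projecting onto the generalized eigenbundles. Setting $q_i(\lambda)=p(\lambda)/(\lambda-\lambda_i)^{m_i}$, the $q_i$ are globally coprime, so a partial-fraction (B\'ezout) identity provides polynomials $a_i$ with $\sum_i a_iq_i\equiv 1$. Define $\pi_i=(a_iq_i)(\phi)\in H^0(X,\End\Vcal)$. Using Cayley--Hamilton, namely $p(\phi)=0$, one checks the standard relations $\sum_i\pi_i=I$, $\pi_i\pi_j=0$ for $i\neq j$, and $\pi_i^2=\pi_i$; moreover $\im\pi_i=\ker(\phi-\lambda_i I)^{m_i}$. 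Each $\pi_i$ is a pointwise projection, so its rank equals $\tr\pi_i$, a holomorphic and therefore constant function on $X$. Constant rank guarantees that $\im\pi_i$ is a holomorphic subbundle and that $\Vcal=\bigoplus_i\im\pi_i$ is a holomorphic direct sum. Because $\lambda_i$ is a root of the constant characteristic polynomial, each $\im\pi_i$ is nonzero.

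Finally, indecomposability forces $k=1$: otherwise the decomposition above would be a nontrivial holomorphic splitting of $\Vcal$. Thus $p(\lambda)=(\lambda-\lambda_1)^n$, and Cayley--Hamilton gives $(\phi-\lambda_1 I)^n=0$, so $\phi-\lambda_1 I$ is nilpotent with $\lambda=\lambda_1$. The only delicate point, and the step I expect to require the most care, is verifying that the algebraic idempotents genuinely yield a holomorphic bundle splitting; this is exactly where the constancy of $\tr\pi_i$, hence of $\rank\pi_i$, is essential, since an idempotent endomorphism of a vector bundle splits it as a direct sum of holomorphic subbundles precisely when its rank is locally constant.
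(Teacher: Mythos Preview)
Your argument is correct and takes a genuinely different route from the paper's. After the common first step (constancy of the characteristic polynomial on compact $X$), the paper proceeds by picking one eigenvalue, translating so that $\ker\phi\neq\{0\}$, writing the extension $0\to\ker\phi\to\Vcal\to\Qcal\to 0$ with $\dbar_E$ in block upper-triangular form, and showing that if the induced endomorphism $\phi_2$ on the quotient were invertible then the second fundamental form $\beta$ would satisfy $\dbar_E(\phi_1\phi_2^{-1})+\beta=0$, forcing the Dolbeault extension class to vanish and contradicting indecomposability. You instead produce the entire generalized-eigenspace decomposition at once via holomorphic idempotents $\pi_i=(a_iq_i)(\phi)$, i.e.\ the classical primary decomposition transported to bundles. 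Your approach is purely algebraic and handles all eigenvalues simultaneously; the paper's stays within the Dolbeault/extension-class language used in the surrounding proof of the Weil--Atiyah theorem. Your flagging of the one genuinely bundle-theoretic step---that a holomorphic idempotent of locally constant rank splits $\Vcal$ holomorphically---is apt, and the justification via constancy of $\tr\pi_i$ is exactly what is needed.
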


\begin{proof}
Since $\det(\phi-\lambda I)$ is holomorphic and $X$ is closed, the eigenvalues of $\phi$ must be constant.  So without loss of generality assume $\ker\phi\neq \{0\}, \Vcal$, and consider the sequence
\begin{equation} 
\begin{matrix}
0 \lra & \ker\phi &\lra & \Vcal & \lra & \coker \phi & \lra 0   \\
& \begin{sideways}$=$\end{sideways} &&&& \begin{sideways}$=$\end{sideways} & \\
& \Scal &&&& \Qcal &
\end{matrix}
\label{eqn:kernel}
\end{equation}
Write:
$$
\dbar_E=\left(\begin{matrix} \dbar_S&\beta \\ 0&\dbar_Q\end{matrix}\right)\qquad ,\qquad
\phi=\left(\begin{matrix} 0&\phi_1\\ 0&\phi_2\end{matrix}\right)\ .
$$
We wish to show $\phi_2=0$.  First note that
$$
0=\dbar_E\phi=\left(\begin{matrix} 0&\dbar_E\phi_1+\beta\phi_2\\ 0&\dbar_Q\phi_2\end{matrix}\right)\ .
$$
So $\phi_2$ is holomorphic as an endomorphism of $\Qcal$.  If $\phi_2\neq 0$, then it is an isomorphism.  This is so because again the eigenvalues of $\phi_2$ are constant, and by assumption $0$ is not an eigenvalue.  Hence, we can rewrite the upper right entry in the matrix equation above as: $\dbar_E(\phi_1\phi_2^{-1})+\beta=0$.  But then the Dolbeault class of $\beta$ vanishes and \eqref{eqn:kernel} splits, contradicting the assumption that $\Vcal$ be indecomposable.
\end{proof}

\begin{proof}[Proof of Theorem \ref{thm:weil}]
Suppose $\Vcal$ has a holomorphic connection. Then by Remark \ref{rem:de}, $D(\Vcal)$ splits.  Moreover, since $D(\Vcal)$ is natural with respect to subbundles, $D(\Vcal_i)$ splits for each indecomposable factor of $\Vcal$.  But then by Lemma \ref{lem:extension-class}, $\deg(\Vcal_i)=0$ for all $i$.  Conversely, suppose $\Vcal$ is indecomposable and $\deg(\Vcal)=0$. It suffices to show $D(\Vcal)$ splits. Now by Serre duality the extension class 
$$
[\beta]\in H^1(X, \End(\Vcal)\otimes \Kcal)\simeq \left(H^0(X, \End(\Vcal)) \right)^\ast\ ,
$$
and the perfect pairing is $\displaystyle (\beta,\phi)=\int_X \tr(\beta\phi)$. By Lemma \ref{lem:nilpotent} we may express $\phi=\lambda I+\phi_0$, where $\phi_0$ is nilpotent.  Then by Lemma \ref{lem:extension-class},
\begin{equation}
(\beta,\phi)=(\beta,\phi_0)+\lambda(\beta, I)
=(\beta,\phi_0)+\lambda\int_X \tr \beta\\
=(\beta,\phi_0) -2\pi\sqrt{-1}\lambda\deg(E)
=(\beta,\phi_0)\ . \label{eqn:phi0}
\end{equation}
Set $\Vcal_\ell=\Vcal$, and 
recursively define $\Vcal_{i-1}$ to be the saturation of $\phi_0(\Vcal_i)$.  Note that $\Vcal_{i-1}$ is a proper subbundle of $\Vcal_i$, since otherwise the restriction of  $\phi_0$ would be almost everywhere an isomorphism. Eventually the process terminates.
Adjust $\ell$ so that $\Vcal_0=\{0\}$, $\Vcal_1\neq \{0\}$. By Remark \ref{rem:de},  $\beta$ preserves the  filtration 
$0=\Vcal_0\subset\Vcal_1\subset\cdots\subset\Vcal_\ell=\Vcal$. Choose a hermitian metric on $V$ and let $\pi_i$ be orthogonal projection to $V_i$.  Note that
$$
I=\sum_{i=1}^\ell(\pi_i-\pi_{i-1})=\sum_{i=1}^\ell(\pi_i-\pi_i\pi_{i-1})=\sum_{i=1}^\ell\pi_i(I-\pi_{i-1})\ ,
$$
and $
(I-\pi_i)\beta\pi_i=(I-\pi_{i-1})\phi\pi_i=0
$.
Then 
\begin{align*}
\tr(\beta\phi_0)=
\tr(\phi_0\beta)&=\sum_{i=1}^\ell \tr(\phi_0\beta\pi_i(I-\pi_{i-1}))\\
&=\sum_{i=1}^\ell \tr((I-\pi_{i-1})\phi_0\beta\pi_i)\\
&=
\sum_{i=1}^\ell \tr((I-\pi_{i-1})\phi_0\pi_i\beta\pi_i)\\
&=0\ .
\end{align*}
So $(\beta,\phi_0)=0$, and by \eqref{eqn:phi0} we conclude $[\beta]=0$. The proof is complete.
\end{proof}

\subsection{The Corlette-Donaldson theorem} \label{sec:corlette}

\subsubsection{Hermitian metrics and equivariant maps}

Let $D=\SU_n\backslash\SL_n(\CBbb)$ and $\rho: \pi\to \SL_n(\CBbb)$. Then $\pi$ acts on the right on $D$ via the representation $\rho$. Following Donaldson, we give a concrete description of $D$ with its $\SL_n(\CBbb)$-action.  Set
$$
D=\{\text{positive hermitian $n\times n$ matrices $M$ with $\det M=1$}\}\ .
$$
Then the right $\SL_n$ action is given by $(M,g)\mapsto g^{-1}M(g^{-1})^\ast$.  Note that the space $D$ may be interpreted as the space of hermitian inner products on $\CBbb^n$ which induce a fixed one on $\det \CBbb^n$. The invariant metric on $D$ is given by $|M^{-1}dM|^2=\tr(M^{-1}dM)^2$.

\begin{definition} \label{def:equivariant}
A map $u:\widetilde X\to D$ is $\rho$-equivariant if $u( x\gamma)=u(x)\rho(\gamma)$ for all $x\in X$, $\gamma\in \pi$. 
\end{definition}

Let $E=\widetilde X\times \CBbb^n /\pi$.  We now claim that a hermitian metric on the bundle $E$ is equivalent to a choice of $\rho$-equivariant map, up to the choice of basepoints.  Indeed, suppose $u:\widetilde X\to D$ is $\rho$-equivariant.  By definition, a  section of $E$ is a map $\sigma: \widetilde X\to \CBbb^n$ such that $\sigma(x\gamma)=\sigma(x)\rho(\gamma)$.  Hence, if we define 
$\Vert\sigma\Vert^2(x)=\langle\sigma(x), \sigma(x)u(x)\rangle_{\CBbb^n}$, then
$$
\Vert\sigma\Vert^2(x\gamma)=\langle\sigma(x)\rho(\gamma), \sigma(x)u(x)(\rho(\gamma)^{-1})^\ast\rangle_{\CBbb^n}=\Vert\sigma\Vert^2(x)\ ,
$$
and so this is a well-defined metric on $E$.  In the other direction, given a metric $H$,  if $\sigma_i$ are sections, then
write $\langle \sigma_i,\sigma_j\rangle_H(x)=\langle \sigma_i(x),\sigma_j(x) u(x)\rangle_{\CBbb^n}$, for a hermitian matrix valued function $u(x)$.  Then
\begin{align*}
\langle \sigma_i(x),\sigma_j(x) u(x)\rangle_{\CBbb^n}&=
\langle \sigma_i,\sigma_j\rangle_H(x)=\langle \sigma_i,\sigma_j\rangle_H(x\gamma)  \\
&=\langle \sigma_i(x)\rho(\gamma),\sigma_j(x)\rho(\gamma)u(x\gamma)\rangle_{\CBbb^n} \\
&=\langle \sigma_i(x),\sigma_j(x)\rho(\gamma)u(x\gamma)\rho(\gamma)^\ast\rangle_{\CBbb^n} 
\end{align*}
for all sections. Hence, $\rho(\gamma)u(x\gamma)\rho(\gamma)^\ast=u(x)$, and $u$ is $\rho$-equivariant.

\subsubsection{Harmonic metrics}

If $u:\widetilde X\to D$ is a continuously
  differentiable $\rho$-equivariant map, 
we define its energy as follows.  
The derivative $du$ is a section of $T^\ast\widetilde X\otimes
u^\ast(TD)$.  
We have fixed an invariant metric on $D$, so the norm 
$e_u(x)=|du|^2(x)$.  In fact, by equivariance, $e_u(x)$ is 
invariant under $\pi$, so it gives a well-defined
 function on $X$ which is called the {\bf energy density}.  
The {\bf energy} of $u$ is then by definition 
\begin{equation} \label{eqn:energy}
E_\rho(u)=\int_X e_u(x)\, \omega\ .
\end{equation}
Note that the energy only depends on the conformal structure on 
$X$ and not the full metric.

The Euler-Lagrange equations for $E_\rho$ are easy to write
down. Define
\begin{equation} \label{eqn:tension}
\tau(u)=d_\nabla^\ast du\ .
\end{equation}
In the above we note that the bundle
$u^\ast(TD)$ has a connection $\nabla$: the pull-back of the
Levi-Civita connection on $D$. It is with respect to this
connection that $d_\nabla$ is defined.  The tensor $\tau(u)$
is called the {\bf tension field}. It is a section of 
$u^\ast(TD)$.
\begin{definition}\label{def:harmonic}
A $C^2$ $\rho$-equivariant map
$u$ is called {\bf harmonic} 
if it satisfies
\begin{equation} \label{eqn:harmonic-map}
\tau(u)=0\ .
\end{equation}
\end{definition}

 Eq.\ \eqref{eqn:harmonic-map} is a second order
elliptic nonlinear partial differential equation in $u$.
This statement is a slightly misleading because $u$ is a
mapping and not a collection of functions. 
This annoying fact makes defining weak solutions a little tricky.
In the case of maps between compact manifolds (the non-equivariant problem)
 one way to circumvent this issue  is to use a Nash isometric embedding of the target into
a euclidean space and rewrite the equations in terms of coordinate functions (cf.\ \cite{Schoen83}).   A more sophisticated technique, better suited to the equivariant problem, is to define the Sobolev
space theory intrinsically (cf.\ \cite{KorevaarSchoen93, KorevaarSchoen97,  Jost94}).
On the other hand, if we \emph{assume} $u$ is Lipschitz continuous, then we can introduce local
coordinates $\{y^a\}$ on $D$ and write
\eqref{eqn:harmonic-map} locally.  By Rademacher's theorem the pull-backs
$s_a=u^\ast(\partial/\partial y^a)$ give a local frame for
$u^\ast(TD)$ almost everywhere, and the connection forms for $\nabla$ 
in this frame are $\Gamma_{ab}^c(u) du^a\otimes s_c$, where
$\Gamma_{ab}^c(u)$ are the Christoffel symbols on $D$
evaluated along $u$. Writing $u=(u^1, \ldots, u^N)$ in terms
of the coordinates on $\{y^a\}$, it is easy to see that
the local expression of
\eqref{eqn:harmonic-map} becomes
\begin{equation}  \label{eqn:harmonic-map-local}
-\tau(u)^a= \Delta u^a+\Gamma_{bc}^a(u) \nabla u^b\cdot \nabla
u^c=0\ .
\end{equation}
To be clear, the dot product in the second term refers to the
metric on $X$, and $\Delta$ is the Laplace operator on $X$.
Notice that this equation is conformally invariant with
respect to the metric on $X$, a
manifestation of the fact that the energy functional itself is
conformally invariant.

In light of the previous section,
 $\rho$-equivariant maps are equivalent to choices of hermitian metrics.  
Given a flat connection $\nabla$ and
 hermitian metric on $E$ we can construct the equivariant map 
in a more intrinsic way.  First, lift $\nabla$ and $E$ to obtain 
a flat connection on a trivial bundle on the universal cover 
$\widetilde X$.  We will use the same notation to denote this lifted 
bundle and connection. If 
we choose a base point $\hat p$ covering the base 
point $p$ for $\pi_1(X,p)$, and we choose a unitary 
frame $\{\ebold_i(\hat p)\}$ for the fiber $E_{\hat p}$, let 
$\{\ebold_i(x)\}$ be given by parallel transport with respect to 
$\nabla$.  Then the map $u: \widetilde X\to D$ is given by 
$x\mapsto \langle \ebold_i, \ebold_j\rangle(x)$. 
 It is $\rho$-equivariant and uniquely determined up
 to the choice of $\hat p$ and the base point in $D$. 

Conversely, if $u:\widetilde X\to D$ is any $\rho$-equivariant 
map such that $u(\hat p)=I$, then $u$ defines a hermitian metric 
for which it is the equivariant map constructed above. 
 Notice that there is an equivalence of the type we saw for
 Higgs bundles.  If $g\in \Gcal_E^\CBbb(p)$ then the
 corresponding $\rho$-equivariant map obtained 
from the pair $(g(\nabla),H)$ is the same as that for
 $(\nabla, Hg)$.  Finally, if we act by a constant 
$g\in\SL_n(\CBbb)$, the same is true,
 but now the map is $(\rho\cdot g)$-equivariant.
The moral of the story is that finding a  harmonic
 metric is equivalent to finding a harmonic equivariant map in the $\Gcal_E^\CBbb$ orbit of $\nabla$. 

Given the data $(\nabla, H)$, we may uniquely 
write $\nabla=d_A+\Psi$ where, 
$d_A$ is a unitary connection on $(E,H)$, 
and $\Psi$ is a $1$-form with values in the bundle $\sqrt{-1}\gfrak_E$ of 
hermitian endomorphisms. 
 We can explicitly define $\Psi$ with respect to a local frame $\{s_i\}$ by 
\begin{equation}\label{eqn:psi}
\langle \Psi s_i, s_j\rangle=\frac{1}{2}\left\{
\langle \nabla s_i, s_j\rangle+\langle s_i,
 \nabla s_j\rangle -d\langle s_i, s_j\rangle
\right\}\ .
\end{equation}

\begin{lemma}[{cf.\ \cite{Donaldson87}}] \label{lem:energy-psi}
The energy of the map defined above is given by $E_\rho(u)
=4\Vert \Psi\Vert^2$. 
\end{lemma}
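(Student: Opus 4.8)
The plan is to reduce the statement to a pointwise matrix computation by passing to the flat trivialization on the universal cover. I lift $(E,\nabla,H)$ to $\widetilde X$; since $\nabla$ is flat and $\widetilde X$ is simply connected, I trivialize so that $\nabla=d$ and the parallel frame obtained by transporting $\{\ebold_i(\hat p)\}$ is the constant standard frame. In this trivialization the equivariant map is simply $u(x)=(\langle \ebold_i,\ebold_j\rangle)(x)=H(x)$, the matrix of the metric. Because the invariant metric on $D$ and the norm on $\sqrt{-1}\gfrak_E$ are both $\SU_n$-invariant, and because the energy density descends to a well-defined function on $X$, it suffices to verify the pointwise identity $e_u=4|\Psi|^2$ at a single point, where I am free to choose a unitary frame so that $H=I$.

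First I would extract $\Psi$ explicitly. Feeding the constant parallel frame $\ebold_i$ (so that $\nabla\ebold_i=0$) into the defining formula \eqref{eqn:psi} and using $d\langle\ebold_i,\ebold_j\rangle=dH_{ij}$, one finds that $\Psi$ is represented by
\[
\Psi=-\tfrac12\,H^{-1}dH\ ,
\]
so in particular $|\Psi|^2=\tfrac14\,|H^{-1}dH|^2$. Next I would compute the energy density from the invariant metric $|M^{-1}dM|^2=\tr(M^{-1}dM)^2$ on $D$. Pulling back by $u=H$ and contracting with the metric on $X$ over an orthonormal coframe gives $e_u=|du|^2=\tr\big((H^{-1}dH)^2\big)$. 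Evaluating at a point where $H=I$, the tangent vectors $dH$ are hermitian, the invariant metric on $T_I D$ agrees with the trace form on traceless hermitian endomorphisms, and therefore $e_u=|H^{-1}dH|^2=4|\Psi|^2$. Integrating against $\omega$ and recalling \eqref{eqn:energy} and the definition of the $L^2$-norm yields $E_\rho(u)=4\Vert\Psi\Vert^2$.

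The main obstacle is purely the bookkeeping of normalizations: one must check that the factor $\tfrac12$ in $\Psi=-\tfrac12 H^{-1}dH$ (coming from the symmetrization in \eqref{eqn:psi}) combines with the factor in the invariant metric to produce exactly $4$, and that the metric $\tr(\cdot^2)$ induced on the fibre $T_{u(x)}D$ genuinely matches the norm used on $\sqrt{-1}\gfrak_E$ in $|\Psi|^2$. This is where choosing a unitary frame with $H=I$ at the point is essential: it simultaneously makes $H^{-1}dH=dH$ hermitian, identifies $T_I D$ with traceless hermitian matrices carrying the metric $\tr(A^2)$, and renders the $H$-adjoint equal to the ordinary adjoint, so that $|\Psi|^2=\tr(\Psi^2)$. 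Once these identifications are aligned the factor of $4$ is immediate, and since the computation is local it is insensitive to the transpose/conjugation conventions implicit in \eqref{eqn:psi}, as these preserve the trace form.
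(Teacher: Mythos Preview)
Your proposal is correct and is essentially the same argument as the paper's, just unpacked in slightly different language. The paper works directly in the parallel frame $\{\ebold_i\}$: since $\nabla\ebold_i=0$ gives $d_A\ebold_i=-\Psi\ebold_i$, and $d_A$ is unitary, one gets $du_{ij}=\langle d_A\ebold_i,\ebold_j\rangle+\langle\ebold_i,d_A\ebold_j\rangle=-2\langle\Psi\ebold_i,\ebold_j\rangle$, i.e.\ $u^{-1}du=-2\Psi$; you reach the same identity $\Psi=-\tfrac12 H^{-1}dH$ by plugging the parallel frame into \eqref{eqn:psi}, which is of course equivalent since \eqref{eqn:psi} encodes exactly the unitarity of $d_A$ and hermiticity of $\Psi$.
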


\begin{proof}
From the definition above and the fact that $d_A$ is unitary,
$$
du_{ij}=\langle d_A \ebold_i, \ebold_j\rangle+\langle \ebold_i, d_A\ebold_j\rangle\ .
$$
On the other hand, the $\ebold_i$ are parallel with respect to $\nabla$, so
$d_A\ebold_j=-\Psi\ebold_j$.  Hence, $u^{-1}du=-2\Psi$.
\end{proof}

\begin{definition} \label{def:harmonic-metric}
We say that $H$ is a {\bf harmonic metric} if the map $u$ defined above is a harmonic map.
\end{definition}

\begin{proposition}[Corlette {\cite{Corlette88}}] \label{prop:semisimple}
If $\rho$ admits a harmonic metric then $\rho$ is semisimple.
\end{proposition}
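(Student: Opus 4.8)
The plan is to recast the harmonic condition analytically and then exploit a one-parameter family of metrics attached to any invariant subspace, showing that harmonicity forces the associated flat connection to split. First I would fix the flat connection $\nabla$ on $E$ with holonomy $\rho$ together with the harmonic metric $H$, and use the decomposition $\nabla=d_A+\Psi$ with $d_A$ unitary and $\Psi$ a $1$-form with values in the hermitian endomorphisms $\sqrt{-1}\gfrak_E$. By Lemma \ref{lem:energy-psi} the energy of the equivariant map $u$ is $E_\rho(u)=4\Vert\Psi\Vert^2$, and the statement that $H$ is harmonic means precisely that $u$ is a critical point of $E_\rho$ among $\rho$-equivariant maps (equivalently $d_A^\ast\Psi=0$).

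Next I would make the standard reduction: $\rho$ is semisimple if and only if every $\rho$-invariant subspace $W\subset\CBbb^n$ admits a $\rho$-invariant complement. Such a $W$ determines a $\nabla$-parallel (flat) subbundle $\Wcal\subset E$; let $\Wcal^\perp$ be its $H$-orthogonal complement, $P$ the orthogonal projection onto $\Wcal$, and $r=\rank\Wcal$. It suffices to show that $\Wcal^\perp$ is also $\nabla$-parallel, since the corresponding subspace is then a $\rho$-invariant complement. Because $\Wcal$ is $\nabla$-invariant, in the orthogonal splitting $E=\Wcal\oplus\Wcal^\perp$ the flat connection has the block form $\nabla=\left(\begin{matrix}\nabla_1&\gamma\\0&\nabla_2\end{matrix}\right)$, where $\gamma\in\Omega^1(X,\Hom(\Wcal^\perp,\Wcal))$ is the second fundamental form; the whole point is to prove $\gamma\equiv0$.

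To do this I would introduce the traceless hermitian endomorphism $\sigma=P-\tfrac{r}{n}I$ and the family of metrics $H_t$ obtained from $H$ by the complex gauge transformations $\exp(t\sigma)$, i.e.\ $\langle s_1,s_2\rangle_{H_t}=\langle e^{t\sigma}s_1,e^{t\sigma}s_2\rangle_H$. This is a geodesic ray in $D$ and gives a smooth family of $\rho$-equivariant maps $u_t$ with $u_0=u$. Since $\sigma$ preserves the splitting, $\Wcal^\perp$ stays $H_t$-orthogonal to $\Wcal$ for all $t$, the connection $\nabla$ and its blocks $\nabla_1,\nabla_2,\gamma$ are unchanged, and only the fiber metric rescales: by $e^{2t(n-r)/n}$ on $\Wcal$ and by $e^{-2tr/n}$ on $\Wcal^\perp$. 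Writing $\Psi_t$ for the hermitian part of $\nabla$ with respect to $H_t$, its diagonal blocks coincide with those of $\Psi$ (a constant scaling of each block leaves the block-adjoint, hence the block-hermitian part, fixed), while its off-diagonal blocks are $\tfrac12\gamma$ and $\tfrac12\gamma^{\ast_{H_t}}$. As $\gamma\in\Hom(\Wcal^\perp,\Wcal)$, its pointwise norm scales as $|\gamma|^2_{H_t}=e^{2t}|\gamma|^2_H$, so that
\begin{equation*}
E_\rho(u_t)=4\Vert\Psi_t\Vert^2=a+2e^{2t}\int_X|\gamma|^2_H\,\omega,
\end{equation*}
with $a$ independent of $t$.

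Finally, since $u_0=u$ is harmonic we have $\frac{d}{dt}E_\rho(u_t)\big|_{t=0}=0$, while the formula above gives $\frac{d}{dt}E_\rho(u_t)\big|_{t=0}=4\int_X|\gamma|^2_H\,\omega$; hence $\gamma\equiv0$. Thus $\nabla$ is block-diagonal, $\Wcal^\perp$ is $\nabla$-parallel, and $W$ has the desired $\rho$-invariant complement. As $W$ was an arbitrary invariant subspace, $\rho$ is semisimple. I expect the main obstacle to be the energy bookkeeping of the previous paragraph: one must justify carefully that in $\Vert\Psi_t\Vert^2=\Vert\text{diag}\Vert^2+\Vert\text{off-diag}\Vert^2$ the diagonal contribution is genuinely $t$-independent and that the entire $t$-dependence is carried by the off-diagonal term $\tfrac12(|\gamma|^2+|\gamma^\ast|^2)$ scaling as $e^{2t}$, so that the vanishing of the first variation is equivalent to the vanishing of the second fundamental form $\gamma$.
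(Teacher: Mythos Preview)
Your proof is correct and follows essentially the same strategy as the paper: deform the harmonic metric by the diagonal one-parameter family associated to an invariant subbundle and its orthogonal complement, observe that the diagonal contribution to $\Vert\Psi_t\Vert^2$ is constant while the off-diagonal contribution scales exponentially, and deduce from criticality at $t=0$ that the second fundamental form vanishes. The only difference is that the paper scales in the opposite direction (so the energy stays bounded as $t\to\infty$) and appeals to Hartman's convexity theorem for energy along geodesics to conclude, whereas you use the first-variation identity directly; your version is marginally more elementary since it avoids that citation.
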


\begin{proof}
Suppose  that $H$ is a critical metric but that $\nabla$ is reducible.  Let $V_1\subset V$ be a subbundle invariant with respect to the connection $\nabla$.   Let $V_2$ be the orthogonal complement of $V_1$, and $H_1$, $H_2$ the induced metrics.  We can express
$$
\nabla=\left( \begin{matrix} \nabla_1 & \beta \\ 0 &\nabla_2\end{matrix}\right)=
\left( \begin{matrix} d_{A_1}+ \Psi_1 & \beta \\ 0 &d_{A_2}+\Psi_2\end{matrix}\right)\ ,
$$
where $\beta\in \Omega^1(X, \Hom(V_2,V_1))$.  It suffices to show that the connection splits, or in other words that $\beta\equiv 0$.  The proposition then follows by induction.
Now using \eqref{eqn:psi} it follows that if $s_1, s_2$ are local sections of $V_1$, then 
$\langle\Psi s_1, s_2\rangle=\langle\Psi_1 s_1, s_2\rangle$.  Similarly, $\langle\Psi s_1, s_2\rangle=\langle\Psi_1 s_1, s_2\rangle$ for local sections of $V_2$. On the other hand, if $s_i\in V_i$, then
$\langle\Psi s_1, s_2\rangle=\frac{1}{2}\langle s_1, \beta s_2\rangle$.
It follows that
$$
\Psi=\left( \begin{matrix} \Psi_1 &\frac{1}{2} \beta \\ \frac{1}{2}\beta^\ast &\Psi_2\end{matrix}\right)\ .
$$
We now deform the metric $H$ to a family $H_t$ as follows: scale $H_1\mapsto e^{-(\rank V_2)t}H_1$, and $H_2\mapsto e^{+(\rank V_1)t}H_2$.  This, of course, preserves the orthogonal splitting and the condition $\det H_t=1$.
But $H_t$ is a geodesic homotopy of $\rho$-equivariant maps, and so
by a result of Hartman the energy $E_\rho(u_t)$ is convex \cite{Hartman67}.  On the other hand, by Lemma \ref{lem:energy-psi},
$$
\frac{1}{4}E_\rho(u_t)=\Vert\Psi_1\Vert^2_{H_1}+\Vert\Psi_2\Vert^2_{H_2}+\Vert\beta\Vert^2_{H}\, e^{-(\rank V)t/2}\ .
$$
In particular, $E_\rho(u_t)$ is bounded as $t\to \infty$.  The only way $E_\rho(u_t)$ could have a critical point at $t=0$ is if $E_\rho(u_t)$ is constant, which implies $\beta\equiv 0$. 
 This completes the proof.
\end{proof}

\subsubsection{The Corlette-Donaldson Theorem}

In this section we prove the following
\begin{theorem}[Corlette {\cite{Corlette88}}, Donaldson {\cite{Donaldson87}}, Jost-Yau {\cite{JostYau91}}, Labourie {\cite{Labourie91}}] \label{thm:corlette}
Let $\rho:\pi\to \SL_n(\CBbb)$ be semisimple. 
 Then there exists a 
$\rho$-equivariant harmonic map $u:\widetilde X\to D$.
\end{theorem}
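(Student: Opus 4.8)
The plan is to realize the harmonic map $u$ as a minimum of the energy functional $E_\rho$ and to produce it by running the gradient flow of $E_\rho$, in direct parallel with the Yang-Mills-Higgs flow used to prove Theorem \ref{thm:hitchin}. By \eqref{eqn:harmonic-map}, harmonicity $\tau(u)=d_\nabla^\ast du=0$ is exactly the Euler-Lagrange equation for $E_\rho$, and by Lemma \ref{lem:energy-psi} the energy equals $4\Vert\Psi\Vert^2$, where $\nabla=d_A+\Psi$ is the decomposition of the fixed flat connection relative to the varying metric. Thus finding a harmonic metric amounts to minimizing $\Vert\Psi\Vert^2$ over the $\Gcal_E^\CBbb$-orbit of $\nabla$, and the semisimplicity hypothesis should be exactly the converse of the obstruction recorded in Proposition \ref{prop:semisimple}.

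First I would run the harmonic map heat flow $\partial_t u=\tau(u)$ with arbitrary smooth equivariant initial data. Because the target $D=\SU_n\backslash\SL_n(\CBbb)$ is a symmetric space of nonpositive curvature, the Eells-Sampson theory (in its equivariant form) guarantees that the flow preserves $\rho$-equivariance and exists for all $t\geq 0$ with no finite-time singularities, and along the flow $\frac{d}{dt}E_\rho(u_t)=-2\Vert\tau(u_t)\Vert_{L^2}^2\leq 0$. Since the energy is bounded below, integrating gives $\int_0^\infty\Vert\tau(u_t)\Vert_{L^2}^2\,dt<\infty$, so there is a sequence $t_j\to\infty$ with $E_\rho(u_{t_j})$ bounded and $\Vert\tau(u_{t_j})\Vert_{L^2}\to 0$. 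The Bochner-Eells-Sampson identity, whose curvature term has a favorable sign since $\Riem^D\leq 0$, then yields local bounds on the energy density, hence local $C^\infty$ estimates on the approximately harmonic maps $u_j:=u_{t_j}$ away from any escape to infinity.

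The hard part is compactness of $\{u_j\}$ in the noncompact target $D$: a priori the maps could diverge toward the boundary at infinity of the symmetric space, and it is precisely here that semisimplicity of $\rho$ is indispensable. After composing with a global isometry of $D$ --- equivalently, conjugating $\rho$ --- I would normalize the maps at a fixed lift $\hat p\in\widetilde X$ of the base point; the local energy bounds then control the oscillation of each $u_j$, so convergence can only fail if the normalizing isometries run off to infinity in $\SL_n(\CBbb)$. Using the convexity of the distance function on the nonpositively curved space $D$ together with the uniform energy bound, I would argue that such divergence forces the limiting configuration to fix a point on the boundary at infinity of $D$; by equivariance this exhibits a proper $\rho$-invariant subspace of $\CBbb^n$ admitting no $\rho$-invariant complement, contradicting the semisimplicity of $\rho$. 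With escape thus ruled out, $\{u_j\}$ subconverges in $C^\infty_{loc}$ to a $\rho$-equivariant limit $u_\infty$ with $\tau(u_\infty)=0$; elliptic regularity makes $u_\infty$ smooth, and it is the desired harmonic map. One could alternatively obtain $u_\infty$ by directly minimizing $E_\rho$ via the Korevaar-Schoen theory of maps into nonpositively curved spaces (Jost-Yau, Labourie), but the flow argument is the one that most closely mirrors the proof of Theorem \ref{thm:hitchin}.
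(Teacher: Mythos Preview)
Your approach is essentially the same as the paper's: run the harmonic map heat flow, use the Eells--Sampson--Bochner formula to get uniform Lipschitz bounds from the energy bound, extract a sequence with $\tau(u_j)\to 0$ in $L^2$, and then use the representation-theoretic hypothesis to rule out escape to infinity in $D$. The paper packages the flow argument into Lemma \ref{lem:lipschitz} and the compactness step into Lemma \ref{lem:reductive}, and then bootstraps regularity exactly as you indicate.

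There is one imprecision in your compactness step. You claim that divergence of $u_j(\hat p)$ to a boundary point of $D$ exhibits a proper $\rho$-invariant subspace \emph{admitting no $\rho$-invariant complement}. But a semisimple reducible $\rho$ certainly can fix points on the ideal boundary of $D$: if $\rho=\rho_1\oplus\rho_2$, the one-parameter family of block metrics $e^{-n_2 t}H_1\oplus e^{n_1 t}H_2$ is a $\rho$-equivariant geodesic in $D$, so $\rho$ fixes its endpoints at infinity, and the associated invariant subspace $\CBbb^{n_1}$ \emph{does} have a complement. So the assertion ``no invariant complement'' does not follow directly from the existence of a boundary fixed point. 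The paper sidesteps this by first reducing to the case of \emph{irreducible} $\rho$ by induction on rank (a direct sum of harmonic metrics on the irreducible summands is harmonic). For irreducible $\rho$ the argument is then clean and concrete: if $h_j=u_j(\hat p)$ is unbounded, choose $\varepsilon_j\to 0$ with $\varepsilon_j h_j\to h_\infty\neq 0$; since $\det h_\infty=0$, the kernel $V=\ker h_\infty$ is a proper subspace, and the uniform Lipschitz bound forces $V$ to be $\rho$-invariant (Lemma \ref{lem:reductive}), contradicting irreducibility. You should either insert this reduction, or replace the boundary-at-infinity heuristic with a more careful argument (e.g.\ showing that the specific direction of divergence produced by an energy-minimizing sequence is incompatible with $\rho$ landing in a Levi subgroup).
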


The following result can be compared to Lemma \ref{lem:minimizing}. 
It will be proven when we discuss the harmonic map flow in the next section.
\begin{lemma} \label{lem:lipschitz}
For any $\rho:\pi\to \SL_n(\CBbb)$ there is a sequence $u_j$ of 
 $\rho$-equivariant maps $u_j:\widetilde X\to D$ satisfying
the conditions:
\begin{enumerate}
\item $u_j$ is energy minimizing.
\item 
 The $u_j$ have a  uniformly bounded Lipschitz constant.
\item $\tau(u_j)\to 0$ in $L^2$.
\end{enumerate}
\end{lemma}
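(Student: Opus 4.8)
The plan is to exhibit the precise analogue of Lemma \ref{lem:minimizing}, with the harmonic map heat flow playing the role of the Yang--Mills--Higgs flow. This is the $L^2$-gradient flow of the energy \eqref{eqn:energy},
\[
\frac{\partial u}{\partial t}=-\tau(u)\ ,
\]
which in the local coordinates of \eqref{eqn:harmonic-map-local} is the Eells--Sampson flow $\partial_t u^a=\Delta u^a+\Gamma^a_{bc}(u)\nabla u^b\cdot\nabla u^c$; since the data are $\rho$-equivariant the flow preserves equivariance. First I would choose an energy-minimizing sequence $v_j$ among all $\rho$-equivariant maps, which exists and has uniformly bounded energy $E_\rho(v_j)\leq C$ because $E_\rho\geq 0$. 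The key structural input, exactly parallel to the favorable sign in Hitchin's equation exploited in Proposition \ref{prop:higgs-bound}, is that the target $D=\SU_n\backslash\SL_n(\CBbb)$ is a complete, simply connected manifold of \emph{nonpositive} sectional curvature. This forces long-time existence of the flow: equivariance lets one regard it as evolving over the compact quotient $X$, and nonpositive curvature together with completeness of $D$ rules out finite-time singularities. The basic properties of the flow used below will be established in the next section.

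Running the flow from each $v_j$, the energy identity reads
\[
\frac{d}{dt}E_\rho(u_t)=-2\Vert\tau(u_t)\Vert_{L^2}^2\ ,
\]
so $E_\rho$ is nonincreasing and every point of the trajectory starting at $v_j$ remains energy-minimizing; this gives (i). Integrating,
\[
2\int_0^\infty\Vert\tau(u_t)\Vert_{L^2}^2\,dt\leq E_\rho(v_j)\leq C\ ,
\]
so for each $j$ one may pick a time $t_j\geq 1$ with $\Vert\tau(u(t_j))\Vert_{L^2}\to 0$; setting $u_j=u(t_j)$ yields (iii).

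For the uniform Lipschitz bound (ii) I would use the Bochner formula for the energy density $e_u=|du|^2$. Because the curvature of $D$ enters with the good sign and the geometry of $X$ is fixed, this gives a differential inequality
\[
\left(\frac{\partial}{\partial t}-\Delta\right)e_{u_t}\leq C\,e_{u_t}\ ,
\]
the harmonic-map counterpart of the subsolution property of $|f_{(A,\Phi)}|^2$ proved above. Passing to $e^{-Ct}e_{u_t}$ produces an honest subsolution of the heat equation, and a standard parabolic mean-value (heat-kernel) estimate then bounds $\sup_X e_{u_t}$ for all $t\geq 1$, uniformly in $t$, in terms of $\Vert e_{u_t}\Vert_{L^1}=E_\rho(u_t)\leq C$. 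Since $e_u=|du|^2$, this is exactly a uniform bound on the Lipschitz constants of the $u_j$.

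The main obstacle is analytic: setting up the flow and proving long-time existence and the a priori energy-density estimate in the \emph{equivariant, noncompact} setting, where neither the domain $\widetilde X$ nor the target $D$ is compact. Nonpositive curvature of $D$ is what makes both work---it yields the Bochner inequality with a controlled constant (preventing concentration of the energy density) and prevents the flow from escaping to infinity in finite time. I stress that, unlike Theorem \ref{thm:corlette}, no semisimplicity of $\rho$ is required here: we extract only the bounds (i)--(iii) along the flow and not its convergence, so the image of $u_j$ may well drift toward the ideal boundary of $D$.
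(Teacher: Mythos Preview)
Your proposal is correct and follows essentially the same approach as the paper: run the Eells--Sampson harmonic map flow from an energy-minimizing sequence, use the energy identity to get $\tau\to 0$ in $L^2$ along suitable times, and use the Bochner inequality $(\partial_t-\Delta)e\leq Ce$ (coming from nonpositive curvature of $D$ and bounded Ricci on $X$) together with a heat-kernel argument to get the uniform Lipschitz bound for $t\geq 1$. The paper phrases the selection of times via a diagonalization argument rather than your direct choice of $t_j$, but the content is identical.
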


\begin{lemma} \label{lem:reductive}
Let $\rho:\pi\to \SL_n(\CBbb)$ be irreducible, and let $u_j: 
\widetilde X\to D$ be a sequence of $\rho$-equivariant maps with a 
uniform Lipschitz constant.  Then $u_j(\hat p)$ is bounded.
\end{lemma}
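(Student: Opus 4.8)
The plan is to argue by contradiction, exploiting the fact that $D=\SU_n\backslash\SL_n(\CBbb)$ is a symmetric space of noncompact type, hence a complete, simply connected manifold of nonpositive curvature (a Hadamard manifold), on which $\SL_n(\CBbb)$ acts by isometries through $(M,g)\mapsto g^{-1}M(g^{-1})^\ast$. Write $M_j=u_j(\hat p)$ and fix the basepoint $o=I\in D$. Suppose, for contradiction, that $M_j$ is unbounded; after passing to a subsequence we may assume $\dist(o,M_j)\to\infty$. The goal is to show that this escape to infinity forces $\rho(\pi)$ to fix a point $\xi\in\partial_\infty D$ of the ideal boundary, and then to observe that this is incompatible with irreducibility.

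First I would record the displacement estimate that the Lipschitz hypothesis buys us. Fix generators $\gamma_1,\ldots,\gamma_k$ of $\pi$, and let $C=\max_i \dist_{\widetilde X}(\hat p,\hat p\gamma_i)$. By $\rho$-equivariance, $u_j(\hat p\gamma_i)=M_j\cdot\rho(\gamma_i)$, so the uniform Lipschitz constant $L$ gives
$$
\dist\bigl(M_j,\ M_j\cdot\rho(\gamma_i)\bigr)=\dist\bigl(u_j(\hat p),\,u_j(\hat p\gamma_i)\bigr)\leq L\,C
$$
for every $j$ and every generator. In other words, each isometry $\rho(\gamma_i)$ displaces the escaping sequence $M_j$ by a uniformly bounded amount.

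Next I would pass to the boundary at infinity. Since $\dist(o,M_j)\to\infty$, after a further subsequence the unit initial directions of the geodesic segments from $o$ to $M_j$ converge, giving a limiting ray and a boundary point with $M_j\to\xi$ in the cone topology. The displacement bound shows that $\rho(\gamma_i)\cdot M_j$ remains within bounded distance of $M_j$; in a Hadamard manifold two sequences at bounded distance converge to the same boundary point, so $\rho(\gamma_i)\cdot M_j\to\xi$ as well. On the other hand each $\rho(\gamma_i)$ acts continuously on $\partial_\infty D$, whence $\rho(\gamma_i)\cdot M_j\to\rho(\gamma_i)\cdot\xi$. Comparing limits yields $\rho(\gamma_i)\cdot\xi=\xi$ for each generator, and therefore $\rho(\pi)\cdot\xi=\xi$.

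Finally I would invoke the structure of boundary stabilizers: the stabilizer in $\SL_n(\CBbb)$ of any $\xi\in\partial_\infty D$ is a proper parabolic subgroup, which preserves a proper nonzero subspace $W\subsetneq\CBbb^n$ (the span of the top eigenspaces of a traceless hermitian matrix representing the direction $\xi$). Thus $\rho(\pi)W=W$, contradicting the irreducibility of $\rho$; hence $M_j=u_j(\hat p)$ must be bounded. I expect the one genuinely delicate point to be the nonpositive-curvature input — that bounded-distance sequences share a boundary point and that the isometric action extends continuously to $\partial_\infty D$ — together with the identification of boundary stabilizers with proper parabolics. If one prefers to avoid the general theory of the ideal boundary, the same conclusion can be reached concretely: writing $M_j$ through its ordered eigenvalues, the escape to infinity singles out a limiting flag, and the displacement bound forces each $\rho(\gamma_i)$ to preserve it.
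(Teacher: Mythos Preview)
Your argument is correct, but the paper takes a more elementary and concrete route that avoids the ideal boundary entirely. The paper works directly with the matrix model: setting $h_j=u_j(\hat p)$ and assuming unboundedness, it chooses scalars $\varepsilon_j\to 0$ so that (after a subsequence) $\varepsilon_j h_j\to h_\infty\neq 0$; since $\det h_j=1$, necessarily $\det h_\infty=0$, so $V=\ker h_\infty$ is a proper nonzero subspace. The displacement bound you also derive is then translated into the explicit inequality
\[
\bigl|\langle w,\,v h_j\rangle_{\CBbb^n}-\langle w,\,v g h_j g^\ast\rangle_{\CBbb^n}\bigr|\leq B
\]
for $v\in V$, $g^{-1}=\rho(\gamma)$, and all $w$; multiplying by $\varepsilon_j$ and passing to the limit shows $vg\in\ker h_\infty$, so $V$ is $\rho(\pi)$-invariant, contradicting irreducibility.

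The two arguments are morally the same --- your boundary point $\xi$ is encoded by the degenerate limit $h_\infty$, and the invariant subspace you extract from the parabolic stabilizer is precisely $\ker h_\infty$ (indeed, you essentially sketch this concrete version in your final sentence). What your approach buys is conceptual clarity and immediate generalization to other symmetric spaces or $\mathrm{CAT}(0)$ targets; what the paper's approach buys is self-containment, requiring no input about the cone topology, continuity of the boundary action, or the identification of boundary stabilizers with parabolics.
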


\begin{proof}
Suppose not.
Set $h_j=u_j(\hat p)$ and choose $\varepsilon_j\to 0$ such that
 (perhaps after passing to a subsequence) $\varepsilon_j h_j\to 
h_\infty\neq 0$.  Notice that $\det h_\infty=0$, so 
$V=\ker h_\infty$ is a proper subspace of $\CBbb^n$.  I 
claim $\rho(\pi)$ fixes $V$.  
Indeed, if $\rho(\gamma)=g^{-1}$ and $v\in V$, then since $d(u_j(\hat p), u_j(\hat p)\cdot g^{-1})$ is uniformly bounded we have
$$
| \langle  w, v h_j\rangle_{\CBbb^n}-\langle w, v g h_j g^\ast\rangle_{\CBbb^n}|\leq B\ ,
$$
for a constant $B$ independent of $j$, and all $w\in \CBbb^n$.  It follows that
$$
| \langle w, v\varepsilon_j h_j \rangle_{\CBbb^n}-\langle wg, vg  \varepsilon_j h_j \rangle_{\CBbb^n}|\lra 0\ ,
$$
and since $vh_\infty=0$ we conclude that $\langle wg, vg h_\infty \rangle_{\CBbb^n}=0$.  Since $w$ was arbitrary, $v g\in V$.
\end{proof}

\begin{proof}[Theorem \ref{thm:corlette}]
By induction it suffices to prove the result for irreducible representations.
Let $u_j$ be a minimizing sequence as in Lemma \ref{lem:reductive},
the existence of which is  guaranteed by Lemma
\ref{lem:lipschitz}.  It follows from Ascoli's theorem
that  there is a uniformly convergent subsequence, also
denoted $u_j$, with the limit $u_j\to u_\infty$ a Lipschitz
$\rho$-equivariant map. 
I claim that we may arrange for $u_\infty$ to be a harmonic
map. Indeed,
  since the convergence is uniform, we
may choose local coordinates and write $u^a$. Then since
$|du^a|$ is uniformly bounded, we may assume further 
 that $u_j\to
u_\infty$ weakly in $L^2_{1,loc.}$.
By the condition in Lemma \ref{lem:lipschitz} (iii), 
the coordinates  $u_\infty^a$ are in $L^2_{1,loc.}$ and form 
 a weak solution of \eqref{eqn:harmonic-map-local}.  
Since $u_\infty$ is Lipschitz, elliptic regularity of the
Laplace operator implies $u_\infty\in L^2_{2,loc.}$.
By the remark following \eqref{eqn:harmonic-map-local}, we may
assume that the local metric on $X$ is euclidean.  Now
differentiate to obtain:
\begin{align*}
\Delta(\nabla u^a_\infty)+\nabla(\Gamma^a_{bc}(u_\infty)\nabla
u^b_\infty\cdot\nabla u^c_\infty)&= 0\ ;\\
\Delta(\nabla^2 u^a_\infty)+\nabla^2(\Gamma^a_{bc}(u_\infty)\nabla
u^b_\infty\cdot\nabla u^c_\infty)&= 0\ .
\end{align*}
Notice that since $u_\infty$ is Lipschitz
 the second term in the first equation is in $L^2$.
It then follows that $u^a_\infty\in L^2_{3,loc.}$.  Because of the
inclusion $L^2_3\hookrightarrow L^4_2$, the second term of
the second equation above is then in $L^2$.  This in turn implies
$u^a_\infty\in L^2_{4,loc.}$.  Finally, $L^2_4 \subset C^{2,\alpha}$,
and so $u_\infty$ is a strong solution to the harmonic map equations
\eqref{eqn:harmonic-map}. 
This completes the proof.
\end{proof}

\subsubsection{The harmonic map flow}
The harmonic map flow is defined by 
\begin{equation} \label{eqn:harmonic-map-flow}
\dot u= -\tau(u)\ .
\end{equation}
Here $u_t$ is a family of $\rho$-equivariant maps.
Since $D$ has non-positive curvature, the flow is
very well-behaved.
Long time existence  is proven in \cite{EellsSampson64, Hamilton75}.

The variation of the energy along the flow is given by 
$$
\frac{d}{dt}E(u_t)=2\int_X \langle du, d\dot u\rangle
=2\int_X \langle d_\nabla^\ast du, \dot u\rangle\omega
=-2\int_X |\tau(u)|^2\omega \ .
$$
In particular, \emph{energy decreases along the flow}.
Moreover,
\begin{equation} \label{eqn:tau-bound}
2\int_0^\infty dt\int_X |\tau(u_t)|^2\omega\leq E(u_0)\ .
\end{equation}

We are now ready for the 

\begin{proof}[Proof of Lemma \ref{lem:lipschitz}]
The proof
is based on the famous 
Eells-Sampson-Bochner 
formula for the change of the energy
 density along the harmonic map flow \cite{EellsSampson64}.  Let $u=u(t,x)$ be a solution to  \eqref{eqn:harmonic-map-flow}, and $e=e_u(t,x)$. Then
$$
-\frac{\partial e}{\partial t}+\Delta e=
|\nabla du |^2+\Ric_X(du, du)-\Riem_D(du,du,du,du)
$$ 
Now since $\Riem_D\leq 0$ and $\Ric_X$ is
 bounded below a negative constant, we have
$$
\frac{\partial e}{\partial t}-\Delta e \leq C\cdot e\ .
$$
Using an explicit argument with the heat kernel, 
this inequality along with the fact that energy is decreasing imply an estimate of the following type
\begin{equation} \label{eqn:energy-bound}
\sup e_{u_t}\leq C\cdot E_{u_0}\ ,
\end{equation}
for $t\geq 1$, say, 
where $C$ is depends only on the geometry of $X$ and $D$.

Now
let $u^{(j)}$ be an energy minimizing sequence 
of $\rho$-equivariant maps.  Let $u^{(j)}_t$ 
be the corresponding maps after the time $t$ flow of
 \eqref{eqn:harmonic-map-flow}.   
 Then since  energy is decreasing along the flow,
$u^{(j)}_{t_j}$ is 
also energy minimizing for any choice of sequence $t_j$.  
On the other hand,
 the right hand of \eqref{eqn:energy-bound} is uniformly bounded,
 so if we choose each $t_j\geq 1$, say, then  $u^{(j)}_{t_j}$ 
is also uniformly Lipschitz. Finally, for each fixed initial
condition $u_0$, \eqref{eqn:tau-bound} implies $\tau(u_{t_j})\to
0$ in $L^2$ along some sequence.
By a diagonalization argument we can arrange for
$u^{(j)}_{t_j}$ to satisfy this property as well.
\end{proof}

\subsection{Hyperk\"ahler reduction}

\subsubsection{The moduli spaces are real isomorphic}
Using \eqref{eqn:psi}, given a hermitian metric we may identify the space of all connections
$$
\Ccal_E=\left\{ (A,\Psi) \in \Acal_E\times  \Omega^1(M,\sqrt{-1}\gfrak_E) \right\}\ .
$$
Then  $\Ccal_E$
 is a \emph{hyperk\"ahler manifold}, and the action of the gauge group $\Gcal$ has associated moment maps
\begin{equation} \label{E:momentmaps}
\mu_1(A,\Psi) = F_A+\tfrac{1}{2}[\Psi,\Psi] \ ,\
\mu_2(A,\Psi)= d_A\Psi \ ,\
\mu_3(A,\Psi)=  d_A(\ast\Psi)\ .
\end{equation} 
 Let  ${\bf m}=(\mu_1,\mu_2,\mu_3)$.
 The hyperk\"ahler quotient is by definition
 $$
{\bf m}^{-1}(0) \bigr/ \Gcal
= \mu_1^{-1}(0)\cap \mu_2^{-1}(0)\cap \mu_3^{-1}(0) \bigr/ \Gcal_E\ .
$$ 
  
The two pictures we have been discussing above are equivalent to  a reduction of $\Ccal_E$ in steps, but  in two different ways.  The first is the  point of view of Hitchin and Simpson described in Section \ref{sec:hitchin}.  Namely, the space of Higgs bundles is given by 
$$
\Bcal_E=\mu_2^{-1}(0)\cap \mu_3^{-1}(0)\subset \Ccal_E\ ,
$$
where the relationship between $\Psi$ is obtained from  $\Phi$ by $\Psi=\Phi+\Phi^\ast$,
and conversely $\Phi$ is the $(1,0)$ part of $\Psi$.
 Just like for functions on surfaces, $\Psi$ harmonic if and only if $\Phi$ is holomorphic.
Now Theorem \ref{thm:hitchin} guarantees that the orbit of every polystable Higgs bundle intersects
locus $\mu_1^{-1}(0)$ in $\Bcal^{ss}$.  Hence, we have
$$
\Mfrak_D^{(n)}=\Bcal_E^{ss}\doubleslash \Gcal_E^\CBbb
={\bf m}^{-1}(0) \bigr/ \Gcal_E
= \mu_1^{-1}(0)\cap \mu_2^{-1}(0)\cap \mu_3^{-1}(0) \bigr/ \Gcal_E\ .
$$ 

The second point of view (e.g.\ Corlette and Donaldson, Section \ref{sec:corlette}) comes from the observation that the space of flat connections is
$$
\Ccal_E^{flat}=\mu_1^{-1}(0)\cap \mu_2^{-1}(0) \subset \Ccal_E\ .
$$
Given $\nabla\in \Ccal_E^{flat}$,
the condition that the associated $\hol(\nabla)$-equivariant map be harmonic is precisely that $\nabla\in \mu_3^{-1}(0)$.  Indeed, suppose  $\delta\nabla$ is a variation of $\nabla$.
It follows from  \eqref{eqn:psi} that $\delta\Psi=\delta\nabla+(\delta\nabla)^\ast$. In the case of a complex gauge transformation with $g^{-1}\delta g=\phi$, $\delta\nabla=\nabla\phi$, and
$$
\delta\Psi=d_A(\phi+\phi^\ast)+[\Psi, \phi-\phi^\ast]\ .
$$
It is easy to see that  the second term will not contribute in the variation
$
\tr(\delta\Psi\wedge \ast\Psi)+\tr(\Psi\wedge\ast\delta\Psi)
$ (by direct computation, and also from the fact that unitary gauge transformations do not vary the associated equivariant map).
So from Lemma \ref{lem:energy-psi} we have 
\begin{align*}
\delta E(u)&=4\int_X \tr(\delta\Psi\wedge \ast\Psi)+\tr(\Psi\wedge\ast\delta\Psi) \\
&=4\int_X \tr(d_A(\phi+\phi^\ast)\wedge \ast\Psi)+\tr(\Psi\wedge\ast d_A(\phi+\phi^\ast)) \\
&=-8\int_X\tr((\phi+\phi^\ast)d_A(\ast\Psi))\ .
\end{align*}
Since $\Psi$ is hermitian and $\phi$ is arbitrary, $\Psi$ is a critical point for the energy  if and only if $d_A(\ast\Psi)=0$.

Now Theorem \ref{thm:corlette} guarantees that the orbit of every semisimple representation contains a harmonic map.  It therefore follows that the holonomy map gives a homeomorphism
$$
\Mfrak_B^{(n)}\simeq \Ccal_E^{flat}\doubleslash\Gcal_E^\CBbb\simeq \mu_1^{-1}(0)\cap \mu_2^{-1}(0)\cap \mu_3^{-1}(0) \bigr/ \Gcal_E\ .
$$
So the Dolbeault and Betti moduli spaces coincide!
\begin{theorem}[\cite{Simpson94a, Simpson94b}]
The identification above gives a homeomorphism $\Mfrak^{(n)}_D\simeq \Mfrak^{(n)}_B$.
\end{theorem}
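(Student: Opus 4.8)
The plan is to observe that, in the two preceding subsections, both moduli spaces have been realized as the \emph{same} hyperk\"ahler quotient ${\bf m}^{-1}(0)\bigr/\Gcal_E$, so that the asserted homeomorphism is merely the composition of these two identifications. The common ground is the dictionary $\Psi=\Phi+\Phi^\ast$, $\nabla=d_A+\Psi$, under which a single point of ${\bf m}^{-1}(0)=\mu_1^{-1}(0)\cap\mu_2^{-1}(0)\cap\mu_3^{-1}(0)$ simultaneously records a Higgs bundle $(\dbar_A,\Phi)$ (since $\mu_2^{-1}(0)\cap\mu_3^{-1}(0)=\Bcal_E$ encodes $\dbar_A\Phi=0$) and a flat connection $\nabla$ (since $\mu_1^{-1}(0)\cap\mu_2^{-1}(0)=\Ccal_E^{flat}$). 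First I would record the two forgetful maps $\Pi_D\colon{\bf m}^{-1}(0)\to\Bcal_E^{ss}$, $(A,\Psi)\mapsto(\dbar_A,\Phi)$, and $\Pi_B\colon{\bf m}^{-1}(0)\to\Hom(\pi,\SL_n(\CBbb))$, $(A,\Psi)\mapsto\hol(\nabla)$. Both are manifestly continuous and descend to the double-slash quotients.

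Next I would verify that $\Pi_D$ and $\Pi_B$ each induce a bijection on quotients. That $\Pi_D$ takes values in the semistable locus and that every $\Gcal_E^\CBbb$-orbit of a polystable Higgs bundle meets ${\bf m}^{-1}(0)$ is exactly Proposition \ref{prop:easy} together with Theorem \ref{thm:hitchin}; the representative is unique up to $\Gcal_E$ by the standard uniqueness of solutions to \eqref{eqn:hitchin2} within a complex gauge orbit. To match equivalence relations, note that the Seshadri graded $\Gr_S(\Ecal,\Phi)$ is what the double slash identifies, and by the remark following \eqref{eqn:critical} a reducible solution splits orthogonally as a sum of stable solutions of the same slope; hence the fiber of $\Pi_D$ over an $S$-equivalence class is a single $\Gcal_E$-orbit. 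The same analysis on the Betti side uses Proposition \ref{prop:semisimple} and Theorem \ref{thm:corlette}: every semisimple $\rho$ carries a harmonic metric, so its $\Gcal_E^\CBbb$-orbit of flat connections meets $\mu_3^{-1}(0)$, with the semisimplification of $\rho$ matching the $\Gcal_E$-orbit.

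The main obstacle is not the bijection but the continuity of the two inverse maps, i.e.\ upgrading the set-theoretic identifications to homeomorphisms. For this I would show that the solution operators---polystable Higgs bundle $\mapsto$ its Hitchin solution, and semisimple $\rho\mapsto$ its harmonic metric---are continuous. This is where the genuine analysis resides, and it can be extracted from the machinery already assembled: the a priori bound of Proposition \ref{prop:higgs-bound}, the energy estimate \eqref{eqn:energy-bound}, and the compactness afforded by the Yang-Mills-Higgs and harmonic-map flows together guarantee that for a convergent sequence of stable objects the corresponding solutions converge, after unitary gauge, to the solution of the limit. Granting this, $\Pi_D$ and $\Pi_B$ descend to homeomorphisms onto $\Mfrak_D^{(n)}$ and $\Mfrak_B^{(n)}$ respectively, and composing the two yields the desired homeomorphism $\Mfrak_D^{(n)}\simeq\Mfrak_B^{(n)}$.
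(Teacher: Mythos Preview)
Your proposal is correct and follows essentially the same approach as the paper: both sides are identified with the common hyperk\"ahler quotient ${\bf m}^{-1}(0)/\Gcal_E$ via the Hitchin--Simpson theorem on the Dolbeault side and the Corlette--Donaldson theorem on the Betti side. If anything, you are more careful than the paper's exposition, which states the theorem immediately after the two identifications without explicitly addressing the continuity of the inverse maps; your invocation of the a priori estimates and flow compactness to justify this is appropriate.
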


\subsubsection{Equivariant cohomology}

As in the case of the $\YMH$-flow, the harmonic map flow actually has continuity properties as $t\to \infty$.  To describe this, 
let $\Gcal_E(p)\subset \Gcal_E$ denote the subgroup of gauge transformations that are the identity at the point $p$.
Now  the holonomy map gives a proper embedding
\begin{equation} \label{eqn:hol-embedding}
\hol : {\bf m}^{-1}(0)/\Gcal_E(p)\hookrightarrow \Hom(\pi, \SL_n(\CBbb))\ ,
\end{equation}
which is  $\SU_n$-equivariant. 
\begin{theorem}[{cf.\ \cite{DWW}}] \label{thm:representation-retract}
The inclusion \eqref{eqn:hol-embedding}
is  an $\SU_n$-equivariant deformation retract.
\end{theorem}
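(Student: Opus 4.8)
The plan is to build the retraction directly from the harmonic map flow \eqref{eqn:harmonic-map-flow}, in exact parallel with the way Wilkin's theorem (Theorem \ref{thm:wilkin}) uses the $\YMH$-flow to retract $\Bcal_E^{ss}$ onto $\Bcal_E^{min}$. Using Proposition \ref{prop:monodromy} I regard $\Hom(\pi,\SL_n(\CBbb))$ as $\Ccal_E^{flat}/\Gcal_E^\CBbb(p)$, and I observe that a point of ${\bf m}^{-1}(0)/\Gcal_E(p)$ is precisely a flat connection whose associated equivariant map is harmonic, i.e.\ one for which the third moment map $\mu_3=d_A(\ast\Psi)$ of \eqref{E:momentmaps} also vanishes. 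The computation following Lemma \ref{lem:energy-psi} shows that the flow is exactly the $L^2$-gradient flow of the energy \eqref{eqn:energy} in the Hermitian directions of the complex gauge group, and that its stationary points are exactly the harmonic metrics, namely the locus ${\bf m}^{-1}(0)$. The flow therefore lives on the space of $\rho$-equivariant maps (equivalently Hermitian metrics); over the reductive locus it fixes the holonomy class and deforms $\Psi$ toward the harmonic representative, and running it for all time and letting $t\to\infty$ should produce the retraction onto \eqref{eqn:hol-embedding}. The link to the target is the surjection $\Ccal_E^{flat}/\Gcal_E(p)\to\Hom(\pi,\SL_n(\CBbb))$, whose fibers are the contractible spaces of metrics.

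More precisely, I would proceed as follows. First, invoke long-time existence for the flow (Eells--Sampson, Hamilton). Second, combine Lemma \ref{lem:lipschitz} with the Eells--Sampson--Bochner estimate to obtain uniform Lipschitz and energy bounds that are preserved by the flow, yielding subconvergence by Ascoli. Third, for a fixed reductive representation, use Theorem \ref{thm:corlette} and its proof to conclude that the flow converges as $t\to\infty$ to a harmonic equivariant map; uniqueness of the harmonic metric (up to the automorphisms of $\rho$, pinned down by the determinant normalization) singles out a well-defined limit point in ${\bf m}^{-1}(0)/\Gcal_E(p)$. Fourth, observe that a point already lying in the image of \eqref{eqn:hol-embedding} is stationary, since by \eqref{eqn:harmonic-map-flow} the tension field \eqref{eqn:tension} vanishes there, so the endpoint map restricts to the identity on the image. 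Finally, $\SU_n$-equivariance is automatic: the energy and tension field are natural under the residual action of $\SU_n\cong\Gcal_E/\Gcal_E(p)$, which corresponds under $\hol$ to conjugation on $\Hom(\pi,\SL_n(\CBbb))$ (cf.\ the discussion following Lemma \ref{lem:energy-psi}), so the flow and the assembled homotopy commute with it. Reparametrizing $t\in[0,\infty]$ to $[0,1]$ then packages the flow as a homotopy $H$ with $H_0=\id$ and $H_1$ the retraction.

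The hard part will be the continuity of the endpoint map $H_1$: that the $t\to\infty$ limit depends continuously on the initial data across all of $\Hom(\pi,\SL_n(\CBbb))$, and in particular across the non-reductive locus. On the reductive locus convergence is governed by Corlette--Donaldson, but where $\rho$ fails to be semisimple there is no harmonic metric (Proposition \ref{prop:semisimple}), the flow no longer converges as a map, and the limiting behavior is dictated only by the Jordan--H\"older degeneration of $\rho$ to its semisimplification. Arranging this limiting behavior to glue continuously with the flow on nearby reductive representations --- so that one obtains a genuine deformation retraction rather than a mere homotopy equivalence --- is precisely the analog of the delicate continuity-at-infinity statements for the $\YMH$-flow, and is where the uniform estimates of Lemma \ref{lem:lipschitz} and the properness of \eqref{eqn:hol-embedding} enter essentially (cf.\ \cite{DWW}). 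I would isolate this uniform continuity up to and including $t=\infty$ as the central lemma, and treat the remaining assertions as comparatively formal consequences.
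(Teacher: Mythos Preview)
Your approach is essentially the same as the paper's: define the retraction via the harmonic map flow, with the crux being continuity of the limit across the non-semisimple locus, where the flow converges to the semisimplification. The paper is slightly more explicit on one mechanical point you gloss over: the harmonic map flow moves the equivariant map $u$ but not the representation $\rho$ itself, so to get an actual flow on $\Hom(\pi,\SL_n(\CBbb))$ (rather than on a bundle over it) one recenters by the unique hermitian $h_t\in\SL_n(\CBbb)$ with $h_t u_t(\tilde p)=I$ and sets $\rho_t=h_t\rho h_t^{-1}$; your ``surjection $\Ccal_E^{flat}/\Gcal_E(p)\to\Hom(\pi,\SL_n(\CBbb))$ with contractible fibers'' is the right idea but this normalization is what makes the induced flow on the base well-defined and independent of the choice of $\nabla$ with $\hol(\nabla)=\rho$.
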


An explicit retraction is defined using  the harmonic map flow to define a flow on the space of representations.  
   Fix a lift $\tilde p\in \widetilde X$ of $p$.
Given $\rho\in \Hom(\pi, \SL_n(\CBbb))$, choose  $\nabla\in \Ccal_E^{flat}$ with $\hol(\nabla)=\rho$. 
The hermitian metric  gives a unique $\rho$-equivariant lift $u:\widetilde X\to D$ with $u(\tilde p)=I$.  Let $u_t$, $t\geq 0$, denote the solution to \eqref{eqn:harmonic-map-flow} with initial condition $u$.  There is a unique continuous family $h_t\in \SL_n(\CBbb)$, $h_t^\ast=h_t$, such that $h_0=I$, and $ h_tu_t(\tilde p)=z$.  Notice that a different choice of flat connection $\widetilde \nabla$ with $\hol(\widetilde \nabla)=\rho$ will be related to $\nabla$ by a based gauge transformation $g$.  The  flow corresponding to $\widetilde \nabla$ is  $\tilde u_t=g\cdot u_t$, and since $g(\tilde p)=I$,  $\tilde h_t=h_t$.  Hence, $h_t$ is well-defined by $\rho$.  The flow on $\Hom(\pi, \SL_n(\CBbb))$
is then defined by $\rho_t=h_t\rho h_t^{-1}$.
The result states that this flow defines a continuous retraction to $\hol\left({\bf m}^{-1}(0)/\Gcal_E(p) \right) $.  When $\rho$ is not semisimple, the flow converges  to a semisimplification.

This result has consequences for computing the equivariant cohomology of moduli space \cite{AtiyahBott82, Kirwan84, Daskalopoulos92}.
 In particular, Theorem \ref{thm:representation-retract} implies
$$H^\ast_{\SU_n}({\bf m}^{-1}(0)/\Gcal_E(p))\simeq H^\ast_{\SU_n}(\Hom(\pi, \SL_n(\CBbb))\ .$$
Note that since $\SL_n(\CBbb)/\SU_n$ is contractible,  on the right hand side we may take equivariant cohomology with respect to $\SL_n(\CBbb)$.
On the other hand, Theorem \ref{thm:wilkin} implies
$$H^\ast_{\SU_n}({\bf m}^{-1}(0)/\Gcal_E(p))= H^\ast_{\Gcal_E}(\Bcal^{min}_E) \simeq
H^\ast_{\Gcal_E}(\Bcal^{ss}_E)\ .$$
It follows that the equivariant cohomology of the space of representations may be computed by studying the equivariant Morse theory of $\YMH$ on $\Bcal_E$ in the spirit of \cite{AtiyahBott82}.  This is complicated, since $\Bcal_E$ is singular. Some progress has been made using this approach (see \cite{DWWW, WentworthWilkin11}.

Figure 1 gives a  cartoon of $\Ccal_E$ with the subspaces $\Ccal_E^{flat}$ and $\Bcal_E$, and the flows that have been defined.

\bigskip
\setlength{\unitlength}{1cm}
\begin{picture}(14,8)
\put(4, 0){
{\scalebox{.65}{\includegraphics{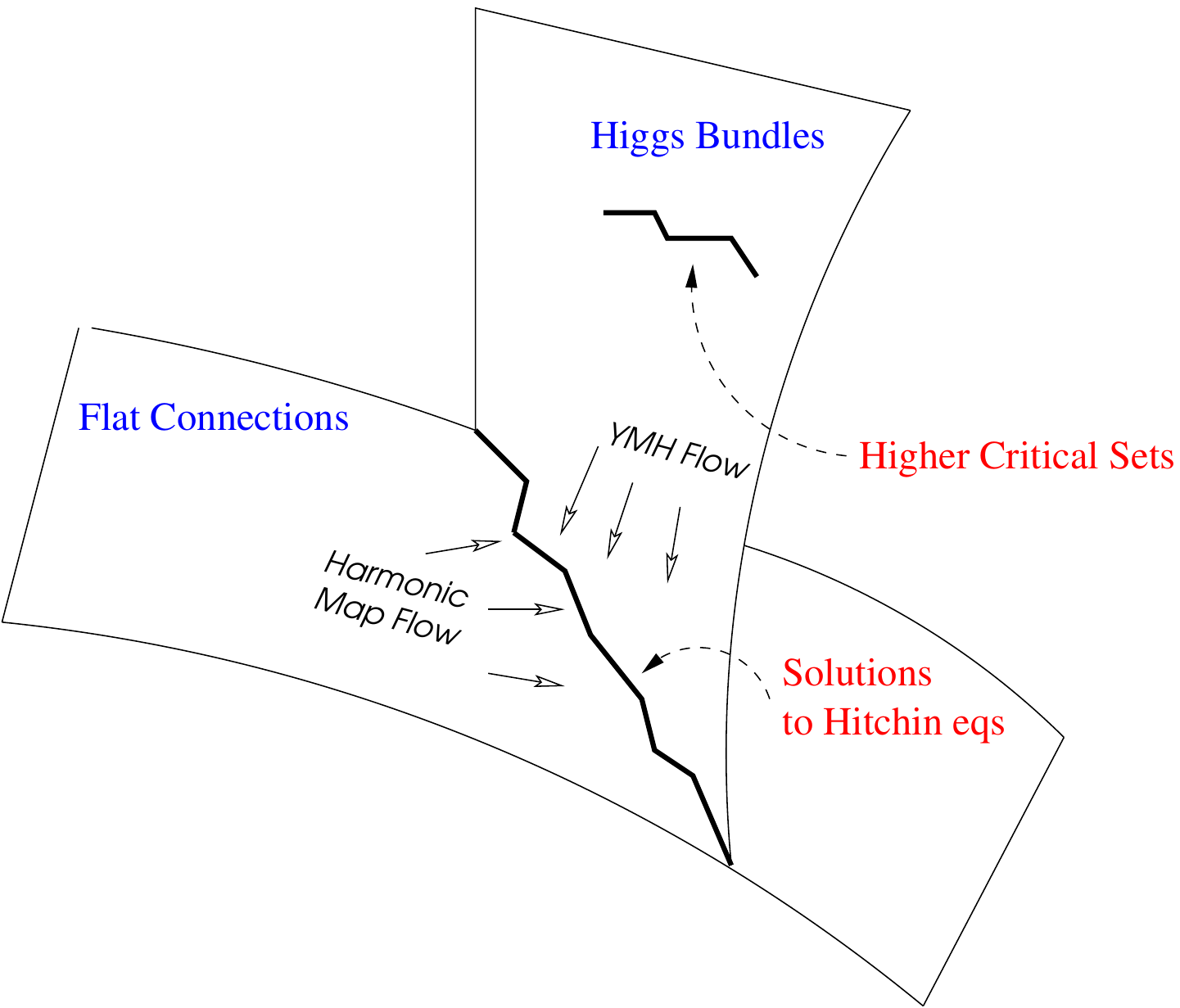}}}}
\put(7,0){Figure 1.}
\end{picture}

\section{Differential Equations}

\subsection{Uniformization}
For more on  the discussion in this  section I refer to the classic text of Gunning \cite{Gunning66}.

\begin{definition}
The {\bf Schwarzian derivative} of a univalent
 holomorphic function  $f(z)$ defined on a domain in $\CBbb$ is given by 
$$
S(f)=\{f,z\}= \frac{f'''}{f'}-\frac{3}{2}\left(\frac{f''}{f'}\right)^2\ .
$$
\end{definition}

\noindent
By straightforward calculation one shows the following:
\begin{enumerate}
\item $S(f\circ g)=(S(f)\circ g) (g')^2+ S(g)\ ;$
\item $S(f)=0 \iff f$ is the restriction of a M\"obius transformation.
\end{enumerate}
A particular consequence of (i) and (ii) is then
$$
\hspace{-10cm} {\rm (iii)}\ 
  S(f)=S(g) \Longrightarrow f=\phi\circ g\ ,
  $$
   where $\phi$ is  a M\"obius 
transformation.

The Schwarzian derivative gives a link between uniformization
and the monodromy of differential equations, as I briefly
explain here. Let $Q(z)$, $y(z)$ be locally defined
holomorphic functions, and consider the ODE
\begin{equation} \label{eqn:2nd-order-ode}
y''(z)+Q(z)y(z)=0\ .
\end{equation}
If $y_1, y_2$ are independent solutions of
\eqref{eqn:2nd-order-ode} and $y_2\neq 0$, then a calculation
shows that $f=y_1/y_2$ satisfies $S(f)=2Q$.

Note that for a univalent function $f$,  $S=S(f)$ is not quite a tensor: rather, by (i) it transforms
with respect to local coordinate changes as
\begin{equation} \label{eqn:projective-connection}
S(w)(w')^2=S(z)-\{w,z\}\ ,
\end{equation}
so $S$ nearly transforms as a quadratic differential.
A collection $\{S(z)\}$ of local holomorphic functions  on
$X$
transforming as in \eqref{eqn:projective-connection} is called
a  {\bf projective connection}.
The space of projective connections on $X$ is an affine space 
modeled on the space $H^0(X,\Kcal^2)$ of holomorphic quadratic
differentials.

Next, consider the transformation properties of the solutions
$y$ to \eqref{eqn:2nd-order-ode}, where $2Q=S$ is an arbitrary
projective connection  (cf.\ \cite{HawleySchiffer66}). If we assume $y$ is a local holomorphic
section of $\Kcal^{-1/2}$, then we have
\begin{align*}
y(z)&= y(w) (w')^{-1/2}\ ; \\
y''(z) &= y''(w)(w')^{3/2}-\tfrac{1}{2}y(z)\{w,z\}\ ,
\end{align*}
and so 
$$
y''(z)+\tfrac{1}{2}S(z)y(z)=(y''(w)+\tfrac{1}{2}S(w)y(w))(w')^{3/2}\ .
$$
We deduce that $Dy=y''+\tfrac{1}{2}Sy$ gives a well-defined
map of $\underline\CBbb$-modules $D:\Kcal^{-1/2}\to
\Kcal^{3/2}$.
Therefore, given a projective connection
$S$
we have a rank $2$ local system $\Vbold$, defined by the solution space to \eqref{eqn:2nd-order-ode}, $2Q=S$. Moreover, there is an exact sequence
of $\underline \CBbb$-modules
$$
0\lra \Vbold \lra \Kcal^{-1/2}\lra \Kcal^{3/2}\lra 0\ .
$$

Now assume $X$ has a  uniformization as a hyperbolic surface.
So 
$\rho_F:\pi\to
\PSL_2(\RBbb)$ is a discrete and faithful representation such that $X$ is biholomorphic to $\HBbb/\rho_F(\pi)$.
Let $u$ be a (multi-valued) inverse of the quotient map
$\HBbb\to X$. In other words, $u$ is a univalent function $u:\widetilde X\to \HBbb$
that is equivariant with respect to $\rho_F$.
 Set $S_F(z)=S(u)(z)$.  Then by items (i) and
(ii) above, for any $\gamma\in\pi$,
$$
S_F(\gamma z)=S(u)(\gamma z)=S(\rho_F(\gamma)\circ
u)(z)=S(u)(z)=S_F(z)\ .
$$
So $S_F$ is a well-defined projective connection on $X$.

Now the key point is the following: if $y_1, y_2$ are linearly independent solutions to
\eqref{eqn:2nd-order-ode} where $2Q=S_F$, 
then $S(y_1/y_2)=S(u)$ and so
by (iii) above there is a M\"obius transformation $\phi$ such that $y_1/y_2=\phi\circ u$. 
It follows that the (projective) monodromy of the local system associated 
 to \eqref{eqn:2nd-order-ode} in the case $2Q=S_F$
is conjugate  to $\rho_F$. 
If $S$ is any  fixed choice of projective connection, one may ask for the holomorphic quadratic
differential $Q$ such that $S_F=S+Q$. This is the famous problem of
\emph{accessory parameters} (cf.\ \cite{Uniformization}).

\begin{remark} \label{rem:projective}
I want to clarify the following issue:
the bundle $\Kcal^{1/2}$ involves a choice of square root 
 of
the canonical bundle (i.e.\ a {\bf spin structure}), of which there are $2^{2g}$ possibilities.  This choice is \emph{precisely}
equivalent to a lift of  the corresponding monodromy $\rho$ from 
$\PSL_2(\CBbb)$
to $\SL_2(\CBbb)$.  To see this, let
$\Vcal_\rho=\Ocal\otimes_{\underline \CBbb}\Vbold_\rho$,  and notice
that $\Vcal_\rho$ fits into an exact sequence (now of $\Ocal$-modules)
\begin{equation} \label{eqn:v-rho}
0\lra \Kcal^{1/2}_\rho\lra \Vcal_\rho\lra \Kcal^{-1/2}_\rho\lra 0\ ,
\end{equation}
where now we also label the choice of spin structure by $\rho$.
Since $\Vcal_\rho$ has a holomorphic connection, by Theorem \ref{thm:weil},
\eqref{eqn:v-rho} cannot split.
On the other hand, the extensions are parametrized by the projective space of $H^1(X,\Kcal)\simeq (H^0(X,\Ocal))^\ast=\CBbb$.  So all the bundles $\Vcal$ obtained in this way as $\rho$ varies are isomorphic, modulo the choice of $\Kcal^{1/2}$.
Eq.\ \eqref{eqn:v-rho} also implies that $\Vcal^\ast_\rho\otimes \Kcal^{-1/2}_\rho$ has a nonzero
holomorphic section.  Moreover,  if we have such an exact sequence
  for one spin structure, then \eqref{eqn:v-rho} cannot hold for any other
choice $\Kcal^{1/2}$.
Indeed,  the induced map $\Kcal^{1/2}\to\Kcal^{-1/2}_\rho$ would necessarily vanish, and so the inclusion $\Kcal^{1/2}\to\Vcal_\rho$ would lift to give an isomorphism
 $\Kcal^{1/2}\simeq \Kcal_\rho^{1/2}$. So $\Kcal^{-1/2}$ is determined by $\rho$.
Changing the lift of the projective  monodromy $\rho$ to $\SL_2(\CBbb)$ amounts to
$\rho\mapsto \rho\otimes \chi$ for some character $\chi:\pi\to \ZBbb/2$. This  
 corresponds to tensoring $\Vcal_\rho$ by a flat line
bundle $\Lcal_\chi$ whose square is trivial. It follows that 
from the condition that $H^0(X,\Vcal_{\rho\otimes \chi}^\ast\otimes
\Kcal^{-1/2}_{\rho\otimes \chi})\neq \{0\}$, and the argument given above, that  $\Kcal^{1/2}_{\rho\otimes \chi}=\Kcal^{1/2}_\rho\otimes\Lcal_\chi$. 
\end{remark}

\subsection{Higher order equations}

\subsubsection{Invariance properties}
The structure outlined in the previous section for equations of the type \eqref{eqn:2nd-order-ode} extends to higher order equations.  
We consider $n$-th order differential equations on $\HBbb$:
\begin{equation} \label{eqn:n-diffeq}
y^{(n)}+ Q_2 y^{(n-2)} +\cdots + Q_n y=0\ .
\end{equation}
We would like an appropriate invariance property under coordinate changes in order to have solutions that are intrinsic to $X$.  Motivated by the example of projective connections, we attempt to realize local solutions of \eqref{eqn:n-diffeq} in the sheaf $\Kcal^{1-q}$,
where $n=2q-1$ and we have chosen a spin structure if $q$ is a half-integer.
Solutions to \eqref{eqn:n-diffeq} are given by the kernel of an operator 
$\Kcal ^{1-q}
 \stackrel{D}{\xrightarrow{\hspace*{.75cm}}}  \Kcal ^q
$.

\begin{theorem} [cf.\ \cite{Itzykson91}, see also \cite{Wilczynski62, Hejhal75}]\label{thm:wk}
Let $D: \Kcal ^{1-q}\to \Kcal^q$ be $\underline\CBbb$-linear and locally of the form 
$$
Dy=y^{(n)}+ Q_2 y^{(n-2)} +\cdots + Q_n y\ .
$$
  Then 
$12Q_2/n(n^2-1)$
is a projective connection,
and for $k\geq 3$,
there exist $w_k$, linear combinations of $Q_j$, $j=2,\ldots, k$ and derivatives, with coefficients polynomials in $Q_2$,
 such that $w_k$ transform as a $k$-differentials. Conversely, given one such operator  and $k$ differentials $w_k$, $k=2,\ldots, n$, these conditions uniquely determine an operator $D$.
\end{theorem}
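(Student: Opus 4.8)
The plan is to reduce everything to the transformation behaviour of the top-order operator $\frac{d^n}{dz^n}$ on densities of the critical weight $1-q=(1-n)/2$, and then to peel off the coefficients $Q_k$ one at a time. Recall that a local section $y$ of $\Kcal^{1-q}$ has function part transforming as $y(z)=\tilde y(w)(w')^{1-q}$ under a coordinate change $z\mapsto w$, while a section of $\Kcal^q$ transforms by the factor $(w')^{q}$. The first thing I would record is the analogue, for general $n$, of the second-order identity $y''(z)=y''(w)(w')^{3/2}-\tfrac12 y(z)\{w,z\}$ established above. Expanding $\frac{d^n}{dz^n}\big(\tilde y(w)(w')^{1-q}\big)$ by Faà di Bruno and collecting terms, the essential structural facts to establish are: (a) the leading term is $(w')^{q}\tilde y^{(n)}(w)$, so that the weights match and the normalization of the leading coefficient to $1$ is coordinate independent; (b) the coefficient of $\tilde y^{(n-1)}$ vanishes identically at this weight — this is the Bol property, i.e.\ $\frac{d^n}{dz^n}$ is $\PSL_2(\CBbb)$-equivariant on $\Kcal^{(1-n)/2}$ — which is exactly what makes the absence of a $y^{(n-1)}$ term a coordinate-independent normalization; and (c) the remaining anomaly is a differential polynomial in the Schwarzian $\{w,z\}$, its leading piece being $-c_n\{w,z\}(w')^{q-2}\tilde y^{(n-2)}(w)$ with $c_n=\tfrac{1}{12}n(n^2-1)$ (for $n=2$ this reproduces $c_2=\tfrac12$ and the identity above).

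With (a)--(c) in hand the first assertion is immediate: comparing the coefficient of $\tilde y^{(n-2)}$ on the two sides of $(Dy)(z)=(w')^q(\widetilde D\tilde y)(w)$ gives $Q_2(z)=\tilde Q_2(w)(w')^2+c_n\{w,z\}$, so $12Q_2/(n(n^2-1))$ satisfies precisely the projective-connection transformation law \eqref{eqn:projective-connection}. For the higher coefficients I would argue by induction on $k$. The decisive point is that the expansion in (c) is \emph{filtered}: in matching the coefficient of $\tilde y^{(n-k)}$, the term with $j=k$ is exactly the desired tensorial piece $(w')^k\tilde Q_k$, and the anomaly involves only the lower coefficients $\tilde Q_j$ with $j<k$, their derivatives, and the Schwarzian. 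Having already produced the projective connection from $Q_2$, I would use it to furnish a projectively covariant differentiation, and then define $w_k$ to be $Q_k$ minus the canonical correction polynomial in $Q_2,\dots,Q_{k-1}$ and their derivatives, chosen to absorb the Schwarzian-dependent anomaly; the resulting $w_k$ then satisfies $w_k(z)=\tilde w_k(w)(w')^k$, i.e.\ transforms as a genuine $k$-differential. This proves the direct statement.

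The converse is a triangular inversion together with a gluing check. Since by construction $w_k=Q_k+(\text{polynomial in }Q_2,\dots,Q_{k-1}\text{ and their derivatives})$, the assignment $(Q_2,\dots,Q_n)\mapsto(S,w_3,\dots,w_n)$, where $S=12Q_2/(n(n^2-1))$, is unitriangular and hence invertible: given a projective connection $S$ and $k$-differentials $w_3,\dots,w_n$ one solves recursively for $Q_2,\dots,Q_n$. Because each ingredient — $Q_2$ from $S$, each $w_k$ as a tensor, and each correction term assembled covariantly from the projective connection — transforms by the prescribed law, the local coefficients so obtained patch to a single globally defined operator $D:\Kcal^{1-q}\to\Kcal^q$, and uniqueness is forced by the triangularity. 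I expect the genuine obstacle to lie entirely in step (c) and its inductive use: one must verify that the defect of the Bol operator from full coordinate equivariance is expressible as a differential polynomial in the Schwarzian alone, and then bookkeep the Faà di Bruno expansion carefully enough to confirm that at each order the anomaly involves only strictly lower coefficients. Everything else — the weight matching, the identification of $c_n$, and the converse — is forced once this combinatorial structure of $\frac{d^n}{dz^n}$ at the critical weight is pinned down.
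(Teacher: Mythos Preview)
The paper does not supply its own proof of this theorem; it is stated with references to \cite{Itzykson91}, \cite{Wilczynski62}, and \cite{Hejhal75}, followed only by the remark that ``the expressions for the $w_k$ are quite complicated'' together with the explicit formulas \eqref{eqn:wk}. So there is no in-paper argument to compare against.

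Your outline is the classical one and is correct in structure. The Bol lemma (your point (b)), the identification of the constant $c_n=\tfrac{1}{12}n(n^2-1)$, and the triangular recursion for the $w_k$ are exactly how the cited references proceed. You are right to flag step (c) as the place where real work remains: the assertion that the full anomaly of $d^n/dz^n$ at the critical weight is a differential polynomial in the Schwarzian $\{w,z\}$ alone (together with appropriate powers of $w'$) does not follow for free from $\PSL_2$-equivariance. Vanishing on M\"obius transformations tells you the anomaly lies in the differential ideal generated by $\{w,z\}$, but passing from ``lies in the ideal'' to ``is a polynomial in $\{w,z\}$ and its $z$-derivatives with coefficients that are powers of $w'$ only'' requires either an explicit Fa\`a di Bruno bookkeeping or an invocation of the fact that $w'$ and $\{w,z\}$ generate the full differential algebra of the jet coordinates $(w',w'',\dots)$ (with $w'$ invertible). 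In practice the references carry out the direct computation, and your inductive ``peel off one order at a time'' scheme, using the projective connection from $Q_2$ to build projectively covariant derivatives, is precisely the mechanism that makes this tractable. The converse via unitriangular inversion is unproblematic once the forward direction is established.
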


\noindent 
The expressions for the $w_k$ are quite complicated.  For example, we reproduce some of   \cite[Table 1]{Itzykson91}:
\begin{align}
\begin{split} \label{eqn:wk}
w_2&= Q_2 \\
w_3&=Q_3-\frac{n-2}{2}Q_2' \\
w_4&= Q_4-\frac{n-3}{2}Q_3' + \frac{(n-2)(n-3)}{10}Q''_2-\frac{(n-2)(n-3)(5n+7)}{10n(n^2-1)}Q^2_2\ .
\end{split}
\end{align}

 It follows from Theorem \ref{thm:wk} that  the space of all such $D$ is an affine space modeled on the Hitchin base
$\bigoplus_{j=2}^n H^0(X,\Kcal^j)$.
The map $D:  \Kcal ^{1-q}\to  \Kcal ^{q}$ is clearly locally surjective.  Moreover, the Wronskian of any fundamental set of solutions $Dy_i=0$ is constant.
 We therefore obtain a local system ${\bf V}$ and an exact sequence of sheaves over $\underline\CBbb$.

\begin{equation} \label{eqn:n-diffop}
0\lra \Vbold \stackrel{\varphi}{\xrightarrow{\hspace*{.75cm}}}  \Kcal ^{1-q}
 \stackrel{D}{\xrightarrow{\hspace*{.75cm}}}  \Kcal ^q \lra 0\ .
\end{equation}
In this situation, 
we say that the local system $\Vbold$ {\bf is  realized in} $\Kcal ^{1-q}$.  

\begin{remark}
If we tensor by a line bundle with a holomorphic connection and replace derivatives $y^{(j)}$ with derivatives in a local parallel frame of the line bundle, then we can consider local systems realized in $\Lcal$:
\begin{equation} \label{eqn:ell}
0\lra \Vbold \stackrel{\varphi}{\xrightarrow{\hspace*{.75cm}}}  \Lcal
 \stackrel{D}{\xrightarrow{\hspace*{.75cm}}}  \Lcal\otimes\Kcal ^n \lra 0\ ,
\end{equation}
where $\deg \Lcal=-(n-1)(g-1)$. 
% This will have a global polymorphic frame only if and only if the corresponding Wronskian takes values in the trivial sheaf, or
%\begin{equation} \label{eqn:diffop-condition}
%\Lcal^n=\Kcal ^{-n(n-1)/2}
%\end{equation}
\end{remark}

\subsubsection{The Riemann-Hilbert correspondence}

The goal of this section is to characterize which local systems can be realized as the monodromy of solutions to differential equations.  To motivate the following, if $\Vbold$ is a local system realized in $\Lcal$, and $\Vcal=\Ocal\otimes_{\underline\CBbb}\Vbold $,
 notice  that in \eqref{eqn:ell} there is a surjective sheaf map $\Vcal\to \Lcal$ given by 
$f\otimes \vbold\mapsto f \varphi(\vbold)$, for $f\in \Ocal $, $\vbold\in \Vbold$.  In particular, $\Vcal^\ast\otimes\Lcal$ has a nonzero holomorphic section.

\begin{theorem} \label{thm:ohtsuki}
A representation $\rho: \pi\to \SL_n(\CBbb)$ can be realized in $\Lcal$ if and only if $\rho$ is irreducible,
$H^0(X,\Vcal_\rho^\ast\otimes\Lcal)\neq\{0\}$, and $\Lcal^n=\Kcal ^{-n(n-1)/2}$.
\end{theorem}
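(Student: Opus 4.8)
The plan is to recognize that realizing $\rho$ in $\Lcal$ is nothing other than exhibiting an \emph{oper}-type structure on the flat bundle $\Wcal=\Vcal_\rho^\ast$: a full flag of holomorphic subbundles on which the flat connection shifts the filtration by one step and whose successive connecting maps are isomorphisms. Once this dictionary is in place, the forward implication extracts the three conditions from the structure, while the converse builds the structure from the three conditions by a cyclic-vector argument and then reads off the scalar operator $D$ using Theorem \ref{thm:wk}.

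For the forward direction I would assume \eqref{eqn:ell}. The condition $H^0(X,\Vcal_\rho^\ast\otimes\Lcal)\neq\{0\}$ is immediate, since the surjection $\Vcal_\rho\to\Lcal$ coming from \eqref{eqn:ell} is a nonzero element of $\Hom(\Vcal_\rho,\Lcal)=H^0(X,\Vcal_\rho^\ast\otimes\Lcal)$, as already observed before the theorem. For the line-bundle identity I would use the Wronskian: a local fundamental system $y_1,\dots,y_n$ of $Dy=0$ has its iterated derivatives $y_i^{(j)}$ transforming as sections of $\Lcal\otimes\Kcal^{j}$, so $W=\det(y_i^{(j)})_{0\le j\le n-1}$ is a section of $\Lcal^{n}\otimes\Kcal^{n(n-1)/2}$; the absence of a $y^{(n-1)}$ term forces $W$ to be flat, and independence of the solutions makes it nowhere vanishing, whence $\Lcal^{n}\otimes\Kcal^{n(n-1)/2}\simeq\Ocal$, i.e.\ $\Lcal^{n}=\Kcal^{-n(n-1)/2}$. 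Irreducibility I would deduce from the oper flag itself: a proper nonzero flat subbundle $U\subset\Wcal$ would be $\nabla$-invariant, but invariance is incompatible with the nonvanishing connecting maps of the flag together with the strictly decreasing degrees of its graded line bundles, exactly as in the $\SL_2$ projective-structure discussion above. Hence no such $U$ exists and $\rho$ is irreducible.

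For the converse, suppose $\rho$ is irreducible, fix a nonzero $\sigma\in\Hom(\Vcal_\rho,\Lcal)$, and dualize to $\sigma^\ast:\Lcal^\ast\to\Wcal$. Using $\sigma^\ast(\Lcal^\ast)$ as a candidate cyclic line subbundle, I would define a filtration $0=W_0\subset W_1\subset\cdots$ by taking $W_{k+1}$ to be the saturation of $W_k$ together with the image of the second-fundamental-form map $W_k\to(\Wcal/W_k)\otimes\Kcal$ induced by $\nabla$. Irreducibility guarantees that this map is nonzero whenever $W_k\neq\Wcal$, so the filtration is a strictly increasing full flag reaching $\Wcal$ in exactly $n$ steps, with line-bundle quotients $L_k=W_k/W_{k-1}$ and nonzero, hence injective, connecting maps $\theta_k:L_k\to L_{k+1}\otimes\Kcal$. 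A degree count then does the real work: writing $d_k=\deg L_k$, injectivity of $\theta_k$ gives $d_{k+1}\ge d_k-(2g-2)$, while $\sum_k d_k=\deg\Wcal=0$ (as $\rho\in\SL_n$) and $d_1\ge\deg\Lcal^\ast=(n-1)(g-1)$ by condition $\Lcal^{n}=\Kcal^{-n(n-1)/2}$. These inequalities squeeze to equalities, forcing $L_1\cong\Lcal^\ast$ and every $\theta_k$ to be an isomorphism, so that $L_k\cong\Lcal^\ast\otimes\Kcal^{-(k-1)}$ and the flag is a genuine oper; the determinant identity $\bigotimes_k L_k=\det\Wcal=\Ocal$ is then precisely the statement $\Lcal^{n}=\Kcal^{-n(n-1)/2}$, which is what pins the realization to the given $\Lcal$. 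Finally, the companion form of $\nabla$ in a cyclic frame yields an operator $Dy=y^{(n)}+Q_2y^{(n-2)}+\cdots+Q_ny$ with vanishing subprincipal term (again the trivial-determinant condition), and Theorem \ref{thm:wk} certifies its tensorial transformation, producing \eqref{eqn:ell} with solution sheaf $\Vbold_\rho$.

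The main obstacle is the converse, and specifically the passage from a \emph{merely injective} connecting map to an \emph{isomorphism at every point}. The cyclic-vector filtration is easy to write down, but a priori each $\theta_k$ could drop rank along a divisor, which would make $D$ acquire singularities and the solutions fail to fill out $\Lcal$ holomorphically. The degree-squeezing argument is exactly what rules this out, and it relies essentially on irreducibility, to keep the flag strictly increasing, and on the global identity $\deg\Wcal=0$. I would therefore devote the most care to verifying that the chain of degree inequalities is simultaneously tight and that the resulting fiberwise isomorphisms assemble into a genuine holomorphic, rather than merely meromorphic, oper structure; the compatibility with the $\Gcal_E^\CBbb$-action and the translation back through Theorem \ref{thm:weil} and the definitions of \eqref{eqn:ell} are then routine.
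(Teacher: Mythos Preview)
Your forward direction matches the paper's: the nonzero section comes from the surjection $\Vcal_\rho\to\Lcal$, the Wronskian argument gives $\Lcal^n\cong\Kcal^{-n(n-1)/2}$, and irreducibility is deduced from the oper flag (the paper defers this to Proposition~\ref{prop:irreducible}; note there is no ``$\SL_2$ discussion above'' to cite, but your sketch of the argument is the right one).

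For the converse you take a genuinely different route. The paper proceeds classically: it writes $\Lcal=\Lcal_0\otimes\Kcal^{-(n-1)/2}$ with $\Lcal_0$ flat (this is where $\Lcal^n=\Kcal^{-n(n-1)/2}$ enters), and then directly \emph{defines} $Dy$ as the $(n{+}1)\times(n{+}1)$ Wronskian determinant of $\varphi(\vbold_1),\ldots,\varphi(\vbold_n),y$. Irreducibility makes $\varphi$ injective, hence the leading $n\times n$ Wronskian minor is a nonzero global section of $\Lcal^n\otimes\Kcal^{n(n-1)/2}\cong\Ocal$, so a nonzero constant; normalizing it to $1$ puts $D$ in the required form. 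No filtration, no degree count.

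Your approach instead constructs the oper structure first, via the cyclic filtration on $\Wcal=\Vcal_\rho^\ast$ and the degree-squeezing inequality $d_{k+1}\ge d_k-(2g-2)$, $d_1\ge(n-1)(g-1)$, $\sum d_k=0$, which collapses to equalities and forces every $\theta_k$ to be an isomorphism. This is correct and is essentially the content of Lemma~\ref{lem:gr} and the discussion in Section~\ref{sec:opers} run backwards. What it buys is a conceptual explanation of why the three hypotheses are exactly what is needed for the oper flag; what it costs is length and the need to translate back from $\Wcal$ to $\Vcal_\rho$. That last step---passing from the oper on $\Vcal_\rho^\ast$ to a scalar operator on $\Lcal$ with kernel $\Vbold_\rho$---is where your write-up is thinnest: you should say explicitly that dualizing the flag gives an oper on $\Vcal_\rho$ with top quotient $\Lcal$, and that pairing a parallel $v\in\Vbold_\rho$ against the cyclic frame $e_1,\nabla^\ast e_1,\ldots$ of $\Wcal$ produces $y,y',\ldots$ and the relation $(\nabla^\ast)^n e_1\in\mathrm{span}(e_1,\ldots,(\nabla^\ast)^{n-1}e_1)$ yields $Dy=0$. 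Invoking Theorem~\ref{thm:wk} at the end is harmless but unnecessary: the operator is already globally defined by the oper, and the vanishing of the $y^{(n-1)}$ coefficient follows from $\det\rho=1$.
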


\begin{proof}
According to Hejhal \cite[Theorem 3]{Hejhal75}, 
the monodromy representation arising from a differential 
operator $D$ is necessarily irreducible.  
 I shall give a proof of this fact below
 (see Proposition \ref{prop:irreducible}).  Accepting this point for the time being, 
 from the discussion above we also have a nonzero section of  $ \Vcal_\rho^\ast\otimes\Lcal$. 
 Moreover, if $y_1,\ldots, y_n$ is an independent set of solutions $Dy_i=0$ on $\HBbb$, then the Wronskian
 $$
 W(y_1,\ldots y_n)=\det\left( \begin{matrix}
 y_1 & \cdots & y_n \\
  y'_1 & \cdots & y'_n \\
  \vdots && \vdots \\
   y^{(n-1)}_1 & \cdots & y^{(n-1)}_n 
   \end{matrix}
 \right)\ ,
 $$
 is a well-defined nowhere vanishing global holomorphic section of $\Lcal^n\otimes\Kcal^{n(n-1)/2}$ on $X$.  The latter is therefore trivial.
 This proves the necessity part of the assertion.  
For the converse, we follow a classical argument using the Wronskian (cf.\
\cite{Ohtsuki82}).
 Assume we have a nonzero holomorphic section $\varphi$ of
$\Vcal_\rho^\ast\otimes\Lcal$. This induces a map (also
denoted by $\varphi$):
$\Vbold_\rho\to \Lcal$.  Because $\rho$ is irreducible,
$\varphi$ is injective.
Because $\Lcal^n=\Kcal^{-n(n-1)/2}$ we can write
$\Lcal=\Lcal_0\otimes \Kcal^{-(n-1)/2}$, where $\Lcal_0$ has a
flat connection.  If we express a section of $\Lcal$ as $\lbold\otimes w$, where $\lbold$ is a parallel section of
$\Lcal_0$, then we define $y'={\bf l}\otimes w'$.  With this
understood, choose a local frame $\{{\bf v}_i\}$ for
$\Vbold_\rho$, and  set
 $$
Dy
=\det\left( \begin{matrix}
 \varphi({\bf v}_1) & \cdots & \varphi({\bf v}_n) & y\\
 \varphi({\bf v}_1)' & \cdots & \varphi({\bf v}_n)'& y' \\
  \vdots && \vdots&\vdots \\
  \varphi({\bf v}_1)^{(n)} & \cdots &\varphi({\bf v}_n)^{(n)}  & y^{(n)}
   \end{matrix}
 \right)\ .
 $$
Then if $y$ is a local holomorphic section of $\Lcal$, $Dy$ is
a well-defined local holomorphic section of
$\Lcal^{n+1}\otimes\Kcal^{n(n+1)/2}=\Lcal\otimes\Kcal^n$.
Clearly, the kernel of $D$ is precisely $\Vbold_\rho$.  Moreover, since the monodromy of $\Vbold_\rho$ is in $\SL_n(\CBbb)$, it is easy to see that $Dy$ is actually globally defined on $X$.  Finally,
$\Lcal^n=\Kcal^{-n(n-1)/2}$, so
 $$
\det\left( \begin{matrix}
 \varphi({\bf v}_1) & \cdots & \varphi({\bf v}_n) \\
 \varphi({\bf v}_1)' & \cdots & \varphi({\bf v}_n)' \\
  \vdots && \vdots \\
  \varphi({\bf v}_1)^{(n-1)} & \cdots &\varphi({\bf v}_n)^{(n-1)}  
   \end{matrix}
 \right)\ ,
 $$
 is a nonzero holomorphic function on $X$, which may therefore be set equal to $1$.
Hence, $Dy$ has the form \eqref{eqn:n-diffop}.
This completes the proof.
\end{proof}

\begin{example} \label{ex:fuchsian}
The lift of the monodromy of a  projective connection defines a representation into $\SL_2(\CBbb)$ which, via the irreducible embedding $\SL_2\hookrightarrow\SL_n$, gives a representation into $\SL_n(\CBbb)$.  
%In the case of the Fuchsian uniformization, we shall also call this a {\bf Fuchsian representation}. 
It is straightforward, if somewhat tedious, to calculate the differential equations associated to  the local systems arising in this way.  Below are some examples where we let $2Q$ to be a  projective connection on $X$.

\begin{itemize}
\item $n=2$: $y''+Qy=0$;
\item $n=3$: $y'''+4Qy'+2Q'y=0$;
\item $n=4$: $y^{(4)}+10Qy''+10Q'y'+(9Q^2+3Q'')y=0$;
\item $n=5$: $y^{(5)}+20Qy'''+30Q'y''+(64Q^2+18Q'')y'+(64QQ'+4Q''')y=0$;
\item $n=6$: $y^{(6)}+35Qy^{(4)}+70Q'y'''+(63Q''+259Q^2)y''+
(28Q'''+518QQ')y'+$ \\
${}$\hskip3in $(130(Q')^2+155QQ''+5Q^{(4)}+225Q^3)y=0$.
\end{itemize}
Note that $w_3, w_4$ in \eqref{eqn:wk} vanish for these examples.
\end{example}

\subsection{Opers} \label{sec:opers}

\subsubsection{Oper structures}
In this section we introduce opers.  For more details  consult \cite{BeilinsonDrinfeld05, 
BenZviFrenkel01a, BenZviFrenkel01b, BenZviBiswas04, JoshiPauly09, Simpson10}.

\begin{definition}[Beilinson-Drinfeld {\cite{BeilinsonDrinfeld05}}] \label{def:oper}
An $\SL_n$-{\bf oper} is a holomorphic bundle $\Vcal \to X$, a holomorphic connection $\nabla$ inducing the trivial connection on $\det\Vcal$, and a filtration
$
0=\Vcal_0\subset \Vcal_1\subset\cdots\subset \Vcal_n=\Vcal
$
 satisfying 
\begin{enumerate}
\item $\nabla\Vcal_i\subset \Vcal_{i+1}\otimes \Kcal $;
\item the induced $\Ocal$-linear map $\Vcal_i/\Vcal_{i-1} \stackrel{\nabla}{\xrightarrow{\hspace*{.6cm}}} 
\Vcal_{i+1}/\Vcal_{i}\otimes \Kcal $ is an isomorphism for $1\leq i\leq n-1$.
\end{enumerate}
\end{definition}

There is an action of $\Gcal^\CBbb$ on the space of opers which pulls back connections and filtrations.
Let $\Op_n$  denote the space of gauge equivalence classes of 
$\SL_n$-opers on $X$.  Given a holomorphic connection on a bundle $\Vcal$, we shall call a filtration 
$
0=\Vcal_0\subset \Vcal_1\subset\cdots\subset \Vcal_n=\Vcal
$
satisfying (i) and (ii) an {\bf oper structure}.  Not every holomorphic connection admits an oper structure. For example, we have the following important

\begin{proposition} \label{prop:irreducible}
The holonomy representation of an oper is irreducible.
\end{proposition}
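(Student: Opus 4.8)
The plan is to argue by contradiction, exploiting that a holomorphic connection on a Riemann surface is automatically flat (there are no $(2,0)$-forms), so that $(\Vcal,\nabla)$ is the holomorphic bundle attached to a local system whose monodromy is the holonomy representation $\rho$. Under the equivalence between local systems and flat bundles, reducibility of $\rho$ is the same as the existence of a proper nonzero $\nabla$-invariant holomorphic subbundle $\Wcal\subset\Vcal$, i.e.\ one with $\nabla\Wcal\subset\Wcal\otimes\Kcal$. The crucial free input is that $\Wcal$ inherits a holomorphic connection and hence, by \eqref{eqn:chern-weil}, satisfies $\deg\Wcal=0$. I would assume such a $\Wcal$ exists and derive a contradiction from the two oper axioms of Definition \ref{def:oper}.

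First I would record the numerology of the filtration. Writing $L_i=\Vcal_i/\Vcal_{i-1}$, axiom (ii) identifies $L_{i+1}\cong L_i\otimes\Kcal^{-1}$, so each $L_i$ is a line bundle and $\deg L_i$ drops by $2g-2$ at each step. Combined with $\deg\Vcal=\deg\det\Vcal=0$ (the connection on $\det\Vcal$ is trivial), this pins down $\deg L_i=(g-1)(n-2i+1)$, and hence $\deg\Vcal_{m-1}=\sum_{i<m}\deg L_i=(g-1)(m-1)(n+1-m)$. The point to retain is that this quantity is \emph{strictly positive} for $2\le m\le n$, since $g\ge2$.

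The heart of the argument is a ``climbing'' step. Let $m$ be the least index with $\Wcal\cap\Vcal_m\neq0$; then $\Scal:=\Wcal\cap\Vcal_m$ is a rank-one subsheaf of $L_m$ (it meets $\Vcal_{m-1}$ trivially). Applying $\nabla$ to a local section $s$ of $\Scal$: by invariance $\nabla s$ lies in $(\Wcal\cap\Vcal_{m+1})\otimes\Kcal$ (using axiom (i)), while axiom (ii) — the fact that the induced map $L_m\to L_{m+1}\otimes\Kcal$ is an \emph{isomorphism} — forces the image of $\nabla s$ in $L_{m+1}\otimes\Kcal$ to be nonzero. Hence $\Wcal\cap\Vcal_{m+1}$ strictly contains $\Scal$, and iterating shows the induced filtration $\Wcal_i=\Wcal\cap\Vcal_i$ jumps at \emph{every} index $i=m,\dots,n$; thus $\rank\Wcal=n-m+1$, and each nonzero graded piece $\Wcal_i/\Wcal_{i-1}$ injects into $L_i$. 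I expect this climbing step to be the main obstacle: it is exactly where one must invoke the oper \emph{isomorphism} rather than mere $\nabla$-invariance, and it takes a little sheaf-level care to verify that the successive graded pieces are genuinely nonzero.

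Finally I would collect degrees. Since the graded pieces of $\Wcal$ are rank-one subsheaves of $L_m,\dots,L_n$, we obtain $0=\deg\Wcal\le\sum_{i=m}^n\deg L_i=-\deg\Vcal_{m-1}$. If $m=1$ this forces $\rank\Wcal=n$, so $\Wcal=\Vcal$, contradicting properness; if $m\ge2$ it reads $0\le-(g-1)(m-1)(n+1-m)<0$, which is absurd. In either case no such $\Wcal$ exists, so $\rho$ is irreducible.
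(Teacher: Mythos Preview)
Your proof is correct and follows essentially the same route as the paper's: intersect a putative $\nabla$-invariant subbundle $\Wcal$ with the oper filtration, use the oper axioms to show the graded pieces $\Wcal_i/\Wcal_{i-1}$ inject into the line bundles $L_i$ and are nonzero from some index $m$ onward, then sum degrees to get $0=\deg\Wcal\le -(g-1)(m-1)(n+1-m)$, forcing $m=1$ and $\Wcal=\Vcal$. The only cosmetic differences are that the paper phrases the climbing step contrapositively (``$r_i=0\Rightarrow r_j=0$ for $j\le i$'') and bounds $\deg(\Wcal_i/\Wcal_{i-1})$ by iterating the inclusion up to $L_n\otimes\Kcal^{n-i}$ rather than using $\Wcal_i/\Wcal_{i-1}\hookrightarrow L_i$ directly; these yield the same inequality.
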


First we have

\begin{lemma} \label{lem:gr}
For any $\SL_n$-oper, $\det \Vcal_j\simeq\Lcal^j\otimes \Kcal ^{nj-j(j+1)/2}$,
where $\Lcal\simeq \Vcal/\Vcal_{n-1}$, and $\Lcal^n\simeq \Kcal ^{-n(n-1)/2}$.
\end{lemma}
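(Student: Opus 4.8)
The plan is to exploit the defining isomorphisms (ii) of an oper to relate each successive quotient $\Vcal_{i}/\Vcal_{i-1}$ to the top quotient $\Lcal=\Vcal/\Vcal_{n-1}$, and then assemble the determinants by the standard fact that $\det\Vcal_j$ is the tensor product of the line-bundle quotients of the filtration of $\Vcal_j$.

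First I would set $\Lcal_i=\Vcal_i/\Vcal_{i-1}$ for $1\le i\le n$, so each $\Lcal_i$ is a line bundle. The oper condition (ii) says the $\Ocal$-linear map $\Lcal_i\to\Lcal_{i+1}\otimes\Kcal$ is an isomorphism, hence $\Lcal_{i+1}\simeq\Lcal_i\otimes\Kcal^{-1}$ for each $1\le i\le n-1$. Since $\Lcal=\Lcal_n$, iterating gives $\Lcal_{n-j}\simeq\Lcal\otimes\Kcal^{j}$; equivalently, writing $\Lcal_i$ in terms of $\Lcal$, we get $\Lcal_i\simeq\Lcal\otimes\Kcal^{n-i}$ for $1\le i\le n$.

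Next I would compute the determinant of $\Vcal_j$ as $\det\Vcal_j\simeq\bigotimes_{i=1}^{j}\Lcal_i\simeq\bigotimes_{i=1}^{j}\bigl(\Lcal\otimes\Kcal^{n-i}\bigr)\simeq\Lcal^{j}\otimes\Kcal^{\sum_{i=1}^{j}(n-i)}$. The exponent of $\Kcal$ is $\sum_{i=1}^{j}(n-i)=nj-\tfrac{j(j+1)}{2}$, which gives the stated formula $\det\Vcal_j\simeq\Lcal^{j}\otimes\Kcal^{nj-j(j+1)/2}$. The claim $\Lcal\simeq\Vcal/\Vcal_{n-1}$ is just the definition of $\Lcal=\Lcal_n$.

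The final relation $\Lcal^{n}\simeq\Kcal^{-n(n-1)/2}$ comes from the hypothesis that $\nabla$ induces the trivial connection on $\det\Vcal$: taking $j=n$ gives $\Ocal\simeq\det\Vcal\simeq\Lcal^{n}\otimes\Kcal^{n^2-n(n+1)/2}=\Lcal^{n}\otimes\Kcal^{n(n-1)/2}$, whence $\Lcal^{n}\simeq\Kcal^{-n(n-1)/2}$. The only mild subtlety — the step I would be most careful about — is justifying $\det\Vcal_j\simeq\bigotimes_i\Lcal_i$ for the nested filtration; this is the elementary multiplicativity of top exterior powers in short exact sequences $0\to\Vcal_{i-1}\to\Vcal_i\to\Lcal_i\to 0$, applied inductively, so there is no real obstacle here and the result follows by pure bookkeeping once the successive-quotient relation $\Lcal_{i+1}\simeq\Lcal_i\otimes\Kcal^{-1}$ is in hand.
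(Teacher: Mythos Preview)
Your proof is correct and follows essentially the same approach as the paper: both extract from oper condition (ii) the relation between successive line-bundle quotients and then sum to compute $\det\Vcal_j$. The only cosmetic difference is that the paper works in additive notation with $v_i=\det\Vcal_i$ and parametrizes by $v_1$ before identifying $\Lcal=v_1-(n-1)\kappa$, whereas you work directly with the quotients $\Lcal_i$ and parametrize by $\Lcal=\Lcal_n$ from the start; the underlying computation is identical.
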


\begin{proof}
To simplify notation, set $v_i=\det\Vcal_i$, $\kappa=\Kcal $, and 
use additive notation for line bundle tensor products.  
Then Definition \ref{def:oper} (ii) 
gives $v_i-v_{i-1}=v_{i+1}-v_i+\kappa$, and so
\begin{align*}
v_j&=\sum_{i=1}^j (v_i-v_{i-1})=\sum_{i=1}^j (v_{i+1}-v_i+k)=v_{j+1}-v_1+j\kappa \\
v_{j+1}-v_j &=  v_1-j\kappa\ .
\end{align*}
Now summing again
\begin{align*}
v_i-v_1&=\sum_{j=1}^{i-1} (v_{j+1}-v_{j})=(i-1)v_1-\frac{i(i-1)}{2}\kappa \\
v_{i} &=  iv_1-\frac{i(i-1)}{2}\kappa \\
0&=v_n= nv_1-\frac{n(n-1)}{2}\kappa\ .
\end{align*}
Set $\Lcal=v_1-(n-1)\kappa$, and this completes the proof.
\end{proof}

\begin{proof}[Proof of Proposition \ref{prop:irreducible}] (cf.\ \cite{JoshiPauly09})
Suppose that $(\Vcal,\nabla)$ has an oper structure and $0\neq
\Wcal\subset
\Vcal$ is $\nabla$-invariant. Let $\Wcal_i=\Wcal\cap \Vcal_i$. 
I claim that the induced map 
$$
\Wcal_i/\Wcal_{i-1}\lra \Wcal_{i+1}/\Wcal_i\otimes \Kcal \ ,
$$
is an inclusion of sheaves for all $i=1, \ldots, n-1$.
Indeed, consider the commutative diagram of $\Ocal$-modules:
$$
\xymatrix{
\Wcal_i/\Wcal_{i-1} \ar[d] \ar[r]&\Wcal_{i+1}/\Wcal_i\otimes
\Kcal     \ar[d] \\
\Vcal_i/\Vcal_{i-1}  \ar[r]&\Vcal_{i+1}/\Vcal_i\otimes
\Kcal     
}
$$
The vertical arrows are inclusions and the lower horizontal
arrow is an isomorphism.  This proves the claim.
Set $r_i=\rank(\Wcal_i/\Wcal_{i-1})$. By the claim,   if
$r_i=0$, then $r_j=0$ for $j\leq i$.  Let $1\leq \ell\leq n$ be the smallest
integer for which  $r_\ell\neq 0$.  It follows that $r_i=1$ if
and only if $\ell\leq i\leq n$.

Applying the  inclusions recursively and using Lemma \ref{lem:gr}, we find
$$
\Wcal_i/\Wcal_{i-1}\hookrightarrow  \Vcal/\Vcal_{n-1}\otimes
\Kcal ^{n-i}\cong \Kcal ^{(n-2i+1)/2}\ .
$$
In particular (see Section \ref{sec:stability}),
$$
\deg(\Wcal_i/\Wcal_{i-1})\leq (n-2i+1)(g-1)\ ,
$$
and so 
$$
\deg \Wcal=\sum_{i=\ell}^n\deg(\Wcal_i/\Wcal_{i-1})
\leq \sum_{i=\ell}^n(n-2i+1)(g-1)=-(n-\ell+1)(\ell-1)(g-1)\ . 
$$
The right hand side is strictly negative unless $\ell=1$.
But since $\Wcal$ has a holomorphic connection induced by
$\nabla$, $\deg \Wcal=0$. Hence, the only possibility is $\ell=1$, which implies 
$\Wcal=\Vcal$. This completes the proof.
\end{proof}

We now show that if a holomorphic connection admits an
 oper structure, then that structure is
 unique up to gauge equivalence. 
For the next part of the discussion, it will be  useful to have the following diagram in mind (cf.\ Lemma \ref{lem:gr}):

\begin{equation} 
\begin{split} \label{eqn:big-diagram}
\xymatrix{
&&& 0 \ar[d] & \\
 & 0 \ar[d] &  & \Lcal\otimes\Kcal ^{n-j}  \ar[d] & \\
0\ar[r] & \Vcal_{j-1} \ar[r] \ar[d] & \Vcal \ar[r] \ar[d]^{\begin{sideways}$\sim$ \end{sideways}} & \Rcal_{j-1} \ar[r] \ar[d] & 0 \\
0\ar[r] & \Vcal_{j} \ar[r] \ar[d] & \Vcal \ar[r]  & \Rcal_{j} \ar[r] \ar[d] & 0 \\
& \Lcal\otimes \Kcal ^{n-j} \ar[d] && 0 & \\
& 0 &&&
}
\end{split}
\end{equation}

\begin{lemma} \label{lem:ext1}
$\displaystyle H^1(X, (\Lcal\otimes \Kcal ^{n-j})\otimes \Rcal_i^\ast)=\begin{cases}
0 & i\geq j+1\ ; \\ H^1(X,\Kcal ) & i=j\ . \end{cases}
$
\end{lemma}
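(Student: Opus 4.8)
The plan is to read off everything from the associated graded of the oper filtration and then run the long exact sequences in cohomology. First I would pin down the line-bundle quotients: by Lemma \ref{lem:gr} (or directly from the isomorphism in Definition \ref{def:oper}(ii) together with $\Lcal\simeq\Vcal/\Vcal_{n-1}$, which forces $\Vcal_i/\Vcal_{i-1}\cong(\Vcal_{i+1}/\Vcal_i)\otimes\Kcal$) one gets
$\Vcal_i/\Vcal_{i-1}\simeq \Lcal\otimes\Kcal^{n-i}$ for every $i$. Consequently the bundle $\Rcal_i=\Vcal/\Vcal_i$ is filtered by subbundles with successive quotients $\Lcal\otimes\Kcal^{n-i-1},\dots,\Lcal\otimes\Kcal^{0}$, so its dual $\Rcal_i^\ast$ is filtered with graded pieces $\Lcal^\ast\otimes\Kcal^{-m}$ for $0\leq m\leq n-i-1$, the sub-line bundle being $\Lcal^\ast$ (dual of the top quotient $\Lcal$ of $\Rcal_i$). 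Tensoring by $\Lcal\otimes\Kcal^{n-j}$ yields a bundle $W:=(\Lcal\otimes\Kcal^{n-j})\otimes\Rcal_i^\ast$ filtered by subbundles whose graded pieces are $\Kcal^{\,n-j-m}$, $0\leq m\leq n-i-1$; that is, the exponents of $\Kcal$ run over every integer from $n-j$ (the sub-line bundle) down to $i-j+1$ (the top quotient).

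Next I would invoke the standard vanishing on $X$. Since $g\geq 2$, Serre duality gives $H^1(X,\Kcal^p)\simeq H^0(X,\Kcal^{1-p})^\ast$, which vanishes for $p\geq 2$ because then $\deg\Kcal^{1-p}<0$, while $H^1(X,\Kcal)\simeq H^0(X,\Ocal)^\ast\simeq\CBbb$. On a curve $H^2$ of any sheaf vanishes, so $H^1$ is right exact on short exact sequences, and a short induction on the length of a filtration shows that if every graded piece has vanishing $H^1$ then so does the total bundle: for $0\to W'\to W\to \text{(top quotient)}\to 0$ one uses $H^1(W')=0$ and $H^1(\text{top})=0$ to conclude $H^1(W)=0$.

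I would then split into the two cases. For $i\geq j+1$ every graded piece of $W$ is $\Kcal^p$ with $p\geq i-j+1\geq 2$, hence has $H^1=0$; the induction above gives $H^1(X,W)=0$. For $i=j$ the exponents run from $n-i$ down to $1$, and the crucial point is that the unique exponent-one factor $\Kcal$ occurs as the top quotient of $W$, while the complementary sub $W'$ has all graded exponents $\geq 2$ and hence $H^1(X,W')=0$. The long exact sequence of $0\to W'\to W\to\Kcal\to 0$ then reads $0=H^1(X,W')\to H^1(X,W)\to H^1(X,\Kcal)\to H^2(X,W')=0$, giving $H^1(X,W)\simeq H^1(X,\Kcal)$, as claimed.

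The only genuinely delicate point is the bookkeeping of the filtration's orientation: I must verify that $\Lcal^\ast$ is the sub-line bundle of $\Rcal_i^\ast$ (equivalently that $\Kcal^{i-j+1}$ is the \emph{top} quotient of $W$), so that in the case $i=j$ the single exponent-one piece is isolated as a quotient and the complementary subbundle contributes nothing to $H^1$. Getting this direction right is exactly what makes the $i=j$ case produce precisely $H^1(X,\Kcal)$ and nothing more; once the graded pieces and their order are correctly identified, the remainder is the routine cohomological vanishing above.
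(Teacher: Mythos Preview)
Your argument is correct and is essentially the paper's own proof, just packaged slightly differently: the paper fixes $j$ and runs a downward induction on $i$ starting from $i=n-1$ using the exact sequence $0\to\Rcal_i^\ast\to\Rcal_{i-1}^\ast\to\Lcal^\ast\otimes\Kcal^{i-n}\to 0$ (tensored with $\Lcal\otimes\Kcal^{n-j}$), which is exactly your procedure of peeling off one graded piece of the dual oper filtration at a time. Your careful check that $\Kcal$ appears as the \emph{top} quotient when $i=j$ is precisely the point that makes the induction land on $H^1(X,\Kcal)$ rather than trapping it inside the filtration.
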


\begin{proof}
Fix $j$ and do induction on $i$.  If $i=n-1$, then $\Rcal_{n-1}=\Lcal$ and
$$
H^1(X, (\Lcal\otimes \Kcal ^{n-j})\otimes \Rcal_{n-1}^\ast)=
H^1(X, \Kcal ^{n-j})=\begin{cases} 0 & n-j-1>0\ ; \\
H^1(X,\Kcal) & n=j+1\ .
\end{cases}
$$
Now the exact sequence $0\to\Rcal_i^\ast\to \Rcal_{i-1}^\ast\to\Lcal^\ast\otimes\Kcal^{i-n}\to 0$ gives the following
$$
H^1(X, (\Lcal\otimes \Kcal ^{n-j})\otimes \Rcal_{i}^\ast)
\lra H^1(X, (\Lcal\otimes \Kcal ^{n-j})\otimes \Rcal_{i-1}^\ast)\lra
H^1(X, \Kcal ^{i-j})\lra 0\ .
$$
By induction the first term vanishes and the last two terms are isomorphic.  This proves the lemma.
\end{proof}

\begin{lemma} \label{lem:ext2}
$\displaystyle H^1(X, \Vcal_j\otimes \Rcal_i^\ast)=\begin{cases}
0 & i\geq j+1\ ; \\ H^1(X,\Kcal ) & i=j \ .\end{cases}
$
\end{lemma}

\begin{proof}
Fix $i$ and induct on $j$.  Now $\Vcal_1\simeq \Lcal\otimes\Kcal ^{n-1}$, so the result in this case follows from Lemma \ref{lem:ext1}.  Next consider the exact sequence
$$
H^1(X,\Vcal_{j-1}\otimes \Rcal_i^\ast)\lra
H^1(X,\Vcal_{j}\otimes \Rcal_i^\ast)\lra
H^1(X,\Lcal\otimes \Kcal ^{n-j}\otimes \Rcal_i^\ast)\lra 0\ .
$$
By induction, the first term vanishes and so the second and third terms are isomorphic.  Again, the result follows from Lemma \ref{lem:ext1}.
\end{proof}

\begin{corollary} \label{cor:ext}
$\displaystyle H^1(X, \Vcal_{j-1}\otimes (\Lcal\otimes \Kcal ^{n-j})^\ast)= H^1(X,\Kcal )
$
\end{corollary}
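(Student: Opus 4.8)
The plan is to deduce the corollary formally from Lemma \ref{lem:ext2} by exploiting the rightmost column of diagram \eqref{eqn:big-diagram}. First I would record the short exact sequence of locally free sheaves
$$
0 \lra \Lcal\otimes\Kcal^{n-j} \lra \Rcal_{j-1} \lra \Rcal_j \lra 0
$$
read off from that column; its kernel is $\Vcal_j/\Vcal_{j-1}$, identified with $\Lcal\otimes\Kcal^{n-j}$ in \eqref{eqn:big-diagram} (equivalently by the oper condition of Definition \ref{def:oper} together with Lemma \ref{lem:gr}). Since everything in sight is a vector bundle on the smooth curve $X$, dualizing is exact, giving
$$
0 \lra \Rcal_j^\ast \lra \Rcal_{j-1}^\ast \lra (\Lcal\otimes\Kcal^{n-j})^\ast \lra 0.
$$

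Next I would tensor this sequence by the locally free sheaf $\Vcal_{j-1}$ (again exact) and pass to the associated long exact sequence in cohomology. The relevant segment is
$$
H^1(X,\Vcal_{j-1}\otimes\Rcal_j^\ast) \lra H^1(X,\Vcal_{j-1}\otimes\Rcal_{j-1}^\ast) \lra H^1(X,\Vcal_{j-1}\otimes(\Lcal\otimes\Kcal^{n-j})^\ast) \lra 0,
$$
where the terminal zero is $H^2$ of a sheaf on a Riemann surface. Now Lemma \ref{lem:ext2} evaluates the two leftmost groups: applied to the pair $(\Vcal_{j-1},\Rcal_j)$, i.e.\ with $\Vcal$-index $j-1$ and $\Rcal$-index $j=(j-1)+1$, it gives $H^1(X,\Vcal_{j-1}\otimes\Rcal_j^\ast)=0$; applied with both indices equal to $j-1$, it gives $H^1(X,\Vcal_{j-1}\otimes\Rcal_{j-1}^\ast)=H^1(X,\Kcal)$. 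The sequence therefore collapses to $0\to H^1(X,\Kcal)\to H^1(X,\Vcal_{j-1}\otimes(\Lcal\otimes\Kcal^{n-j})^\ast)\to 0$, which is the claimed isomorphism.

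There is no genuine obstacle here: the entire content sits in Lemma \ref{lem:ext2}, and the corollary is a one-step diagram chase. The only points requiring a moment's care are matching the indices in Lemma \ref{lem:ext2}, in particular checking that $j\ge(j-1)+1$ is exactly the boundary case forcing the vanishing term, and confirming that the dualization and the tensoring preserve exactness, which is automatic since all the sheaves involved are locally free on a curve.
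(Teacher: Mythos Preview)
Your argument is correct and is precisely the paper's own proof: the same long exact sequence obtained from $0\to\Rcal_j^\ast\to\Rcal_{j-1}^\ast\to(\Lcal\otimes\Kcal^{n-j})^\ast\to 0$ tensored with $\Vcal_{j-1}$, followed by the same two applications of Lemma~\ref{lem:ext2}. You have simply spelled out in more detail where the sequence comes from and why the index-matching in Lemma~\ref{lem:ext2} works.
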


\begin{proof}
Consider the exact sequence
$$
H^1(X,\Vcal_{j-1}\otimes \Rcal_j^\ast)\lra
H^1(X,\Vcal_{j-1}\otimes \Rcal_{j-1}^\ast)\lra
H^1(X,  \Vcal_{j-1}\otimes (\Lcal_{j-1}\otimes \Kcal ^{n-j})^\ast)\lra 0\ .
$$
By Lemma \ref{lem:ext2} the first term vanishes and the second is $\simeq H^1(X,\Kcal )$.
\end{proof}

\begin{lemma} \label{lem:ext3}
The extension 
$0\to \Vcal_{j-1}\to \Vcal_j\to \Lcal\otimes \Kcal ^{n-j}\to 0$,
 is non-split.
\end{lemma}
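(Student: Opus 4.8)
The plan is to show the pushout extension class is nonzero by a curvature computation, after reducing to a one‑dimensional cohomology group. By Corollary \ref{cor:ext} the extension class lies in $H^1(X,\Vcal_{j-1}\otimes(\Lcal\otimes\Kcal^{n-j})^\ast)\cong H^1(X,\Kcal)\cong\CBbb$, and the isomorphism is induced by the projection $\Vcal_{j-1}\to\Vcal_{j-1}/\Vcal_{j-2}$; since $\Vcal_{j-1}/\Vcal_{j-2}\cong\Lcal\otimes\Kcal^{n-j+1}$ by Lemma \ref{lem:gr}, one has $(\Vcal_{j-1}/\Vcal_{j-2})\otimes(\Lcal\otimes\Kcal^{n-j})^\ast\cong\Kcal$. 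Thus the given extension is non-split iff the pushed-out rank-two extension $0\to\Lcal\otimes\Kcal^{n-j+1}\to\Vcal_j/\Vcal_{j-2}\to\Lcal\otimes\Kcal^{n-j}\to0$ has nonzero class, and by Serre duality this holds iff $\int_X\beta_{j-1,j}\neq0$, where $\beta_{j-1,j}\in\Omega^{1,1}(X)$ is the second fundamental form of the subbundle under the identification $\Hom(\Lcal\Kcal^{n-j},\Lcal\Kcal^{n-j+1})\cong\Kcal$.

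Next I would exploit the connection. Fix a smooth orthogonal splitting $\Vcal=\bigoplus_{i=1}^nL_i$ of the oper flag, $L_i\cong\Lcal\otimes\Kcal^{n-i}$, so that $\Vcal_k=\bigoplus_{i\le k}L_i$. In this frame $\dbar_\Vcal$ is block lower-triangular with nearest-diagonal blocks $\beta_{i-1,i}:L_i\to L_{i-1}$ (the second fundamental forms), while the oper conditions of Definition \ref{def:oper} force the $(1,0)$-part of $\nabla$ to be block lower-triangular with exactly one superdiagonal, whose entries $\theta_{i+1,i}:L_i\to L_{i+1}$ are the oper isomorphisms. Under the canonical identifications $\Hom(L_i,L_{i+1})\otimes\Kcal\cong\Ocal$ these become nowhere-vanishing holomorphic functions, hence nonzero constants $c_i$.

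The key step is flatness: a holomorphic connection on a curve has curvature of type $(2,0)$, which vanishes, so $F_\nabla=0$. Reading off the $(1,1)$-part of the diagonal block on $L_k$ gives the pointwise identity $F_{L_k}+c_k\,\beta_{k,k+1}+c_{k-1}\,\beta_{k-1,k}=0$ (with the convention that the $c_0$- and $c_n$-terms are absent), where $F_{L_k}$ is the curvature of the induced connection on $L_k$. Writing $E_k:=\int_X\beta_{k-1,k}$ and integrating, using $\int_XF_{L_k}=-2\pi\sqrt{-1}\deg L_k$ from \eqref{eqn:chern-weil}, I obtain the recursion $c_kE_{k+1}+c_{k-1}E_k=2\pi\sqrt{-1}\deg L_k$ for $1\le k\le n$, with $E_1$ and $E_{n+1}$ read as $0$. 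Setting $S_k=c_{k-1}E_k$ this is $S_{k+1}+S_k=2\pi\sqrt{-1}\deg L_k$ with $S_1=S_{n+1}=0$, whose unique solution is $S_{k+1}=2\pi\sqrt{-1}\sum_{i=1}^k(-1)^{k-i}\deg L_i$. By Lemma \ref{lem:gr} one has $\deg L_i=(g-1)(n+1-2i)$, and a direct evaluation of the alternating sum gives $(g-1)(n-k)$ for $k$ odd and $-(g-1)k$ for $k$ even—in either case nonzero for $1\le k\le n-1$. Hence $S_k\neq0$, so $E_k\neq0$, for every $2\le k\le n$; in particular $E_j\neq0$ and the extension is non-split.

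I expect the main obstacle to be the precise bookkeeping in the flatness identity: tracking the signs and the canonical isomorphisms so that the cross-terms $c_k\beta_{k,k+1}$ and $c_{k-1}\beta_{k-1,k}$ enter with the correct coefficients, and verifying that no longer-range blocks of $\dbar_\Vcal$ or of $\nabla^{1,0}$ contribute to the diagonal $(1,1)$-curvature. The conclusion is robust to the exact signs, however: any variant of the recursion has inhomogeneous term a nonzero multiple of $\deg L_k$ and telescopes to a partial (possibly alternating) sum of the $\deg L_i$, and all such sums—for instance $\sum_{i\le m}\deg L_i=(g-1)m(n-m)$—are nonzero in the relevant range. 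As a sanity check, the boundary cases $k=1$ and $k=n$ yield $E_2$ and $E_n$ with no ambiguity, and the latter recovers that the top extension $0\to\Vcal_{n-1}\to\Vcal\to\Lcal\to0$ is non-split, which also follows directly from Theorem \ref{thm:weil}, since $\Lcal$ would otherwise be a degree $-(n-1)(g-1)\neq0$ line-bundle summand of the bundle $\Vcal$, which carries a holomorphic connection.
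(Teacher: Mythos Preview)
Your argument is essentially correct and takes a genuinely different route from the paper's. The paper proceeds by a short diagram chase: it relates the extension class $[\alpha]$ of $0\to\Vcal_{j-1}\to\Vcal_j\to\Lcal\otimes\Kcal^{n-j}\to 0$ to the extension class $[\beta]$ of $0\to\Vcal_{j-1}\to\Vcal\to\Rcal_{j-1}\to 0$ via an injective map $g:H^1(\Vcal_{j-1}\otimes\Rcal_{j-1}^\ast)\to H^1(\Vcal_{j-1}\otimes(\Lcal\otimes\Kcal^{n-j})^\ast)$ (injectivity coming from Lemma~\ref{lem:ext2}), and then observes that $[\beta]\neq 0$ by the Weil--Atiyah theorem (Theorem~\ref{thm:weil}), since $\deg\Vcal_{j-1}\neq 0$ and $\Vcal$ carries a holomorphic connection. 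Your approach instead unpacks flatness of $\nabla$ directly: reading off the diagonal blocks of $F_\nabla=0$ yields a linear recursion whose inhomogeneous terms are the degrees $\deg L_k$, and solving it shows each $\int_X\beta_{k-1,k}$ is nonzero. This is more computational but also more explicit---you actually compute the extension classes, not just their nonvanishing---and it bypasses the appeal to Theorem~\ref{thm:weil}, though the underlying mechanism (relating curvature integrals to degrees) is the same.

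Two minor corrections that do not affect your conclusion. First, with $\Vcal_k=\bigoplus_{i\le k}L_i$ and each $\Vcal_k$ a holomorphic subbundle, $\dbar_\Vcal$ is block \emph{upper}-triangular, not lower; and $\nabla^{1,0}$ is not merely ``one superdiagonal'' but can have arbitrary upper-triangular entries plus one subdiagonal. Nevertheless your diagonal-curvature identity survives: in $(B\wedge C+C\wedge B)_{kk}$ the constraints $B_{km}\neq 0\Rightarrow m>k$ and $C_{mk}\neq 0\Rightarrow m\le k+1$ (from the oper condition $\nabla\Vcal_i\subset\Vcal_{i+1}\otimes\Kcal$) force $m=k+1$ in the first sum and $m=k-1$ in the second, exactly as you need. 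Second, for the reduction step you only need the easy implication (pushed-out extension non-split $\Rightarrow$ original non-split), so invoking Corollary~\ref{cor:ext} as an ``iff'' is unnecessary; the pushout map on $H^1$ is all that is required.
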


\begin{proof}
Consider the diagram:
\begin{equation} 
\begin{split} \label{eqn:ext-diagram}
\xymatrix{
  & H^1(X,\Vcal_{j-1}\otimes \Rcal_j^\ast) \ar[d] \\
  H^0(X,\Rcal_{j-1}\otimes \Rcal_{j-1}^\ast) \ar[d] \ar[r]^{I\mapsto[\beta]}  
  & H^1(X,\Vcal_{j-1}\otimes \Rcal_{j-1}^\ast) \ar[d]^g \\
   H^0(X,\Rcal_{j-1}\otimes (\Lcal\otimes \Kcal ^{n-j})^\ast)  \ar[r]  
  & H^1(X,\Vcal_{j-1}\otimes  (\Lcal \otimes\Kcal ^{n-j})^\ast) \ar[d] \\
   H^0(X,(\Lcal\otimes \Kcal ^{n-j})\otimes (\Lcal \otimes\Kcal ^{n-j})^\ast)  \ar[u] \ar[ur]_{I\mapsto [\alpha]}
   & 0
}
\end{split}
\end{equation}
By the comment following \eqref{eqn:extension}, 
$[\alpha]$ is the extension class of 
$0\to \Vcal_{j-1}\to \Vcal_j\to \Lcal\otimes \Kcal ^{n-j}\to 0$,
and $[\beta]$  is the extension class of  $0\to \Vcal_{j-1}\to \Vcal\to \Rcal_{j-1}\to 0$.
By Lemma \ref{lem:ext2}, $g$ is injective.  By tracing through the definition of the coboundary one has $[\alpha]=g[\beta]$.  Finally, since $\Vcal$ has a holomorphic connection and $\deg\Vcal_{j-1}\neq 0$
by Lemma \ref{lem:gr}, it follows from Theorem \ref{thm:weil} that   $[\beta]\neq 0$.
\end{proof}

Finally, we can state the result on the uniqueness of the underlying holomorphic structures.

\begin{proposition} \label{prop:bd}
Let $(\Vcal, \nabla)$ be an $\SL_n$-oper. 
 Then the oper structure on $\Vcal$
 is uniquely determined by $\Lcal=\Vcal/\Vcal_{n-1}$.  
In particular, the isomorphism class of the  bundle $\Vcal$ is
fixed on every connected component of $\Op_n$. 
\end{proposition}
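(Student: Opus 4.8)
The plan is to reconstruct the entire filtration of $\Vcal$ inductively as a chain of extensions, each of which is rigid because the governing extension group turns out to be one-dimensional. I would show by induction on $j$ that $\Vcal_j$, as a subbundle of the oper, is determined up to isomorphism by $\Lcal$ alone, with the isomorphism restricting to the identity on $\Vcal_{j-1}$.

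First, condition (ii) of Definition \ref{def:oper} pins the graded pieces to $\Vcal_i/\Vcal_{i-1}\simeq \Lcal\otimes\Kcal^{n-i}$, so the base case $\Vcal_1\simeq\Lcal\otimes\Kcal^{n-1}$ is immediate. For the inductive step, assume $\Vcal_{j-1}$ is fixed by $\Lcal$. The bundle $\Vcal_j$ then sits in
$$
0\lra \Vcal_{j-1}\lra \Vcal_j\lra \Lcal\otimes\Kcal^{n-j}\lra 0\ ,
$$
with class in $H^1(X,\Vcal_{j-1}\otimes(\Lcal\otimes\Kcal^{n-j})^\ast)$. By Corollary \ref{cor:ext} this group is $H^1(X,\Kcal)\cong\CBbb$, one-dimensional, and by Lemma \ref{lem:ext3} the class is nonzero. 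Any two extensions with the same sub and quotient whose classes are proportional by a nonzero scalar are isomorphic --- the scalar being realized by pushing forward along the automorphism $\lambda\cdot\id$ of $\Vcal_{j-1}$, which keeps the subbundle fixed. Hence $\Vcal_j$ is determined up to such an isomorphism, and running the induction from $j=1$ to $j=n$ shows that the filtered bundle $(\Vcal,\{\Vcal_i\})$, i.e.\ the oper structure, depends only on $\Lcal$.

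For the final assertion, Lemma \ref{lem:gr} gives $\Lcal^n\simeq\Kcal^{-n(n-1)/2}$, so $\Lcal$ is an $n$-th root of a fixed line bundle. This set of roots is a torsor under the $n$-torsion $\Jac(X)[n]\cong(\ZBbb/n)^{2g}$, hence finite and discrete in the relevant component of the Picard variety. Since $\Lcal=\Vcal/\Vcal_{n-1}$ varies continuously over $\Op_n$ but takes values in a discrete set, it is locally constant, and therefore constant on each connected component. Combined with the uniqueness above, the isomorphism class of $\Vcal$ is fixed on every connected component of $\Op_n$.

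The substantive cohomological content has already been absorbed into Corollary \ref{cor:ext} and Lemma \ref{lem:ext3}, so the remaining obstacle is essentially bookkeeping: making the rigidity argument genuinely inductive, i.e.\ checking that the scaling isomorphism at stage $j$ is compatible with the identifications already chosen at stage $j-1$, so that the whole nested flag --- not merely the top bundle --- is matched. One must also confirm that $\Lcal$ really does depend continuously on the oper, so that the discreteness argument applies; this is the one place where the family structure on $\Op_n$, rather than a single oper, is used.
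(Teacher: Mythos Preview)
Your proposal is correct and follows essentially the same route as the paper: determine $\Vcal_1$ from $\Lcal$ via Lemma~\ref{lem:gr}, then use Corollary~\ref{cor:ext} and Lemma~\ref{lem:ext3} to identify each $\Vcal_j$ as the unique nonsplit extension of $\Lcal\otimes\Kcal^{n-j}$ by $\Vcal_{j-1}$, and finally invoke $\Lcal^n\simeq\Kcal^{-n(n-1)/2}$ for discreteness. The bookkeeping and continuity concerns you flag at the end are minor and the paper does not dwell on them either.
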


\begin{proof}
By Lemma \ref{lem:gr}, $\Vcal_1=\Lcal\otimes\Kcal^{n-1}$, and so  is determined.  
By Corollary \ref{cor:ext} and Lemma \ref{lem:ext3}, 
each $\Vcal_{j}$ is successively determined by 
$\Vcal_{j-1}$ as the unique nonsplit extension of the 
sequence appearing in Lemma \ref{lem:ext3}.  Continuing 
in this way until $j=n$, this proves the first statement. The
second statement follows as well, 
since by Lemma \ref{lem:gr}
 we also have $\Lcal^n\simeq \Kcal^{-n(n-1)/2}$,  
and therefore 
 the set of  possible $\Lcal$'s
is  discrete.
\end{proof}

\begin{corollary} \label{cor:oper-embedding}
The map sending an oper to its  monodromy representation
gives an embedding $\Op_n\to \Mfrak_B^{(n)}$. 
\end{corollary}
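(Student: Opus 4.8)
The plan is to exhibit the monodromy map as a well-defined continuous injection and then identify its image as a locally closed subset on which the inverse is continuous. First I would check that the map is well-defined. An $\SL_n$-oper $(\Vcal,\nabla,\Vcal_\bullet)$ carries a holomorphic connection inducing the trivial connection on $\det\Vcal$, hence a monodromy representation $\rho:\pi\to\SL_n(\CBbb)$ as in the discussion of Proposition \ref{prop:monodromy}. A complex gauge transformation $g\in\Gcal^\CBbb$ pulls back $\nabla$ to a conjugate connection, so gauge-equivalent opers produce conjugate representations; thus the assignment descends to $\Op_n$. By Proposition \ref{prop:irreducible} the representation $\rho$ is irreducible, in particular semisimple, so it determines a genuine point of $\Mfrak_B^{(n)}$ lying in the open locus where the quotient is geometric. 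Continuity is immediate since the holonomy depends holomorphically on the connection.

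The heart of the matter is injectivity, and here the main obstacle---recognizing that the extra datum of the oper filtration is not really extra---dissolves once one observes that \emph{the oper filtration coincides with the Harder-Narasimhan filtration of $\Vcal$}. Indeed, by Lemma \ref{lem:gr} the successive quotients satisfy $\Vcal_i/\Vcal_{i-1}\cong\Lcal\otimes\Kcal^{n-i}$, so they are line bundles of degree $\deg\Lcal+(n-i)(2g-2)$, strictly decreasing in $i$. Since line bundles are semistable (Remark \ref{rem:tensor}) and the slopes strictly decrease, uniqueness of the Harder-Narasimhan filtration forces $\Vcal_\bullet$ to be it. Now suppose two opers map to the same point of $\Mfrak_B^{(n)}$. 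Their associated local systems are isomorphic, so by the equivalence between local systems and bundles with holomorphic connection there is an isomorphism $\psi$ intertwining the two connections. Being an isomorphism of the underlying holomorphic bundles, $\psi$ automatically carries one Harder-Narasimhan (hence oper) filtration onto the other. Because $\rho$ is irreducible, Schur's lemma makes $\psi$ unique up to scale, and rescaling so that $\det\psi=1$ (possible since both connections are trivial on determinants) turns $\psi$ into a complex gauge transformation realizing the required gauge equivalence of opers. Hence the map is injective.

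Finally I would argue that the injection is a homeomorphism onto its image. On the image the inverse is constructed explicitly: given $\rho$, form $\Vcal_\rho=\Ocal\otimes_{\underline\CBbb}\Vbold_\rho$ with its holomorphic connection and equip it with the Harder-Narasimhan filtration, which by the preceding paragraph recovers the oper structure. Each of these operations is continuous, so the inverse is continuous. Invoking Proposition \ref{prop:bd}, on each connected component of $\Op_n$ the bundle $\Vcal$ is fixed and the opers form an affine space over the Hitchin base $\bigoplus_{j=2}^n H^0(X,\Kcal^j)$; since the image lands in the irreducible locus of $\Mfrak_B^{(n)}$, where the quotient is geometric, the map is an immersion onto a locally closed subset, completing the identification of $\Op_n$ with a subvariety of $\Mfrak_B^{(n)}$.
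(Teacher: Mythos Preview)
Your proof of injectivity is correct but takes a different route from the paper. The paper's argument invokes Proposition~\ref{prop:bd} (the oper structure is determined by the line bundle $\Lcal=\Vcal/\Vcal_{n-1}$) and then shows directly that $\Lcal$ is uniquely determined by the monodromy $\rho$, using the fact that $H^0(X,\Vcal_\rho^\ast\otimes\Lcal)\neq\{0\}$ together with the filtration to rule out any competing $\Mcal$. You instead observe that the oper filtration \emph{is} the Harder--Narasimhan filtration of $\Vcal$---since the successive quotients are line bundles (hence semistable) with strictly decreasing slopes---so any holomorphic isomorphism of the underlying bundles automatically matches the oper filtrations. This is cleaner for the purpose at hand and avoids the cohomological Lemmas~\ref{lem:ext1}--\ref{lem:ext3}; indeed, the paper proves exactly your observation later as Lemma~\ref{lem:hn-type}. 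What the paper's approach buys is the finer structural statement of Proposition~\ref{prop:bd} (the oper structure depends only on $\Lcal$, not on the full bundle), which is of independent interest.

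One caution about your final paragraph: the paper's proof of this corollary addresses only injectivity; the topological/properness aspects are deferred to the subsequent theorem via Proposition~\ref{prop:maximal-hn-type}. Your claim that ``each of these operations is continuous'' when reconstructing the oper via the Harder--Narasimhan filtration needs care, since the Harder--Narasimhan filtration does \emph{not} vary continuously in general---it jumps on strata. It works here only because (by Proposition~\ref{prop:bd} or your own argument) the Harder--Narasimhan type is constant along the oper locus, so you should say this explicitly rather than treating it as automatic.
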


\begin{proof}
Fix a representation $\rho: \pi\to \SL_n(\CBbb)$, and suppose
that up to conjugation $\rho$ is the monodromy of opers
$(\Vcal_\rho,\nabla_1)$ and $(\Vcal_\rho,\nabla_2)$. 
 In light of Proposition \ref{prop:bd},
it suffices to show that 
the line bundle $\Lcal$ is uniquely determined by $\rho$.  
Let $\Lcal$ and $\Mcal$ be line bundles of degree
$-(n-1)(g-1)$ such that $H^0(X,\Vcal_\rho^\ast\otimes\Lcal)\neq \{0\}$ and
$H^0(X,\Vcal_\rho^\ast\otimes\Mcal)\neq \{0\}$.
Let $\{\Vcal_i\}$ be the oper structure for
$(\Vcal_\rho,\nabla_1)$, and assume $\Vcal_\rho/\Vcal_{n-1}=\Lcal$. If $\Lcal$
and $\Mcal$ are not isomorphic, it follows from 
$$
0\lra \Lcal^\ast\otimes\Mcal\lra \Vcal_\rho^\ast\otimes\Mcal\lra
\Vcal_{n-1}^\ast\otimes\Mcal\lra 0\ ,
$$
that 
$H^0(X,\Vcal_{n-1}^\ast\otimes\Mcal)\neq \{0\}$.
Now for $j\leq n-1$, $\deg \Rcal_j^\ast\otimes\Mcal<0$, so by
applying this argument successively we conclude 
that
$H^0(X,\Vcal_{1}^\ast\otimes\Mcal)\neq \{0\}$.
But
$\Vcal_1^\ast\otimes\Mcal=\Lcal^\ast\otimes\Mcal\otimes\Kcal^{1-n}$ 
also has negative degree, so we get a
contradiction.
\end{proof}

\begin{remark}
There are precisely $n^{2g}$ possibilities for the 
line bundle $\Lcal$ in Proposition \ref{prop:bd}.  
These choices label the components of $\Op_n$. As in Remark \ref{rem:projective}, these correspond precisely to the $n^{2g}$ ways of lifting a monodromy representation in $\PSL_n(\CBbb)$ to $\SL_n(\CBbb)$.
For simplicity, from
 now on we will always take $\Lcal=\Kcal^{-(n-1)/2}$ where if $n$ is even we assume a fixed choice of $\Kcal^{1/2}$.
\end{remark}

\subsubsection{Opers and differential equations}
We first show how to
 obtain an oper from a local system that is realized in 
$\Kcal ^{1-q}$, $n=2q-1$.  So assume we are given the exact sequence 
\eqref{eqn:n-diffop}, and set $\Vcal=\Vcal_n=\Ocal\otimes_{\underline\CBbb}\Vbold$. For $k=1,\ldots, n-1$,
define
$$
\Vcal_{n-k}=\bigl\{ \sum_{i=1}^n f_i\otimes \vbold_i : \sum_{i=1}^n f^{(j)}_i\varphi(\vbold_i)=0\ ,\ j=0,\ldots, k-1\bigr\}\ .
$$
Then $\Vcal_{n-k}\subset\Vcal$ is a coherent subsheaf and we have exact sequences 
\begin{equation} \label{eqn:v-sequence}
0\lra \Vcal_{n-k-1}\lra \Vcal_{n-k}\lra \Kcal ^{1-q+k}\lra 0\ .
\end{equation} 
Property (i) of Definition \ref{def:oper} is clearly satisfied. Furthermore, in view of \eqref{eqn:v-sequence}, the connection $\nabla$ induces an $\Ocal $-linear map $\Vcal_{n-k-1}\to \Vcal_{n-k}/\Vcal_{n-k-1}\otimes \Kcal\simeq \Kcal^{2-q+k} $, by
$$
\sum_{i=1}^n f_i\otimes \vbold_i \mapsto \sum_{i=1}^n f^{(k+1)}_i\varphi(\vbold_i)\ ,
$$
and this is an isomorphism of sheaves.  So property (ii) holds as well.

Conversely,
suppose that $\Vcal$ is a rank $n$ holomorphic bundle with 
holomorphic connection $\nabla$ that admits an oper structure. 
By  Lemma \ref{lem:gr} we have $\Vcal/\Vcal_{n-1}\simeq \Kcal ^{1-q}$.  
It follows that 
for any $\SL_n$-oper (we continue to assume $\Lcal=\Kcal^{-(n-1)/2}$), 
$H^0(X,\Vcal^\ast\otimes\Kcal ^{1-q})\neq\{0\}$. Since the monodromy of an oper is irreducible by Proposition \ref{prop:irreducible}, the hypotheses of Theorem \ref{thm:ohtsuki} are satisfied, and $(\Vcal, \nabla)$ is realized in $\Kcal ^{1-q}$.

\begin{theorem}[Beilinson-Drinfeld
{\cite{BeilinsonDrinfeld05}}] \label{thm:bd}
The embedding
 above gives an isomorphism between the connected components of
$
\Op_n
$
and the $($affine$)$ Hitchin base
$ \bigoplus_{j=2}^n H^0(X, \Kcal ^{j})
$.
\end{theorem}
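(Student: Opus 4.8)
The plan is to combine the two constructions given immediately before the theorem with the parametrization of differential operators from Theorem \ref{thm:wk}, and then to check that the resulting correspondence is a bijection that descends to gauge equivalence classes. Throughout I would fix one connected component of $\Op_n$. By Proposition \ref{prop:bd} the line bundle $\Lcal=\Vcal/\Vcal_{n-1}$ is constant on this component, and following the convention established above I normalize $\Lcal=\Kcal^{-(n-1)/2}$, so that $n=2q-1$ and $\Lcal=\Kcal^{1-q}$. The same proposition shows that the underlying holomorphic bundle $\Vcal$ together with its oper filtration $0=\Vcal_0\subset\cdots\subset\Vcal_n=\Vcal$ is then determined up to isomorphism, so that the only data left to vary is the connection.

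First I would make precise the map from opers to operators. Given an oper $(\Vcal,\nabla)$ in the component, the quotient $\Vcal\to\Vcal/\Vcal_{n-1}=\Kcal^{1-q}$ is a canonical nonzero section $\varphi$ of $\Vcal^\ast\otimes\Kcal^{1-q}$, and by the ``converse'' paragraph above (invoking Proposition \ref{prop:irreducible} together with Theorem \ref{thm:ohtsuki}) the pair $(\Vcal,\nabla)$ is realized in $\Kcal^{1-q}$. Applying $\nabla$ repeatedly against $\varphi$ produces a $\CBbb$-linear operator $D:\Kcal^{1-q}\to\Kcal^q$ of the local form in Theorem \ref{thm:wk}. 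Since both $\varphi$ and the filtration are intrinsic to the oper structure, $D$ is determined up to isomorphism; in particular two gauge-equivalent opers yield the same $D$, so this descends to a well-defined map on the chosen component of $\Op_n$.

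Conversely, the first construction preceding the theorem takes an operator $D$ of the form \eqref{eqn:n-diffop} and builds an oper: its kernel is a local system $\Vbold$, one sets $\Vcal=\Ocal\otimes_{\underline\CBbb}\Vbold$ with the induced holomorphic connection, and the subsheaves $\Vcal_{n-k}=\{\sum f_i\otimes\vbold_i:\sum f_i^{(j)}\varphi(\vbold_i)=0,\ j=0,\ldots,k-1\}$ furnish the oper filtration (I would verify properties (i) and (ii) of Definition \ref{def:oper} exactly as in that construction, and record $\Vcal/\Vcal_{n-1}\cong\Kcal^{1-q}$). These two constructions are mutually inverse: starting from $D$, the reconstructed oper has top-quotient map $\varphi$ recovering $D$ by its very definition, while starting from an oper the kernel of the associated $D$ is precisely the monodromy local system with its canonical realization. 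Hence the map is a bijection between the component of $\Op_n$ and the set of operators $D$ as in Theorem \ref{thm:wk}. By that theorem this set is an affine space modeled on $\bigoplus_{j=2}^n H^0(X,\Kcal^j)$, with coordinates the pluricanonical differentials $w_2,\ldots,w_n$ (relative to a fixed base projective connection), and composing yields the asserted isomorphism with the affine Hitchin base.

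The step I expect to be the main obstacle is precisely the injectivity and well-definedness on gauge equivalence classes, namely that $D$ is a \emph{complete} invariant of the oper modulo the $\Gcal^\CBbb$-action rather than merely an invariant. This rests on two earlier results: that the oper structure, and hence the top-quotient $\varphi$, is canonical (Proposition \ref{prop:bd}), so $D$ depends on no auxiliary choices; and that irreducibility (Proposition \ref{prop:irreducible}) pins down $\varphi$ up to scale, so no hidden freedom is introduced when passing to $D$. Granting these, the fact that the two constructions are mutually inverse gives both surjectivity (every $D$, equivalently every point of the Hitchin base, is realized) and injectivity (opers with the same $D$ have isomorphic, hence gauge-equivalent, solution sheaves), completing the identification.
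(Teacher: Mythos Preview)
Your proposal is correct and follows essentially the same route as the paper, which in fact gives no separate proof environment for this theorem: the statement is meant to follow directly from the two constructions in the preceding paragraphs (oper $\leadsto$ realization in $\Kcal^{1-q}$ via Proposition \ref{prop:irreducible} and Theorem \ref{thm:ohtsuki}, and conversely differential operator $\leadsto$ oper via the filtration by vanishing of successive derivatives), combined with the parametrization of such operators by the Hitchin base in Theorem \ref{thm:wk}. You have simply spelled out the bijectivity and gauge-invariance checks more carefully than the paper does, invoking Proposition \ref{prop:bd} to fix the underlying filtered bundle on each component; this is the right way to complete the argument.
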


\begin{corollary}[Teleman {\cite{Teleman60}}]
The monodromy of a differential equation \eqref{eqn:n-diffop}
$($or \eqref{eqn:ell}$)$ is never unitary.
\end{corollary}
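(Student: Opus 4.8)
The plan is to derive a contradiction from the \emph{instability} of the holomorphic bundle underlying an oper. First I would recall that, by the discussion preceding Theorem \ref{thm:bd}, the monodromy $\rho$ of a differential equation \eqref{eqn:n-diffop} (or \eqref{eqn:ell}) is exactly the holonomy representation of an $\SL_n$-oper $(\Vcal,\nabla)$ with $\Vcal=\Vcal_\rho$ and oper structure $0=\Vcal_0\subset\cdots\subset\Vcal_n=\Vcal$. The whole point is that this underlying bundle is unstable, so it cannot be the bundle attached to a unitary local system.

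Next I would exhibit the destabilizing subbundle. By Lemma \ref{lem:gr}, the first step of the filtration is $\Vcal_1\simeq\Lcal\otimes\Kcal^{n-1}$, with $\Lcal^n\simeq\Kcal^{-n(n-1)/2}$ and hence $\deg\Lcal=-(n-1)(g-1)$. A one-line computation then gives
\[
\deg\Vcal_1=\deg\Lcal+(n-1)\deg\Kcal=-(n-1)(g-1)+(n-1)(2g-2)=(n-1)(g-1).
\]
Since $g\geq 2$ and $n\geq 2$ this is strictly positive, while $\deg\Vcal=0$ because $\Vcal$ carries a holomorphic connection (by \eqref{eqn:chern-weil}, or Theorem \ref{thm:weil}). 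As $\Vcal_1$ has rank one, $\mu(\Vcal_1)=(n-1)(g-1)>0=\mu(\Vcal)$, so $\Vcal$ violates semistability.

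To finish, I would assume for contradiction that $\rho$ is unitary. Then $\Vcal_\rho$ carries a flat connection preserving a Hermitian metric, i.e.\ a (projectively) flat unitary connection with constant $\mu=\mu(\Vcal)=0$. By the Narasimhan--Seshadri theorem (Theorem \ref{thm:narasimhan-seshadri}), $\Vcal=\Vcal_\rho$ would then be polystable, in particular semistable, contradicting the previous paragraph. Hence $\rho$ is never unitary.

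The routine parts above are genuinely routine; the step that deserves the most care is the bridge to Narasimhan--Seshadri. I must make sure that the holomorphic bundle appearing in the oper is literally the de Rham bundle $\Vcal_\rho=\Ocal\otimes_{\underline\CBbb}\Vbold_\rho$ underlying the local system, so that a unitary structure on $\rho$ produces a flat unitary connection whose $(0,1)$-part is the given holomorphic structure, and that the degree-zero, trivial-determinant hypotheses let me apply Theorem \ref{thm:narasimhan-seshadri} with $\mu=0$. Once that identification is in place, the instability computed from Lemma \ref{lem:gr} is what drives the contradiction.
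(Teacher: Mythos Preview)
Your proof is correct and follows essentially the same approach as the paper: use Lemma~\ref{lem:gr} to see that $\Vcal_\rho$ is unstable (you make the degree computation of $\Vcal_1$ explicit, which the paper leaves implicit), and then conclude that $\Vcal_\rho$ cannot carry a flat unitary connection. The only minor refinement is that the paper cites Proposition~\ref{prop:easy} (the easy direction of Narasimhan--Seshadri) rather than the full Theorem~\ref{thm:narasimhan-seshadri}, since only the implication ``flat unitary $\Rightarrow$ polystable'' is needed.
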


\begin{proof}
If $\rho$ is the monodromy, then 
from the correspondence above and Lemma \ref{lem:gr} we see
that  $\Vcal_\rho$ is an unstable bundle. But then  from the easy
direction of Theorem \ref{thm:narasimhan-seshadri} (see Proposition \ref{prop:easy}),
$\Vcal_\rho$ cannot admit a flat unitary connection.
\end{proof}

\subsubsection{Opers and moduli space}

The main goal of this section is to prove the following

\begin{theorem}
The map $\Op_n\hookrightarrow \Mfrak_B^{(n)}$ is a
proper embedding.
\end{theorem}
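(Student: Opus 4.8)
The embedding itself is already supplied by Corollary \ref{cor:oper-embedding}, so only the properness of the monodromy map $f\colon \Op_n\hookrightarrow \Mfrak_B^{(n)}$ remains to be established. My plan is to factor $f$ through the Hitchin map and cash in on the properness proved in Theorem \ref{thm:hitchin-proper}. Write
$$
H\colon \Mfrak_B^{(n)}\xrightarrow{\ \sim\ }\Mfrak_D^{(n)}\xrightarrow{\ h\ }\bigoplus_{i=2}^n H^0(X,\Kcal^i)
$$
for the Hitchin map \eqref{eqn:hitchin-map} transported to the Betti side through the non-abelian Hodge homeomorphism; it is continuous and, by Theorem \ref{thm:hitchin-proper}, proper. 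The first step is the elementary topological observation that \emph{it suffices to prove $H\circ f$ is proper}: for $K\subseteq\Mfrak_B^{(n)}$ compact the set $H(K)$ is compact by continuity, so $f^{-1}(K)\subseteq (H\circ f)^{-1}(H(K))$ sits inside a compact set, and since $f^{-1}(K)$ is closed, a closed subset of a compact set is compact.

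With this reduction the problem collapses to showing that the composite $g=H\circ f\colon \Op_n\to \bigoplus_{i=2}^n H^0(X,\Kcal^i)$ is proper. Here I would invoke the Beilinson–Drinfeld isomorphism (Theorem \ref{thm:bd}), which identifies each component of $\Op_n$ with the affine space $B=\bigoplus_{i=2}^n H^0(X,\Kcal^i)$. Under this identification $g$ becomes a continuous self-map of the finite-dimensional vector space $B$, and properness is exactly \emph{coercivity}: I must show that if a sequence of oper parameters $q_j\in B$ has $|q_j|\to\infty$, then the Dolbeault invariant $h([\Ecal_j,\Phi_j])$ of the Higgs bundle $(\Ecal_j,\Phi_j)$ attached to the oper monodromy $\rho_{q_j}$ (via Theorem \ref{thm:corlette} and non-abelian Hodge) also leaves every bounded set.

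To produce this estimate I would argue by contradiction, reusing the compactness machinery from the proof of Theorem \ref{thm:hitchin-proper}. Suppose $|q_j|\to\infty$ while $h([\Ecal_j,\Phi_j])$ stays bounded. Equipping each $\rho_{q_j}$ with its harmonic metric and writing $\nabla_{q_j}=d_{A_j}+\Phi_j+\Phi_j^\ast$, the energy is a fixed multiple of $\Vert\Phi_j\Vert_{L^2}^2$ by Lemma \ref{lem:energy-psi}, and $(\Ecal_j,\Phi_j)$ solves Hitchin's equations \eqref{eqn:hitchin2} with $\mu=0$. The a priori bound of Proposition \ref{prop:higgs-bound} then yields a uniform bound on $\sup|\Phi_j|$, hence on $\Vert F_{A_j}\Vert$, and Uhlenbeck compactness extracts a limiting polystable Higgs bundle $(\Ecal_\infty,\Phi_\infty)$. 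The contradiction I would aim for is that the de Rham side cannot degenerate while the Dolbeault side converges: the oper $(\Vcal,\nabla_{q_j})$ has a rigidly prescribed unstable underlying holomorphic structure with Harder–Narasimhan gaps equal to $2g-2$ (Lemma \ref{lem:gr}, the extremal case of Proposition \ref{prop:bound}), realized by the non-split extensions of Lemma \ref{lem:ext3}; letting $|q_j|\to\infty$ must then either push the monodromy out of every compact set or violate the energy bound just obtained.

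The main obstacle is precisely this final comparison. The oper parameter $q$ is a spectral invariant of the flat (de Rham) connection, whereas $h([\Ecal,\Phi])$ is a spectral invariant of the Dolbeault Higgs field; these are genuinely different functions on moduli, and controlling the growth of one by the other is the analytically delicate \emph{conformal-limit} phenomenon. I therefore expect the heart of a complete proof to be a quantitative lower bound for the harmonic-map energy of $\rho_q$ in terms of $|q|$—equivalently, coercivity of $q\mapsto\Vert\Phi_q\Vert$—obtained by testing the energy against the oper filtration. It is this estimate, and not the topological factoring, that carries all the weight.
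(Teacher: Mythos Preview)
Your reduction to coercivity of the composite $g=H\circ f$ is logically sound, but the coercivity estimate you flag as the ``main obstacle'' is genuinely hard---it asks you to compare the de Rham spectral parameter of an oper with the Dolbeault spectral parameter of its associated Higgs bundle across the non-abelian Hodge correspondence, and you do not supply this comparison. The contradiction sketch you outline (Uhlenbeck compactness on the Dolbeault side forcing convergence while the oper parameter escapes) does not close: nothing in the argument prevents a sequence of opers $q_j\to\infty$ from mapping to a bounded (even convergent) sequence of Higgs bundles, because the two sets of characteristic polynomial coefficients are related by the harmonic metric, which itself depends nontrivially on $q_j$. So as written the proposal is incomplete, and completing it would require analysis well beyond what the paper has developed.

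The paper's proof bypasses all of this by staying entirely on the de Rham/Betti side. The key observation is Proposition \ref{prop:maximal-hn-type}: among \emph{all} holomorphic bundles admitting a holomorphic connection, opers are characterized as those whose underlying bundle has the strictly maximal Harder--Narasimhan type. Since the Harder--Narasimhan type is upper semicontinuous in families (Section \ref{sec:higgs-fields}), the locus of representations $\rho$ for which $\Vcal_\rho$ has HN type $\geq\vec\mu_{\mathrm{oper}}$ is closed; but by maximality this locus is exactly the image of $\Op_n$. Hence the image is closed and the embedding is proper. No Hitchin map, no harmonic metrics, no energy estimates---just the combinatorics of slopes worked out in Lemma \ref{lem:gr} and Proposition \ref{prop:bound}. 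Your route, if it could be made to work, would give quantitative information about how the Hitchin and oper fibrations interact, but for the bare properness statement the HN-type argument is both complete and far simpler.
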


\noindent 
By the upper semicontinuity of the Harder-Narasimhan type (see Section \ref{sec:higgs-fields}),
 this theorem is a direct consequence of the following

\begin{proposition} \label{prop:maximal-hn-type}
Among bundles with holomorphic connections,
opers have strictly maximal Harder-Narasimhan type.
\end{proposition}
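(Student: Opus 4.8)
The plan is to pin down the Harder--Narasimhan type of an oper, show it dominates that of every other bundle carrying a holomorphic connection by a short convexity argument, and then analyze the equality case. First I would record, from Lemma~\ref{lem:gr} together with axiom~(ii) of Definition~\ref{def:oper}, that the oper filtration $0\subset\Vcal_1\subset\cdots\subset\Vcal_n=\Vcal$ has line bundle quotients $\Vcal_i/\Vcal_{i-1}\simeq\Lcal\otimes\Kcal^{n-i}$, whose degrees $(n-2i+1)(g-1)$ are \emph{strictly decreasing} with consecutive gaps exactly $2g-2$ (here $\deg\Lcal=-(n-1)(g-1)$ is forced by $\Lcal^n\simeq\Kcal^{-n(n-1)/2}$, so the type is the same for every oper). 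Being line bundles, these quotients are semistable, and since their slopes strictly decrease the filtration \emph{is} the Harder--Narasimhan filtration; hence the oper has type $\vec\mu$ with $\mu_i=(n-2i+1)(g-1)$.

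For the inequality $\vec\lambda\leq\vec\mu$ I would induct on $\rank\Vcal'$. A semistable $\Vcal'$ has type $(0,\dots,0)$ and there is nothing to prove. If the connection is \emph{irreducible}, Proposition~\ref{prop:bound} bounds the gaps between consecutive distinct slopes by $2g-2$, which on the level of the $n$-tuple says $\lambda_i-\lambda_{i+1}\leq 2g-2=\mu_i-\mu_{i+1}$ for all $i$. Thus $\delta_i:=\mu_i-\lambda_i$ is non-increasing, while $\sum_i\delta_i=0$ since both bundles have degree zero; a non-increasing sequence with vanishing total sum has concave, hence non-negative, partial sums, and $\sum_{i\leq k}\delta_i\geq0$ is precisely $\vec\lambda\leq\vec\mu$.

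The reducible case is where Proposition~\ref{prop:bound} no longer applies, and I expect it to be the main obstacle. Here I would use the description $\sum_{j\leq k}\lambda_j=\max\{\deg\Fcal:\rank\Fcal=k\}$ and a proper $\nabla$-invariant subbundle $\Vcal_1'\subset\Vcal'$, of rank $p$ and degree $0$, whose quotient also inherits a holomorphic connection. For a rank-$k$ subsheaf $\Fcal$, writing $a=\rank(\Fcal\cap\Vcal_1')$ and $b=k-a$ gives $\deg\Fcal\leq\deg(\Fcal\cap\Vcal_1')+\deg(\text{image in }\Vcal'/\Vcal_1')$, which the inductive hypothesis bounds by $a(p-a)(g-1)+b(n-p-b)(g-1)$. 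The identity $k(n-k)-a(p-a)-b(n-p-b)=a(n-p-b)+b(p-a)\geq0$ (both summands non-negative since $0\leq a\leq p$ and $0\leq b\leq n-p$) then yields $\sum_{j\leq k}\lambda_j\leq k(n-k)(g-1)=\sum_{j\leq k}\mu_j$.

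For strictness I would show $\vec\lambda=\vec\mu$ forces $\Vcal'$ to be an oper. The estimate above shows a proper $\nabla$-invariant subbundle would cap the top slope at $\max(p-1,n-p-1)(g-1)<(n-1)(g-1)$, so equality makes $\nabla$ irreducible; since the $\mu_i$ are distinct, the Harder--Narasimhan filtration has line bundle quotients. It then remains to check the oper axioms for this filtration. Axiom~(i) holds because the $\Ocal$-linear composite $\Vcal_i\to(\Vcal'/\Vcal_{i+1})\otimes\Kcal$ maps a sheaf of minimal slope $(n-2i+1)(g-1)$ into one of maximal slope $(n-2i-1)(g-1)$, hence vanishes by Lemma~\ref{lem:semistable} applied to the graded pieces; so $\nabla\Vcal_i\subset\Vcal_{i+1}\otimes\Kcal$. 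The induced axiom~(ii) map $\Vcal_i/\Vcal_{i-1}\to(\Vcal_{i+1}/\Vcal_i)\otimes\Kcal$ is then a map of line bundles of equal degree, so it is either an isomorphism or zero; if it were zero then $\Vcal_i$ would be $\nabla$-invariant, contradicting irreducibility. This exhibits the filtration as an oper structure, giving the desired strict maximality.
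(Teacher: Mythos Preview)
Your argument is correct and considerably more streamlined than the paper's. For the irreducible case, the paper repeatedly sums the inequalities $\lambda_j\leq\lambda_{j+i}+i(2g-2)$ coming from Proposition~\ref{prop:bound} and carries out a lengthy double induction to establish $\deg\Vcal_j\leq(g-1)(\rank\Vcal_j)(n-\rank\Vcal_j)$; your observation that $\delta_i=\mu_i-\lambda_i$ is non-increasing with zero total sum, hence has non-negative partial sums, replaces all of this with a two-line convexity argument. For the reducible case, the paper invokes upper semicontinuity of the Harder--Narasimhan type to reduce to a direct sum and then does a combinatorial reordering of the slopes $\{\mu_i^{(n_1)},\mu_j^{(n_2)}\}$; your approach---bounding subsheaf degrees via the intersection with a $\nabla$-invariant subbundle together with the identity $k(n-k)-a(p-a)-b(n-p-b)=a(n-p-b)+b(p-a)\geq0$---is more direct and avoids semicontinuity altogether. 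The equality analysis (forcing irreducibility, then checking the oper axioms on the Harder--Narasimhan filtration) is essentially the same in both proofs.

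One point to tighten: the description $\sum_{j\leq k}\lambda_j=\max\{\deg\Fcal:\rank\Fcal=k\}$ is not literally true when $k$ is not one of the ranks $r_i$ appearing in the Harder--Narasimhan filtration (a stable bundle of rank $2$ and odd degree has no line subbundle of the required degree). What you actually need, and what your argument delivers, is the bound $\deg\Fcal\leq k(n-k)(g-1)$ for every rank-$k$ subsheaf; since the Harder--Narasimhan polygon is the smallest concave function through the points $(r_i,\deg\Ecal_i)$ and $k\mapsto k(n-k)(g-1)$ is itself concave, this suffices. Equivalently, check the inequality only at the vertices $k=r_i$, where $\sum_{j\leq r_i}\lambda_j=\deg\Ecal_i$ genuinely is the degree of a subsheaf, and then interpolate using concavity of the right-hand side.
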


We begin with 

\begin{lemma} \label{lem:hn-type}
The Harder-Narasimhan filtration of a bundle $\Vcal$ with an oper 
structure is given by the oper filtration itself.
\end{lemma}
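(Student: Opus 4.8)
The plan is to compute the graded pieces of the oper filtration explicitly and to verify that they satisfy the two defining properties of a Harder-Narasimhan filtration---semistable quotients of strictly decreasing slope---after which uniqueness of that filtration finishes the argument.

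First I would note that condition (ii) of Definition \ref{def:oper} forces every graded piece $\Vcal_i/\Vcal_{i-1}$ to be a line bundle. Indeed, the isomorphisms $\Vcal_i/\Vcal_{i-1}\simeq \Vcal_{i+1}/\Vcal_i\otimes\Kcal$ show that all $n$ graded pieces share a common rank $r$; since $\rank\Vcal=n$ and there are $n$ of them, we must have $r=1$. In particular each such quotient is trivially semistable, with slope equal to its degree.

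Next I would identify these line bundles. Beginning from $\Vcal_n/\Vcal_{n-1}=\Lcal$ and applying the isomorphism of (ii) repeatedly, one obtains
\[
\Vcal_i/\Vcal_{i-1}\simeq \Lcal\otimes\Kcal^{n-i}\ ,
\]
which, with the normalization $\Lcal=\Kcal^{-(n-1)/2}$ coming from Lemma \ref{lem:gr}, equals $\Kcal^{(n-2i+1)/2}$. Taking degrees then gives
\[
\deg(\Vcal_i/\Vcal_{i-1})=(n-2i+1)(g-1)\ .
\]
Because $g\geq 2$, this is a strictly decreasing function of $i$, so the slopes of the successive quotients decrease strictly as required.

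Having checked that the oper filtration has semistable graded pieces of strictly decreasing slope, I would invoke the uniqueness of the Harder-Narasimhan filtration (recalled in Section \ref{sec:higgs-fields}) to conclude that it coincides with the oper filtration. The only point needing any care---and the closest thing to an obstacle---is the degree bookkeeping together with the use of $g\geq 2$ to guarantee strict monotonicity; everything else is an immediate consequence of the oper axioms and Lemma \ref{lem:gr}.
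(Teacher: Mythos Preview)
Your argument is correct, and it takes a slightly different route from the paper's. You verify directly that the oper filtration satisfies the two axioms characterizing the Harder--Narasimhan filtration (semistable successive quotients, strictly decreasing slopes) and then appeal to uniqueness. The paper instead argues inductively, from the top down, that $\Vcal_{j+1}/\Vcal_j$ is the maximal destabilizing subsheaf of $\Vcal/\Vcal_j$ for every $j$: using the exact sequence $0\to\Vcal_{j+1}/\Vcal_j\to\Vcal/\Vcal_j\to\Vcal/\Vcal_{j+1}\to 0$ and the inductive hypothesis on $\Vcal/\Vcal_{j+1}$, any maximal destabilizer $\Fcal$ of $\Vcal/\Vcal_j$ must map trivially to $\Vcal/\Vcal_{j+1}$, forcing $\Fcal\simeq\Vcal_{j+1}/\Vcal_j$. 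This is the recursive construction of the Harder--Narasimhan filtration rather than a verification of its axioms.

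Your approach is more economical here because the graded pieces are line bundles, so semistability is free; the only content is the slope computation, which is immediate from the oper axiom (ii) and Lemma~\ref{lem:gr}. The paper's approach has the minor advantage of not relying on the uniqueness statement as a black box, but in practice the two arguments are of comparable difficulty. One small remark: the normalization $\Lcal=\Kcal^{-(n-1)/2}$ is a convention adopted after Lemma~\ref{lem:gr} rather than a consequence of it; but since you only need $\deg\Lcal=-(n-1)(g-1)$, which does follow from $\Lcal^n\simeq\Kcal^{-n(n-1)/2}$, your degree computation goes through regardless.
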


\begin{proof}
It suffices to show that for each $j=0, \ldots, n-1$, $\Vcal_{j+1}/\Vcal_j$ is the maximally destabilizing subsheaf of $\Vcal/\Vcal_j$.  In order to do this, let $\mu_{max}(\Vcal/\Vcal_j)$ denote the maximal slope of a  subsheaf of $\Fcal\subset\Vcal/\Vcal_j$, $0<\rank\Fcal<\rank(\Vcal/\Vcal_j)$.  We make the inductive hypothesis that
$$
\mu_{max}(\Vcal/\Vcal_j)=\mu(\Vcal_{j+1}/\Vcal_j)=(n-1)(g-1)-j(2g-2)\ .
$$
Note that this is trivially satisfied for $j=n-1$.  Now suppose $j\leq n-2$ and let $\Fcal\to \Vcal/\Vcal_{j}$ be the maximally destabilizing subsheaf.  Then $\Fcal$ is semistable, and from the sequence
$$
0\lra \Vcal_{j+1}/\Vcal_j\lra \Vcal/\Vcal_j\lra \Vcal/\Vcal_{j+1}\lra 0\ ,
$$
and the inductive hypothesis, we have
$$
\mu(\Fcal)\geq \mu(\Vcal_{j+1}/\Vcal_j)>\mu(\Vcal_{j+2}/\Vcal_{j+1})=\mu_{max}(\Vcal/\Vcal_{j+1})\ .
$$
It follows that the induced  map $\Fcal\to \Vcal/\Vcal_{j+1}$ must vanish.  Therefore, $\Fcal\simeq\Vcal_{j+1}/\Vcal_j$, and moreover the inductive hypothesis is satisfied for $j$.
This concludes the proof.
\end{proof}

\begin{proof}[Proof of Proposition \ref{prop:maximal-hn-type}] (cf.\ \cite[Theorem 5.3.1]{JoshiPauly09})
Let $(\Vcal, \nabla)$ be an unstable bundle with holomorphic connection.
I claim that 
it suffices to assume that $\nabla$ is irreducible.   Indeed, in the case of rank $1$ there is nothing to prove. Suppose the result has been proven for rank $<n$ and suppose $(\Vcal, \nabla)$ is reducible.  Since the Harder-Narasimhan type is upper semicontinuous, we may assume there is a splitting $(\Vcal,\nabla)=(\Vcal_1,\nabla_1)\oplus (\Vcal_2,\nabla_2)$, with $n_i=\rank\Vcal_i\geq 1$.  Then by the induction hypothesis, it suffices to assume the $\Vcal_i$ have the Harder-Narasimhan types of rank $n_i$-opers.  Indeed, if not then we can change the Harder-Narasimhan types of $\Vcal_i$, without changing the ordering of the slopes for $\Vcal$, so that $\Vcal$ has a larger Harder-Narasimhan type.
Let
\begin{equation} \label{eqn:mu-oper}
\mu_i=\mu_i^{(n)}=\mu(\Kcal ^{q-i})=(n+1-2i)(g-1)\ ,
\end{equation}
be the Harder-Narasimhan type of a rank $n$-oper (see Lemmas \ref{lem:hn-type} and \ref{lem:gr}).  
If $\lambda_i$ is a reordering of the slopes $\{\mu_i^{(n_1)}, \mu_j^{(n_2)}\}$, we need to show 
\begin{equation} \label{eqn:lambda-inequality}
\sum_{i=1}^k\lambda_i\leq \sum_{i=1}^k \mu_i^{(n)}\ ,
\end{equation}
for all $k=1,\ldots, n$, with strict inequality for some $k$.  Assume $n_1\geq n_2$.  Without changing the ordering of the slopes we can sequentially subtract even integers from the leading entries $\mu_i^{(n)}$, $\lambda_i=\mu_i^{(n_1)}$ for $2i\leq n_1-n_2$, and add the integers to last entries where $n_1+n_2+2\leq 2i$.
 Notice that the multiplicities of the resulting first and last slopes in $\{\mu_i\}$ and $\{\lambda_i\}$ are equal and  will cancel in the sums, so it suffices to consider the intervening sums.  This reduces the problem to one of two cases:  
$n_1=n_2$ or $n_1=n_2+1$ (and  $n=n_1+n_2$), where it is straightforward to verify \eqref{eqn:lambda-inequality}.

With this understood, we may assume that $(\Vcal,\nabla)$ is irreducible.
The Harder-Narasimhan type of an oper is given by \eqref{eqn:mu-oper}.
Let $\Vcal_{i-1}\subset\Vcal_{i}$, $i=1,\ldots, \ell$, be the Harder-Narasimhan filtration of $\Vcal$,
and $\lambda_i=\mu(\Vcal_i/\Vcal_{i-1})$. Let $n_i=\rank(\Vcal_i/\Vcal_{i-1})$ and $d_i=n_i\lambda_i$.
 Then it suffices to show
\begin{equation} \label{eqn:hn}
\sum_{i=1}^j n_i\lambda_i\leq \sum_{i=1}^{\rank(\Vcal_j)}\mu_i\ ,
\end{equation}
for $j=1,\ldots, \ell$.  The left hand side is just $\deg\Vcal_j$
while the right hand side is
$$
\sum_{i=1}^{\rank(\Vcal_j)}(n+1-2i)(g-1)=(g-1)\rank(\Vcal_j)(n-\rank(\Vcal_j))\ .
$$
Hence, \eqref{eqn:hn} is equivalent to 
\begin{equation} \label{eqn:hn2}
\deg\Vcal_j\leq (g-1)\bigl( \sum_{i=1}^{j}n_i\bigr)\bigl(n-\sum_{i=1}^{j}n_i\bigr)\ .
\end{equation}
Repeatedly apply Proposition \ref{prop:bound} to find
\begin{align*}
\lambda_j&\leq \lambda_{j+1}+2g-2 \\
\lambda_j&\leq \lambda_{j+2}+2(2g-2) \\
\lambda_j&\leq \lambda_{j+i}+i(2g-2) \\
\lambda_j&\leq \lambda_{\ell}+(\ell-j)(2g-2) \ ,
\end{align*}
for any $i\leq \ell-j$.  This implies
\begin{align*}
\frac{n_{j+1}}{n_j}d_j&\leq d_{j+1}+(2g-2)n_{j+1} \\
\frac{n_{j+i}}{n_j}d_j&\leq d_{j+i}+i(2g-2)n_{j+i} \\
\frac{n_{\ell}}{n_j}d_j&\leq d_{\ell}+(\ell-j)(2g-2)n_{j+1} \ ,
\end{align*}
from which we have 
\begin{equation} \label{eqn:inequality}
\bigl( \sum_{i=1}^{\ell-j} n_{i+j}\bigr)\frac{d_j}{n_j} \leq \sum_{i=1}^{\ell-j} d_{i+j}+(2g-2)\sum_{i=1}^{\ell-j} in_{i+j}
\end{equation}
Consider first  the case $j=1$. Then \eqref{eqn:inequality} becomes
\begin{align}
\bigl( \sum_{i=2}^{\ell} n_{i}\bigr)\frac{d_1}{n_1} &\leq \sum_{i=2}^{\ell} d_{i}+(2g-2)\sum_{i=2}^{\ell} (i-1)n_{i} \notag \\
( n-n_{1})\frac{d_1}{n_1} &\leq -d_1+(2g-2)\sum_{i=2}^{\ell} (i-1)n_{i}\notag \\
d_1 &\leq \frac{n_1}{n}(2g-2)\sum_{i=2}^{\ell} (i-1)n_{i} \ . \label{eqn:hn3}
\end{align}
We claim that
\begin{equation} \label{eqn:claim1}
\frac{2}{n}\sum_{i=2}^{\ell} (i-1)n_{i}  \leq n-n_1=\sum_{i=2}^\ell n_i\ .
\end{equation}
Note that this combined with \eqref{eqn:hn3} proves \eqref{eqn:hn2} in the case $j=1$.
To prove the claim, let $r_i=n_i-1\geq 0$.  Then \eqref{eqn:claim1} becomes
\begin{align*}
2\sum_{i=2}^{\ell} (i-1)(r_{i}+1)  &\leq n\sum_{i=2}^\ell (r_i+1) \\
2\sum_{i=2}^{\ell} (i-1)r_{i} +\ell(\ell-1)  &\leq \left[\sum_{i=2}^\ell (r_i+1)\right]^2+n_1\sum_{i=2}^\ell (r_i+1)\ ,
\end{align*}
which holds if
$$
2\sum_{i=2}^{\ell} (i-1)r_{i} +\ell(\ell-1)  \leq \left[\sum_{i=2}^\ell r_i + (\ell-1)\right]^2+(\ell-1)\ ,
$$
which in turn, after canceling like terms from both sides,  holds if
$$
\sum_{i=2}^{\ell} (i-1)r_{i}   \leq \sum_{i=2}^\ell (\ell-1)r_i \ ,
$$
and the latter is manifestly true since $r_i\geq 0$. Hence, \eqref{eqn:claim1} holds.

We now proceed by induction.  So suppose that \eqref{eqn:hn2} holds for $j$. We show that it holds also for $j+1$.  Adding \eqref{eqn:hn2} (for $j$) and \eqref{eqn:inequality} (for $j+1$) we have
\begin{align*}
\deg\Vcal_{j+1}= \deg\Vcal_j+d_{j+1}&\leq
(g-1)\sum_{i=1}^j n_i\sum_{i=1}^{\ell-j} n_{i+j} \\
&\qquad -\frac{n_{j+1}}{\sum_{i=2}^{\ell-j} n_{i+j}}\sum_{i=1}^{j+1} d_i+
\frac{n_{j+1}}{\sum_{i=2}^{\ell-j} n_{i+j}}(2g-2)\sum_{i=1}^{\ell-j-1}in_{i+j+1} \\
\frac{\sum_{i=1}^{\ell-j} n_{i+j}}{\sum_{i=2}^{\ell-j} n_{i+j}}\, \deg\Vcal_{j+1}
&\leq 
(g-1)\sum_{i=1}^j n_i\sum_{i=1}^{\ell-j} n_{i+j}+
\frac{n_{j+1}}{\sum_{i=2}^{\ell-j} n_{i+j}}(2g-2)\sum_{i=1}^{\ell-j-1}in_{i+j+1} \\
\frac{\deg\Vcal_{j+1}}{n-\sum_{i=1}^{j+1} n_{i}}
&\leq 
(g-1)\sum_{i=1}^j n_i+
\frac{n_{j+1}}{\sum_{i=1}^{\ell-j} n_{i+j}\sum_{i=2}^{\ell-j} n_{i+j}}(2g-2)\sum_{i=1}^{\ell-j-1}in_{i+j+1}\ ,
\end{align*}
where in going from the first inequality to the second we have used the fact that $\deg\Vcal_{j+1}=\sum_{i=1}^{j+1}d_i$.
Hence, it suffices to show
$$
2\sum_{i=1}^{\ell-j-1}i n_{i+j+1}\leq \sum_{i=1}^{\ell-j}n_{i+j}\sum_{i=2}^{\ell-j} n_{i+j}\ .
$$
In terms of the $r_i$ defined above, this becomes
\begin{align*}
2\sum_{i=2}^{\ell-j}(i-1) (r_{i+j}+1)&\leq (r_{j+1}+1)\sum_{i=2}^{\ell-j} (r_{i+j}+1)+\left(\sum_{i=2}^{\ell-j} (r_{i+j}+1)\right)^2 \\
2\sum_{i=2}^{\ell-j}(i-1) r_{i+j}+(\ell-j)(\ell-j-1)&\leq (r_{j+1}+1)\sum_{i=2}^{\ell-j} r_{i+j}
+(r_{j+1}+1)(\ell-j-1)  \\
&\qquad\qquad\qquad\qquad
+\left(\sum_{i=2}^{\ell-j} r_{i+j}+(\ell-j-1)\right)^2 \ .
\end{align*}
But this is a consequence of
$$
2\sum_{i=2}^{\ell-j}(i-1) r_{i+j}\leq 2\sum_{i=2}^{\ell-j}(\ell-j-1) r_{i+j}\ ,
$$
which obviously holds.  This completes the proof of the maximality of the Harder-Narasimhan type.  We now show that if the Harder-Narasimhan type of $(\Vcal,\nabla)$ is maximal then the filtration $\{\Vcal_i\}$ is an oper structure.  Indeed, consider the $\Ocal$-linear map $\nabla: \Vcal_i\to \Vcal/\Vcal_{i+1}\otimes \Kcal$.  By Remark \ref{rem:maximal}, the minimal slope of a quotient of $\Vcal_i$ is $\mu_i=\mu(\Vcal_i/\Vcal_{i-1})$, whereas the maximal slope of a subsheaf of $\Vcal/\Vcal_{i+1}\otimes\Kcal$ is 
$$\mu(\Vcal_{i+2}/\Vcal_{i+1}\otimes\Kcal)=\mu_{i+2}+2g-2=\mu_{i+1}=\mu_i-(2g-2)<\mu_i\ .$$
Hence, the map above must be zero, and $\nabla\Vcal_i\subset \Vcal_{i+1}\otimes \Kcal$.  By irreducibility of the connection, $\Vcal_i/\Vcal_{i-1}\to \Vcal_{i+1}/\Vcal_i\otimes\Kcal$ is nonzero.  Since these are line bundles with the same degree, this map is an isomorphism.  Therefore, conditions (i) and (ii) in Definition \ref{def:oper} are satisfied. This completes the proof.
\end{proof}

\subsection{The Eichler-Shimura isomorphism} Let us return in more detail to Example \ref{ex:fuchsian}.
 For $q\in \frac{1}{2}\ZBbb$, let $V_q$ denote the $2q-1$ dimensional irreducible representation of $\SL_2(\CBbb)$. Let  $\rho:\pi\to \SL_2(\CBbb)$ be the (lift of the) monodromy of a projective connection on $X$.  
 We can realize the local system $\Vbold_\rho$ in $\Kcal^{-1/2}$ for some choice of spin structure.  For $q\geq 3/2$, let $\Vbold_q$ denote the local system obtained by composing $\rho$ with the representation $V_q$:
$$
\rho^{(n)}:\pi\lra\SL_2(\CBbb)\lra \SL(V_q)\ .
$$
  Then $\Vbold_q$ is realized in $\Kcal^{1-q}$, and we have
\begin{equation} \label{eqn:fuchsian-oper}
0\lra \Vbold_q \lra \Kcal ^{1-q}
 \stackrel{D}{\xrightarrow{\hspace*{.75cm}}}  \Kcal ^q \lra 0\ .
\end{equation}
Since $q\geq 3/2$, $H^0(X,\Kcal^{1-q})\simeq H^1(X, \Kcal^q)^\ast =\{0\}$.  This implies $H^0(X,\Vbold_q)=H^2(X,\Vbold_q)=\{0\}$, and the long exact sequence associated to \eqref{eqn:fuchsian-oper} becomes
$$
0\lra H^0(X,\Kcal^{q}) \stackrel{\delta}{\xrightarrow{\hspace*{.5cm}}}   H^1(X,\Vbold_q) 
\lra  H^1(X,\Kcal^{1-q}) \lra 0\ .
$$
The coboundary map $\delta$ is called {\bf Eichler integration}.
The reason for the terminology is the following: if $\omega$ is a global holomorphic section of $\Kcal^q$, then on sufficiently small open sets $U_i$ we can solve the inhomogeneous equation $Dy_i=\omega\bigr|_{U_i}$.  If we set ${\bf v}_{ij}=y_i-y_j$, then $\{{\bf v}_{ij}\}$ is a $1$-cocycle with values in $\Vbold_q$ which represents $\delta\omega$.

In any case,
it follows that we have an isomorphism (cf.\ \cite{Eichler57, Shimura59, Gunning67})
\begin{equation} \label{eqn:eichler}
H^1(X,\Vbold_q)\simeq H^0(X,\Kcal^{q}) \oplus (H^0(X,\Kcal^{q}))^\ast\ .
\end{equation}

Eq.\ \eqref{eqn:eichler} can be used to describe the tangent space to the Betti moduli space at $[\rho^{(n)}]$ (this was explained to me by Bill Goldman \cite{Goldman}).  From Weil's description of the tangent space,
\begin{equation} \label{eqn:weil}
T_{[\rho^{(n)}]}\Mfrak_B^{(n)}\simeq H^1(X, \End\Vbold_q)\ .
\end{equation}
Now representations of $\SL_2(\CBbb)$ are self-dual: $V_q^\ast\simeq V_q$.  By the Clebsch-Gordon rule for decomposition of tensor product representations, we have
$$
\End V_q= (V_q\otimes V_q^\ast)_{\tr=0}\simeq  (V_q\otimes V_q)_{\tr=0}=\bigoplus_{\stackrel{j=2}{j\in \ZBbb}}^{2q-1} V_j
$$
(note that the trivial representation $V_{3/2}$ is eliminated by the traceless condition). 
This decomposition translates into one for the local system. It follows that
$$
H^1(X, \End\Vbold_q)=\bigoplus_{\buildrel {j=2}\over {j\in \ZBbb}}^{2q-1} H^1(X,\Vbold_j)\ .
$$
Combining this with eqs.\ \eqref{eqn:eichler} and \eqref{eqn:weil} we obtain
$$
T_{[\rho^{(n)}]}\Mfrak_B^{(n)}\simeq
\bigoplus_{j=2}^n H^0(X,\Kcal^j)\oplus (H^0(X,\Kcal^j))^\ast\ .
$$
This should be compared with \eqref{eqn:higgs-fuchs}!

%%%%%%%%%%%%%%%%%%%%%%%%%%%%%%%%%%%%%%%%%%%%%%%%%
%%%% BIBLIOGRAPHY %%%%%%%%%%%%%%%%%%%%%%%%%%%%%%%%%%%%

%\bibliographystyle{plain}
%\bibliography{ref}

\end{document}